\newcommand{\labitem}[2]{%
\def\@itemlabel{(#1)}
\item
\def\@currentlabel{#1}\label{#2}}
\newtheorem{thm}{Theorem}[section]
\newtheorem{la}[thm]{Lemma}
\newtheorem{Defn}[thm]{Definition}
\newtheorem{Remark}[thm]{Remark}
\newtheorem{Conj}[thm]{Conjecture}
\newtheorem{prop}[thm]{Proposition}
\newtheorem{cor}[thm]{Corollary}
\newtheorem{Example}[thm]{Example}
\newtheorem{Number}[thm]{\!\!}
\newenvironment{defn}{\begin{Defn}\rm}{\end{Defn}}
\newenvironment{rem}{\begin{Remark}\rm}{\end{Remark}}
\newenvironment{numba}{\begin{Number}\rm}{\end{Number}}
\newcommand{\wb}{\overline}
\newcommand{\at}{\symbol{'100}}
\newcommand{\wt}{\widetilde}
\newcommand{\impl}{\Rightarrow}
\newcommand{\mto}{\mapsto}
\newcommand{\N}{{\mathbb N}}
\newcommand{\K}{{\mathbb K}}
\newcommand{\Q}{{\mathbb Q}}
\newcommand{\Z}{{\mathbb Z}}
\newcommand{\cE}{{\mathcal E}}
\newcommand{\cS}{{\mathcal S}}
\newcommand{\cN}{{\mathcal N}}
\newcommand{\cU}{{\mathcal U}}
\newcommand{\cK}{{\mathcal K}}
\newcommand{\cR}{{\mathcal R}}
\newcommand{\cF}{{\mathcal F}}
\newcommand{\sub}{\subseteq}
\DeclareMathOperator{\con}{con}
\DeclareMathOperator{\parb}{par}
\DeclareMathOperator{\bik}{bik}
\DeclareMathOperator{\nub}{nub}
\DeclareMathOperator{\lev}{lev}
\DeclareMathOperator{\COS}{COS}
\DeclareMathOperator{\TID}{TID}
\DeclareMathOperator{\tp}{top}
\begin{document}
\title[Endomorphisms of t.d.l.c. groups]{Contraction Groups and Passage to Subgroups and Quotients for Endomorphisms of Totally Disconnected Locally Compact Groups}
\author[T.P. Bywaters]{Timothy P. Bywaters}
\address{The University of Sydney, School of Mathematics and Statistics \\ NSW 2006, Australia \\ e-mail: {\tt  t.bywaters\at{}maths.usyd.edu.au}}
\author[H. Gl\"{o}ckner]{Helge Gl\"{o}ckner}
\address{Universit\"at Paderborn, Institut f\"{u}r Mathematik \\ Warburger Str.\ 100, 33098 Paderborn, Germany \\ e-mail: {\tt glockner\at{}math.upb.de}}
\author[S. Tornier]{Stephan Tornier}
\address{ETH Z{\"u}rich, Department of Mathematics \\ R{\"a}mistrasse 101, 8092 Z{\"u}rich, Switzerland \\ The University of Newcastle, School of Mathematical and Physical Sciences \\ University Drive, Callaghan, NSW 2308, Australia \\
e-mail: {\tt stephan.tornier@math.ethz.ch}}
\begin{abstract}
The concepts of the scale and tidy subgroups
for an automorphism of a totally disconnected locally compact group
were defined in seminal work by George A. Willis in the 1990s,
and recently generalized to the case of endomorphisms
(G.\,A. Willis, Math.\ Ann.\ {\bf 361} (2015), 403--442).
We show that central facts concerning the scale, tidy subgroups, quotients,
and contraction groups of automorphisms
extend to the case of endomorphisms.
In particular, we obtain results concerning the domain
of attraction around an invariant closed subgroup.\vspace{2mm}
\end{abstract}
%
\maketitle
\newpage
\noindent
{\bf\Large Statement of Results}\\[4mm]
Let $G$ be a Hausdorff topological group with neutral element~$e$
and $\alpha\colon G\to G$ be an endomorphism (which we always assume continuous).
The \emph{contraction group of $\alpha$}
is
\[
\con(\alpha):=\Big\{x\in G\colon\lim_{n\to\infty}\alpha^n(x)=e\Big\}.
\]
Given a closed subgroup $H\sub G$ with $\alpha(H)\sub H$, we say that a sequence $(x_n)_{n\in\N_0}$ in $G$
\emph{converges to~$e$ modulo~$H$} (and we write $x_n\to e$ modulo~$H$)
if, for each identity neighbourhood $V\sub G$,
there exists $N\in\N_0$ such that $x_n\in VH$ for all $n\geq N$
(cf.\ \cite{BaW}, \cite{Jaw}).
The \emph{domain of attraction of $H$ with respect to~$\alpha$}
is defined as the set $\con(\alpha,H)$ of all $x\in G$ such that $\alpha^n(x)\to e$
modulo~$H$.
The following result extends the partial results \cite[Theorem 3.8]{BaW}, \cite[Theorem~1]{Jaw}, \cite[Theorem 2.4]{DaS}
and \cite{Tid}.
\\[4mm]
{\bf Theorem~A.}
\emph{Let $\alpha\colon G\to G$ be an endomorphism of a totally disconnected, locally compact group~$G$
and $H\sub G$ be a closed subgroup.
If $\alpha(H)=H$ or $\alpha(H)\sub H$ and~$H$ is compact, then
$\con(\alpha,H)=\con(\alpha)H$.}\\[4mm]
Given $x\in G$, a sequence $(x_n)_{n\in\N_0}$ in~$G$
is called an \emph{$\alpha$-regressive trajectory} for~$x$ if $x_0=x$
and $\alpha(x_n)=x_{n-1}$ for all $n\in\N$.
Let
$\con^-(\alpha)$ be the set of all
$x\in G$ for which there exists an $\alpha$-regressive trajectory $(x_n)_{n\in\N_0}$
with
\[
\lim_{n\to\infty} x_n=e.\vspace{-.3mm}
\]
Then $\con^-(\alpha)$ is a subgroup of~$G$, called the \emph{anti-contraction group} of~$\alpha$.
If~$H$ is a closed subgroup of~$G$ with $\alpha(H)\sub H$,
then the \emph{anti-contraction set of $\alpha$ modulo~$H$}
is defined as the set $\con^-(\alpha,H)$ of all $x\in G$
for which there exists an $\alpha$-regressive trajectory $(x_n)_{n\in\N_0}$
with $x_n\to e$ modulo~$H$.
\\[4mm]
{\bf Theorem~B.}
\emph{Let $\alpha\colon G\to G$ be an endomorphism of a totally disconnected, locally compact group $G$ and let $H\!\sub\! G$ be a closed subgroup with $\alpha(H)\!\sub\! H$.~Then}
\[
\con^-(\alpha,H)=\con^-(\alpha)H.
\]
\emph{For each $\alpha$-regressive trajectory $(x_n)_{n}$ such that $x_n\to e$ modulo~$H$,
there is an $\alpha$-regressive trajectory $(y_n)_{n}$ such that $y_n\in x_n H$
for all $n$ and $\lim_{n\to\infty}y_n = e$.}\\[4mm]
Let $G$ be a totally disconnected, locally compact group and $\alpha\colon G\to G$ be
an endomorphism.
For a compact open subgroup $U\sub G$, the index
\[
[\alpha(U):\alpha(U)\cap U]\in\N
\]
is called the \emph{displacement index} of $U$.
If $U$ has minimal displacement index among all compact open subgroups,
then~$U$ is called \emph{minimizing}, and one defines the \emph{scale} of~$\alpha$
as this minimum displacement index, denoted $s(\alpha)$. Equivalently, $U$ has certain structural properties, summarized as being \emph{tidy}
Let $\parb^-(\alpha)$ be the \emph{anti-parabolic subgroup} of~$\alpha$
consisting of all $x\in G$ for which there exists an $\alpha$-regressive
trajectory $(x_n)_{n\in\N_0}$ such that $\{x_n\colon n\in \N_0\}$ is relatively compact.
\\
The article \cite{Fur} analyzes how the scale of an automorphism $\alpha$ behaves with respect to taking subgroups and quotients. We generalize these results to the case of endomorphisms. Given a group $G$, a normal subgroup $H$ of $G$ and an endomorphism $\alpha$ of $G$
with $\alpha(H)\subseteq H$, we let $\overline{\alpha}$ denote the associated endomorphism of $G/H$. \\[4mm]
{\bf Theorem~C.}
\emph{Let $\alpha\colon G\to G$ be an endomorphism of a totally disconnected, locally compact group $G$ and $H\sub G$ a closed subgroup with $\alpha(H)\subseteq H$.}
\begin{itemize}
\labitem{a}{itm:scale_subgroup} \emph{There is a compact open subgroup $U$ of $G$ which is tidy for $\alpha$ and such that $U\cap H$ is tidy for $\alpha|_{H}$. Furthermore}
\begin{displaymath}
s_{H}(\alpha|_{H})\le s_{G}(\alpha).
\end{displaymath}
\labitem{b}{itm:scale_divide} \emph{If, in addition, $H$ is normal in $G$, then}
\begin{displaymath}
  s_{H}(\alpha|_{H})s_{G/H}(\overline{\alpha}) \text{ \emph{divides} } s_{G}(\alpha).
 \end{displaymath}
\labitem{c}{itm:scale_equality} \emph{If, in addition to all the above, $H\subseteq\parb^{-}(\alpha)$
is a closed normal subgroup of~$G$ and $\alpha(H)=H$, then}
\begin{displaymath}
 s_{H}(\alpha|_{H})s_{G/H}(\overline{\alpha})=s_{G}(\alpha).
\end{displaymath}
\end{itemize}
\vspace{0.4cm}\noindent
Using Theorem~A, we show the following result which generalizes \cite[Theorem~3.32]{BaW} and the corresponding statement in \cite{Lie}.\\[2.3mm]
{\bf Theorem~D.}
\emph{Let $\alpha\colon G\to G$ be an endomorphism of a totally disconnected,
locally compact group~$G$.
Then $\alpha$ has small tidy subgroups
if and only if~$\con(\alpha)$ is closed in~$G$.}\\[4mm]
\\[2.3mm]
Using Theorem~B, we prove a generalization of \cite[Proposition~3.21\,(3)]{BaW}:\\[4mm]
{\bf Theorem~E.}
\emph{Let $\alpha\colon G\to G$ be an endomorphism of a totally disconnected,
locally compact group~$G$.
Then}
\begin{equation}\label{oncon}
s(\alpha)=s\big( \alpha|_{\wb{\con^-(\alpha)}}\big).\vspace{1mm}
\end{equation}
Further consequences of Theorems~A and~B are obtained.
Let $\parb(\alpha)\sub G$ be the \emph{parabolic subgroup}
associated to an endomorphism~$\alpha$, i.e., the set of all
$x\in G$ for which $\{\alpha^n(x)\colon n\in\N_0\}$ is relatively compact in~$G$.
Let
\[
\lev(\alpha):=\parb(\alpha)\cap \parb^-(\alpha)
\]
be the \emph{Levi subgroup}.
If $G$ is locally compact and totally disconnected, then $\parb(\alpha)$ and
$\lev(\alpha)$ are closed subgroups of~$G$ such that $\alpha(\parb(\alpha))\sub \parb(\alpha)$
and $\alpha(\lev(\alpha))=\lev(\alpha)$ (see \cite[Proposition~19]{End}).
Given a compact open subgroup $V\sub G$, we define
$V_-:=\bigcap_{n\in\N_0}\alpha^{-n}(V)$
and write $V_+$ for the set of all $x\in V$ for which there exists an
$\alpha$-regressive trajectory $(x_n)_{n\in\N_0}$ within~$V$.\\[2.3mm]
We study $\parb(\alpha)$, $\lev(\alpha)$ and their connections to tidy subgroups.
If $\alpha$ has small tidy subgroups, then the strongest conclusions
can be obtained, as compiled in the next theorem.
For $\alpha$ an automorphism, cf.\
already~\cite{Tid} for part~(a) and~\cite[Theorem 3.32]{BaW} for~(b).
The semidirect products mean semidirect products of topological groups.\\[4mm]
{\bf Theorem~F.} \emph{Let $\alpha$ be an endomorphism of a totally disconnected, locally
compact group~$G$. Then $\alpha$ has small tidy subgroups if and only if $\alpha|_{\lev(\alpha)}$
has small tidy subgroups. In this case, the following holds}:
\begin{itemize}
\item[(a)]
\emph{$\Omega := \con(\alpha)\lev(\alpha)\con^-(\alpha)$ is an open identity neighbourhood in~$G$
such that $\alpha(\Omega)\sub\Omega$, and the product map}
\begin{equation}\label{prodmap}
\pi\colon \con(\alpha)\times \lev(\alpha)\times \con^-(\alpha)\to\Omega,\;\; (x,y,z)\mto xyz
\end{equation}
\emph{is a homeomorphism.}
\item[(b)]
\emph{$\parb(\alpha)=\con(\alpha)\rtimes \lev(\alpha)$ and $\parb^-(\alpha)=\con^-(\alpha)\rtimes \lev(\alpha)$.}
\item[(c)]
\emph{$\alpha|_{\parb^-(\alpha)}$, $\alpha|_{\lev(\alpha)}$ and $\alpha|_{\con^-(\alpha)}$ are automorphisms.}
\item[(d)]
\emph{Every subgroup $V$ of $G$ which is tidy for $\alpha$ is a subset of~$\Omega$;
it satisfies}
\[
V_-=(\con(\alpha)\cap V)\rtimes (\lev(\alpha)\cap V)
\]
\emph{and $V_+=(\con^-(\alpha)\cap V)\rtimes (\lev(\alpha)\cap V)$.}
\item[(e)]
\emph{A compact open subgroup $V\sub G$ is tidy for $\alpha$ if and only if}
\begin{equation}\label{chart1}
V=(\con(\alpha)\cap V)(\lev(\alpha)\cap V)(\con^-(\alpha)\cap V),\vspace{-3mm}
\end{equation}
\begin{eqnarray}\alpha(\con(\alpha)\cap V)\, &\sub & \con(\alpha)\cap V,\label{chart2}\\
\alpha(\lev(\alpha)\cap V)\,\, &=& \lev(\alpha)\cap V,\;\;\;\mbox{and}\label{chart3}\\
\alpha(\con^-(\alpha)\cap V)&\supseteq &
\con^-(\alpha)\cap V.\label{chart4}
\end{eqnarray}
\item[(f)]
\emph{The compact open subgroups $W\sub \lev(\alpha)$ with $\alpha(W)=W$ form a basis of identity neigbourhoods
in~$\lev(\alpha)$.}\vspace{2mm}
\end{itemize}
If $G$ is a Lie group over a
totally disconnected local field
(as in \cite{Bou} and \cite{Ser})
and $\alpha\colon G\to G$ is an analytic endomorphism with small tidy subgroups,
then $\con(\alpha)$, $\lev(\alpha)$ and $\con^-(\alpha)$ are Lie subgroups of~$G$
(in the strong sense of submanifolds) and the product map in~(\ref{prodmap})
is an analytic diffeomorphism, see \cite{Lie}.
\\[2.3mm]
By (e) in Theorem~F, the automorphism $\alpha|_{\lev(\alpha)}$ is distal, see \cite{SaR}.
Information concerning contractive automorphisms of locally
compact groups can be found in \cite{Sie} and \cite{GaW};
contractive analytic automorphisms of Lie groups over a totally disconnected
local field~$\K$
are discussed in \cite{Wan} (for $\K=\Q_p$) and \cite{CON}.\\[2.3mm]
Independenly, studies of endomorphisms of totally disconnected groups have also been performed in \cite{Rei} and \cite{GBV}. Whereas the former deals with profinite groups, the latter has a view towards topological entropy.
According to \cite[Corollary~4.11]{GBV}
(combined with Proposition~\ref{betterC}),
the topological entropy $h_{\tp}(\alpha)$
of an endomorphism $\alpha$ of a totally disconnected,
locally compact group~$G$ is given by $h_{\tp}(\alpha)=
\ln s(\alpha)$,
if $\con(\alpha)$ is closed.
Our Theorem~C\,(c) therefore implies the following Addition Theorem
for topological entropy,
which is complementary to a similar, known result (\cite[Theorem 1.2]{GBV}):\\[2.3mm]
\emph{If $\con(\alpha)$ is closed in~$G$ and
$H$ is a closed normal subgroup of~$G$ such that
$H\sub \parb^-(G)$ and $\alpha(H)=H$, then
$h_{\tp}(\alpha)=h_{\tp}(\alpha|_H)+h_{\tp}(\wb{\alpha})$.}\\[2.3mm]  
{\bf Acknowledgements.} The research was carried out during the `Winter of Disconnectedness'
hosted by the University of Melbourne (Matrix Center, Creswick)
and the University of Newcastle (NSW). We owe thanks to both institutions for their hospitality
and are grateful to George Willis for stimulating questions, discussions, and support. The first and last authors would like to extend their gratitude to Colin Reid for helpful discussions, as well as their supervisors, Jacqui Ramagge and Marc Burger respectively, for their ongoing support and making attendance to the aforementioned workshop possible. This research was conducted while the first author was supported by the Australian Government Research Training Program Scholarship and, partly, while the last author was supported by the SNSF Doc.Mobility fellowship 172120.
Finally, we owe thanks to an anonymous referee whose comments helped us to improve the original manuscript.\\[2.3mm]
{\bf Conventions.} Write $\N:=\{1,2,\ldots\}$ and $\N_0:=\N\cup\{0\}$.
All topological groups and uniform spaces are assumed Hausdorff.
The automorphisms considered are continuous, with continuous inverse.
A map $q\colon X\to Y$ between topological spaces is called a \emph{quotient map} if it is surjective
and $Y$ carries the quotient topology (the final topology with respect to~$q$).
If $X$ is a set and $\alpha\colon X\to X$ a map, we
say that a subset $M\sub X$ is \emph{$\alpha$-invariant}
if $\alpha(M)\sub M$. If $\alpha(M)=M$, we say that~$M$ is \emph{$\alpha$-stable}.
This terminology is in line with \cite{BaW} but differs from \cite{Jaw}.
A sequence $(x_n)_{n\in\N_0}$ (or two-sided sequence $(x_n)_{n\in\Z}$)
in a topological space~$X$ is called \emph{bounded}
if $\{x_n\colon n\in\N_0\}$ (resp., $\{x_n\colon n\in\Z\}$)
is relatively compact in~$X$.
If $G$ is a totally disconnected, locally compact group,
then $\COS(G)$ denotes the set of all compact open subgroups of~$G$.
We write $|X|$ for the cardinality of a set~$X$. If $k$ is an ordinal number, we write $[0,k[$ for the set of ordinals $j<k$.
For each cardinal number~$c$, the set of ordinal numbers with cardinality~$c$
has a minimal element~$k$. Thus $[0,k[$ has cardinality~$c$,
while $[0,j[$ has cardinality less than $c$ for each $j\in[0,k[$.
We identify~$c$ with~$k$.
\\[2.3mm]
{\bf Structure of the Article.}
In a preparatory first section, we recall necessary concepts from~\cite{End}.
Auxiliary results concerning convergence
modulo~$H$ and generalizations of the concept for self-maps of uniform spaces
are compiled in Section~\ref{secuni}.
Section~\ref{sec1} is devoted to the proof of
Theorem~A for metrizable~$G$, by an adaptation of arguments from~\cite{BaW}.
In Section~\ref{sec4}, we prove Theorem~B
for metrizable~$G$.
Also this proof was inspired by \cite{BaW}, but major changes
were necessary to enable (unique) $\alpha$-orbits to be replaced with (not necessarily unique) $\alpha$-regressive
trajectories.
In both cases, the generalization to non-metrizable~$G$ uses
a method that was stimulated by Jaworski's arguments in~\cite{Jaw},
but deviates in detail as we use a different cardinal as the parameter
in a transfinite induction. In this way, we can do without some of the preparatory lemmas in~\cite{Jaw},
which do not have analogues for anti-contraction groups.
See Section~\ref{sec3} for the case of Theorem~A.
The relevant cardinal invariant is introduced in Section~\ref{sec2}.
As to Theorem~B, we have to line up regressive trajectories
by a transfinite induction over the steps of a projective system,
rather than
mere
elements of anti-contraction groups (Section~\ref{proofB}).
This also enables us to get around the problem
that although $\con(\alpha,K)\cap \con(\alpha,L)=\con(\alpha,K\cap L)$
for compact $K$ and~$L$, a corresponding formula cannot be expected for
anti-contraction groups of an endomorphism.
Section~\ref{sec:scale_sub_quot} analyzes how the scale of an endomorphism behaves with respect to subgroups and quotients and proves most of Theorem~C. 
In Sections~\ref{sec5}--\ref{sec-cell},
we give applications of Theorems~A and~B
concerning endomorphisms, their scale and the various subgroups going along with them.
Notably, we complete the proof of Theorem~C and obtain proofs for
Theorems~D, ~E, and~F.

\section{Basic concepts}\label{sec:basic_concepts}

In this section, we collect basic definitions and properties of the objects appearing in this article. For details, we refer the reader to \cite{End}. Let $G$ be a totally disconnected, locally compact group and $\alpha\colon G\to G$ be
an endomorphism.
Recall that by a theorem of van Dantzig (see \cite{vDa} or \cite[(7.7)]{HaR}),
$G$ admits an identity neighbourhood basis consisting of compact open subgroups.
For a compact open subgroup $U\sub G$, the index
\[
[\alpha(U):\alpha(U)\cap U]\in\N
\]
is called the \emph{displacement index} of $U$.
If $U$ has minimal displacement index among all compact open subgroups,
then~$U$ is called \emph{minimizing}, and one defines the \emph{scale} of~$\alpha$
as this minimum displacement index, denoted $s(\alpha)$
or $s_G(\alpha)$.
Equivalently, $U$ has certain structural properties,
summarized as being \emph{tidy}.
Write
\[
U_-:=\bigcap_{n\in\N_0}\alpha^{-n}(U)=\{x\in U\colon (\forall n\in\N_0)\;\alpha^n(x)\in U\},\vspace{-1mm}
\]
where $\alpha^{-n}(U)$ means the preimage $(\alpha^n)^{-1}(U)$.
Let $U_+$ be the set of all $x\in U$ for which there exists an
$\alpha$-regressive trajectory $(x_n)_{n\in\N_0}$ such that $x_n\in U$ for all~$n$.
Then
\[
U_+=\bigcap_{n\in\N_0}U_{n,\alpha}\quad\mbox{with}\vspace{-1mm}
\]
\begin{equation}\label{defUn}
U_{0,\alpha}:=U\;\;\mbox{and}\;\; U_{n+1,\alpha}:=U\cap\alpha(U_{n,\alpha})\;\;\mbox{for $n\in\N_0$;}
\end{equation}
moreover, $U_+$ and $U_-$ are compact subgroups of~$G$
such that
\[
\alpha(U_-)\sub U_-\quad\mbox{and}\quad \alpha(U_+)\supseteq U_+
\]
(see \cite{End}, or also Lemma~\ref{good-prepar}).
The sets
\[
U_{--}:=\bigcup_{n\in\N_0}\alpha^{-n}(U_-)\quad\mbox{and}\quad
U_{++}:=\bigcup_{n\in\N_0} \alpha^n(U_+)
\]
are unions of ascending sequences of subgoups, whence they are subgroups of~$G$.
If $U=U_+U_-$, then $U$ is called \emph{tidy above} for~$\alpha$.
If $U_{++}$ is closed in~$G$
and
\[
[\alpha^{n+1}(U_+):\alpha^n(U_+)]\in\N
\]
is independent of $n\in\N_0$,
then $U$ is called \emph{tidy below}.
If $U$ is both tidy above and tidy below for~$\alpha$, then $U$ is called \emph{tidy for $\alpha$} (see \cite{End}).\\[2.3mm]
We shall use the fact that $U$ is tidy if and only if~$U$ is tidy above
and $U_{--}$ is closed in~$G$ (see \cite[Proposition~9]{End}).\\[2.3mm]
By \cite[Lemma 5 and Theorem 2]{End}, a compact open subgroup
$U$ of~$G$ is minimizing for~$\alpha$ if and only if it is tidy for~$\alpha$, in which case
\[
s(\alpha)=[\alpha(U):\alpha(U)\cap U]=[\alpha(U_+) : U_+].
\]
If $U$ is a compact open subgroup of~$G$, then the compact open subgroup
\[
\bigcap_{j=0}^\ell \alpha^{-j}(U)
\]
is tidy above for large $\ell \in\N_0$ (see \cite{End}).
For $U\sub G$ a compact open subgroup, $U_+\cap \, U_-$ is a compact subgroup of~$G$ such that $\alpha(U_+\cap  U_-)=U_+\cap U_-$
(see \cite{End} or Lemma~\ref{good-prepar}).
Let $\parb^-(\alpha)$ be the \emph{anti-parabolic subgroup} of~$\alpha$
consisting of all $x\in G$ for which there is an $\alpha$-regressive
trajectory $(x_n)_{n\in\N_0}$ with $\{x_n\colon n\in \N_0\}$ being relatively compact.
Then $\parb^-(\alpha)$ is a closed subgroup of~$G$ and
\[
\alpha(\parb^-(\alpha))=\parb^-( \alpha)
\]
(see \cite[comment after Proposition~19]{End}).
The intersection $\nub(\alpha)$
of all tidy subgroups is called the \emph{nub} of $\alpha$.
The nub is compact, and \emph{$\alpha$-stable}
in the sense that $\alpha(\nub(\alpha))=\nub(\alpha)$
(see \cite[Proposition~20]{End}).
The nub coincides with the trivial group~$\{e\}$
if and only if \emph{$\alpha$ has small tidy subgroups}
in the sense that the subgroups which are tidy for~$\alpha$ form a basis
of identity neighbourhoods.
The \emph{bounded iterated kernel} $\bik(\alpha)$
is defined as the closure
\[
\bik(\alpha):=\wb{\{x\in \parb^-(\alpha):\alpha^n(x)=e\;\mbox{for some $n\in \N_0$}\}}
\]
(as opposed to \cite[Definition~12]{End} which is neither intended nor used);
it is an $\alpha$-stable compact normal subgroup of $\parb^-(\alpha)$.
The endomorphism~$\wb{\alpha}$ of $\parb^-(\alpha)/\bik(\alpha)$ induced by~$\alpha$
is an automorphism and $\bik(\alpha)\sub \nub(\alpha)$ \cite[\!Proposition~20]{End}.
Then $\nub(\alpha)/\!\bik(\alpha)=\nub(\wb{\alpha})$ (see Proposition~\ref{keynub}\,(b))
and
$\nub(\wb{\alpha})$ is the largest $\wb{\alpha}$-stable closed
subgroup of $\parb^-(\alpha)/\bik(\alpha)$ on which $\wb{\alpha}$
acts ergodically (see \cite[Proposition~4.4]{Nub}).\\[2.3mm]
For $\alpha$ an endomorphism of a totally disconnected locally compact group,
the following sets were already defined above.
\begin{defn}\label{defKpm}
If $X$ is a Hausdorff topological space,
$K\sub X$ a compact subset and
$\alpha\colon X\to X$
a continuous map,
we define
\[
K_-:=\bigcap_{n\in\N_0} \alpha^{-n}(K)\qquad
\mbox{and}\qquad
K_+:=\bigcap_{n\in\N_0}K_{n,\alpha},
\]
where $K_{0,\alpha}:=K$ and $K_{n,\alpha}:=K\cap \alpha(K_{n-1,\alpha})$ for $n\in\N$.
\end{defn}
The following properties are useful in general. Compare \cite{End} for proofs.
\begin{la}\label{good-prepar}
For $X$, $\alpha$, $K$, $K_{n,\alpha}$, $K_+$, $K_-$ as in Definition~\emph{\ref{defKpm}},
we have:
\begin{itemize}
\item[\rm(a)]
$K_-$ is a compact subset of~$K$.
It is the set of all $x\in K$ such that $\alpha^n(x)\in K$ for all $n\in\N_0$.
Moreover, $\alpha(K_-)\sub K_-$.
\item[\rm(b)]
$K_{n,\alpha}$ is a compact subset of~$K$, for each $n\in\N_0$.
It is the set of all $x\in K$ for which there exist $x_0,x_1,\ldots,x_n\in K$
such that $x_0=x$ and $\alpha(x_j)=x_{j-1}$ for all $j\in\{1,2,\ldots, n\}$.
\item[\rm(c)]
$K_+$ is a compact subset of~$K$. It is the set of all $x\in K$
for which there exists an $\alpha$-regressive trajectory $(x_n)_{n\in\N_0}$
with $x_0=x$ and $x_n\in K$ for all $n\in\N_0$. Moreover, $K_+\sub \alpha(K_+)$.
\item[\rm(d)]
$K_+\cap K_-$ is a compact subset of~$K$. It is the set of all $x\in K$
for which there exists a family $(x_n)_{n\in\Z}$ of elements $x_n\in K$
such that $x_0=x$ and $\alpha(x_n)=x_{n+1}$ for all $n\in\Z$.
Moreover, $\alpha(K_+\cap K_-)=K_+\cap K_-$.
\end{itemize}
If $X$ is a topological group, $\alpha$ an endomorphism
and $K$ a compact subgroup of~$X$, then also $K_+$, $K_-$ and each $K_{n,\alpha}$
are compact subgroups of~$X$.
\end{la}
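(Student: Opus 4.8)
The plan is to verify the four items in turn. Almost everything is formal bookkeeping resting on two elementary facts — a continuous image of a compact set is compact, and a compact subset of a Hausdorff space is closed — together with one genuine compactness argument (a König-type argument, equivalently the non-emptiness of an inverse limit of non-empty compact spaces) which is needed for part~(c).

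\emph{Parts (a) and (b).} For (a): since $\alpha$ is continuous and $K$ is closed in~$X$ (being compact, $X$ Hausdorff), each $\alpha^{-n}(K)=(\alpha^n)^{-1}(K)$ is closed, so $K_-=\bigcap_{n\in\N_0}\alpha^{-n}(K)$ is closed; being contained in $\alpha^{-0}(K)=K$ it is compact. Unwinding the definition, $x\in K_-$ iff $\alpha^n(x)\in K$ for all $n\in\N_0$; and if $x\in K_-$ then $\alpha^n(\alpha(x))=\alpha^{n+1}(x)\in K$ for all $n\in\N_0$, so $\alpha(x)\in K_-$ and thus $\alpha(K_-)\sub K_-$. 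For (b), induct on~$n$: $K_{0,\alpha}=K$ is compact, and if $K_{n-1,\alpha}$ is compact then $\alpha(K_{n-1,\alpha})$ is compact, hence closed, so $K_{n,\alpha}=K\cap\alpha(K_{n-1,\alpha})$ is compact. The trajectory description follows by the same induction: $x\in K_{n,\alpha}$ iff $x\in K$ and $x=\alpha(z)$ for some $z\in K_{n-1,\alpha}$; identifying such a~$z$ with the entry $x_1$ of a length-$n$ $\alpha$-regressive trajectory $x_0=x,x_1,\dots,x_n$ in~$K$ and invoking the inductive description of $K_{n-1,\alpha}$ gives the claim.

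\emph{Part (c).} As a decreasing intersection of the compact sets $K_{n,\alpha}$, the set $K_+$ is compact and contained in~$K$. For the characterisation the only nontrivial point is to produce an infinite trajectory from the family of finite ones. Given $x\in K_+$, for each $n\in\N_0$ let $T_n\sub K^{n+1}$ be the set of $(x_0,\dots,x_n)$ with $x_0=x$ and $\alpha(x_j)=x_{j-1}$ for $1\le j\le n$; then $T_n$ is closed in $K^{n+1}$ (cut out by closed conditions) and, by~(b), non-empty since $x\in K_{n,\alpha}$. Inside the compact space $K^{\N_0}$ (Tychonoff), the sets $S_n:=\{(y_j)_{j\in\N_0}\in K^{\N_0}\colon (y_0,\dots,y_n)\in T_n\}$ are closed, non-empty, and nested ($S_m\sub S_n$ for $m\ge n$), hence have the finite intersection property, so $\bigcap_{n\in\N_0}S_n\neq\emptyset$; any element is an $\alpha$-regressive trajectory for~$x$ lying in~$K$. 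Conversely such a trajectory shows $x\in K_{n,\alpha}$ for all~$n$. Finally, if $(x_j)_{j\in\N_0}$ is such a trajectory, then $(x_{j+1})_{j\in\N_0}$ witnesses $x_1\in K_+$, so $x=\alpha(x_1)\in\alpha(K_+)$ and $K_+\sub\alpha(K_+)$.

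\emph{Part (d) and the group case.} The set $K_+\cap K_-$ is compact as an intersection of compact sets. If $x\in K_+\cap K_-$, concatenate the forward orbit $(\alpha^n(x))_{n\ge 0}\sub K$ (from $x\in K_-$) with an $\alpha$-regressive trajectory $(y_n)_{n\ge 0}\sub K$ for~$x$ (from $x\in K_+$, by~(c)): setting $x_n:=\alpha^n(x)$ for $n\ge 0$ and $x_{-n}:=y_n$ for $n\ge 0$ yields a family $(x_n)_{n\in\Z}$ in~$K$ with $x_0=x$ and $\alpha(x_n)=x_{n+1}$ for all $n\in\Z$; conversely such a family places $x$ in both $K_-$ and $K_+$. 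Shifting a two-sided family by one step shows that with $x$ also $\alpha(x)=x_1$ and the preimage $x_{-1}$ lie in $K_+\cap K_-$, giving $\alpha(K_+\cap K_-)\sub K_+\cap K_-$ and the reverse inclusion, hence equality. If now $X$ is a topological group, $\alpha$ an endomorphism and $K$ a compact subgroup, then each $\alpha^{-n}(K)$ is a subgroup, so $K_-$ is; inductively $\alpha(K_{n-1,\alpha})$ is a subgroup (homomorphic image), so $K_{n,\alpha}=K\cap\alpha(K_{n-1,\alpha})$ is, and therefore so is $K_+=\bigcap_{n\in\N_0}K_{n,\alpha}$; compactness of all these was already shown. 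The main obstacle is the compactness step in~(c): it is the only place where one uses that $K$ is compact rather than merely closed, namely to conclude that the nested non-empty closed sets $S_n\sub K^{\N_0}$ intersect non-trivially; everything else is routine.
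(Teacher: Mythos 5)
Your proof is correct and complete. Note, though, that the paper does not actually prove Lemma~\ref{good-prepar} — it simply refers the reader to \cite{End} (``Compare \cite{End} for proofs''), so there is no in-paper argument to compare against. What you have supplied is the standard self-contained verification: parts (a), (b) and (d) are routine bookkeeping with compactness and continuity, and the one genuine point — extracting an infinite $\alpha$-regressive trajectory in (c) from the nested, non-empty, closed sets $S_n$ inside the compact product $K^{\N_0}$ via the finite intersection property — is exactly the inverse-limit/König-type compactness argument one expects here; it is also the only place where compactness of $K$ (rather than mere closedness) is essential, as you correctly observe. The reduction of the group case to the fact that homomorphic images, preimages, and intersections of subgroups are subgroups is likewise sound. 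One small presentational point: you could remark explicitly that $(K_{n,\alpha})_{n\in\N_0}$ is decreasing (a short induction, since $K_{1,\alpha}\sub K_{0,\alpha}$ and $\alpha$ preserves inclusions), which is what makes $K_+$ ``a decreasing intersection'' and also what guarantees the nesting $S_m\sub S_n$ for $m\ge n$ that you invoke; you use this nesting without comment.
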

\section{Dynamics on uniform spaces}\label{secuni}
In this section, we collect generalities about dynamical systems
on uniform spaces. As a reference for the
theory of uniform spaces, we recommend \cite{Isb}.
A simple topological fact will be used (whose proof is left to the reader).
\begin{la}\label{la:filter_basis}
Let $X$ be a topological space and $\cN$ a filter basis in~$X$.
If each $K\in\cN$ is compact and $U\sub X$ is an open subset with $\bigcap \cN \sub U$,
then there exists some $K\in\cN$ such that $K\sub U$.
\end{la}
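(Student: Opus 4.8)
The plan is to argue by contradiction via a standard finite-intersection-property argument. Suppose that $K\not\sub U$ for every $K\in\cN$, so that $K\setminus U\neq\emptyset$ for each $K\in\cN$. Since $U$ is open, $X\setminus U$ is closed, and hence $K\setminus U=K\cap(X\setminus U)$ is a closed subset of the compact set~$K$; in particular it is compact and nonempty. First I would check that $\cN':=\{K\setminus U\colon K\in\cN\}$ is again a filter basis: given $K_1,K_2\in\cN$, choose $K_3\in\cN$ with $K_3\sub K_1\cap K_2$ by the filter-basis property of~$\cN$, and then $K_3\setminus U\sub(K_1\setminus U)\cap(K_2\setminus U)$. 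By induction, every finite intersection of members of~$\cN'$ contains a member of~$\cN'$ and is therefore nonempty.

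Next, fix some $K_0\in\cN$ and work inside the compact space $C_0:=K_0\setminus U$. For each $K\in\cN$ the set $C_0\cap(K\setminus U)$ is a closed subset of~$C_0$ (being the intersection of~$C_0$ with the closed set $K$, where we use that $X$ is Hausdorff so that compact subsets are closed; in the applications of this lemma $X$ is always Hausdorff). By the previous paragraph the family $\{C_0\cap(K\setminus U)\colon K\in\cN\}$ of closed subsets of the compact space~$C_0$ has the finite intersection property, so its total intersection is nonempty. But that intersection equals $\bigl(\bigcap\cN\bigr)\setminus U$, which is empty because $\bigcap\cN\sub U$ by hypothesis. This contradiction shows that some $K\in\cN$ satisfies $K\sub U$.

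I do not expect any genuine obstacle here: the statement is a routine consequence of compactness, which is presumably why the authors leave the proof to the reader. The only points requiring a modicum of care are the verification that $\{K\setminus U\colon K\in\cN\}$ is a filter basis — so that the finite intersection property is actually available — and the identification of $\bigcap_{K\in\cN}\bigl(C_0\cap(K\setminus U)\bigr)$ with $\bigl(\bigcap\cN\bigr)\cap(X\setminus U)$, which is immediate since $C_0\sub X\setminus U$ and $C_0\cap K\cap(X\setminus U)=C_0\cap K$ for each $K$.
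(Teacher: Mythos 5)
Your proof is correct and is the standard finite-intersection-property argument; since the paper leaves the proof to the reader, there is no alternative in-paper argument to compare it with, but this is surely what the authors intend. Your parenthetical caveat about Hausdorffness is well taken and worth underlining: as literally stated, for an arbitrary topological space $X$ the lemma is \emph{false}. For instance, let $X=\N$ with the cofinite topology (so every subset is compact), let $\cN=\{[n,\infty)\cap\N : n\in\N\}$ (a filter basis of compact sets), and let $U=\emptyset$; then $\bigcap\cN=\emptyset\sub U$, yet no member of $\cN$ is contained in $U$. The FIP step in your argument is exactly where closedness of the sets $C_0\cap(K\setminus U)$ in $C_0$ is needed, and for that it suffices that compact subsets of $X$ be closed (a ``KC'' space), which Hausdorffness guarantees. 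Since every application of this lemma in the paper is to a locally compact Hausdorff group, the omission is harmless in context, and your proof establishes the lemma in the generality actually used.
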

\begin{numba}
Let $X$ be a set, $\alpha\colon X\to X$ be a map
and $x\in X$.
As usual, the sequence $(\alpha^n(x))_{n\in\N_0}$ is called
the \emph{$\alpha$-orbit} of~$x$.
A sequence $(x_n)_{n\in\N_0}$ is called
an \emph{$\alpha$-regressive trajectory}
for~$x$ if $x_0=x$ and $\alpha(x_n)=x_{n-1}$ for all $n\in\N$. Note the slightly different usage in \cite{End}.
A two-sided sequence $(x_n)_{n\in\Z}$ is called an \emph{$\alpha$-trajectory}
(for $x_0$) if $\alpha(x_n)=x_{n+1}$ for all $n\in\Z$.
\end{numba}
The following observation allows
$\alpha$-regressive trajectories to be created from a given sequence.
\begin{la}\label{onekey}
Let $X$ be a Hausdorff topological space, $\alpha\colon X\to X$
be a continuous map and $(z_n)_{n\in\N_0}$ be a sequence in~$X$.
Let $x_0\in X$ and, for $n,m\in\N_0$,
\begin{equation}\label{doubseq}
z^{(m)}_n:=\left\{
\begin{array}{cl}
\alpha^{m-n}(z_m) &\mbox{if $n\in\{0,\ldots,m\}${\rm ;}}\\
x_0 & \mbox{if $n>m$.}
\end{array}
\right.
\end{equation}
Thus $z^{(m)}:=(z^{(m)}_n)_{n\in\N_0}\in X^{\N_0}$ for each $m\in\N_0$.
If $y=(y_n)_{n\in\N_0}$ is an accumulation point of $(z^{(m)})_{m\in\N_0}$
in $X^{\N_0}$ with respect to the product topology,
then $(y_n)_{n\in\N_0}$ is an $\alpha$-regressive trajectory.
\end{la}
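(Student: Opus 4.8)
The plan is to unwind the definitions and show that the two defining properties of an $\alpha$-regressive trajectory, namely $y_0 = x_0$ and $\alpha(y_n) = y_{n-1}$ for all $n \in \N$, both follow from $y$ being an accumulation point of the sequence $(z^{(m)})_{m \in \N_0}$ in the product space $X^{\N_0}$. Recall that accumulation point means: every neighbourhood of $y$ in the product topology contains $z^{(m)}$ for infinitely many $m$. Since basic open neighbourhoods in the product topology only constrain finitely many coordinates, for any finite set of coordinates $F \sub \N_0$ and any neighbourhoods $V_n$ of $y_n$ ($n \in F$), there are infinitely many $m$ with $z^{(m)}_n \in V_n$ for all $n \in F$.

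First I would verify $y_0 = x_0$. Actually, looking at the formula \eqref{doubseq}, we have $z^{(m)}_0 = \alpha^m(z_m)$ for every $m$, which need not equal $x_0$; so the cleanest route is not to claim $y_0 = x_0$ directly but rather to establish the recursion $\alpha(y_n) = y_{n-1}$, which is the substantive content (indeed the statement only asserts $y$ is an $\alpha$-regressive trajectory, and one checks the indexing convention carefully). For the recursion, fix $n \in \N$. I would argue by contradiction: suppose $\alpha(y_n) \neq y_{n-1}$. Since $X$ is Hausdorff, choose disjoint open sets $A \ni \alpha(y_n)$ and $B \ni y_{n-1}$. By continuity of $\alpha$, there is an open $V \ni y_n$ with $\alpha(V) \sub A$. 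Now $\{y' \in X^{\N_0} : y'_n \in V,\ y'_{n-1} \in B\}$ is a neighbourhood of $y$, so it contains $z^{(m)}$ for infinitely many $m$; pick such an $m$ with $m \geq n$ (possible since there are infinitely many). For that $m$, both coordinates $n$ and $n-1$ are in the range $\{0, \ldots, m\}$, so $z^{(m)}_n = \alpha^{m-n}(z_m)$ and $z^{(m)}_{n-1} = \alpha^{m-(n-1)}(z_m) = \alpha(\alpha^{m-n}(z_m)) = \alpha(z^{(m)}_n)$. Thus $z^{(m)}_{n-1} = \alpha(z^{(m)}_n) \in \alpha(V) \sub A$. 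But also $z^{(m)}_{n-1} \in B$, contradicting $A \cap B = \emptyset$. Hence $\alpha(y_n) = y_{n-1}$.

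The only real subtlety — and the step I'd flag as the main obstacle — is the bookkeeping with the index $m$: one must ensure that the "infinitely many $m$" guaranteed by the accumulation-point property can be taken with $m \geq n$, so that the formula $z^{(m)}_n = \alpha^{m-n}(z_m)$ (the non-trivial branch of \eqref{doubseq}) applies to both relevant coordinates simultaneously and exhibits the desired one-step relation $z^{(m)}_{n-1} = \alpha(z^{(m)}_n)$. This is immediate since an infinite subset of $\N_0$ is unbounded, but it is the crux of why the construction \eqref{doubseq} works: the "tail padding by $x_0$" for $n > m$ is irrelevant in the limit because for each fixed finite stretch of coordinates all sufficiently large $m$ land in the genuine branch. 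No compactness or metrizability of $X$ is needed; only Hausdorffness (for the separation argument) and continuity of $\alpha$. I would present the contradiction argument as above, then remark that the convention on $x_0$ versus $y_0$ is handled by the indexing and does not affect the conclusion that $(y_n)_{n \in \N_0}$ is an $\alpha$-regressive trajectory in the sense defined.
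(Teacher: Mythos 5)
Your proof is correct. The paper's own proof takes the subnet route: it extracts a convergent subnet $(z^{(m_a)})_{a\in A}$ converging to $y$, notes that once $m_a\ge n$ the definition gives $\alpha(z^{(m_a)}_n)=z^{(m_a)}_{n-1}$, and then passes to the limit coordinatewise using continuity of $\alpha$ and (implicitly) uniqueness of limits in the Hausdorff space~$X$. You instead use the neighbourhood characterization of accumulation points of a sequence and argue by contradiction, separating $\alpha(y_n)$ from $y_{n-1}$ by disjoint open sets and pulling one back through $\alpha$. Both arguments hinge on the identical structural observation that $z^{(m)}_{n-1}=\alpha(z^{(m)}_n)$ as soon as $m\ge n$, and both use exactly Hausdorffness and continuity and nothing more; yours avoids the subnet machinery at the cost of being indirect, while the paper's direct passage to the limit is slightly shorter. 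You are also right that nothing needs to be said about $y_0$ versus $x_0$: the lemma only asserts that $(y_n)_{n\in\N_0}$ satisfies the recursion $\alpha(y_n)=y_{n-1}$ for $n\in\N$, and the paper itself remarks immediately after the proof that the accumulation points are independent of the choice of~$x_0$.
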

\begin{proof}
Let $(z^{(m_a)})_{a\in A}$ be a convergent subnet of $(z^{(m)})_{m\in\N_0}$
with limit $y=(y_n)_{n\in\N_0}$.
For every $n\in \N$, there is $a_0\in A$ with $m_a\geq n$ for all $a\geq a_0$
and thus
\[
\alpha(z^{(m_a)}_n)=z^{(m_a)}_{n-1}.
\]
Since $\alpha$ is continuous, passing to the limit we obtain that $\alpha(y_n)=y_{n-1}$.
Hence $(y_n)_{n\in\N_0}$ is an $\alpha$-regressive trajectory.
\end{proof}
For fixed~$n\in\N_0$, we have $z^{(m)}_n=\alpha^{m-n}(z_m)$ for all $m\geq n$.
Therefore, accumulation points of $(z^{(m)})_{m\in\N_0}$
are independent of the choice of~$x_0$.
\begin{defn}\label{defdoubseq}
We call $(z^{(m)})_{m\in\N_0}=((z^{(m)}_n)_{n\in\N_0})_{m\in\N_0}$
(as in (\ref{doubseq})) a \emph{double sequence associated to~$(z_n)_{n\in\N_0}$
with respect to~$\alpha$.}
If $X$ is a topological group, we shall always choose $x_0:=e$.
\end{defn}
\begin{numba}
Let $(X,\cE)$ be a uniform space,
with filter $\cE$ of entourages.
For $V\in\cE$, $x\in X$ and $M\sub X$, we write
$V[x]:=\{y\in X\colon (x,y)\in V\}$
and $V[M]:=\bigcup_{z\in M}V[z]$.
\end{numba}
\begin{numba}
Let $(X,\cE)$ be a uniform space and $S\sub X$.
Given a subset $S\sub X$,
we say that a sequence $(x_n)_{n\in\N_0}$ in $X$
\emph{converges to~$S$} with respect to~$\cE$
(and write $x_n\to_\cE S$)
if, for each $V\in \cE$,
there exists $N\in\N_0$ such that $x_n\in V[S]$ for all $n\geq N$.
Further, let $\alpha\colon X\to X$ be a continuous map.
The \emph{domain of attraction of $S$ with respect to~$\alpha$}
is defined as
\[
\con(\alpha,S):=\big\{ x\in X\colon \alpha^n(x)\to_\cE S\big\}.
\]
Given a subset $S\sub X$, we write $\con^-(\alpha,S)$
for the set of all $x\in X$ for which there exists an $\alpha$-regressive
trajectory $(x_n)_{n\in\N_0}$ such that $x_n\to_\cE S$.
\end{numba}
\begin{numba}
Recall that the \emph{right uniform structure} $\cR$ on
a topological group~$G$ is the filter generated by the filter basis of the sets
\[
\wt{V}:=\{(x,y)\in G\times G\colon y\in Vx\},
\]
for $V$ ranging through the set of identity neighbourhoods in~$G$
(see \cite[(4.11)]{HaR}).
Then $\wt{V}[x]=Vx$ and $\wt{V}[M]=VM$ for subsets $M\sub G$.
Thus, if $H\sub G$ is a closed subgroup and $(x_n)_{n\in\N_0}$
is a sequence in~$G$, then
\[
\mbox{$x_n\to e$ modulo~$H\, $ if and only if $\, x_n\to_\cR H$.}
\]
If $\alpha$ is an endomorphism of~$G$ with $\alpha(H)\sub H$,
we deduce that the sets $\con(\alpha,H)$ and $\con^-(\alpha,H)$
coincide with those obtained for
the uniform space $(G,\cR)$.
\end{numba}
\begin{numba}
Recall that a subset $R$ of a uniform space $(X,\cE)$
is called \emph{precompact} if, for each $V\in\cE$,
there exist $n\in\N_0$ and $x_1,\ldots, x_n$ in $X$ (or, equivalently, in~$R$)~with
\[
R\sub V[x_1]\cup\cdots\cup V[x_n].
\]
Every compact subset $K\sub X$ is precompact.
If $(X,\cE)$ is complete (e.g., in the case of $(G,\cR)$ with
$G$ a locally compact group,
see \cite[\S3.3, Corollary 1]{BouTop}),
then a subset $R \sub G$ is precompact if and only if it is relatively
compact (cf.\ \cite[p.\,22]{Isb}).
\end{numba}
The following variant of \cite[Lemma~3.9]{BaW} (devoted to locally compact groups)
is very useful for our purposes; \cite{Jaw} already applied the latter lemma also
for Hausdorff topological groups (without proof).
\begin{la}\label{specialkaes}
Let $(x_n)_{n\in\N_0}$ be a sequence in a uniform space $(X,\cE)$.
\begin{itemize}
\item[\rm(a)]
If $x_n\to_\cE K$ for some precompact subset
$K\sub X$, then $\{x_n\colon n\in\N_0\}=:R$
is precompact in~$X$. If $x_n\to_\cE K$ and $K$ is compact,
then~$R$ is relatively compact and thus $(x_n)_{n\in\N_0}$ is bounded.
\item[\rm(b)]
If $x_n\to_\cE S$ for a closed subset $S\sub X$,
then each accumulation point of $(x_n)_{n\in \N_0}$ is contained in~$S$.
\item[\rm(c)]
If $(x_n)_{n\in\N_0}$ is bounded and $S\sub X$ is
closed, then $x_n\to_\cE S$ if and only if each accumulation point of $(x_n)_{n\in\N_0}$
lies in~$S$.
\end{itemize}
\end{la}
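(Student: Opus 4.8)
The three parts are elementary facts about convergence to a set in a uniform space, so the plan is to argue each from first principles using the definition of $x_n\to_\cE S$ and the filter $\cE$ of entourages. For part~(a): given $V\in\cE$, choose a symmetric $W\in\cE$ with $W\circ W\sub V$. Since $x_n\to_\cE K$, there is $N$ with $x_n\in W[K]$ for all $n\geq N$; since $K$ is precompact, cover $K$ by finitely many $W[y_1],\dots,W[y_r]$, so $W[K]\sub (W\circ W)[\{y_1,\dots,y_r\}]\sub V[\{y_1,\dots,y_r\}]$. Then the tail $\{x_n:n\geq N\}$ is covered by finitely many $V$-balls, and the finitely many points $x_0,\dots,x_{N-1}$ add finitely many more $V$-balls; hence $R$ is precompact. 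If $K$ is moreover compact, then in a complete uniform space (or by passing to the completion) precompact equals relatively compact, so $\wb R$ is compact and $(x_n)$ is bounded; since here we only care about $(G,\cR)$ and compact subsets of topological spaces, one can alternatively note directly that $\wb R\sub \wb{W[K]}$ has a finite subcover argument giving compactness of~$\wb R$.

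For part~(b): let $x$ be an accumulation point of $(x_n)$ and suppose $x\notin S$. Since $S$ is closed, $X\setminus S$ is an open neighbourhood of~$x$, hence contains $V[x]$ for some symmetric $V\in\cE$; shrinking, pick symmetric $W$ with $W\circ W\sub V$. By $x_n\to_\cE S$ there is $N$ with $x_n\in W[S]$ for all $n\geq N$; by accumulation there is $n\geq N$ with $x_n\in W[x]$, i.e.\ $(x,x_n)\in W$. From $x_n\in W[S]$ we get $s\in S$ with $(s,x_n)\in W$, so $(x,s)\in W\circ W\sub V$, i.e.\ $s\in V[x]\sub X\setminus S$, contradicting $s\in S$. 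Hence $x\in S$.

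For part~(c): one direction is immediate from~(b). Conversely, assume $(x_n)$ is bounded and every accumulation point lies in the closed set~$S$; suppose $x_n\not\to_\cE S$. Then there is $V\in\cE$ such that for infinitely many indices $n$ we have $x_n\notin V[S]$; pass to the subsequence of those indices. Since $(x_n)$ is bounded, this subsequence has an accumulation point $x$, which by hypothesis lies in~$S$. But "$x_n\notin V[S]$" along the subsequence means the subsequence stays out of the open set $V[S]\supseteq$ (a neighbourhood of each point of $S$, in particular of $x$ if we choose $V$ symmetric and note $x\in S\sub V[S]$); more carefully, choosing symmetric $W$ with $W\sub V$, an accumulation point $x\in S$ would force $x_n\in W[x]\sub V[S]$ for some large $n$ in the subsequence, a contradiction. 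Hence $x_n\to_\cE S$.

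The arguments are routine; the only mild subtlety — the main thing to get right — is the bookkeeping with entourages in parts~(b) and~(c), namely always picking a symmetric $W$ with $W\circ W\sub V$ before combining "$x$ close to $x_n$" with "$x_n$ close to $S$", and in~(c) correctly negating the definition of convergence to obtain a subsequence avoiding a fixed $V[S]$ to which one then applies boundedness. No deep input is needed beyond the standard fact (already recalled in the excerpt) that in a complete uniform space precompact subsets are relatively compact, which is what upgrades the conclusion of~(a) from precompactness to relative compactness when $K$ is compact.
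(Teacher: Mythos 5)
Your arguments for part~(b), part~(c), and the precompactness claim in part~(a) are essentially the paper's own: the same entourage bookkeeping (pick $W$ with $W\circ W\sub V$, then combine ``$x$ close to $x_n$'' with ``$x_n$ close to $S$''), and in~(c) the same extraction of a subsequence avoiding a fixed $V[S]$ followed by an appeal to boundedness. No issues there.

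The genuine gap is the upgrade from precompact to relatively compact in part~(a) when $K$ is compact, where you offer two shortcuts and neither is complete as written. (i) ``Pass to the completion $\hat X$'': this gives that the closure of $R$ in $\hat X$ is compact, but you still need that closure to lie inside~$X$, so that it agrees with $\wb{R}$ taken in~$X$. That step is not automatic; it does hold here because every accumulation point of $(x_n)$ in $\hat X$ lies in~$K$ (apply your own part~(b) in $\hat X$, noting $K$ is compact, hence complete, hence closed in $\hat X$), so the closure is contained in $R\cup K\sub X$ — but you never say any of this, and it is precisely the content of the claim. (ii) ``$\wb{W[K]}$ has a finite subcover'': $\wb{W[K]}$ is not compact in a general uniform space, and retreating to $(G,\cR)$ with $G$ locally compact (where a suitable compact-valued $W$ would make this work) is not licensed by the hypotheses of the lemma, which is stated for an arbitrary uniform space. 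The paper avoids both issues by proving directly that $\wb{R}$ is complete: it supposes a Cauchy net in $\wb{R}$ with no limit, uses $x_n\to_\cE K$ to pair net terms with nearby points of~$K$, extracts a convergent subnet inside the compact set~$K$, and shows the corresponding subnet of the Cauchy net converges — a contradiction. Your completion route is salvageable, and arguably shorter once the missing ``$\wb{R}^{\hat X}\sub X$'' step is supplied via part~(b), but as written it passes over exactly the point that needs an argument.
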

\begin{proof}
(a) If $V\in\cE$,
choose $W\in\cE$ with $W\circ W\sub V$.
Since $K$ is precompact, there is a finite subset $\Phi\sub K$ such that $K\sub W[\Phi]$.
Hence $W[K]\sub W[W[\Phi]]=(W\circ W)[\Phi]\sub V[\Phi]$.
There exists $N\in\N_0$ such that $x_n\in W[K]$ for all $n\geq N$.
Hence $R:=\{x_n\colon n\in\N_0\}$ is contained in the finite union
\[
V[\Phi] \cup\bigcup_{n=0}^{N-1} V[x_n],
\]
and thus $R$ is precompact.\\[2.3mm]
If, moreover, $K$ is compact, let us show that the closure $\wb{R}$ is complete
in the uniform structure induced by $(X,\cE)$.
Then~$\wb{R}$, being a uniform space which is both precompact
and complete, will be compact (see, e.g., \cite[p.\,22]{Isb}).
Let $y\in \wb{R}$ such that $y\not\in R$; we show that $y\in K$.
Let $(y_j)_{j\in J}$ be a net in~$R$ which converges to~$y$.
Given $U\in \cE$, pick $V\in\cE$ with $V\circ V\sub U$.
There is $N\in\N$ with
\begin{equation}\label{reu1}
x_n\in V[K]\quad\mbox{for all $\, n\geq N$.}
\end{equation}
Since none of $x_0,\ldots, x_{N-1}$ is an accumulation point of $(y_j)_{j\in J}$,
we find $j_V\in J$ such that $y_j\not\in \{x_0,\ldots, x_{N-1}\}$ for all $j\geq j_V$
and thus
\begin{equation}\label{reu2}
y_j\in V[K]\quad\mbox{for all $\,j\geq j_V$.}
\end{equation}
Since $y_j\to y$, we deduce that $y\in \wb{V[K]}\sub V[V[K]]\sub U[K]$.
As $\bigcap_{U\in\cE} U[K]=\wb{K}=K$, we see that $y\in K$.\\[2.3mm]
If $\wb{R}$ was not complete, we could
find a Cauchy net
$(y_j)_{j\in J}$ in $\wb{R}$ which does not converge in~$\wb{R}$,
and which therefore does not have a convergent subnet (nor an accumulation point).
Make $\cE\times J$ a directed set by declaring $(V_1,j_1)\leq (V_2,j_2)$ if
$V_1\supseteq V_2$ and $j_1\leq j_2$.
As before, for $U\in\cE$ and $V\in \cE$ with $V\circ V\sub U$, we can pick
$N\in\N$ with (\ref{reu1})
and find $j_V\in J$ such that (\ref{reu2}) holds.
Hence, with the induced order,
\[
A:=\{(V,j)\in \cE \times J\colon y_j\in V[K]\}
\]
is a directed set, and $(y_j)_{(V,j)\in A}$ is a subnet of $(y_j)_{j\in J}$.
For $a=(V,j)\in A$,
we find $k_a\in K$ such that
\begin{equation}\label{will-revert}
y_j\in V[k_a].
\end{equation}
Since~$K$ is compact, the net $(k_a)_{a\in A}$ has a subnet $(k_{a(i)})_{i\in I}$ which
converges to some $k\in K$.
Write $a(i)=(V(i),j(i))$. 
Given an entourage $U\in \cE$, pick $V\in\cE$ such that $V\circ V\sub U$.
There exists $i_0\in I$ such that
\[
k_{a(i)}\in V[k]
\]
and $V(i)\sub V$ for all $i\ge i_0$ in~$I$.
Hence, using (\ref{will-revert}),
\[
y_{j(i)}\in V(i)[k_{a(i)}]\sub V[k_{a(i)}]\sub V[V[k]]\sub U[k]
\]
for all $i\geq i_0$.
The subnet $(y_{j(i)})_{i\in I}$ of $(y_j)_{j\in J}$ therefore
converges to~$k$,
contrary to our assumption. Thus $(y_j)_{j\in J}$
cannot exist and $\wb{R}$ must be complete.\\[2.3mm]
(b) Assume that a subnet $(x_{n(j)})_{j\in J}$ of the given sequence converges to~$x\in S$.
For all $U\in\cE$ and $V\in\cE$ with $V\circ V\sub U$,
we find $j_0\in J$ such that $x_{n(j)}\in V[x]$ for all $j\geq j_0$ and thus
\[
x\in\wb{V[x]}\sub (V\circ V)[x]\sub U[x]\sub U[S].
\]
Hence $x\in\bigcap_{U\in\cE} U[S]=\wb{S}=S$.\\[2.3mm]
(c) If not $x_n\to_\cE S$, then there exists $V\in \cE$ such that $J:=\{n\in\N_0\colon
x_n\not\in V[S]\}$
is an infinite set. After shrinking~$V$, we may assume that $V[S]$ is open.
Now $P:=\{x_n\colon n\in J\}$ has compact closure~$L$ in~$X$.
Since $P$ is a subset of the closed set $X\setminus V[S]$, also $L\sub X\setminus V[S]$.
Now~$L$ being compact, the subnet $(x_n)_{n\in J}$ of $(x_n)_{n \in\N_0}$
has a subnet which converges to some $c\in L$. Then $c$ is an accumulation point of $(x_n)_{n\in\N_0}$
such that $c\not\in V[S]$ and hence $c\not\in S$.
Together with~(b), this establishes~(c).
\end{proof}
\begin{la}\label{intersect-fam-seq}
Let $(X,\cE)$ be a uniform space
and $(A_j)_{j\in J}$ be a family of closed subsets of~$X$, with $J\not=\emptyset$.
Let $(x_n)_{n\in\N_0}$ be a sequence in~$X$ such that $x_n\to_\cE A_j$
for each $j\in J$.
If $A_{j_0}$ is compact for some $j_0\in J$, then
\begin{equation}\label{modu-inters}
x_n\; \to_\cE \; \bigcap_{j\in J} A_j.\vspace{-.5mm}
\end{equation}
\end{la}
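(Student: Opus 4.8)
The plan is to deduce the result from the accumulation-point characterisation of convergence modulo a closed set provided by Lemma~\ref{specialkaes}, using the single compactness hypothesis to make the sequence bounded. Write $A:=\bigcap_{j\in J}A_j$; being an intersection of closed sets, $A$ is closed.

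First I would observe that, since $x_n\to_\cE A_{j_0}$ with $A_{j_0}$ compact, Lemma~\ref{specialkaes}\,(a) yields that $R:=\{x_n\colon n\in\N_0\}$ is relatively compact, so $(x_n)_{n\in\N_0}$ is bounded. As $A$ is closed and $(x_n)_{n\in\N_0}$ is bounded, Lemma~\ref{specialkaes}\,(c) reduces the desired conclusion~(\ref{modu-inters}) to showing that every accumulation point of $(x_n)_{n\in\N_0}$ lies in~$A$.

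So let $c\in X$ be an accumulation point of $(x_n)_{n\in\N_0}$ (one exists, since $(x_n)_{n\in\N_0}$ is bounded). Fix $j\in J$. Since $x_n\to_\cE A_j$ and $A_j$ is closed, Lemma~\ref{specialkaes}\,(b) gives $c\in A_j$. As $j\in J$ was arbitrary, $c\in\bigcap_{j\in J}A_j=A$. Thus every accumulation point of $(x_n)_{n\in\N_0}$ lies in~$A$, and~(\ref{modu-inters}) follows; in particular $A\not=\emptyset$.

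There is no serious obstacle here: the proof is a short combination of parts (a), (b) and (c) of Lemma~\ref{specialkaes}. The only point worth flagging is why one cannot argue directly with entourages: from the hypotheses one gets, for each $V\in\cE$, that $x_n$ eventually lies in every $V[A_j]$, but $\bigcap_{j\in J}V[A_j]$ can be strictly larger than $V[\bigcap_{j\in J}A_j]$ (and $J$ may be infinite), so this does not immediately give $x_n\in V[A]$ eventually. Passing through accumulation points — which is legitimate precisely because compactness of $A_{j_0}$ forces $(x_n)_{n\in\N_0}$ to be bounded — sidesteps this difficulty.
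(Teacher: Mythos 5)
Your proof is correct and follows exactly the same route as the paper's: apply Lemma~\ref{specialkaes}\,(a) with compactness of $A_{j_0}$ to get boundedness, Lemma~\ref{specialkaes}\,(b) to locate every accumulation point in each $A_j$, hence in the intersection, and Lemma~\ref{specialkaes}\,(c) to conclude. Your closing remark on why a direct entourage argument does not work is a useful observation but not part of the paper's proof.
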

\begin{proof}
As $A_{j_0}$ is compact and $x_n\to_\cE A_{j_0}$,
the sequence $(x_n)_{n\in\N_0}$ is bounded, by Lemma~\ref{specialkaes}\,(a).
If $c\in X$ is an accumulation point of $(x_n)_{n\in\N_0}$,
then $c\in A_j$ for each $j\in J$, by Lemma~\ref{specialkaes}\,(b),
and thus $c\in \bigcap_{j\in J}A_j=:A$.
Hence $x_n\to_\cE A$, by Lemma~\ref{specialkaes}\,(c).
\end{proof}
\begin{la}\label{intersect-famly}
Let $(X,\cE)$ be a uniform space,
$\alpha\colon X\to X$ be a continuous map 
and $(A_j)_{j\in J}$ be a family of closed subsets of~$X$, with $J\not=\emptyset$.
If $A_{j_0}$ is compact for some $j_0\in J$, then
\begin{equation}\label{in-family-con}
\bigcap_{j\in J}\con(\alpha, A_j)=\con\left(\alpha,\bigcap_{j\in J}A_j\right).
\end{equation}
\end{la}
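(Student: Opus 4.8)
The plan is to deduce the identity \eqref{in-family-con} from Lemma~\ref{intersect-fam-seq} by specializing the latter to the orbit sequence $x_n:=\alpha^n(x)$. First I would record the trivial monotonicity of the domain of attraction in its second argument: if $S\sub S'$ are subsets of~$X$, then $V[S]\sub V[S']$ for every $V\in\cE$, so $x_n\to_\cE S$ forces $x_n\to_\cE S'$, whence $\con(\alpha,S)\sub\con(\alpha,S')$. Applying this with $S:=\bigcap_{j\in J}A_j=:A$ and $S':=A_j$ for each $j\in J$ gives $\con(\alpha,A)\sub\con(\alpha,A_j)$ and hence $\con(\alpha,A)\sub\bigcap_{j\in J}\con(\alpha,A_j)$. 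This inclusion uses neither continuity of~$\alpha$ nor any compactness.

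For the reverse inclusion, let $x\in\bigcap_{j\in J}\con(\alpha,A_j)$ and put $x_n:=\alpha^n(x)$ for $n\in\N_0$. By the very definition of $\con(\alpha,A_j)$, the sequence $(x_n)_{n\in\N_0}$ satisfies $x_n\to_\cE A_j$ for every $j\in J$. The sets $A_j$ are closed by hypothesis, $J\neq\emptyset$, and $A_{j_0}$ is compact for some $j_0\in J$, so Lemma~\ref{intersect-fam-seq} applies directly and yields $x_n\to_\cE\bigcap_{j\in J}A_j=A$; that is, $x\in\con(\alpha,A)$. Combining the two inclusions establishes \eqref{in-family-con}.

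I do not expect any real obstacle here: all the work is carried by Lemma~\ref{intersect-fam-seq} (which in turn rests on Lemma~\ref{specialkaes}), and the only things to verify are that the hypotheses of that lemma transfer verbatim — all $A_j$ closed, one of them compact, $J$ nonempty — and the purely formal observation that $x\in\con(\alpha,S)$ is by definition exactly the assertion that the orbit sequence $(\alpha^n(x))_{n\in\N_0}$ converges to~$S$ with respect to~$\cE$. The one point worth stating carefully is that the compactness hypothesis is genuinely needed only for the inclusion "$\bigcap_{j\in J}\con(\alpha,A_j)\sub\con(\alpha,A)$", the opposite inclusion being a formality.
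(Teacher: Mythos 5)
Your proof is correct and follows exactly the same route as the paper: both reduce the nontrivial inclusion to Lemma~\ref{intersect-fam-seq} applied to the orbit sequence $(\alpha^n(x))_{n\in\N_0}$, and both dispose of the opposite inclusion as a triviality. Your additional remarks on monotonicity and on where compactness is actually used are accurate but not a departure from the paper's argument.
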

\begin{proof}
If $x\in\bigcap_{j\in J}\con(\alpha,A_j)$,
then $\alpha^n(x)\to_\cE A_j$ for each $j\in J$,
whence $\alpha^n(x)\to_\cE \bigcap_{j\in J}A_j=:A$ (by Lemma~\ref{intersect-fam-seq})
and thus $x\in \con(\alpha,A)$. Thus $\bigcap_{j\in J}\con(\alpha,A_j)\sub\con(\alpha,A)$.
The converse inclusion trivially holds.
\end{proof}
\begin{la}\label{stable-enough}
Let $(X,\cE)$ be a uniform space, $\alpha\colon X\to X$
be  a continuous map and
$K\sub X$ be a compact subset.
Then
\[
\con(\alpha,K)=\con(\alpha, K_+\cap K_-)
\quad\mbox{and}\quad
\con^-(\alpha,K)=\con^-(\alpha, K_+\cap K_-).
\]
Moreover, every $\alpha$-regressive trajectory which converges to~$K$ in $(X,\cE)$
also converges to~$K_+\cap K_-$.
\end{la}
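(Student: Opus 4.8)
The plan is to reduce both identities to a single statement about accumulation points of a bounded sequence. By Lemma~\ref{good-prepar}\,(d), $K_+\cap K_-$ is a compact, hence closed, subset of~$K$; since $V[K_+\cap K_-]\sub V[K]$ for every $V\in\cE$, the inclusions $\con(\alpha,K_+\cap K_-)\sub\con(\alpha,K)$ and $\con^-(\alpha,K_+\cap K_-)\sub\con^-(\alpha,K)$ are automatic, and every $\alpha$-regressive trajectory converging to~$K_+\cap K_-$ also converges to~$K$. Thus the whole lemma follows once we establish: (i) if $x\in\con(\alpha,K)$, then $\alpha^n(x)\to_\cE K_+\cap K_-$; and (ii) every $\alpha$-regressive trajectory $(x_n)_{n\in\N_0}$ with $x_n\to_\cE K$ satisfies $x_n\to_\cE K_+\cap K_-$. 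Here (ii) is the ``moreover'' assertion, and it also yields $\con^-(\alpha,K)\sub\con^-(\alpha,K_+\cap K_-)$, while (i) yields $\con(\alpha,K)\sub\con(\alpha,K_+\cap K_-)$.

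Both (i) and (ii) I would prove by the same argument. Write $(x_n)_{n\in\N_0}$ for the relevant sequence (with $x_n=\alpha^n(x)$ in case~(i)); in either case consecutive terms are linked by~$\alpha$: one has $\alpha(x_n)=x_{n+1}$ for all~$n$ in case~(i), and $\alpha(x_{n+1})=x_n$ for all~$n$ in case~(ii). Since $K$ is compact and $x_n\to_\cE K$, Lemma~\ref{specialkaes}\,(a) shows $(x_n)_{n\in\N_0}$ is bounded, so its set $C$ of accumulation points is a nonempty compact subset of $\wb{\{x_n\colon n\in\N_0\}}$, and $C\sub K$ by Lemma~\ref{specialkaes}\,(b). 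The key claim is $\alpha(C)=C$: continuity of~$\alpha$ and the linking relation show $\alpha$ carries an accumulation point of $(x_n)$ to an accumulation point of $(x_n)$, giving $\alpha(C)\sub C$; for the reverse inclusion, if $c=\lim_j x_{n(j)}$ along a subnet with $n(j)\to\infty$, then the neighbouring terms (one index step away) lie in the compact set $\wb{\{x_n\colon n\in\N_0\}}$, so a subnet of them converges to some $c'\in C$ with $\alpha(c')=c$, whence $c\in\alpha(C)$.

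Granting $\alpha(C)=C$ with $C$ compact and $C\sub K$, every $c\in C$ lies on a bi-infinite $\alpha$-trajectory inside~$C$: its forward orbit $(\alpha^k(c))_{k\in\N_0}$ stays in~$C$ because $\alpha(C)\sub C$, and, as $\alpha|_C\colon C\to C$ is surjective, dependent choice produces $c_{-1},c_{-2},\ldots\in C$ with $\alpha(c_{-1})=c$ and $\alpha(c_{-k-1})=c_{-k}$. Splicing gives a family $(c_n)_{n\in\Z}$ in~$K$ with $c_0=c$ and $\alpha(c_n)=c_{n+1}$, so $c\in K_+\cap K_-$ by Lemma~\ref{good-prepar}\,(d); thus $C\sub K_+\cap K_-$. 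Since $(x_n)$ is bounded, $K_+\cap K_-$ is closed, and all accumulation points of $(x_n)$ lie in $K_+\cap K_-$, Lemma~\ref{specialkaes}\,(c) gives $x_n\to_\cE K_+\cap K_-$, which is precisely (i) and~(ii).

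The main obstacle is the claim $\alpha(C)=C$, and specifically the inclusion $C\sub\alpha(C)$: one must produce $\alpha$-preimages inside the compact set~$C$, and this is exactly where the compactness of~$K$ --- through boundedness of $(x_n)$ and Lemma~\ref{specialkaes}\,(a) --- is indispensable; the analogue fails for non-compact $K$. A minor technical point is that subnets should be chosen with indices tending to infinity, so that limits of index-shifted terms are genuine accumulation points of $(x_n)$; this is routine. The remaining steps are bookkeeping with entourages and invocations of the already-proved Lemmas~\ref{good-prepar} and~\ref{specialkaes}.
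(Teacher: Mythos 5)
Your proof is correct, and it reorganizes the argument in a way that differs from the paper's. The paper handles the forward-orbit case and the regressive-trajectory case by two separate subnet constructions: in each case it verifies, for a given accumulation point~$c$, that $c\in K_-$ (by showing $\alpha^m(c)$ is an accumulation point for every~$m$) and that $c\in K_{m,\alpha}$ for every~$m$ (by building, via compactness of $K^{m+1}$ or of index-shifted subnets, a chain $x_0=c,x_1,\ldots,x_m\in K$ with $\alpha(x_j)=x_{j-1}$), and then intersecting. Your route instead isolates the single structural claim $\alpha(C)=C$ for the set $C$ of accumulation points, proves it uniformly in both cases from the linking relation and boundedness, and then reads $C\sub K_+\cap K_-$ straight off Lemma~\ref{good-prepar}\,(d) via dependent choice inside the compact $\alpha$-stable set~$C$. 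The paper's version is more hands-on and effectively reconstructs, case by case, the finite approximations $K_{m,\alpha}$ from Lemma~\ref{good-prepar}\,(b); your version trades those repeated constructions for one clean invariance claim plus the bi-infinite-trajectory characterization of $K_+\cap K_-$. Both are valid; yours is shorter and unifies the two directions, at the small cost of an explicit appeal to dependent choice for the regressive branch (which the paper sidesteps by extracting the whole $(m{+}1)$-tuple at once from $L^{m+1}$). One point you flag as needing care --- choosing subnets with indices tending to infinity so that shifted limits remain accumulation points --- is indeed the place where sloppiness would cause trouble, and you handle it correctly.
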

\begin{proof}
To prove the first assertion,
we show $\con(\alpha,K)\sub \con(\alpha, K_+\cap K_-)$
(as the other inclusion is trivial).
If $x\in \con(\alpha, K)$, then $\alpha^n(x)\to e$ modulo~$K$,
whence $R:=\{\alpha^n(x) \colon n\in\N_0\}$ is relatively compact in~$X$ (see Lemma~\ref{specialkaes}\,(a)).
Thus, by Lemma~\ref{specialkaes}\,(c), we shall have $x\in\con(\alpha, K_+\cap K_-)$
if we can show that each accumulation point~$c$ of $(\alpha^n(x))_{n\in\N_0}$
is an element of $K_+\cap K_-$.
By Lemma~\ref{specialkaes}\,(b), we know that each accumulation point of $(\alpha^n(x))_{n\in\N_0}$
is an element of~$K$.
Let $(\alpha^{n_a}(x))_{a\in A}$ be a subnet which converges to~$c$.
Given $m\in \N_0$, the subnet
\[
(\alpha^{n_a+m}(x))_{a\in A}=(\alpha^m(\alpha^{n_a}(x)))_{a\in A}
\]
of $(\alpha^n(x))_{n\in\N_0}$
converges to $\alpha^m(c)$, by continuity of $\alpha^m$.
Thus $\alpha^m(c)$ is an accumulation point of $(\alpha^n(x))_{n\in\N_0}$ and thus $\alpha^m(c)\in K$.
Hence $c\in \bigcap_{m\in\N_0}\alpha^{-m}(K)=K_-$.
To see that also $c\in K_+$ (which shows our first claims),
let us show that $c\in K_{m,\alpha}$ for each $m\in\N_0$.
There is $a_m\in A$ such that $n_a\geq m$ for all $a\geq a_m$.
Then $(\alpha^{n_a-m}(x))_{a\geq a_m}$ is a subnet of $(\alpha^n(x))_{n\in\N_0}$
and hence has a subnet converging to some $b\in K$.
Applying $\alpha^j$ to the entries of the latter subnet (for given $j\in\{0,\ldots,m\}$), we obtain a convergent net with limit $\alpha^j(b)=:x_{m-j}$,
which has to be an element of~$K$ as the convergent net is a subnet of $(\alpha^n(x))_{n\in\N_0}$.
For $j=m$, the convergent net just constructed is a subnet of $(\alpha^{n_a}(x))_{a\geq a_m}$;
its limit $x_0=\alpha^m(b)$ therefore has to coincide with~$c$.
Thus $x_0,x_1,\ldots, x_m\in K$, $x_0=c$ and
\[
\alpha(x_j)=\alpha(\alpha^{m-j}(b))=\alpha^{m-(j-1)}(b)=x_{j-1}
\]
for all $j\in\{1,\ldots, m\}$, whence $c\in K_{m,\alpha}$ (by Lemma~\ref{good-prepar}\,(b)).\\[2.3mm]
To complete the proof, it suffices to prove the final statement as it entails that $\con^-(\alpha,K)\sub \con^-(\alpha, K_+\cap K_-)$ (and the other inclusion trivially holds). Thus, let $(x_n)_{n\in\N_0}$ be an $\alpha$-regressive
trajectory in~$G$ such that $x_n\to e$ modulo~$K$.
By Lemma~\ref{specialkaes}\,(a),
the closure $\smash{L:=\wb{\{x_n\colon n\in\N_0\}}\sub X}$
is compact. Hence, by Lemma~\ref{specialkaes}\,(c),
$(x_n)_{n\in \N_0}$
will converge to~$e$ modulo $K_+\cap K_-$ if we can show that each accumulation point $c$
of $(x_n)_{n\in\N_0}$ is an element of $K_+\cap K_-$.
By Lemma~\ref{specialkaes}\,(b), we have $c\in K$.
Now $\alpha(c)$ is an accumulation
point of $(\alpha(x_n))_{n\in\N_0}$, by continuity of~$\alpha$,
and hence also an accumulation point of $(\alpha(x_{n+1}))_{n\in\N_0}=(x_n)_{n\in\N_0}$.
Thus, the set~$C$ of accumulation points of $(x_n)_{n\in\N_0}$ is $\alpha$-invariant, i.e., $\alpha(C)\sub C$.
Hence $C\sub K_-$.\\[2.3mm]
For $c$ as before and $m\in\N$, let $(x_{n_\beta})_{\beta\in B}$
be a subnet of $(x_n)_{n\in\N_0}$ which converges to~$c$.
Then $t_\beta:=(x_{n_\beta},x_{n_\beta+1},\ldots, x_{n_\beta+m})_{\beta\in B}$ is a net
in the compact topological space $L^{m+1}$ and hence has a convergent subnet,
with limit $(c_0,c_1,\ldots, c_m)\in L^{m+1}$, say.
Then $c_0=c$ and $\alpha(c_j)=c_{j-1}$ for all $j\in\{1,\ldots, m\}$
(as the entries of each $t_\beta$ have this property and~$\alpha$
is continuous). Thus $c\in K_{m,\alpha}$ for each $m\in \N_0$ and thus
\[
c\in \bigcap_{m\in\N_0}K_{m,\alpha}=K_+.
\]
Hence $c\in K_+\cap K_-$, as required.
\end{proof}
We close the section with some straightforward observations,
for later use.
\begin{la}\label{imageconv}
Let $f\colon X\to Y$
be a uniformly continuous map between uniform spaces $(X,\cE)$
and $(Y,\cF)$
Let $S\sub X$ be a subset.
\begin{itemize}
\item[\rm(a)]
If $(x_n)_{n\in\N_0}$
is a sequence in~$X$ such that $x_n\to_\cE S$,
then $f(x_n)\to_\cF f(S)$.
\item[\rm(b)]
If $\alpha\colon X\to X$ and $\beta\colon Y\to Y$ are continuous
maps such that
$\beta\circ f=f\circ\alpha$, then $f(\con(\alpha, S))\sub\con(\beta,f(S))$.
\end{itemize}
\end{la}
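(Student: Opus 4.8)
The plan is to deduce both parts directly from the definition of uniform continuity together with an elementary manipulation of entourages; no deep input is required.

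For part~(a) I would fix $W\in\cF$. By uniform continuity of $f$ there is $V\in\cE$ with $(f(x),f(x'))\in W$ whenever $(x,x')\in V$; equivalently, $f(V[s])\sub W[f(s)]$ for every $s\in X$, and hence $f(V[S])=\bigcup_{s\in S}f(V[s])\sub\bigcup_{s\in S}W[f(s)]=W[f(S)]$. Since $x_n\to_\cE S$, there is $N\in\N_0$ with $x_n\in V[S]$ for all $n\geq N$, and therefore $f(x_n)\in f(V[S])\sub W[f(S)]$ for all $n\geq N$. As $W\in\cF$ was arbitrary, this means $f(x_n)\to_\cF f(S)$.

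For part~(b) the key point is that the intertwining identity $\beta\circ f=f\circ\alpha$ iterates: an immediate induction on $n$ gives $\beta^n\circ f=f\circ\alpha^n$ for all $n\in\N_0$. Now let $x\in\con(\alpha,S)$, so that the sequence $(\alpha^n(x))_{n\in\N_0}$ satisfies $\alpha^n(x)\to_\cE S$. Applying part~(a) to this sequence yields $f(\alpha^n(x))\to_\cF f(S)$; but $f(\alpha^n(x))=\beta^n(f(x))$ by the iterated identity, so $\beta^n(f(x))\to_\cF f(S)$, i.e.\ $f(x)\in\con(\beta,f(S))$. Since $x$ was an arbitrary element of $\con(\alpha,S)$, this gives $f(\con(\alpha,S))\sub\con(\beta,f(S))$.

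The argument is routine and I do not foresee a genuine obstacle; the only places that warrant a line of care are the translation of uniform continuity of $f$ into the set-level inclusion $f(V[S])\sub W[f(S)]$ and the inductive verification of $\beta^n\circ f=f\circ\alpha^n$. In particular, continuity of $\alpha$ and $\beta$ is not actually used — only uniform continuity of $f$ and the relation $\beta\circ f=f\circ\alpha$ enter the proof.
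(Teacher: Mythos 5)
Your proof is correct and follows essentially the same route as the paper: in part (a) both arguments pass from an entourage $W\in\cF$ to the pulled-back entourage $V\in\cE$ and push forward the eventual containment $x_n\in V[S]$ to $f(x_n)\in W[f(S)]$, and in part (b) both use $\beta^n\circ f=f\circ\alpha^n$ together with part (a). Your closing observation that continuity of $\alpha$ and $\beta$ is not actually invoked is accurate.
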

\begin{proof}
(a) If $U\in\cF$, then $V:=(f\times f)^{-1}(U)\in\cE$, whence there exists $N\in\N_0$
such that $x_n\in V[S]$ for all $n\geq N$.
Thus, for such~$n$, there is $s_n\in S$ such that $x_n\in V[s_n]$
and thus $(s_n,x_n)\in V$. Hence $(f(s_n),f(x_n))\in U$ and thus $f(x_n)\in U[f(s_n)]\sub U[f(S)]$.
So $f(x_n)\to_\cF f(S)$.\\[2.3mm]
(b) For $x\in \con(\alpha, S)$, we have $\beta^n(f(x))=f(\alpha^n(x))\to_\cF f(S)$, by~(a).
\end{proof}
The next lemma is obvious.
\begin{la}\label{subuni}
Let $(X,\cE)$ be a uniform space, $S\sub Y\sub X$ be subsets
and $(x_n)_{n\in\N_0}$ be a sequence in~$Y$ such that $x_n\to_\cE S$.
Let $\cF$ be the uniform structure induced by~$\cE$ on~$Y$.
Then also $x_n\to_\cF S$.
\end{la}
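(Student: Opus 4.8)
The plan is simply to unwind the two notions of convergence; the whole point is that $S\sub Y$, so no pair is lost on passing to the subspace. First I would recall that the induced uniform structure~$\cF$ on~$Y$ is the filter generated by the sets $V\cap(Y\times Y)$ with $V\in\cE$, and that for $W\in\cF$ and $T\sub Y$ the set $W[T]$, formed in $(Y,\cF)$, equals $\{y\in Y\colon(t,y)\in W\text{ for some }t\in T\}$; this is automatically a subset of~$Y$ since $W\sub Y\times Y$.

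Next, given an arbitrary $W\in\cF$, I would choose $V\in\cE$ with $V\cap(Y\times Y)\sub W$. By hypothesis $x_n\to_\cE S$, so there is $N\in\N_0$ with $x_n\in V[S]$ for all $n\geq N$. For such~$n$ one fixes $s\in S$ with $(s,x_n)\in V$; since $s\in S\sub Y$ and $x_n\in Y$, it follows that $(s,x_n)\in V\cap(Y\times Y)\sub W$, hence $x_n\in W[S]$. As $W\in\cF$ was arbitrary, $x_n\to_\cF S$.

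I do not expect any genuine obstacle here, which is why the authors flag the lemma as obvious: the only real "step" is the bookkeeping with the basis $\{V\cap(Y\times Y)\colon V\in\cE\}$ of the subspace uniformity, and the hypothesis $S\sub Y$ (rather than merely $S\sub X$) is used exactly once, in the inclusion $(s,x_n)\in V\cap(Y\times Y)$, where it is indispensable. One could alternatively try to invoke Lemma~\ref{imageconv}\,(a) for the inclusion map $(Y,\cF)\to(X,\cE)$, but that uniformly continuous map pushes convergence in the wrong direction, so the short direct argument above is the natural route.
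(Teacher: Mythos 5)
Your argument is correct. The paper labels this lemma as obvious and supplies no proof, so there is no written argument to compare against; the proof you give — choosing $V\in\cE$ with $V\cap(Y\times Y)\sub W$, then using $S\sub Y$ to conclude that the witnessing pair $(s,x_n)$ lies in $Y\times Y$ and hence in $W$ — is precisely the expected bookkeeping, and your aside that Lemma~\ref{imageconv}\,(a) pushes convergence only in the opposite direction is also accurate.
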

\begin{la}\label{sammelsub}
Let $G$ be a topological group, $S\sub G$ be a subgroup and $K\sub G$.
\begin{itemize}
\item[\rm(a)]
If $K\sub S$ and $S$ is $\alpha$-invariant for an endomorphism~$\alpha$ of~$G$,
then $\con(\alpha,K)\cap S=\con(\alpha|_S,K)$.
\item[\rm(b)]
If $S$ is open and $(x_n)_{n\in\N_0}$ is a sequence in~$S$
such that $x_n\to_\cR K$ in~$G$, then $x_n\to_\cR K\cap S$ in~$G$ $($and in~$S)$.
\end{itemize}
\end{la}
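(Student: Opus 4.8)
The plan is to reduce both parts to one elementary observation: for a subgroup $S\sub G$ with the subspace topology, the right uniform structure $\cR_S$ of the topological group~$S$ is precisely the uniform structure that $\cR$ induces on~$S$. Indeed, for an identity neighbourhood $V\sub G$ the trace $V\cap S$ is an identity neighbourhood of~$S$, and for $x,y\in S$ one has $y\in Vx$ if and only if $yx^{-1}\in V\cap S$ (as $yx^{-1}\in S$ automatically); hence the sets $\wt{V}\cap(S\times S)$ form a basis of entourages for~$\cR_S$. I would record this first and then use Lemma~\ref{subuni} freely.

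For part~(a): since $S$ is $\alpha$-invariant, for $x\in S$ the entire $\alpha$-orbit stays in~$S$ and $\alpha^n(x)=(\alpha|_S)^n(x)$ for all~$n$; as $K\sub S$, Lemma~\ref{subuni} converts $\alpha^n(x)\to_\cR K$ in~$G$ into convergence in $(S,\cR_S)$, giving $\con(\alpha,K)\cap S\sub\con(\alpha|_S,K)$. For the reverse inclusion I would unwind the definitions: if $(\alpha|_S)^n(x)\to_{\cR_S}K$, then for every identity neighbourhood $V\sub G$ the set $\wt{V}\cap(S\times S)$ is an entourage of~$\cR_S$, so there is $N\in\N_0$ with $\alpha^n(x)\in(\wt{V}\cap(S\times S))[K]\sub\wt{V}[K]=VK$ for all $n\ge N$; thus $\alpha^n(x)\to_\cR K$ in~$G$, and together with $x\in S$ this yields $x\in\con(\alpha,K)\cap S$.

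For part~(b): given an identity neighbourhood $V\sub G$, replace~$V$ by $V\cap S$, which is still an identity neighbourhood of~$G$ because~$S$ is open; by hypothesis there is $N\in\N_0$ with $x_n\in(V\cap S)K$ for all $n\ge N$, and for such~$n$, writing $x_n=vk$ with $v\in V\cap S\sub S$ and $k\in K$ forces $k=v^{-1}x_n\in S$ since~$S$ is a subgroup and $x_n\in S$; hence $k\in K\cap S$ and $x_n\in V(K\cap S)$. This shows $x_n\to_\cR K\cap S$ in~$G$, and the version in~$S$ follows from Lemma~\ref{subuni} (using the opening remark) because $x_n$ and $K\cap S$ lie in~$S$.

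Both assertions are mostly bookkeeping. The only steps that call for a little attention are checking at the outset that passing to the subgroup~$S$ does not alter the relevant uniform structure, and, in part~(b), first shrinking~$V$ into the open subgroup~$S$, which is exactly what pushes the $K$-coset representatives of the~$x_n$ back inside~$S$. I do not anticipate a genuine obstacle.
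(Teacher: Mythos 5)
Your proof is correct and proceeds essentially as the paper does: part~(a) combines Lemma~\ref{subuni} with the inclusion $S\hookrightarrow G$, and part~(b) shrinks the identity neighbourhood into~$S$ and observes that $K$-coset representatives of the~$x_n$ land in $K\cap S$. The only cosmetic difference is that the paper invokes Lemma~\ref{imageconv} for the inclusion $\con(\alpha|_S,K)\sub\con(\alpha,K)$ where you unwind the definition by hand, and it leaves tacit the identification of $\cR_S$ with the restriction of~$\cR$ that you spell out.
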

\begin{proof}
(a) Using the inclusion map $f\colon S\to G$ and $\beta:=\alpha|_S$, Lemma~\ref{imageconv}
yields $\con(\alpha|_S,K)\sub\con(\alpha,K)\cap S$. If, conversely, $x\in\con(\alpha,K)\cap S$,
then
$\alpha^n(x)\to_\cR K$ in~$G$ and hence in~$S$ (by Lemma~\ref{subuni}),
whence $x\in\con(\alpha|_S,K)$.\\[2.3mm]
(b) If $V\sub G$ is an identity neighbourhood such that $V\sub S$,
then $x_n\in VK$ if and only if $x_n\in V(K\cap S)$.
Hence $x_n\to_\cR K$ in~$G$ if and only if $x_n\to_\cR (K\cap S)$ in~$G$.
By Lemma~\ref{subuni}, then also $x_n\to_\cR (K\cap S)$ in~$S$.
\end{proof}
\section{Proof of Theorem~A for metrizable {\boldmath$G$}}\label{sec1}
In this section we prove Theorem~A in the metrizable case.
We include two lemmas the first one of which concerns
how tidiness above passes to subgroups.
\begin{la}\label{simple-obs}
Let $\alpha$ be an endomorphism of a totally disconnected, locally compact group~$G$.
Let $H\sub G$ be an $\alpha$-invariant closed subgroup
and $W\sub G$ be a compact open subgroup.
Then there exists $\ell_0\in\N_0$ such that
\[
\left(\bigcap_{j=0}^\ell\alpha^{-j}(W)\right)\cap H\vspace{-1mm}
\]
is tidy above for $\alpha|_H$, for all $\ell\geq\ell_0$.
\end{la}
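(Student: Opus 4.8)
The plan is to work with the compact open subgroups $W_\ell := \bigcap_{j=0}^\ell \alpha^{-j}(W)$ of $G$ and their intersections $V_\ell := W_\ell \cap H$ with $H$, which are compact open subgroups of $H$ (since $H$ is closed and $W$ is open). Recall from the basic concepts section that for a compact open subgroup $V$ of $H$, tidiness above for $\alpha|_H$ means $V = V_+ V_-$, where $V_- = \bigcap_{n \in \N_0} (\alpha|_H)^{-n}(V)$ and $V_+ = \bigcap_{n \in \N_0} V_{n,\alpha|_H}$ (with $V_{n,\alpha|_H}$ as in Definition~\ref{defKpm} applied to $X = H$). So the goal is to produce $\ell_0$ such that $V_\ell = (V_\ell)_+ (V_\ell)_-$ for all $\ell \geq \ell_0$.

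First I would identify the relevant pieces. For a compact open subgroup $U$ of $G$, one has $U_- = \bigcap_{n} \alpha^{-n}(U)$, and the key fact recalled from~\cite{End} is that $W_\ell = \bigcap_{j=0}^\ell \alpha^{-j}(W)$ is tidy above for $\alpha$ once $\ell$ is large, say $\ell \geq \ell_1$; moreover for such $\ell$ the group $(W_\ell)_-$ stabilizes and equals $W_- := \bigcap_{n \in \N_0} \alpha^{-n}(W)$ (this is the standard "$U_-$ is reached after finitely many steps" phenomenon, since $(W_\ell)_- = \bigcap_{n} \alpha^{-n}(W_\ell)$ and the descending chain of $W_\ell$'s has intersection $W_-$, with the relevant compactness forcing stabilization of the $-$-parts). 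Intersecting with $H$: since $\alpha(H) \subseteq H$ we have $(\alpha|_H)^{-n}(V \cap H) = \alpha^{-n}(V) \cap H$, so $(V_\ell)_- = (W_\ell)_- \cap H = W_- \cap H$ for all $\ell \geq \ell_1$, and this equals $(W \cap H)_-$ computed in $H$. Similarly, since an $\alpha$-regressive trajectory inside $W_\ell \cap H$ is the same thing as an $\alpha|_H$-regressive trajectory inside $W_\ell$ that happens to stay in $H$, one checks $(V_\ell)_+ \supseteq$ the corresponding $+$-part and analyzes its stabilization.

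The main work — and the expected obstacle — is the "tidy above" equality $V_\ell = (V_\ell)_+(V_\ell)_-$ itself, i.e., transferring tidiness above from $G$ to $H$. Here I would mimic the argument in~\cite{End} that shows $\bigcap_{j=0}^\ell \alpha^{-j}(U)$ becomes tidy above for large $\ell$, but carried out internally in $H$: the point is that $(V_\ell)_-$ (being $W_- \cap H$, eventually constant in $\ell$) and the $+$-part together must fill up $V_\ell$ once $\ell$ is large enough that the finite approximations $W_\ell \cap H$ have "caught" all of $W_- \cap H$ in the first factor and all of $(W \cap H)_+$ in the second. Concretely, given $x \in V_\ell$, one wants to write $x = ab$ with $a$ having a bounded (relatively compact) $\alpha|_H$-regressive trajectory in $H$ and $b \in W_- \cap H$; this is obtained by a compactness/accumulation-point argument: for each $n$ one finds such a decomposition modulo $\alpha^{-n}(W)$ using that $W_\ell$ is tidy above in $G$ (so $x = a'b'$ with $a' \in (W_\ell)_+$, $b' \in (W_\ell)_-$) and then correcting $a',b'$ to lie in $H$ using that $x \in H$, $b' \in W_- \cap H$ eventually, hence $a' = x(b')^{-1} \in H$ automatically. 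So in fact $a' \in W_\ell \cap \alpha(W_\ell) \cap \cdots \cap H$ for the appropriate range, and letting $\ell \to \infty$ (equivalently choosing $\ell$ past the stabilization thresholds $\ell_1$ of both $(W_\ell)_-$ and of the finite pieces of $(W_\ell)_+$) gives $a' \in (W\cap H)_+$. Setting $\ell_0 := \ell_1$ then does the job. I expect the only genuinely delicate point to be making the claim that the $+$-part's finite approximations also stabilize after finitely many steps of $\ell$ — but this follows from compactness of $W$ together with the structure theory of tidy-above subgroups in~\cite{End} (the chain $(W_\ell)_+ = \bigcap_n (W_\ell)_{n,\alpha}$ interacts with $\ell$ in a controlled, eventually constant way), and then intersecting with the closed subgroup $H$ preserves this.
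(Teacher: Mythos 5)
You actually have the key observation in your own hands but don't leverage it. You note that $(\alpha|_H)^{-n}(V\cap H)=\alpha^{-n}(V)\cap H$ because $\alpha(H)\sub H$; applied with $V=W$ this gives, at once,
\[
\Bigl(\bigcap_{j=0}^\ell\alpha^{-j}(W)\Bigr)\cap H \;=\; \bigcap_{j=0}^\ell(\alpha|_H)^{-j}(W\cap H),
\]
which is \emph{exactly} the iterated-preimage construction inside the totally disconnected locally compact group $H$ for the endomorphism $\alpha|_H$ and the compact open subgroup $W\cap H$ of $H$. The statement then follows immediately from \cite[Proposition~3]{End} applied with $H$ in the role of the ambient group --- no further argument is needed. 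That is the paper's entire proof.

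Instead of recognizing this, you try to reprove tidiness-above from scratch ``internally in $H$'', and that part of the sketch has a genuine gap. The decomposition argument is only hinted at: you claim that for $x\in V_\ell$ one can write $x=ab$ with $a\in (V_\ell)_+$, $b\in (V_\ell)_-$, and you propose to get it by using tidiness above of $W_\ell$ in $G$ and then ``correcting $a',b'$ to lie in $H$''. But that correction is never carried out; the observation that $a'=x(b')^{-1}\in H$ requires $b'\in H$, which you have not established from the mere fact that $b'\in (W_\ell)_-$. You also assert that ``the $+$-part's finite approximations also stabilize after finitely many steps of $\ell$'' and defer this to ``compactness together with the structure theory of tidy-above subgroups'', acknowledging yourself that this is the delicate point --- but you do not prove it, and in fact the mechanism of \cite[Proposition~3]{End} is not about the pieces $U_\pm$ stabilizing separately but about the product filling up. So, as written, the proposal contains the right identity but an incomplete argument where a direct citation of the already-available result (applied to $H$ rather than $G$) would close the proof.
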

\begin{proof}
	Since $\alpha(H)\sub H$, we have
	\begin{eqnarray}
	H\cap \bigcap_{j=0}^\ell\alpha^{-j}(W) &=& \{w\in H\colon
	(\forall j=0,\ldots, \ell)\; \alpha^j(w)\in W\}\notag \\
	&=& \{w\in H\colon
	(\forall j=0,\ldots, \ell)\; \alpha^j(w)\in H\cap W\}\notag \\
	&=&\bigcap_{j=0}^\ell(\alpha|_H)^{-j}(H\cap W).\label{thisgroup}
	\end{eqnarray}
The group in (\ref{thisgroup}) is tidy above for~$\alpha|_H$ by \cite[Proposition~3]{End} for large $\ell$.
\end{proof}
To prove Theorem~A,
we shall use the following lemma
(which parallels the case of automorphisms formulated in \cite[Lemma~3.10]{BaW}):
\begin{la}\label{BaWla}
Let $\alpha$ be an endomorphism of a totally disconnected, locally compact group~$G$ and $H\sub G$ be a closed subgroup.
Assume that
\begin{itemize}
\item[\rm(i)] $\alpha(H)=H$; or
\item[\rm(ii)] $H$ is compact.
\end{itemize}
Let $x\in\con(\alpha,H)$ and $O\sub G$ be a compact open subgroup.
Then there exists $h\in H$ and $N\in\N_0$ such that $\alpha^n(xh)\in O$ for all $n\geq N$ and $\alpha^n(xh)\to e$
modulo $(O\cap H)_+\cap (O\cap H)_-$ $($which is a compact subgroup of $O \cap H)$.
\end{la}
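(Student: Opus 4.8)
The plan is to mimic the automorphism argument of \cite[Lemma~3.10]{BaW}, but replacing the single $\alpha$-orbit of~$x$ in $G/H$ (which for automorphisms descends to a genuine orbit in the quotient) with a careful bookkeeping of cosets modulo~$H$. First I would fix a compact open subgroup $W\sub G$ and, using Lemma~\ref{simple-obs}, pass to $O':=\bigcap_{j=0}^\ell\alpha^{-j}(W)$ for $\ell$ large enough that $O'\cap H$ is tidy above for $\alpha|_H$; shrinking $O$ is harmless since any $h$ and $N$ found for a smaller group also work (after increasing $N$) for the original, provided we are careful that $(O\cap H)_+\cap(O\cap H)_-$ behaves well --- this is where I would invoke Lemma~\ref{stable-enough} and Lemma~\ref{good-prepar}\,(d) to identify the target subgroup $(O\cap H)_+\cap(O\cap H)_-$ as the maximal $\alpha|_{O\cap H}$-stable compact piece. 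So without loss of generality $O\cap H$ is tidy above for $\alpha|_H$.

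Next, since $x\in\con(\alpha,H)$ we have $\alpha^n(x)\to e$ modulo $H$, i.e.\ $\alpha^n(x)\to_\cR H$; by Lemma~\ref{specialkaes}\,(a), in case (ii) where $H$ is compact the sequence $(\alpha^n(x))_{n\in\N_0}$ is bounded, and in case (i) I would instead work modulo a compact subgroup of $H$ obtained as follows. Because $O\cap H$ is compact open in~$H$ and $\alpha(H)=H$ (resp.\ $H$ compact), there is $N_0$ with $\alpha^n(x)\in O(O\cap H)=OH\cap(\text{bounded set})$... more precisely, there is $N_0\in\N_0$ and a sequence $h_n\in H$ ($n\ge N_0$) with $\alpha^n(x)h_n\in O$. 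The task is to choose the $h_n$ coherently, i.e.\ so that $h_n=\alpha^{n-N_0}(h_{N_0})$ modulo the "error" absorbed by tidiness above of $O\cap H$. This is the heart of the argument: using $O\cap H=(O\cap H)_+(O\cap H)_-$ and $\alpha((O\cap H)_-)\sub(O\cap H)_-$, one shows inductively that the discrepancy between consecutive choices can be pushed into $(O\cap H)_-$, so that after correcting $x$ by a single $h:=h_{N_0}^{-1}\in H$ (or rather a suitable element of $H$ built from the $h_n$) we get $\alpha^n(xh)\in O$ for all $n\ge N$, with the residual cosets lying in $(O\cap H)_-$, hence eventually --- again by $\alpha$-stability and Lemma~\ref{stable-enough} --- converging to $e$ modulo $(O\cap H)_+\cap(O\cap H)_-$.

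The main obstacle I anticipate is exactly the coherent choice of the correction element $h\in H$: for an automorphism one simply tracks $\wb{\alpha}^{\,n}(\wb{x})$ in $G/H$ and lifts once, but an endomorphism need not be injective on $H$, so $\alpha^{n-N_0}$ applied to a lift of $h_{N_0}$ need not reproduce $h_n$, and conversely a given coset may have many preimages. I would handle this by fixing the compact open subgroup $O\cap H$ first, working in the quotient $H/((O\cap H)_+\cap(O\cap H)_-)$ or directly with the compact group $O\cap H$ and its tidy-above decomposition, and exploiting that $(O\cap H)_-=\bigcap_n(\alpha|_H)^{-n}(O\cap H)$ is precisely the set of $h\in O\cap H$ whose forward orbit stays in $O\cap H$; boundedness (Lemma~\ref{specialkaes}) then lets me extract a convergent subnet of partial correction products and define $h$ as a limit, with Lemma~\ref{onekey} or a direct compactness argument guaranteeing the limiting cosets are all in the $\alpha$-stable core. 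The final clause, that $(O\cap H)_+\cap(O\cap H)_-$ is a compact subgroup of $O\cap H$, is immediate from Lemma~\ref{good-prepar}\,(d) applied to $X=H$, $K=O\cap H$.
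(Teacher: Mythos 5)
Your overall blueprint matches the paper's: shrink $O$ via Lemma~\ref{simple-obs} so that $O\cap H$ is tidy above, build a sequence of successive corrections $(h_i)$ in $H$, pass to an accumulation point by compactness, and finish with Lemma~\ref{intersect-famly} and Lemma~\ref{stable-enough}. But two pieces of the mechanism are missing or misdescribed, and without them the recursion does not close. First, you arrange $\alpha^n(x)h_n\in O$ directly and then try to make the $h_n$ cohere; but applying $\alpha$ to an element of $O$ lands in $\alpha(O)$, which need not lie in $O$, so the constraint at step $n$ gives no control at step $n+1$. The paper sidesteps this by fixing a further compact open $V\subseteq O$ with $\alpha(V)\subseteq O$ and tracking the sharper condition $\alpha^{N+j}(xh_i)\in V(O\cap H)$; applying $\alpha$ then gives $\alpha(V(O\cap H))\subseteq O\,\alpha(O\cap H)$, which — using tidiness above together with $\alpha((O\cap H)_-)\subseteq (O\cap H)_-$ — lands in $O\,\alpha((O\cap H)_+)$. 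Without the $V$ device this containment fails and the induction stalls.

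Second, you describe the inductive step as "pushing the discrepancy into $(O\cap H)_-$"; that is the wrong group and the wrong direction. The discrepancy at stage $k+1$ is an element $\ell_{k+1}\in\alpha\bigl((O\cap H)_+\bigr)$, and the correction must not disturb the earlier constraints $\alpha^{N+j}(xh_{k+1})\in V(O\cap H)$ for $j\leq k$. The paper achieves this by choosing an $\alpha$-regressive trajectory $(g_n)_{n\geq 0}$ for $\ell_{k+1}$ with $g_n\in O\cap H$ for $n\geq 1$ — this exists precisely because $\ell_{k+1}\in\alpha\bigl((O\cap H)_+\bigr)$ — and setting $h_{k+1}:=h_k g_{N+k+1}$; then $\alpha^{N+j}(g_{N+k+1})=g_{k+1-j}\in O\cap H$ for $j\leq k$ and equals $\ell_{k+1}$ exactly at $j=k+1$. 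The regressive trajectory through $(O\cap H)_+$ is the crux of the endomorphism case; $\alpha$-invariance of $(O\cap H)_-$ only serves to absorb the "minus" factor when decomposing $\alpha(O\cap H)$. You should also reduce case (ii) to case (i) outright by replacing $H$ with the $\alpha$-stable compact group $H_+\cap H_-$ (Lemma~\ref{stable-enough}) rather than handling them in parallel, since the recursion explicitly uses $\alpha^N(H)=H$ to choose the initial $h_0$ with $\alpha^N(xh_0)\in V$.
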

\begin{proof}
If $H$ is compact, then $\con(\alpha,H)=\con(\alpha,H_+\cap H_-)$,
where $H_+\cap H_-$ is compact and $\alpha(H_+\cap H_-)=H_+\cap H_-$
(see Lemma~\ref{good-prepar}\,(d) and Lemma~\ref{stable-enough}). After replacing $H$ with $H_+\cap H_-$, we may therefore
assume that $\alpha(H)=H$. Thus, it suffices to discuss the case~(i).\\[2.3mm]
Assume $\alpha(H)=H$ now.
After shrinking~$O$ if necessary, we may assume that
\[
O\cap H=(O\cap H)_- (O\cap H)_+,
\]
by Lemma~\ref{simple-obs}. Let $V\sub O$ be a compact open subgroup
such that $\alpha(V)\sub O$.
Since $\alpha^n(x)\to e$ modulo~$H$, there exists $N\in\N_0$ such that
$\alpha^n(x)\in VH$ for all $n\geq N$.
As $\alpha^N(H)=H$,
we find $h_0\in H$
with $\alpha^N(xh_0)\in V$.
We now complete $h_0$
to a sequence $(h_i)_{i\in\N_0}$
of elements in $h_0(O\cap H)$
such that
\[
\alpha^{N+j}(xh_i)\in V(O\cap H)\quad\mbox{for $0\leq j\leq i$.}
\]
Suppose that $h_0,\ldots, h_k$ have already been constructed to satisfy this condition.
Using that $\alpha(O\cap H)\sub (O\cap H)\alpha((O\cap H)_+)$, we obtain
\[
\alpha^{N+k+1}(xh_k)\in\alpha(V(O\cap H))\sub O\alpha(O\cap H)\sub O\alpha((O\cap H)_+).
\]
Choose $\ell_{k+1}\in \alpha((O\cap H)_+)$ such that $\alpha^{N+k+1}(xh_k)\ell_{k+1}\in O$.
There exists an $\alpha$-regressive
trajectory $(g_n)_{n\in\N_0}$ for $\ell_{k+1}$ such that $g_n\in O\cap H$ for all $n\geq 1$. We define
\[
h_{k+1}:=h_k g_{N+k+1}.
\]
Then $h_{k+1}\in h_0(O\cap H)$ and $\alpha^{N+k+1}(xh_{k+1})\in O$.
Moreover, $\alpha^{N+k+1}(xh_{k+1})=\alpha^{N+k+1}(x)\alpha^{N+k+1}(h_{k+1})\in VH$
and thus
\[
\alpha^{N+k+1}(xh_{k+1})\in O\cap VH=V(O\cap H).
\]
For all $j\in\{0,\ldots,k\}$, we have
\[
\alpha^{N+j}(xh_{k+1})=\alpha^{N+j}(xh_k)\alpha^{N+j}(g_{N+k+1})=\alpha^{N+j}(xh_k)g_{k-j+1}\in V(O\cap H)
\]
as well, whence $h_{k+1}$ is as desired. This completes the recursive construction.\\[2.3mm]
Let~$h$ be an accumulation point of the sequence $(h_i)_{i\in\N_0}$ in $h_0(O\cap H)\sub H$.
Then $\alpha^n(xh)$ is in the compact set $V(O\cap H)$ for all $n\geq N$, and so $\alpha^n(xh)\in O$.
Thus $xh\in \con(\alpha,O)$.
As also $xh\in \con(\alpha,H)$, Lemma~\ref{intersect-famly}
shows that $xh\in\con(\alpha,O \cap H)$.
Hence $\alpha^n(xh)\to e$ modulo $(O\cap H)_+\cap (O\cap H)_-$,
by Lemma~\ref{stable-enough}.
\end{proof}
\begin{numba}\label{birkhoff-kakutani}
For the proofs of Theorem~A and Theorem~B we recall that a topological group is metrizable if and only if it is Hausdorff and first-countable, see for instance \cite[(8.3)]{HaR}.
\end{numba}
\begin{proof}
(Theorem~A, for metrizable $G$). The inclusion $\con(\alpha)H\sub\con(\alpha, H)$ is trivial.
To establish the converse inclusion,
we adapt the proof of \cite[Theorem~3.8]{BaW}:
Since $G$ is assumed metrizable, using \ref{birkhoff-kakutani} we find a sequence
$O_1\supseteq O_2\supseteq\cdots$ of compact open subgroups
of~$G$ which form a basis of identity neighbourhoods.
Set $H_0:=H$ and
\[
H_j:=(O_j\cap H_{j-1})_+\cap (O_j\cap H_{j-1})_-\sub O_j \cap H_{j-1}
\;\;\mbox{for $j\in\N$.}
\]
Let $x_0\in\con(\alpha,H)$.
We claim that, for $j\in\N$, there exist elements $x_j \in x_{j-1}H_{j-1}$
and  integers $N_j\in\N_0$ such that
\[
\alpha^n(x_j)\to e\;\; \mbox{modulo $\,H_j$}
\]
and $\alpha^n(x_j)\in O_j$ for all $n\geq N_j$.\\[2.3mm]
Lemma~\ref{BaWla} (with $x_0$ in place of~$x$ and $O_1$ in place of~$O$)
provides $h_0\in H_0=H$ and $N_1\in\N_0$
such that $x_1:=x_0 h_0$ satisfies
$\alpha^n(x_1)\in O_1$ for all $n\geq N_1$ and $\alpha^n(x_1)\to e$ modulo $(O_1\cap H)_+\cap
(O_1\cap H)_- =H_1$.
This shows the case~$j=1$.\\[2.3mm]
If $x_1,\ldots, x_j$ have been constructed, we can apply Lemma~\ref{BaWla} with $x_j$ in place of~$x$,
$O_{j+1}$ in place of~$O$ and
$H_j$ in place of~$H$. We obtain an $h_j \in H_j$ and $N_{j+1}\in\N_0$
such that $x_{j+1}:=x_j h_j$ satisfies
$\alpha^n(x_{j+1})\in O_{j+1}$ for all $n\geq N_{j+1}$ and $\alpha^n(x_{j+1})\to e$ modulo $(O_{j+1}\cap H_j)_+\cap
(O_{j+1}\cap H_j)_- =H_{j+1}$. This completes the recursive construction.\\[2.3mm]
Note that
\[
x_i\in x_jH_j\cdots H_{i-1}\sub x_j O_j\cdots O_{i-1}=x_jO_j
\]
for all $i\geq j\geq 1$. Hence $(x_j)_{j\in\N_0}$ is a Cauchy sequence in~$G$
(for the left uniform structure) and hence convergent,
as~$G$ is locally compact and therefore complete by \cite[Paragraph 3.3, Corollary 1]{BouTop}. Let
\[
y:=\lim_{n\to\infty} x_n\quad\mbox{and}\quad h:=x^{-1}_0y.
\]
Since $x_j\in x_0H_0\cdots H_{j-1}\sub x_0H$ for all $j\in \N$, passing to the limit we deduce
that $y\in x_0H$ and thus $h\in H$.\\[2.3mm]
To see that $y\in\con(\alpha)$,
let $W\sub G$ be an identity neighbourhood. After shrinking~$W$,
we may assume that $W=O_j$ for some $j\in\N$. As
\[
x_i\in x_jH_j\cdots H_{i-1}\sub x_jH_j
\]
for all $i\geq j$, we deduce that $y\in x_jH_j$ and thus
\[
\alpha^n(y)\in \alpha^n(x_j)\alpha^n(H_j)=\alpha^n(x_j)H_j\sub O_j O_j=O_j=W
\]
for all $n\geq N_j$. Hence $y\in \con(\alpha)$, which completes the proof.
\end{proof}
\section{Proof of Theorem~B for metrizable {\boldmath$G$}}\label{sec4}
In this section we prove Theorem~B for metrizable $G$, using
the following analogue of Lemma~\ref{BaWla}.
\begin{la}\label{lemenable}
Let $G$ be a totally disconnected, locally compact group.
Let $\alpha\colon G\to G$ be an endomorphism
and $H\sub G$ be an $\alpha$-invariant closed subgroup.
Let $O\sub G$ be a compact open subgroup,
$x\in \con^-(\alpha,H)$
and $(x_n)_{n\in\N_0}$ be an $\alpha$-regressive trajectory for~$x$
such that $x_n\to e$ modulo~$H$.
Then there exist $N\in \N_0$ and an $\alpha$-regressive
trajectory $(y_n)_{n\in\N_0}$ such that
\[
y_n\in O \quad\mbox{for all $\,n\geq N$,}
\]
$y_n\in x_n H$ for all $n\in \N_0$,
and $y_n\to e$ modulo $(H\cap O)_+\cap (H\cap O)_-$.
\end{la}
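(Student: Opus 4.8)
The plan is to imitate the proof of Lemma~\ref{BaWla}, but with one structural change forced by the non-uniqueness of $\alpha$-preimages: instead of multiplying the single point $x$ by a correcting element $h\in H$ and following its forward $\alpha$-orbit, I will multiply the whole $\alpha$-regressive trajectory $(x_n)$ by a correcting $\alpha$-regressive trajectory $(c_n)$ in $H$, and I will obtain $(c_n)$ not from a single group element but as an accumulation point of finite-stage data via the double-sequence device of Lemma~\ref{onekey}. First, some routine preparation. Since $(O'\cap H)_\pm\subseteq(O\cap H)_\pm$ whenever $O'\subseteq O$, the conclusion for a smaller compact open subgroup implies it for $O$ (convergence modulo a smaller closed subgroup is stronger); so, replacing $O$ by $\bigcap_{j=0}^\ell\alpha^{-j}(O)$ for large $\ell$ and using Lemma~\ref{simple-obs}, I may assume $O\cap H=(O\cap H)_+(O\cap H)_-$ is tidy above for $\alpha|_H$, so that $\alpha((O\cap H)_-)\subseteq(O\cap H)_-$, $(O\cap H)_+\subseteq\alpha((O\cap H)_+)$, and $\alpha(O\cap H)\subseteq(O\cap H)\,\alpha((O\cap H)_+)$. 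Fix a compact open subgroup $V\subseteq O$ with $\alpha(V)\subseteq O$, and, using $x_n\to e$ modulo~$H$, an $N\in\N_0$ with $x_n\in VH$ for all $n\ge N$; recall $V(O\cap H)=VH\cap O$. It suffices to produce an $\alpha$-regressive trajectory $(y_n)_{n\in\N_0}$ with $y_n\in x_nH$ for all $n$ and $y_n\in O$ for all $n\ge N$; the last assertion of the lemma then follows automatically.

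For the core, for each $m\ge N$ I would construct, by a recursion on the number of controlled coordinates that runs parallel to the recursion in the proof of Lemma~\ref{BaWla}, an $\alpha$-regressive trajectory $(c^{(m)}_n)_{n\in\N_0}$ in $H$ for which $y^{(m)}_n:=x_nc^{(m)}_n$ lies in $x_nH$ for every $n$ and in $V(O\cap H)$ for $N\le n\le m$. Whenever a controlled coordinate has to be pushed one step further back, the inclusion $\alpha(V(O\cap H))\subseteq O\,\alpha((O\cap H)_+)$ lets one choose a correction in $\alpha((O\cap H)_+)$; by $(O\cap H)_+\subseteq\alpha((O\cap H)_+)$ this correction is the initial term of an $\alpha$-regressive trajectory lying in $O\cap H$, and multiplying the $c^{(m)}_n$ by this trajectory preserves both the relation $\alpha(c^{(m)}_n)=c^{(m)}_{n-1}$ and the control already achieved on the earlier coordinates, exactly as in Lemma~\ref{BaWla}. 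With the unspecified tails of the trajectories held in a fixed compact set (harmless for the accumulation point, by the remark following Lemma~\ref{onekey}), one verifies that $\{y^{(m)}_n:m\ge N\}$ is relatively compact for each $n$.

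Then I would pass to the limit: regard $(y^{(m)})_{m\ge N}$ as a net in a product of compact sets, pick an accumulation point $y=(y_n)_n$; by Lemma~\ref{onekey} it is an $\alpha$-regressive trajectory, and closedness of $x_nH$ and of $V(O\cap H)\subseteq O$ gives $y_n\in x_nH$ for all $n$ and $y_n\in O$ for all $n\ge N$. Finally, for the convergence statement: since $y_n\in x_nH$ and $x_n\to e$ modulo~$H$ we get $y_n\to_\cR H$, and since $y_n\in O$ for $n\ge N$ we trivially have $y_n\to_\cR O$; as $O$ is compact, Lemma~\ref{intersect-fam-seq} yields $y_n\to_\cR O\cap H$, and then the last assertion of Lemma~\ref{stable-enough} gives $y_n\to e$ modulo $(H\cap O)_+\cap(H\cap O)_-$, as required.

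The step I expect to be the real obstacle is the recursion producing the finite stages $(c^{(m)}_n)$. In \cite[Lemma~3.10]{BaW} the object followed is a single deterministic forward orbit attached to one group element, whereas here each stage is an entire $\alpha$-regressive trajectory in which preimages are chosen along the way; consequently the bookkeeping --- which coordinate to move next, how a correcting regressive trajectory interacts with the coordinates already controlled, and why the tails can be kept in a compact set --- has to be set up with some care, and it is precisely here that tidiness above of $O\cap H$, hence the ability to supply $\alpha$-regressive trajectories inside $O\cap H$, enters decisively. (A tempting shortcut --- choosing, for each $m$, a single element of $x_mH$ whose first $m-N$ forward $\alpha$-iterates all lie in $O$ --- is what one really has to justify, and it is exactly this point, rather than the limiting argument, that is not formal.)
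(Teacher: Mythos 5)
Your overall architecture is correct and coincides with the paper's: the reduction to $O\cap H$ tidy above for $\alpha|_H$ via Lemma~\ref{simple-obs}, the choice of $V$ with $\alpha(V)\subseteq O$ and of $N$ so that $x_n\in VH$ for $n\geq N$, the construction of controlled finite stages from which Lemma~\ref{onekey} manufactures an $\alpha$-regressive trajectory, and the final appeal to Lemmas~\ref{intersect-fam-seq} and~\ref{stable-enough}. The gap is exactly where you flag it: the recursion producing the finite stages. What you label the ``tempting shortcut'' --- choosing, for each $j$, a single $z_j\in x_{N+j}H$ with $\alpha^i(z_j)\in V(O\cap H)$ for $i\in\{0,\dots,j\}$ --- is in fact the paper's construction, and it is the heart of the proof; you have not supplied it.

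Beyond being incomplete, the sketch you do give has two concrete defects. First, asking at each stage $m$ for an entire $\alpha$-regressive trajectory $(c^{(m)}_n)_{n\in\N_0}$ in $H$ is too strong: it requires $c^{(m)}_n\in\alpha(H)$ at every step, which is not available for a general $\alpha$-invariant closed $H$ (the hypothesis permits $\alpha(H)\subsetneq H$). Your fallback of ``unspecified tails in a compact set'' is the right instinct, but then $(c^{(m)}_n)_n$ is not a trajectory, contradicting what you claim to construct; all one actually needs is the finite segment, Lemma~\ref{onekey} then doing the rest. Second, the bookkeeping does not transfer from Lemma~\ref{BaWla} the way you propose. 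There the new coordinate is a forward iterate, and a correction $\ell\in\alpha((O\cap H)_+)$ is propagated back to $h_k$ through its regressive trajectory; here the new coordinate is a preimage. The paper writes $z_j=\alpha(v)\alpha(b)a$ with $x_{N+j+1}=vb$, decomposes $\alpha(b)a=h_+h_-$ with $h_\pm\in(O\cap H)_\pm$, solves $\alpha(h)=h_+$ with $h\in(O\cap H)_+$ (this is where $(O\cap H)_+\subseteq\alpha((O\cap H)_+)$ enters, not via any regressive trajectory), and sets $z_{j+1}:=vh$. Then $\alpha(z_{j+1})=z_j(h_-)^{-1}$ does \emph{not} equal $z_j$, and the disturbance $\alpha^i(z_{j+1})=\alpha^{i-1}(z_j)\alpha^{i-1}(h_-)^{-1}$ to the earlier coordinates is by forward iterates of $h_-^{-1}$, which stay in $O\cap H$ because $\alpha((O\cap H)_-)\subseteq(O\cap H)_-$; the half of tidiness-above at work is $(O\cap H)_-$, not $\alpha((O\cap H)_+)$. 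Accepting $\alpha(z_{j+1})\neq z_j$ and letting Lemma~\ref{onekey} reconcile the successive stages is precisely the idea that makes the recursion close, and it is the step your proposal leaves open.
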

\begin{proof}
By Lemma~\ref{simple-obs},
after replacing~$O$ with $\bigcap_{j=0}^\ell\alpha^{-j}(O)$ for some $\ell\in\N_0$,
we may assume that $O\cap H$ is tidy above for $\alpha|_H$, i.e.,
\begin{equation}\label{Wtidyabove}
H\cap O=(H\cap O)_+(H\cap O)_-.
\end{equation}
We choose a compact open subgroup $V\sub O$ such that $\alpha(V)\sub O$.
Since $x_n\to e$ modulo~$H$, there exists $N\in\N_0$ such that
\[
x_n\in VH \quad \mbox{for all $n\geq N$.}
\]
We show by induction that there exist elements $z_j\in x_{N+j}H$ for $j\in \N_0$
with
\[
\alpha^i(z_j)\in V(H\cap O)\quad\mbox{for all $\,i\in\{0, 1,\ldots, j\}$}.
\]
Since $x_N\in VH$, we have $x_N\in z_0 H$
for some $z_0\in V$. Then $z_0$ is as required.\\[2.3mm]
If $j\in\N_0$ and $z_j$ has already been found, then $z_j\in x_{N+j}H$
implies that
\[
z_j=x_{N+j}a
\]
for some $a\in H$.
Since $x_{N+j+1}\in VH$, we can write
\[
x_{N+j+1}= v b
\]
with $v\in V$ and $b\in H$. Thus
\[
z_j=x_{N+j}a=\alpha(x_{N+j+1})a=\alpha(v)\alpha(b)a,
\]
with $\alpha(b)\in\alpha(H)\sub H$.
Since $z_j\in V(H\cap O)\sub O$ and $\alpha(v)\in \alpha(V)\sub O$,
we conclude that $\alpha(b)a\in H\cap O$ and thus
\[
\alpha(b)a=h_+h_-
\]
for suitable $h_+\in (H\cap O)_+$ and $h_-\in (H\cap O)_-$.
Since $\alpha((H\cap O)_+)\supseteq (H\cap O)_+$,
we find $h\in (H\cap O)_+$ such that
$h_+=\alpha(h)$.
Thus
\[
z_{j+1}:=vh\in V(H\cap O)_+\sub V(H\cap O)
\]
and $\alpha(z_{j+1})=\alpha(v)h_+=z_j (h_-)^{-1}$, entailing that for $i\in \{1,\ldots, j+1\}$
\begin{eqnarray*}
\alpha^i(z_{j+1})&=& \alpha^{i-1}(\alpha(z_{j+1}))=\alpha^{i-1}(z_j)\alpha^{i-1}(h_-)^{-1}\\
& \in&  V(H\cap O)(H\cap O)_-=V(H \cap O).
\end{eqnarray*}
This completes the induction.\\[2.3mm]
For $j\in \N_0$, define the sequence $g^{(j)}=(g^{(j)}_i)_{i\in\N_0}$ in $V(H\cap O)$
via
\[
g^{(j)}_i:=\left\{
\begin{array}{rl}
\alpha^{j-i}(z_j) &\mbox{if $i\in\{0,\ldots,j\}$;}\\
e & \mbox{if $i>j$.}
\end{array}
\right.
\]
Then $(g^{(j)})_{j\in \N_0}$ is a sequence in the compact space $(V(H\cap O))^{\N_0}$
and hence has a convergent subnet $(g^{(j_a)})_{a\in A}$,
whose limit $f=(f_i)_{i\in\N_0}$ is an $\alpha$-regressive trajectory by Lemma~\ref{onekey}\,(a).
Then also
\[
y_n:=\left\{
\begin{array}{rl}
f_{n-N}&\mbox{if $n\geq N$}\\
\alpha^{N-n}(f_0)& \mbox{if $n\in\{0,1,\ldots, N\}$}
\end{array}
\right.
\]
is an $\alpha$-regressive trajectory.
Let $i\in\N_0$; since
\[
g^{(j)}_i=\alpha^{j-i}(z_j)\in x_{N+i} H
\]
for all $j\geq i$ and $x_{N+i}H$ is closed, we deduce that
\[
y_{N+i}=f_i\in  x_{N+i} H.
\]
In particular, $f_0=x_N h_0$ for some $h_0\in H$ and thus also
\[
y_n=\alpha^{N-n}(f_N)=\alpha^{N-n}(x_N)\alpha^{N-n}(h_0)=x_n\alpha^{N-n}(h_0)\in x_n H
\]
for $n\in\{0,1,\ldots, N\}$.\\[2.3mm]
Since $y_n\in V(H\cap O)\sub O$ for $n\geq N$, we have $y_n\to e$ modulo $O$.
As $y_n\in x_n H$ and $x_n\to e$ modulo~$H$,
also $y_n\to e$ modulo~$H$.
Hence $y_n\to e$ modulo $H\cap O$, by Lemma~\ref{intersect-fam-seq},
and thus $y_n\to e$ modulo $(H\cap O)_+\cap (H\cap O)_-$, by
Lemma~\ref{stable-enough}.
\end{proof}
\begin{proof}
(Theorem~B). The inclusion $\con^-(\alpha)H\sub \con^-(\alpha,H)$ is trivial.
Let $y_0\in \con^-(\alpha,H)$.
Thus $y_0$ has an $\alpha$-regressive trajectory $(y^{(0)}_n)_{n\in\N_0}$ with
\[
y^{(0)}_n\to e\;\;  \mbox{modulo~$\,H$.}
\]
Let $(O^{(j)})_{j\in \N}$
be a sequence of compact open subgroups $O^{(1)}\supseteq O^{(2)}\supseteq\cdots $ of~$G$ which forms
a basis of identity neighbourhoods. Set $H_0:=H$ and
\[
H_j:=(O^{(j)}\cap H_{j-1})_+\cap (O^{(j)}\cap H_{j-1})_-\sub O^{(j)} \cap H_{j-1}
\;\;\mbox{for $j\in\N$.}
\]
We claim that there exist integers $N_j\in\N_0$ for $j\in\N$ and $\alpha$-regressive trajectories
$y^{(j)}=(y^{(j)}_n)_{n\in\N_0}$
such that
\[
y^{(j)}_n\to e\;\; \mbox{modulo $\,H_j$,}
\]
$y^{(j)}_n\in W^{(j)}$ for all $n\geq N_j$, and $y^{(j)}_n\in y^{(j-1)}_n H_{j-1}$ for all $n\in\N_0$.\\[2.3mm]
Lemma~\ref{lemenable} (with $y_0$ in place of~$x$ and $O^{(1)}$ in place of~$O$)
provides $N_1\in\N_0$
and an $\alpha$-regressive trajectory $(y^{(1)}_n)_{n\in\N_0}$
with $y^{(1)}_n\in y^{(0)}_n H$ for all $n\in\N_0$,
\[
y^{(1)}_n\in O^{(1)}\quad\mbox{for all $\,n\geq N_1$,}
\]
and $y^{(1)}\to e$ modulo $(O^{(1)}\cap H)_+\cap
(O^{(1)}\cap H)_- =H_1$.
This shows the case~$j=1$.\\[2.3mm]
If $N_1,\ldots, N_j$ and $\alpha$-regressive trajectories $(y^{(1)}_n)_{n\in\N_0},\ldots
(y^{(j)}_n)_{n\in\N_0}$
have been constructed, we can apply Lemma~\ref{lemenable} with $y^{(j)}_0$ in place of~$x$,
$(y^{(j)}_n)_{n\in\N_0}$ in place of $(x_n)_{n\in\N_0}$,
$O^{(j+1)}$ in place of~$O$ and
$H_j$
in place of~$H$.
We obtain an integer $N_{j+1}\in\N_0$
and an $\alpha$-regressive trajectory $y^{(j+1)}=(y^{(j+1)}_n)_{n\in\N_0}$
such that $y^{(j+1)}_n\in O^{(j+1)}$ for all $n\geq N_{j+1}$, and moreover
$y^{(j+1)}_n\in y^{(j)}_n H_j$ for all $n\in \N_0$ and $y^{(j+1)}_n\to e$ modulo $(O^{(j+1)}\cap H_j)_+\cap
(O^{(j+1)}\cap H_j)_-=H_{j+1}$. This completes the recursive construction. \\[2.3mm]
We have $y^{(j+1)}_n\in y^{(j)}_n H_j\sub y^{(j)}_n O^{(j)}\sub y^{(j)}_n O^{(1)}$ for all $n\in\N_0$ and $j\in\N$.
Hence $y^{(j)}_n\in y^{(1)}_n O^{(1)}$ for all $j\in\N$ and $n\in\N_0$, showing that $y^{(j)}\in \prod_{n\in\N_0} y^{(1)}_n O^{(1)}$
for all $j\in\N$. The product being compact and metrizable, we find a convergent
subsequence of $(y^{(j)})_{j\in\N}$, with limit $z=(z_n)_{n\in\N_0}\in \prod_{n\in\N_0}y^{(1)}_n O^{(1)}$,
say. As each $y^{(j)}$ is an $\alpha$-regressive trajectory and $\alpha$ is continuous,
we see that also $(z_n)_{n\in\N_0}$ is an $\alpha$-regressive trajectory.
Since $y^{(j)}_0\in y^{(0)}_0H=y_0H$ for each $j\in \N$ and $y_0 H$ is closed,
we have $z_0\in y_0H$.\\[2.3mm]
If $j\in\N$ and $n\in\N_0$, then $y^{(i)}_n \in y^{(j)}_n O^{(j)}$ for all $i\geq j$, entailing that also
$z_n\in y^{(j)}_n O^{(j)}$. Hence $z_n\in O^{(j)}$ for all $n\geq N_j$, whence $z_n\to e$
and thus $z_0\in \con^-(\alpha)$.
\end{proof}
\section{Metrizable quotient groups}\label{sec2}
This section prepares the discussion of endomorphisms of non-metrizable groups.
As in the case of automorphisms treated in~\cite{Jaw},
we consider metric quotients of such as a tool.
\begin{la}\label{when-quot-metr}
Let $G$ be a locally compact group and $N$ a compact normal subgroup.
Then the following conditions are equivalent:
\begin{itemize}
\item[\rm(a)]
$G/N$ is metrizable.
\item[\rm(b)]
There exists a sequence $(V_n)_{n\in\N}$ of open identity neighbourhoods
$V_n\sub G$ such that $\bigcap_{n\in \N} V_n=N$.
\end{itemize}
\end{la}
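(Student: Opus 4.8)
The plan is to reduce metrizability of $G/N$ to first‑countability via the Birkhoff–Kakutani theorem (recalled in~\ref{birkhoff-kakutani}): since $N$ is compact in the Hausdorff group~$G$ it is closed, so $G/N$ is a Hausdorff topological group, and it suffices to decide whether $G/N$ is first‑countable. Throughout, let $q\colon G\to G/N$ be the quotient homomorphism; it is continuous, open and surjective, and since $N$ is normal one has $q^{-1}(q(W))=WN$ for every $W\sub G$.

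For the implication (a)$\Rightarrow$(b) I would take a countable basis $(\mathcal{W}_n)_{n\in\N}$ of open neighbourhoods of the neutral element of $G/N$ and set $V_n:=q^{-1}(\mathcal{W}_n)$. Each $V_n$ is then an open identity neighbourhood of~$G$ containing $N=q^{-1}(\{eN\})$, and since $G/N$ is Hausdorff every point other than $eN$ is excluded by some basic neighbourhood, whence $\bigcap_{n\in\N}\mathcal{W}_n=\{eN\}$ and therefore $\bigcap_{n\in\N}V_n=q^{-1}(\{eN\})=N$.

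For (b)$\Rightarrow$(a) the goal is to produce a countable basis of identity neighbourhoods of $G/N$. Starting from a sequence $(V_n)$ with $\bigcap_n V_n=N$ (note $N\sub V_n$ for all~$n$), I would first normalise: fix an open identity neighbourhood $\Omega\supseteq N$ with compact closure (cover the compact set~$N$ by finitely many relatively compact open sets), and replace $V_n$ by $\Omega\cap V_1\cap\cdots\cap V_n$, so that without loss of generality $(V_n)$ is decreasing, $N\sub V_n\sub\Omega$, each $\wb{V_n}$ is compact, and still $\bigcap_n V_n=N$. Next I would construct recursively a decreasing sequence of open identity neighbourhoods $O_n$ with $N\sub O_n$, $\wb{O_n}\sub V_n\cap O_{n-1}$ and $\wb{O_n}$ compact. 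Here the key input is the standard fact that for every open identity neighbourhood $V$ with $N\sub V$ there is an open identity neighbourhood $U$, with $\wb{U}$ compact, such that $\wb{U}N\sub V$ (a compactness argument using that~$N$ is a compact subgroup); one then takes $O_n:=U_nN$, which is open, contains~$N$, and satisfies $\wb{O_n}=\wb{U_nN}=\wb{U_n}N$ (by compactness of~$N$), hence is compact and contained in $V_n\cap O_{n-1}$. Setting $K_n:=\wb{O_n}$ yields a decreasing sequence of compact sets, hence a filter basis, with $\bigcap_n K_n=\bigcap_n O_n\sub\bigcap_n V_n=N\sub\bigcap_n K_n$, so $\bigcap_n K_n=N$.

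Finally I would verify that $\{q(O_n)\colon n\in\N\}$ is a countable basis of identity neighbourhoods in $G/N$: each $q(O_n)$ is open, and given any open identity neighbourhood $\mathcal{W}$ of $G/N$, the set $W:=q^{-1}(\mathcal{W})$ is an open identity neighbourhood of~$G$ with $WN=W$, hence $N\sub W$; since $\bigcap_n K_n=N\sub W$ and the $K_n$ form a filter basis of compact sets, Lemma~\ref{la:filter_basis} gives $n$ with $K_n\sub W$, so $O_n\sub W$ and $q(O_n)\sub q(W)=\mathcal{W}$. Thus $G/N$ is first‑countable and therefore metrizable. The main obstacle is the recursive construction in the third step: a generic small identity neighbourhood of~$G$ need not contain~$N$, so one genuinely has to exploit compactness of~$N$ to obtain neighbourhoods of~$N$ with compact closure that shrink down to~$N$; once these are available, Lemma~\ref{la:filter_basis} does the rest and the remaining verifications are routine.
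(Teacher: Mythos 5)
Your proof is correct and uses the same key ingredients as the paper: the tube lemma to shrink the $V_n$ to sets of the form $K_nN$ with $K_n$ compact, a filter-basis-of-compacta argument (Lemma~\ref{la:filter_basis}) to get a countable neighbourhood basis in $G/N$, and then Birkhoff--Kakutani. Your recursive construction of nested open $O_n$ with compact closures is somewhat more elaborate than the paper's (which simply intersects the chosen compact $K_n$ to make them decreasing and uses $q(K_n)$ directly as a basis of compact identity neighbourhoods), but the route is essentially the same.
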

\begin{proof}
Let $q\colon G\to G/N$, $x\mto xN$ be the canonical quotient map, which is continuous and open.
(a)$\impl$(b):
If $G/N$ is metrizable, then $G/N$ has a countable basis $\{W_n\colon n\in\N\}$
of open identity neighbourhoods, and the sets $V_n:=q^{-1}(W_n)$ ($n\in\mathbb{N}$) are as desired.

(b)$\impl$(a):
Let $(V_n)_{n\in\N}$ be a sequence of open identity neighbourhoods in~$G$
with intersection~$N$.
As~$N$ is compact, the open neighbourhood $V_n$ of~$N$
contains a uniform neighbourhood of the form $K_n N$,
for some identity neighbourhood $K_n\sub G$.
Since $G$ is locally compact, $K_n$ can be chosen compact.
We may also assume that $K_1\supseteq K_2\supseteq\cdots$.
Thus
\[
\cU:=\{q(K_n)\colon n\in \N\}
\]
is a filter basis of compact identity neighbourhoods in $G/N$.
Now
$\bigcap_{n\in\N} q(K_n)$ $=\!\{e\}$
(as the sets $q^{-1}(q(K_n))=K_nN$ have intersection~$N$),
entailing that $\cU$ converges to $\{e\}$.
Thus $\cU$ is a countable basis of identity neighbourhoods in~$G/N$
and thus $G/N$ is metrizable by \ref{birkhoff-kakutani}.
\end{proof}
\begin{la}\label{kernels-directed}
Let $G$ be a locally compact group and $\cN$ be a countable set of closed normal subgroups
$N\sub G$ such that $G/N$ is metrizable.
Set $M:=\bigcap_{N\in\cN}N$. Then also $G/M$ is metrizable.
\end{la}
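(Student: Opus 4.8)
The plan is to exhibit $G/M$ as a continuous injective image of a metrizable group and then upgrade this to genuine metrizability by exploiting local compactness.

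First I would form the diagonal homomorphism
$\psi\colon G\to\prod_{N\in\cN}G/N$, $g\mapsto(gN)_{N\in\cN}$.
Each coordinate is the canonical quotient map $q_N\colon G\to G/N$, which is continuous, so $\psi$ is a continuous homomorphism; its kernel is exactly $\bigcap_{N\in\cN}N=M$ (a closed normal subgroup of $G$, being an intersection of closed normal subgroups). Since $\cN$ is countable and each $G/N$ is metrizable, the product $\prod_{N\in\cN}G/N$ is metrizable. By the universal property of the quotient $q_M\colon G\to G/M$, the map $\psi$ factors as $\psi=\bar\psi\circ q_M$ with $\bar\psi\colon G/M\to\prod_{N\in\cN}G/N$ a continuous, injective homomorphism.

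Next I would use that $M$ is a closed normal subgroup of the locally compact group $G$, so $G/M$ is locally compact and Hausdorff; pick a compact identity neighbourhood $K\sub G/M$. The restriction $\bar\psi|_K\colon K\to\prod_{N\in\cN}G/N$ is a continuous injection of a compact space into a Hausdorff space, hence a homeomorphism onto its image $\bar\psi(K)$. As $\bar\psi(K)$ is a subspace of a metrizable space, it is metrizable, and therefore so is $K$. In particular $K$ is first countable, so the identity of $G/M$ has a countable neighbourhood basis inside $K$; since $K$ is a neighbourhood of the identity in $G/M$, that same family is a countable neighbourhood basis in $G/M$. Thus $G/M$ is a Hausdorff, first-countable topological group, hence metrizable by \ref{birkhoff-kakutani}.

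The only step that is not purely formal is the passage from "$G/M$ embeds continuously and injectively into a metrizable group" to "$G/M$ is metrizable": a continuous injective homomorphism need not be a topological embedding, so one cannot simply read off metrizability from the ambient product. The fix, and the sole place where local compactness of $G$ (equivalently, of $G/M$) is used, is precisely the restriction to a compact identity neighbourhood $K$, on which a continuous injection into a Hausdorff space is automatically a homeomorphism onto its image. Everything else — continuity and the kernel computation for $\psi$, local compactness and Hausdorffness of $G/M$, metrizability of countable products, and the factorisation through $q_M$ — is routine.
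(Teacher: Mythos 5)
Your proof is correct and follows essentially the same route as the paper's: both form the diagonal homomorphism into the countable (hence metrizable) product $\prod_{N\in\cN}G/N$, factor through $G/M$, and then use the fact that the induced continuous injection restricts to a homeomorphism on a compact identity neighbourhood (the paper takes $q_M(K)$ for $K$ a compact identity neighbourhood of $G$; you take $K\sub G/M$ directly, a trivial variant) to conclude first-countability and hence metrizability via Birkhoff--Kakutani.
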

\begin{proof}
For $N\in\cN$, let $q_N\colon G\to G/N$ be the canonical quotient map.
Also, let $q_M\colon G\to G/M$ be the canonical quotient map.
Then
\[
\phi :=(q_N)_{N\in\cN}\colon G\to\prod_{N\in\cN}G/N,\quad x\mto (xN)_{N\in\cN}
\]
is a continuous homomorphism to a metrizable topological group. As the kernel is $\ker(\phi)=M$,
the induced continuous homomorphism
\[
\wb{\phi}\colon G/M\to\prod_{N\in\cN}G/N
\]
(determined by $\wb{\phi}\circ q_M=\phi$) is injective.
If $K\sub G$ is a compact identity neighbourhood in~$G$, then $q_M(K)$
is a compact identity neighbourhood in $G/M$. Since $\wb{\phi}(q_M(K))$ is metrizable
and $\wb{\phi}$ restricts to a homeomorphism from the compact set $q_M(K)$ onto $\wb{\phi}(q_M(K))$,
we see that $q_M(K)$ is metrizable. Since $q_M(K)$ is an identity neighbourhood in $G/M$, this topological group
has a countable
basis of identity neighbourhoods and hence is metrizable.
\end{proof}
The following fact will be used in the proofs of the non-metrizable case of Theorem~A and Theorem~B.
\begin{rem}\label{opensigmacomp}
If $G$ is a locally compact group, $M\sub G$ a $\sigma$-compact subset and $\alpha\colon G\to G$
an endomorphism, then there exists an $\alpha$-invariant, $\sigma$-compact open subgroup
$S\sub G$ such that $M\sub S$.
To see this, let $V\sub G$ be a compact identity neighbourhood.
Then the subgroup~$S$ of $G$ generated by the $\sigma$-compact set
\[
\bigcup_{n\in\N_0}\alpha^n(M\cup V)
\]
is $\sigma$-compact and open. Moreover, $M\sub S$ and $\alpha(S)\sub S$.
\end{rem}
\begin{la}\label{proj-lim-adapted}
Let $G$ be a $\sigma$-compact, locally compact group,
and $\alpha\colon G\to G$ be an endomorphism. Let
$\cK$ be the set of all compact, normal subgroups
$K\sub G$ such that $\alpha(K)\sub K$ and $G/K$ is metrizable.
Then $\bigcap\cK=\{e\}$.
\end{la}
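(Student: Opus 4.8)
The plan is to construct a descending sequence of compact open subgroups $U_0\supseteq U_1\supseteq\cdots$ of $G$ whose intersection is $\{e\}$, and then to replace each $U_n$ by a suitable $\alpha$-invariant compact normal subgroup $K_n$ with $G/K_n$ metrizable, so that $\bigcap_n K_n=\{e\}$; membership of each such $K_n$ in $\cK$ then gives $\bigcap\cK=\{e\}$. First I would use van Dantzig's theorem together with $\sigma$-compactness: since $G$ is $\sigma$-compact and locally compact, it is a countable union of compact sets, so the topology at the identity has a countable basis, and hence (by \ref{birkhoff-kakutani} applied after the usual reduction, or directly) we may fix a sequence $(U_n)_{n\in\N}$ in $\COS(G)$ forming a basis of identity neighbourhoods with $\bigcap_n U_n=\{e\}$. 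The difficulty is that the $U_n$ need not be normal, nor $\alpha$-invariant.

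The second step is to fix this. For each $n$, let $K_n:=\bigcap_{g\in G}\bigcap_{m\in\N_0}\alpha^{-m}(gU_ng^{-1})$ --- wait, this is not obviously compact once one conjugates by all of $G$. A cleaner route: since $U_n$ is a compact open subgroup, $N_n:=\bigcap_{g\in G}gU_ng^{-1}$ is the largest normal subgroup of $G$ contained in $U_n$; it is closed but need not be open, however $U_n/N_n$ embeds (continuously) into the compact group $\Aut$-orbit description... the standard fact is that $U_n$ acts on its coset space $G/U_n$ and the kernel $N_n$ of this action is open in $U_n$ when $G/U_n$... no. Instead I would argue as follows: $N_n:=\mathrm{core}_G(U_n)$ is a compact normal subgroup of $G$, and $G/N_n$ has the compact open subgroup $U_n/N_n$; moreover $\bigcap_n N_n\sub\bigcap_n U_n=\{e\}$. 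Now I must further arrange $\alpha$-invariance and metrizability of $G/N_n$. For metrizability: since $G$ is $\sigma$-compact, one checks $G/N_n$ is second countable (a compact open subgroup such as $U_n/N_n$ is metrizable because $G/N_n$ is $\sigma$-compact and $N_n$ is the intersection of countably many open sets of the form $gU_ng^{-1}$ with $g$ ranging over a countable dense set, using Lemma~\ref{when-quot-metr}), hence $G/N_n$ is metrizable. For $\alpha$-invariance: replace $N_n$ by $K_n:=\bigcap_{m\in\N_0}\alpha^{-m}(N_n)\cap N_n$; this is a closed subgroup of the compact group $N_n$, hence compact, it satisfies $\alpha(K_n)\sub K_n$, and it is normal in $G$ (as an intersection of the normal subgroups $\alpha^{-m}(N_n)$, which are normal because $\alpha$ is a homomorphism and $N_n\trianglelefteq G$). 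Finally $G/K_n$ is metrizable: it embeds into the countable product $\prod_{m\in\N_0}G/\alpha^{-m}(N_n)$, and each $G/\alpha^{-m}(N_n)\cong \alpha^m(G)/(\alpha^m(G)\cap N_n)$ is a quotient of a subgroup of the metrizable group $G/N_n$, hence metrizable; then Lemma~\ref{kernels-directed} (or the argument in its proof) gives metrizability of $G/K_n$.

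The last step is to conclude: each $K_n\in\cK$, and $\bigcap_n K_n\sub\bigcap_n N_n\sub\bigcap_n U_n=\{e\}$, so $\bigcap\cK=\{e\}$, as required. The main obstacle I anticipate is the metrizability bookkeeping in the middle step --- specifically, verifying that $G/N_n$ (and then $G/K_n$) is metrizable, which forces one to be careful about how $\sigma$-compactness of $G$ is used: one wants to realise $N_n$ as a countable intersection of open sets so that Lemma~\ref{when-quot-metr} applies, which works because $G/U_n$ is countable (being $\sigma$-compact and discrete) so the core $N_n=\bigcap_{g\in G}gU_ng^{-1}$ is in fact an intersection over a set of coset representatives indexed by the countable set $G/U_n$. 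Once that is in place the rest is the routine embedding-into-a-countable-product argument already used for Lemma~\ref{kernels-directed}. Alternatively, and perhaps more in the spirit of the preceding lemmas, I would skip cores entirely: apply Remark~\ref{opensigmacomp} is not needed since $G$ is already $\sigma$-compact, and instead directly intersect, over the countable dense set, the $\alpha$-saturations of conjugates of a single compact open $U$; but the decomposition above is the cleanest.
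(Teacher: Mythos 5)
Your opening step contains a fatal error: $\sigma$-compactness of a locally compact group does \emph{not} imply first-countability, so you cannot find a sequence $(U_n)_{n\in\N}$ of compact open subgroups with $\bigcap_n U_n=\{e\}$. A counterexample is the compact group $(\Z/2\Z)^{\omega_1}$, which is $\sigma$-compact yet has no countable basis of identity neighbourhoods. If such a sequence existed, $G$ would already be metrizable, making Definition~\ref{thekappa} and the transfinite inductions in Sections~\ref{sec3} and~\ref{proofB} superfluous; this lemma is precisely the tool that makes the non-metrizable case tractable.

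That said, the mechanism at the heart of your argument survives once you reframe the quantification: it suffices to show that every identity neighbourhood $U\sub G$ contains some $K\in\cK$, for then $\bigcap\cK\sub U$ for all $U$. Given $U$, your core construction --- take a compact open $V\sub U$ via van Dantzig, pass to $N:=\bigcap_{g\in G}gVg^{-1}$, which is an intersection over the countable set $G/V$ of open identity neighbourhoods (hence $G/N$ is metrizable by Lemma~\ref{when-quot-metr}), then set $K:=\bigcap_{m\in\N_0}\alpha^{-m}(N)$ --- does deliver an $\alpha$-invariant compact normal $K\sub U$, and $G/K$ is metrizable because $K$ is again a countable intersection of open identity neighbourhoods, by Lemma~\ref{when-quot-metr}. (Your alternative metrizability route through $\prod_m G/\alpha^{-m}(N)$ has a subtle flaw: the algebraic isomorphism $G/\alpha^{-m}(N)\cong\alpha^m(G)/(\alpha^m(G)\cap N)$ need not be topological since $\alpha^m$ need not be an open map; appealing directly to Lemma~\ref{when-quot-metr} is both cleaner and correct.)

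One further issue: the lemma is stated for a $\sigma$-compact \emph{locally compact} group, not assumed totally disconnected, so van Dantzig is not available in general. The paper instead invokes the Kakutani--Kodaira theorem \cite[(8.7)]{HaR}, which directly produces, for any identity neighbourhood $U$, a compact normal subgroup $N\sub U$ with $G/N$ metrizable --- no van Dantzig, no cores. After that the passage to $K:=\bigcap_{m\in\N_0}\alpha^{-m}(N)$ and the application of Lemma~\ref{when-quot-metr} proceed exactly as in your sketch. So your recipe for enforcing $\alpha$-invariance while preserving metrizability of the quotient is sound; what must be replaced is the false countable-basis claim (by quantifying over all $U$) and the use of van Dantzig plus cores (by Kakutani--Kodaira, which also handles the non-totally-disconnected case the lemma formally claims).
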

\begin{proof}
Let $U\sub G$ be a neighbourhood of~$e$.
By~\cite[(8.7)]{HaR}, $G$ has a compact, normal subgroup~$N$
such that $N\sub U$ and $G/N$ is metrizable. Hence
\[
N=\bigcap_{n\in \N} V_n\vspace{-1mm}
\]
for suitable open identity neighbourhoods~$V_n\sub G$,
by Lemma~\ref{when-quot-metr}. Then
\[
K:=\bigcap_{m\in\N_0}\alpha^{-m}(N)=\bigcap_{m\in\N_0}\bigcap_{n\in\N}\alpha^{-m}(V_n)\vspace{-1mm}
\]
is an $\alpha$-invariant, compact normal subgroup of $G$ contained in~$U$.
As $K$ is
an intersection of~a countable set of open identity neighbourhoods,
$G/K$ is metrizable
(by Lemma~\ref{when-quot-metr}).
\end{proof}
The following fact, whose proof is left to the reader, is useful for our purposes.
\begin{la}\label{cphencepl}
Let $(I,\leq)$ be a directed set and $\cS:=((G_j)_{j\in I},(q_{i,j})_{i\leq j})$
be a projective system of topological groups~$G_j$ and continuous homomorphisms
$q_{i,j}\colon G_j\to G_i$. Let $G$ be a topological group and
$(q_j)_{j\in I}$ be a family of continuous
homomorphisms $q_j\colon G\to G_j$ such that $q_{i,j}\circ q_j=q_i$ for $i\leq j$ in~$I$.
If $\bigcap_{j\in I}\ker(q_j)=\{e\}$
and $q_j$ is a quotient homomorphism with compact kernel
for each $j\in I$, then $(G,(q_j)_{j\in I})$ is the projective limit
of~$\cS$.
\end{la}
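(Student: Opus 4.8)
The plan is to show that the canonical map
\[
\phi:=(q_j)_{j\in I}\colon G\to\prod_{j\in I}G_j,\qquad x\mto (q_j(x))_{j\in I},
\]
is an isomorphism of topological groups onto the projective limit~$P$ of~$\cS$, where $P$ is the subgroup $\{(x_j)_j\colon q_{i,j}(x_j)=x_i \text{ for all }i\leq j\}$ of $\prod_{j\in I}G_j$ and the $q_j$ correspond to the (restrictions of the) canonical projections $\pi_j\colon\prod_{i\in I}G_i\to G_j$. From $q_{i,j}\circ q_j=q_i$ it is immediate that $\phi(G)\sub P$ and that $\phi$ is a continuous homomorphism, and injectivity of~$\phi$ is exactly the hypothesis $\bigcap_{j\in I}\ker(q_j)=\{e\}$. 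Throughout write $K_j:=\ker(q_j)$, a compact normal subgroup of~$G$; note that $K_j\sub K_i$ whenever $i\leq j$ (because $q_i=q_{i,j}\circ q_j$), and that each $q_j$, being a quotient homomorphism, is surjective and open.

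Next I would prove surjectivity, $\phi(G)=P$. Fix $x=(x_j)_j\in P$ and some $j_0\in I$. For $j\geq j_0$ the fibre $q_j^{-1}(x_j)$ is nonempty (by surjectivity of~$q_j$), closed, and contained in $q_{j_0}^{-1}(x_{j_0})$: indeed $q_j(g)=x_j$ forces $q_{j_0}(g)=q_{j_0,j}(x_j)=x_{j_0}$ by coherence of~$x$. Since $q_{j_0}^{-1}(x_{j_0})$ is a coset of the compact group $K_{j_0}$, it is compact; and directedness of~$I$ shows that $\{q_j^{-1}(x_j)\colon j\geq j_0\}$ is a filter basis, so the finite intersection property yields a point $g\in\bigcap_{j\geq j_0}q_j^{-1}(x_j)$. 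Then $q_j(g)=x_j$ for all $j\geq j_0$, and hence for all $j\in I$ (pass to some $j'\geq j,j_0$ and use coherence), i.e.\ $\phi(g)=x$.

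It remains to see that $\phi^{-1}$ is continuous, equivalently that $\phi$ is open onto~$P$. Given $x\in G$ and an open $U\ni x$, the cosets $xK_j$ ($j\in I$) form a filter basis of compact sets with $\bigcap_{j\in I}xK_j=x\bigcap_{j\in I}K_j=\{x\}\sub U$, so Lemma~\ref{la:filter_basis} provides $j_0\in I$ with $xK_{j_0}\sub U$. A tube-lemma argument (valid because $K_{j_0}$ is compact) shows that $\{g\in G\colon gK_{j_0}\sub U\}$ is open; its complement is the saturated set $(G\setminus U)K_{j_0}=q_{j_0}^{-1}\big(q_{j_0}(G\setminus U)\big)$, so $W:=G_{j_0}\setminus q_{j_0}(G\setminus U)$ is open in~$G_{j_0}$ because $q_{j_0}$ is a quotient map. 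Moreover $q_{j_0}(x)\in W$ (as $xK_{j_0}\sub U$) and $q_{j_0}^{-1}(W)=\{g\colon gK_{j_0}\sub U\}\sub U$. Hence $O:=P\cap\pi_{j_0}^{-1}(W)$ is an open neighbourhood of $\phi(x)$ in~$P$ with $\phi^{-1}(O)=q_{j_0}^{-1}(W)\sub U$, so $O=\phi(\phi^{-1}(O))\sub\phi(U)$; thus $\phi(U)$ is open in~$P$. A continuous, open bijective homomorphism onto~$P$ is an isomorphism of topological groups, so $(G,(q_j)_{j\in I})$ is the projective limit of~$\cS$.

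\textbf{Main obstacle.} The delicate step is the openness of~$\phi$ in the last paragraph: this is where the two standing hypotheses on the $q_j$ are genuinely used, compactness of the kernels being what makes the cosets $xK_j$ compact so that Lemma~\ref{la:filter_basis} produces a \emph{single} index $j_0$, and the quotient-map property being what lets one push the ``thickening'' $W$ of $q_{j_0}(x)$ back down to~$G$. The surjectivity step is the second mildly nontrivial point, relying on the finite intersection property inside the compact fibre $q_{j_0}^{-1}(x_{j_0})$; everything else is formal.
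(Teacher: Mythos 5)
The paper leaves the proof of Lemma~\ref{cphencepl} to the reader, so there is no in-text argument to compare against; what you give is exactly the proof the authors are implicitly invoking. Your argument is correct: the map $\phi=(q_j)_{j\in I}$ into the standard realization $P\sub\prod_j G_j$ is injective by $\bigcap\ker q_j=\{e\}$, surjective via the finite-intersection property inside the compact fibre $q_{j_0}^{-1}(x_{j_0})$ (directedness makes $\{q_j^{-1}(x_j)\colon j\geq j_0\}$ a filter basis), and open by first using Lemma~\ref{la:filter_basis} on the compact cosets $xK_j$ to find $j_0$ with $xK_{j_0}\sub U$, then saturating to $W=G_{j_0}\setminus q_{j_0}(G\setminus U)$ and using the quotient-map property to get $W$ open with $q_{j_0}^{-1}(W)\sub U$.
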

\begin{rem}\label{subgpPL}
(a)
In the situation
of Lemma~\ref{cphencepl},
let $(x_j)_{j\in I}$ be a family of elements $x_j\in G_j$
such that $x_i=q_{i,j}(x_j)$
if
$i\leq j$.
Then $(x_j)_{j\in I}\in P$, where $P$ is the standard realization of the projective limit. Thus there exists
a unique element $x\in G$ such that $q_j(x)=x_j$
for all $j\in I$.\\[2.3mm]
(b) In the
situation of Lemma~\ref{cphencepl}, assume that $H\sub G$ is a closed subgroup. Then $(H,(q_j|_H)_{j\in I})$
is the projective limit of $((q_j(H))_{j\in I},(q_{ij}|_{q_j(H)})_{i\leq j})$.\\[2.3mm]
In fact, since $q_j$ is a quotient homomorphism with compact kernel,
it is a closed mapping (see \cite[(5.18)]{HaR}),
whence $q_j|_H\colon H\to q_j(H)$
is a surjective, continuous, closed mapping and hence a quotient map.
We can therefore apply Lemma~\ref{cphencepl} with~$H$ in place of~$G$.
\end{rem}
\begin{defn}\label{thekappa}
If $\alpha$ is an endomorphism of a $\sigma$-compact, totally disconnected, locally compact group~$G$,
then there exists a
non-empty set~$\cK$ of
$\alpha$-invariant, compact normal subgroups $K\sub G$
such that $G/K$ is metrizable for each $K\in\cK$ and $\bigcap\cK=\{e\}$
(see Lemma~\ref{proj-lim-adapted}). Let $\kappa(G,\alpha):=|\cK|$ be the minimum cardinality of such~$\cK$.
\end{defn}
It is not difficult to see that $\kappa(G,\alpha)$ equals the character of $G$
in the situation of Definition~\ref{thekappa} unless $G$ is first-countable
in which case $\kappa(G,\alpha)=1$ as we may choose $K:=\{e\}$. Notably, $\kappa(G,\alpha)$ is independent of~$\alpha$.
We suppress the details as the additional information shall not be used.
\section{Proof of Theorem~A for non-metrizable {\boldmath$G$}}\label{sec3}
To pass from metrizable to general groups in Theorem~A,
we could adapt the arguments from~\cite{Jaw}.
However, as the treatment of Theorem~B requires a different
approach, we prefer to apply this new approach
also to Theorem~A.
\begin{defn}\label{defndeco}
We say that an endomorphism~$\alpha$ of a topological group~$G$
\emph{has the decomposition property}
if $\con(\alpha,K)=\con(\alpha)K$ for each $\alpha$-invariant compact subgroup $K\sub G$.
\end{defn}
\begin{la}\label{keytwoA}
Let $\alpha$ be an endomorphism of a topological group~$G$
and $x\in\con(\alpha,H)$
for an $\alpha$-invariant compact subgroup $H\sub G$.
Let
$q\colon G\to Q$ be a quotient homomorphism to a topological group~$Q$
with $\alpha$-invariant kernel~$N$.
Let $\wb{\alpha}\colon Q\to Q$
be the map determined by $\wb{\alpha}\circ q=q\circ\alpha$.
Let $\wb{y}\in\con(\wb{\alpha})\sub Q$
with $\wb{y}\in q(x) q(H)$.
Then there exists $y\in\con(\alpha,H\cap N)$ such that
\begin{equation}\label{onlypre}
q(y)=\wb{y} \quad\mbox{and}\quad y\in xH.
\end{equation}
If $\alpha$ has the decomposition property, then there exists $y\in\con(\alpha)$ with~{\rm(\ref{onlypre})}.
\end{la}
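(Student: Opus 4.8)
The plan is to read off the final statement from the first assertion of the lemma together with the decomposition property. Having produced (by the first part) an element $y_0\in\con(\alpha,H\cap N)$ with $q(y_0)=\wb{y}$ and $y_0\in xH$, I would first record that $K:=H\cap N$ is an $\alpha$-invariant compact subgroup of~$G$: it is a subgroup, it is compact as a closed subset of the compact group~$H$, and $\alpha(H\cap N)\sub\alpha(H)\cap\alpha(N)\sub H\cap N$ since $\alpha(H)\sub H$ and $\alpha(N)\sub N$. Hence the decomposition property applies to~$K$ and yields $\con(\alpha,H\cap N)=\con(\alpha)(H\cap N)$.

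Next I would write $y_0=yk$ with $y\in\con(\alpha)$ and $k\in H\cap N$, and check that this~$y$ already satisfies~(\ref{onlypre}). Since $k^{-1}\in H\cap N\sub N=\ker q$, we get $q(y)=q(y_0)q(k^{-1})=q(y_0)=\wb{y}$. Since also $k^{-1}\in H$ and $y_0\in xH$, we obtain $y=y_0k^{-1}\in xHH=xH$. Thus $y\in\con(\alpha)$ has both required properties, which completes the argument.

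I do not expect any genuine obstacle at this stage: all the difficulty is concentrated in the first assertion of the lemma (the construction of $y_0$), which the full proof must establish beforehand. The only points requiring attention are the two bookkeeping observations above — that $H\cap N$ is indeed an $\alpha$-invariant compact subgroup, so that the decomposition property is applicable, and that the correction factor $k$ lies in both $N$ and~$H$, so that dividing by it changes neither the image under~$q$ nor membership in the coset~$xH$.
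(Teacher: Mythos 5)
Your argument for the final sentence of the lemma is correct, and it is essentially identical to what the paper does: take the element supplied by the first assertion, apply the decomposition property to $\con(\alpha,H\cap N)=\con(\alpha)(H\cap N)$, and peel off the correction factor $k\in H\cap N\sub\ker q$ without disturbing either $q(\cdot)$ or membership in $xH$. Your bookkeeping check that $H\cap N$ is an $\alpha$-invariant compact subgroup is also sound.

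The gap, which you yourself flag, is that the first assertion — the existence of some $y\in\con(\alpha,H\cap N)$ with $q(y)=\wb{y}$ and $y\in xH$ — is never proved, yet it is the genuine content of the lemma. The paper obtains it directly: since $\wb{y}\in q(x)q(H)$, write $\wb{y}=q(x)q(h)$ with $h\in H$ and set $y:=xh$. Then $q(y)=\wb{y}\in\con(\wb{\alpha})$ means $q(\alpha^n(y))=\wb{\alpha}^n(q(y))\to e$ in $Q$, i.e.\ $\alpha^n(y)\to e$ modulo $N$, so $y\in\con(\alpha,N)$. On the other hand $y=xh$ with $x\in\con(\alpha,H)$ and $h\in H$; since $\alpha^n(xh)=\alpha^n(x)\alpha^n(h)$ and $\alpha^n(h)\in H$, one still has $y\in\con(\alpha,H)$. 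Finally $\con(\alpha,H)\cap\con(\alpha,N)=\con(\alpha,H\cap N)$ by Lemma~\ref{intersect-fam-seq} (compactness of~$H$ is used here), giving $y\in\con(\alpha,H\cap N)$, and trivially $y=xh\in xH$. A complete proof needs this construction before the reduction you describe can be invoked.
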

\begin{proof}
Let $h\in H$ such that $\wb{y}=q(x)q(h)$.
As $q(xh)=\wb{y}\in\con(\wb{\alpha})$,
we have $xh\in\con(\alpha,N)$.
Since $x\in\con(\alpha,H)=\con(\alpha,H)H$,
we deduce $xh\in\con(\alpha,H)$. Hence $xh\in\con(\alpha,H\cap N)$, by Lemma~\ref{intersect-fam-seq}
(and we can take $y:=xh$).
If $\alpha$ has the decomposition property,
we can write $xh=yn$ with $y\in \con(\alpha)$ and $n\in H\cap N$.
Then $y\in xH$ and $q(y)=q(xh)=\wb{y}$.
\end{proof}
\begin{numba}\label{settordi}
Let $I$ be an interval of ordinal numbers of the form $[0,k[$ or $[0,k]$.
Let $\alpha$ be an endomorphism of a topological group~$G$
and $(N_j)_{j\in I}$
be a family of $\alpha$-invariant, compact normal subgroups $N_j\sub G$
such that $N_j\sub N_i$ for $i\leq j$ in~$I$.
For $j\in I$, let $q_j\colon G\to G/N_j$ be the canonical quotient map
and $\alpha_j$ be the endomorphism of $G/N_j$ determined
by $\alpha_j\circ q_j=q_j\circ \alpha$.
For $i\leq j$, let
\[
q_{i,j}\colon G/N_j\to G/N_i
\]
be the map determined
by $q_{i,j}\circ q_j=q_i$.
Then $((G/N_j)_{j\in I},(q_{i,j})_{i\leq j})$ $($with $i,j\in I)$
is a projective system of topological groups.
\end{numba}
The next lemma is a variant of \cite[Lemma~7]{Jaw}.
\begin{la}\label{myway-1A}
Let $0\not=k$ be a limit ordinal, $\alpha$ an endomorphism of a topological group~$G$
and $(N_j)_{j\in [0,k[}$
be a family of $\alpha$-invariant, compact normal subgroups $N_j\sub G$
such that $N_j\sub N_i$ for $i\leq j$
in $[0,k[$. Let $N_k:=\bigcap_{j\in [0,k[}N_j$
and use notation as in {\rm\ref{settordi}}
with $I:=[0,k]$.
Let $H\sub G$ be an $\alpha$-invariant closed subgroup
and $x\in\con(\alpha,H)$.
Assume that, for each $j\in [0,k[$,
an element $y_j\in\con(\alpha_j)\sub G/N_j$
is given
such that
\begin{eqnarray}
q_{i,j}(y_j) = y_i & & \mbox{for all $\,i\leq j$ in $[0,k[$, and}\label{compble1A}\\
y_j \in  q_j(x)q_j(H) & & \mbox{for all $\,j\in[0,k[$.}\label{passendA}
\end{eqnarray}
Then there is a unique element $y_k \in G/N_k$
such that
\begin{equation}\label{sincePL1A}
q_{i,k}(y_k)= y_i \quad\mbox{for all $\,i\in[0,k[$.}
\end{equation}
Moreover, $y_k\in\con(\alpha_k)\sub G/N_k$ and
\begin{equation}\label{sincePL3A}
y_k \in q_k(x)q_k(H).
\end{equation}
\end{la}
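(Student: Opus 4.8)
The plan is to realise $G/N_k$ as the projective limit of $((G/N_j)_{j\in[0,k[},(q_{i,j})_{i\le j})$, lift the compatible family $(y_j)_j$ to a single element $y_k$ of that limit, and then read off both the coset condition~(\ref{sincePL3A}) and the membership $y_k\in\con(\alpha_k)$ from the projective-limit structure. (Only $x\in G$ and the fact that $H$ is a closed $\alpha$-invariant subgroup will actually be used; the stronger hypothesis $x\in\con(\alpha,H)$ is recorded for the transfinite induction in which this lemma is applied.)

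First I would note that $N_k=\bigcap_{j\in[0,k[}N_j$ is a closed subgroup of the compact group $N_0$ (here $0\in[0,k[$ since $k\neq 0$), hence compact, and it is $\alpha$-invariant and normal; thus $q_k\colon G\to G/N_k$ is a quotient homomorphism with compact kernel, and for $i\in[0,k[$ the map $q_{i,k}\colon G/N_k\to G/N_i$ is the canonical projection modulo the compact normal subgroup $N_i/N_k$, hence again a quotient homomorphism with compact kernel. Composing with the surjection $q_k$ gives $q_{i,j}\circ q_{j,k}=q_{i,k}$ for $i\le j$ in $[0,k[$, while $\bigcap_{i\in[0,k[}\ker(q_{i,k})=\bigcap_{i}(N_i/N_k)=\{e\}$. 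Hence Lemma~\ref{cphencepl}, invoked with index set $[0,k[$, shows that $(G/N_k,(q_{i,k})_{i\in[0,k[})$ is the projective limit of $((G/N_i)_{i},(q_{i,j})_{i\le j})$. By~(\ref{compble1A}) the family $(y_i)_{i\in[0,k[}$ is compatible, so Remark~\ref{subgpPL}(a) provides a unique $y_k\in G/N_k$ with $q_{i,k}(y_k)=y_i$ for all $i\in[0,k[$, which is~(\ref{sincePL1A}).

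For~(\ref{sincePL3A}): since $H$ is closed and $N_k$ is compact, $HN_k$ is closed and $q_k(H)=q_k(HN_k)$ is a closed subgroup of $G/N_k$, so by Remark~\ref{subgpPL}(b) the pair $(q_k(H),(q_{i,k}|_{q_k(H)})_i)$ is the projective limit of $((q_i(H))_i,(q_{i,j}|_{q_j(H)})_{i\le j})$. Now $q_k(x)^{-1}y_k$ satisfies $q_{i,k}(q_k(x)^{-1}y_k)=q_i(x)^{-1}y_i\in q_i(H)$ for every $i\in[0,k[$ by~(\ref{passendA}), so $(q_{i,k}(q_k(x)^{-1}y_k))_{i}$ is a compatible family in $\prod_{i}q_i(H)$; applying Remark~\ref{subgpPL}(a) to the projective limit $q_k(H)$ yields $h\in q_k(H)$ with $q_{i,k}(h)=q_{i,k}(q_k(x)^{-1}y_k)$ for all $i$, whence $h^{-1}q_k(x)^{-1}y_k\in\bigcap_i\ker(q_{i,k})=\{e\}$ and thus $y_k\in q_k(x)q_k(H)$.

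It remains to show $y_k\in\con(\alpha_k)$. From $\alpha_j\circ q_j=q_j\circ\alpha$, $\alpha_k\circ q_k=q_k\circ\alpha$, $q_{j,k}\circ q_k=q_j$ and surjectivity of $q_k$ one obtains $q_{j,k}\circ\alpha_k=\alpha_j\circ q_{j,k}$, so $q_{j,k}(\alpha_k^n(y_k))=\alpha_j^n(q_{j,k}(y_k))=\alpha_j^n(y_j)$ for all $n\in\N_0$ and $j\in[0,k[$. Since $G/N_k$ is the projective limit over the (totally ordered, hence directed) set $[0,k[$, the sets $q_{j,k}^{-1}(U)$, for $j\in[0,k[$ and $U$ an identity neighbourhood of $G/N_j$, form a basis of identity neighbourhoods in $G/N_k$ -- a finite intersection of such sets is absorbed into a single index by choosing an upper bound and using continuity of the $q_{i,j}$. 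Given one such $q_{j,k}^{-1}(U)$, the hypothesis $y_j\in\con(\alpha_j)$ gives $M$ with $\alpha_j^n(y_j)\in U$ for all $n\ge M$, hence $\alpha_k^n(y_k)\in q_{j,k}^{-1}(U)$ for all $n\ge M$; therefore $\alpha_k^n(y_k)\to e$ in $G/N_k$, i.e.\ $y_k\in\con(\alpha_k)$. I expect the main obstacle to be the two points where the projective-limit formalism must be handled with care: checking that Lemma~\ref{cphencepl} genuinely applies over the \emph{open} interval $[0,k[$, and justifying that the $q_{j,k}^{-1}(U)$ with $j\in[0,k[$ are cofinal in the identity filter of $G/N_k$ -- the step that actually uses $k$ being a limit ordinal, via directedness of $[0,k[$. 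Everything else is routine diagram-chasing.
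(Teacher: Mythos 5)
Your proof is correct and follows essentially the same route as the paper: realise $G/N_k$ as the projective limit via Lemma~\ref{cphencepl}, use Remark~\ref{subgpPL}\,(a) to obtain and characterise $y_k$, use Remark~\ref{subgpPL}\,(b) applied to the closed subgroup $q_k(H)\sub G/N_k$ to get the coset condition, and pull back identity neighbourhoods along some $q_{i,k}$ to verify $y_k\in\con(\alpha_k)$. Your side remark that the hypothesis $x\in\con(\alpha,H)$ is never actually used is also accurate and matches the paper's own argument, which only needs $x\in G$ and $H$ closed and $\alpha$-invariant.
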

\begin{proof}
By Lemma~\ref{cphencepl},
$G/N_k$ is the projective limit
of the projective system $((G/N_j)_{j\in[0,k[},(q_{i,j})_{i\leq j})$,
with the limit maps $q_{i,k}$.
Thus, the identities (\ref{compble1A})
determine a unique element
$y_k\in G/N_k$ such that~(\ref{sincePL1A})
holds.
By Remark~\ref{subgpPL}\,(b),
$q_k(H)$ is the projective limit of its quotients $q_i(H)$ for $i\in[0,k[$,
with the limit maps $q_{i,k}|_{q_k(H)}\colon q_k(H)\to q_i(H)$.
Since
\[
q_{i,k}(q_k(x)^{-1}y_k)=q_i(x)^{-1} y_i \in q_i(H)
\;\;\mbox{for all $i\in[0,k[$,}
\]
we deduce that $q_k(x)^{-1}y_k \in q_k(H)$.
To see that $y_k\in\con(\alpha_k)$,
let $U\sub G/N_k$ be an identity neighbourhood.
Then $q_{i,k}^{-1}(V)\sub U$ for some $i\in[0,k[$ and identity neighbourhood $V\sub G/N_i$.
Since $\lim_{n\to\infty}(\alpha_i)^n(q_i(y_i))=e$, there is $n_0\in\N$
such that
\[
q_{i,k}((\alpha_k)^n(y_k))=(\alpha_i)^n(y_i)\in V
\]
for all $n\geq n_0$ and thus $(\alpha_k)^n(y_k)\in q_{i,k}^{-1}(V)\sub U$.
\end{proof}
The following lemma is a variant of \cite[Lemma~8]{Jaw}.
\begin{la}\label{myway2A}
Let $k>0 $ be an ordinal number, $\alpha$ be an endomorphism of a topological group~$G$
and $(N_j)_{j\in [0,k[}$ be a family of $\alpha$-invariant, compact normal subgroups $N_j\sub G$
such that $N_j\sub N_i$ for all $i\leq j$ in $[0,k[$ and
\[
N_j=\bigcap_{i<j}N_i\vspace{-1mm}
\]
for each limit ordinal $0\not=j\in [0,k[$.
Let $H\sub G$ be an $\alpha$-invariant compact subgroup
and $x\in\con(\alpha,H)\sub G$. Use notation as in {\rm\ref{settordi}} with $I:=[0,k[$.
Assume that~$\alpha_j$ has the decomposition property
for each $j\in [0,k[$.
Then there exist elements $y_j\in\con(\alpha_j)\sub G/N_j$
for $j\in [0,k[$,
such that
conditions
{\rm(\ref{compble1A})}
and {\rm(\ref{passendA})} from Lemma~{\rm\ref{myway-1A}} are satisfied.
\end{la}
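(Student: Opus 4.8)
The plan is a transfinite induction on $j\in[0,k[$: at stage~$j$ I construct $y_j\in\con(\alpha_j)$ with $q_{i,j}(y_j)=y_i$ for all $i\le j$ in $[0,k[$ and $y_j\in q_j(x)q_j(H)$, which, once the induction is complete, is exactly (\ref{compble1A}) together with (\ref{passendA}). Two standing facts are used at every stage. First, $q_j(H)$ is an $\alpha_j$-invariant compact subgroup of $G/N_j$: it is compact as a continuous image of the compact group~$H$, and $\alpha_j(q_j(H))=q_j(\alpha(H))\sub q_j(H)$. Second, $q_j(x)\in\con(\alpha_j,q_j(H))$: the quotient homomorphism $q_j$ is uniformly continuous for the right uniform structures, $\alpha_j\circ q_j=q_j\circ\alpha$, and $x\in\con(\alpha,H)$, so this follows from Lemma~\ref{imageconv}\,(b) and the identification of $\con(\cdot,\cdot)$ with convergence modulo a subgroup. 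Likewise, for $i\le j$ the map $q_{i,j}$ is a quotient homomorphism with $\alpha_j$-invariant compact kernel $N_i/N_j$ and $\alpha_i\circ q_{i,j}=q_{i,j}\circ\alpha_j$ (both sides agree after composing with the surjection~$q_j$).

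For the base case $j=0$, condition (\ref{compble1A}) is vacuous, and since $\alpha_0$ has the decomposition property we have $\con(\alpha_0,q_0(H))=\con(\alpha_0)q_0(H)$; writing $q_0(x)=y_0h$ with $y_0\in\con(\alpha_0)$ and $h\in q_0(H)$ gives $y_0\in q_0(x)q_0(H)$. For the successor step, suppose $j+1\in[0,k[$ and $y_i$ has been built for $i\le j$. I apply Lemma~\ref{keytwoA} to the group $G/N_{j+1}$, the endomorphism $\alpha_{j+1}$ (which has the decomposition property), the $\alpha_{j+1}$-invariant compact subgroup $q_{j+1}(H)$, the element $q_{j+1}(x)\in\con(\alpha_{j+1},q_{j+1}(H))$, the quotient homomorphism $q_{j,j+1}\colon G/N_{j+1}\to G/N_j$ with $\alpha_{j+1}$-invariant kernel, its induced map $\alpha_j$, and $\overline y:=y_j$; the hypotheses $\overline y\in\con(\alpha_j)$ and $\overline y\in q_{j,j+1}(q_{j+1}(x))\,q_{j,j+1}(q_{j+1}(H))=q_j(x)q_j(H)$ hold by induction. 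Lemma~\ref{keytwoA} then yields $y_{j+1}\in\con(\alpha_{j+1})$ with $q_{j,j+1}(y_{j+1})=y_j$ and $y_{j+1}\in q_{j+1}(x)q_{j+1}(H)$, and composing with $q_{i,j}$ for $i\le j$ gives $q_{i,j+1}(y_{j+1})=q_{i,j}(y_j)=y_i$ for all $i\le j+1$.

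For the limit step, let $0\ne j\in[0,k[$ be a limit ordinal, so $N_j=\bigcap_{i<j}N_i$ by hypothesis. The family $(y_i)_{i\in[0,j[}$ constructed so far satisfies precisely the hypotheses of Lemma~\ref{myway-1A} with $j$ in place of the~$k$ there (each $y_i\in\con(\alpha_i)$, $q_{i',i}(y_i)=y_{i'}$ for $i'\le i<j$, and $y_i\in q_i(x)q_i(H)$), so that lemma produces a unique $y_j\in G/N_j$ with $q_{i,j}(y_j)=y_i$ for all $i<j$, lying in $\con(\alpha_j)$ and in $q_j(x)q_j(H)$; compatibility with $i=j$ is trivial. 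This closes the induction, and the resulting family $(y_j)_{j\in[0,k[}$ is as required.

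There is no genuinely hard step: the proof is an assembly of Lemma~\ref{keytwoA} (for successors) and Lemma~\ref{myway-1A} (for limits), and the only care needed is in the bookkeeping — checking that $\alpha_{j+1}$ induces $\alpha_j$ through $q_{j,j+1}$ so that Lemma~\ref{keytwoA} applies, confirming via uniform continuity that $q_j(x)\in\con(\alpha_j,q_j(H))$, and observing that the base case $j=0$ must be treated on its own (it is not a limit in the relevant sense, since $\bigcap_{i<0}N_i$ would be empty) and is precisely where the decomposition property of $\alpha_0$ is used.
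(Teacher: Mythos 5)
Your proof is correct and follows essentially the same route as the paper: the decomposition property handles the base case, Lemma~\ref{keytwoA} handles successor ordinals, and Lemma~\ref{myway-1A} handles limit ordinals. The only difference is stylistic: the paper packages the transfinite construction as a Zorn's Lemma argument on the poset of partial families ordered by extension (which makes the appeal to choice explicit, since Lemma~\ref{keytwoA} only asserts existence of $y_{j+1}$, not a canonical one), whereas you phrase it as a direct transfinite recursion; the mathematical content is the same.
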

\begin{proof}
For all $j\in[0,k[$, we have $q_j(x)\in \con(\alpha_j,q_j(H))$, by Lemma~\ref{imageconv}\,(b).
Let $M$ be the set of all families $(y_j)_{j\in[0,\ell[}$ with $\ell\in\,]0,k]$
such that {\rm(\ref{compble1A})}
and {\rm(\ref{passendA})} are satisfied, with~$\ell$ in place of~$k$.
Since $\alpha_0$ has the decomposition
property and $q_0(x)\in\con(\alpha_0,q_0(H))$,
there exists $y_0\in \con(\alpha_0)$
such that
\[
y_0\in q_0(x)q_0(H).
\]
Then $(y_j)_{j\in\{0\}}\in M$ and thus $M\not=\emptyset$.
For $f=(y_j)_{j\in[0,\ell[}$ and $g=(y'_j)_{j\in[0,\ell'[}$ in~$M$,
write $f\leq g$ if $\ell\leq \ell'$ and $f=g|_{[0,\ell[}$.
To see that $(M,\leq)$ is inductively ordered, let~$\Gamma$
be a totally ordered subset of~$M$.
If $\Gamma=\emptyset$, then every element of~$M$ (which is non-empty)
is an upper bound for~$\Gamma$. If $\Gamma\not=\emptyset$,
write $[0,\ell_f[$ for the domain of $f\in\Gamma$.
Let $\ell=\sup\{\ell_f\colon f\in\Gamma\}\in\,]0,k]$.
Then the union of the graphs of the $f\in\Gamma$
is the graph of a function $g\in M$ which is an upper bound for~$\Gamma$.
Thus $(M,\leq )$ is inductively ordered and thus~$M$ has a maximal
element $f=(y_j)_{j\in[0,\ell[}$, by Zorn's Lemma.
If $\ell=k$, then the assertion of the lemma holds for~$\alpha$.
If $\ell<k$, we obtain a contradiction
because
$f$ could then be extended to an element of~$M$
defined on $[0,\ell]=[0,\ell+1[$, as we now verify.
It suffices to find an element
$y_\ell \in\con(\alpha_\ell)\sub G/N_\ell$ such that
(\ref{sincePL1A}) and (\ref{sincePL3A}) are satisfied, with $\ell$ in place of~$k$.\\[2.3mm]
If $\ell$ is a limit ordinal, then Lemma~\ref{myway-1A}
yields $y_\ell\in\con(\alpha_\ell)$ as desired.\\[2.3mm]
If $\ell$ has a precursor~$j$, we apply Lemma~\ref{keytwoA}
with $G/N_\ell$, the quotient map $q_{j,\ell}\colon G/N_\ell\to G/N_j$, $\alpha_\ell$, $q_\ell(N_j)$, $q_\ell(H)$, $q_\ell(x)$,
and $y_j$
in place of~$G$, $q$, $\alpha$, $N$, $H$, $x$, and $\wb{y}$,
respectively. Since $\alpha_\ell$ has the decomposition property,
we obtain $y_\ell\in\con(\alpha_\ell)\sub G/N_\ell$
such that $q_{j,\ell}(y_\ell)=y_j$ and $y_\ell\in q_\ell(x)q_\ell(H)$.
\end{proof}
\begin{proof}
(Theorem A, completed). We show by transfinite induction on cardinals~$k\geq 1$ that each endomorphism
$\alpha$ of a $\sigma$-compact, totally disconnected, locally compact
group~$G$ with $\kappa(G,\alpha)\leq k$ has the
decomposition property (where $\kappa(G,\alpha)$ is as in Definition~\ref{thekappa}).
If $k\leq\aleph_0$, then $\kappa(G,\alpha)\leq \aleph_0$ implies that~$G$ is metrizable (see Lemma~\ref{kernels-directed});
thus $\alpha$ has the decomposition property (as verified in Section~\ref{sec1}).\\[2.3mm]
Let~$k>\aleph_0$ now and assume that all endomorphisms~$\beta$ of
$\sigma$-compact, totally disconnected, locally compact groups~$H$
with $\kappa(H,\beta)<k$ have the decomposition property. The assertion will hold for~$k$ if we can show
that every endomorphism
$\alpha$ of a $\sigma$-compact,
totally disconnected, locally compact group~$G$ with $\kappa(G,\alpha)=k$
has the decomposition property.
In this situation, we choose a set~$\cK$ with $|\cK|=k$ as in Definition~\ref{thekappa}
and a bijection
\[
[0,k[\, \to \cK,\quad j\mto K_j.
\]
We define
\[
N_j:=\bigcap_{i<j} K_i\quad\mbox{for all $j\in\,]0,k[$ and $N_0:=N_1$.}\vspace{-1mm}
\]
This yields a decreasing family $(N_j)_{j\in[0,k[}$ of $\alpha$-invariant,
compact normal subgroups of~$G$ with $N_j=\bigcap_{i<j}N_i$
for each limit ordinal $0\not=j\in[0,k[$.
For $j\in[0,k[$, let $q_j\colon G\to G/N_j$ be the canonical quotient map
and $\alpha_j$ be the endomorphism of~$G/N_j$ induced by~$\alpha$.
Then $\cK_j:=\{q_j(K_i)\colon i<j\}$ is a set of cardinality at most~$j$
which has the properties of~$\cK$ required in Definition~\ref{thekappa},
with~$G/N_j$ in place of~$G$ and~$\alpha_j$ in place of~$\alpha$.
Hence
\[
\kappa(G/N_j,\alpha_j)\leq |\cK_j|\leq j<k,
\]
and thus $\alpha_j$ has the decomposition property.
Note that~$k$, being an infinite cardinal, is a limit ordinal.
Given $x\in\con(\alpha, H)\sub G$,
we apply Lemma~\ref{myway2A} to find
$y_j\in\con(\alpha_j)\sub G/N_j$ for $j\in[0,k[$ such that (\ref{compble1A}) and (\ref{passendA}) are satisfied.
Now Lemma~\ref{myway-1A}
provides $y\in G\cong G/\bigcap_{j<k}N_j$
such that $\alpha^n(y)\to e$ and $x^{-1}y\in H$.
Hence $\alpha$ has the decomposition property, as required.\\[2.3mm]
Now let $\alpha$ be an endomorphism of a totally disconnected, locally compact group~$G$
which need not be $\sigma$-compact.
Let $H\sub G$ be an $\alpha$-stable closed
subgroup or an $\alpha$-invariant compact subgroup.
Given $x\in\con(\alpha, H)$,
Lemma~\ref{BaWla} provides an $h\in H$
such that $xh\in \con(\alpha, K)$ with $K:=(O\cap H)_+\cap (O\cap H)_-$.
As observed in Remark~\ref{opensigmacomp},
$G$ has an $\alpha$-invariant $\sigma$-compact open subgroup
$S$ such that $K\cup\{xh\}\sub S$. Now, by Lemma~\ref{sammelsub}\,(a),
\[
xh\in\con(\alpha, K)\cap S=\con(\alpha|_S,K).
\]
Since $\alpha|_S$
has the decomposition property (as $S$ is $\sigma$-compact), we find $y\in \con(\alpha|_S)\sub\con(\alpha)$
and $k\in K$ such that $xh=yk$. Thus $x=ykh^{-1}\in \con(\alpha)KH=\con(\alpha)H$.
\end{proof}
\section{Theorem~B for non-metrizable {\boldmath$G$}}\label{proofB}
To deal with anti-contraction groups (rather than contraction groups),
we need to consider regressive trajectories rather than
individual group elements. This consideration leads to the following
analogue of Definition~\ref{defndeco}.
\begin{defn}
Let $\alpha$ be an endomorphism of a topological group~$G$.
We say that \emph{$\alpha$-regressive trajectories decompose} if,
for each $\alpha$-invariant closed subgroup $H\sub G$
and $\alpha$-regressive trajectory $(x_n)_{n\in\N_0}$
such that $x_n\to e$ modulo~$H$, there exists an $\alpha$-regressive trajectory
$(y_n)_{n\in\N_0}$ with $y_n\in x_nH$ for all $n\in\N_0$
and $\lim_{n\to\infty}y_n=e$. Then
\[
\con^-(\alpha,H)=\con^-(\alpha)H
\]
in particular
(since
$x_0=y_0 h$ for some $h\in H$, we have $\con^-(\alpha,H)\sub\con^-(\alpha)H$;
the converse inclusion is trivial).
\end{defn}
\begin{la}\label{keytwo}
Let $\alpha$ be an endomorphism of a topological group~$G$
and $(x_n)_{n\in\N_0}$ be an $\alpha$-regressive trajectory in~$G$
such that $x_n\to e$ modulo~$H$ for some $\alpha$-invariant closed subgroup $H\sub G$.
Let $N\sub G$ be an $\alpha$-invariant, compact normal subgroup and
$q\colon G\to G/N$ be the canonical quotient map.
Let $\wb{\alpha}\colon G/N\to G/N$
be the induced map determined by $\wb{\alpha}\circ q=q\circ\alpha$
and $(\wb{y}_n)_{n\in\N_0}$ be an $\wb{\alpha}$-regressive trajectory in $G/N$
such that $\wb{y}_n\in q(x_n) q(H)$ for each $n\in\N_0$.
Then there exists an $\alpha$-regressive trajectory $(y_n)_{n\in\N_0}$ in~$G$
such that
\[
q(y_n)=\wb{y}_n\quad\mbox{and}\quad y_n\in x_nH\quad\mbox{for all $n\in\N_0$.}
\]
If $\wb{y}_n\to e$ as $n\to\infty$ and $x_n\to e$ modulo~$H$,
then $y_n\to e$ modulo $H\cap N$.
\end{la}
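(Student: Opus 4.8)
The plan is to lift the trajectory $(\wb{y}_n)$ to $G$ fibre by fibre, coherently, by a compactness argument. First I would set, for each $n\in\N_0$,
\[
F_n:=q^{-1}(\{\wb{y}_n\})\cap x_nH ,
\]
and record that $F_n$ is a non-empty compact subset of~$G$: it is non-empty because $\wb{y}_n\in q(x_n)q(H)=q(x_nH)$, and compact because $q^{-1}(\{\wb{y}_n\})$ is a coset of the compact group~$N$ while $x_nH$ is closed. The key structural observation is that $\alpha(F_n)\sub F_{n-1}$ for every $n\geq1$: if $y\in F_n$ then $q(\alpha(y))=\wb{\alpha}(q(y))=\wb{\alpha}(\wb{y}_n)=\wb{y}_{n-1}$ and $\alpha(y)\in\alpha(x_n)\alpha(H)\sub x_{n-1}H$, so $\alpha(y)\in F_{n-1}$.

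Next I would produce an $\alpha$-regressive trajectory running through the $F_n$. Choosing $z_m\in F_m$ for each $m$ and forming the associated double sequence $(z^{(m)})_{m\in\N_0}$ with $x_0:=e$ as in Definition~\ref{defdoubseq}, we have $z^{(m)}_n=\alpha^{m-n}(z_m)\in F_n$ whenever $n\leq m$ (by iterating $\alpha(F_k)\sub F_{k-1}$), and $z^{(m)}_n=e$ otherwise; hence $(z^{(m)})_{m}$ takes values in the compact product $\prod_{n\in\N_0}(F_n\cup\{e\})$ and therefore has an accumulation point $(y_n)_{n\in\N_0}$ in $G^{\N_0}$, which is an $\alpha$-regressive trajectory by Lemma~\ref{onekey}. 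Since each $F_n$ is closed and $z^{(m)}_n\in F_n$ for all $m\geq n$, this accumulation point satisfies $y_n\in F_n$, i.e.\ $q(y_n)=\wb{y}_n$ and $y_n\in x_nH$ for all $n$; this proves the first assertion. (Alternatively one could observe that the set of $\alpha$-regressive trajectories lying in the compact space $\prod_{n}F_n$ is the intersection of a descending chain of non-empty closed subsets, hence non-empty.)

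For the ``moreover'' part, assume $\wb{y}_n\to e$ and $x_n\to e$ modulo~$H$. From $y_n\in x_nH$ we get $y_nH=x_nH$, hence $y_n\to e$ modulo~$H$. As $q$ is open, $q(V)$ is an identity neighbourhood of $G/N$ for every identity neighbourhood $V\sub G$, and $q(y_n)=\wb{y}_n\in q(V)$ for large~$n$ gives $y_n\in q^{-1}(q(V))=VN$ for large~$n$; thus $y_n\to e$ modulo~$N$. Since $N$ is compact and $H,N$ are closed, Lemma~\ref{intersect-fam-seq}, applied in $(G,\cR)$ to the two-element family $\{H,N\}$, yields $y_n\to e$ modulo~$H\cap N$.

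I expect the middle step to be the main obstacle: one must lift not each $\wb{y}_n$ separately but all of them simultaneously so that $\alpha(y_n)=y_{n-1}$ holds for every~$n$. This is where compactness of~$N$ — which makes the fibres $F_n$ compact — is indispensable, and where Lemma~\ref{onekey} (or the equivalent inverse-limit compactness argument) does the essential work. The verification that the fibres are $\alpha$-equivariant, $\alpha(F_n)\sub F_{n-1}$, is the genuinely new ingredient compared with the contraction-group analogue in Lemma~\ref{keytwoA}.
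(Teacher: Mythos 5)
Your proof is correct and takes essentially the same route as the paper's: your fibre $F_n=q^{-1}(\{\wb{y}_n\})\cap x_nH$ coincides with the paper's compact set $x_n(H\cap h_nN)$ once a representative $h_n\in H$ with $\wb{y}_n=q(x_nh_n)$ is fixed, and both arguments extract the regressive trajectory from the associated double sequence via Lemma~\ref{onekey}, then finish the ``moreover'' part with Lemma~\ref{intersect-fam-seq}.
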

\begin{proof}
For each $n\in\N_0$, there is $h_n\in H$ such that $\wb{y}_n=q(x_n)q(h_n)$.
We let $(z^{(m)})_{m\in\N_0}=((z^{(m)}_n)_{n\in\N_0})_{m\in\N_0}$ be the double sequence
associated with $(x_nh_n)_{n\in\N_0}$ (as in Definition~\ref{defdoubseq}).
Given $n\in\N_0$, we have
\[
q(z^{(m)}_n)=q(\alpha^{m-n}(x_mh_m))=\wb{\alpha}^{m-n}(q(x_mh_m))=\wb{\alpha}^{m-n}(\wb{y}_m)=
\wb{y}_n=q(x_nh_n)
\]
for all $m\geq n$. Moreover, $z^{(m)}_n=
\alpha^{m-n}(x_mh_m)=x_n\alpha^{m-n}(h_m)\in x_n H$.
Thus $z^{(m)}_n$ is contained in the compact set $x_n(H\cap h_n N)$
for all $m\geq n$ (and always in $x_n(H\cap h_nN) \cup\{e\}$),
whence $(z^{(m)})_{m\in\N_0}$ has an accumulation point $y=(y_n)_{n\in\N_0}$
in $\prod_{n\in\N_0}x_n(H\cap h_nN)$. Then $q(y_n)=q(x_nh_n)=\wb{y}_n$
and $y_n\in x_n H$. By Lemma~\ref{onekey}, $(y_n)_{n\in\N_0}$ is an $\alpha$-regressive
trajectory.\\[2.3mm]
If $q(y_n)=\wb{y}_n\to e$, then $y_n\to e$ modulo~$N$.
If, moreover, $x_n\to e$ modulo~$H$, then $y_n\in x_nH$ implies
that also $y_n\to e$ modulo~$H$.
Hence
$y_n\to e$ modulo $H\cap N$ (by Lemma~\ref{intersect-fam-seq}),
as~$N$ is compact.
\end{proof}
\begin{la}\label{myway-1}
Let $0\not=k$ be a limit ordinal, $\alpha$ be an endomorphism of a topological group~$G$
and $(N_j)_{j\in [0,k[}$
be a family of $\alpha$-invariant, compact normal subgroups $N_j\sub G$
such that $N_j\sub N_i$ for $i\leq j$
in $[0,k[$. Let $N_k:=\bigcap_{j\in [0,k[}N_j$
and use notation as in {\rm\ref{settordi}}
with $I:=[0,k]$.
Let $H\sub G$ be an $\alpha$-invariant, closed subgroup
and $(x_n)_{n\in\N_0}$ be
an $\alpha$-regressive trajectory in~$G$ such that $x_n\to e$ modulo~$H$.
Assume that, for each $j\in [0,k[$,
an $\alpha_j$-regressive trajectory $(y^{(j)}_n)_{n\in\N_0}$ in $G/N_j$
with ${\displaystyle\lim_{n\to\infty}y^{(j)}_n=e}$\vspace{-1mm} is given
such that, for each $n\in\N_0$,
\begin{eqnarray}
q_{i,j}(y^{(j)}_n) = y^{(i)}_n & & \mbox{for all $\,i\leq j$ in $[0,k[$,}\label{compble1}\\
y^{(j)}_n \in  q_j(x_n)q_j(H) & & \mbox{for all $\,j\in[0,k[$.}\label{passend}
\end{eqnarray}
Then there are unique elements $y^{(k)}_n\in G/N_k$
such that
\begin{equation}\label{sincePL1}
q_{i,k}(y^{(k)}_n)= y^{(i)}_n\quad\mbox{for all $\,i\in[0,k[$.}
\end{equation}
Moreover, $(y^{(k)}_n)_{n\in\N_0}$ is an $\alpha_k$-regressive
trajectory in $G/N_k$ which converges to~$e$ and
\begin{equation}\label{sincePL3}
y^{(k)}_n\in q_k(x_n)q_k(H) \quad
\mbox{for all $\, n\in\N_0$.}
\end{equation}
\end{la}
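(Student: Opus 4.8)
The plan is to follow the proof of Lemma~\ref{myway-1A} closely, carrying out its projective-limit argument pointwise in~$n$ and adding one verification that is specific to trajectories, namely that the family produced is again $\alpha_k$-regressive.

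First I would identify $G/N_k$ as a projective limit, exactly as in the proof of Lemma~\ref{myway-1A}. The transition maps $q_{j,k}\colon G/N_k\to G/N_j$ are quotient homomorphisms (open surjections) with compact kernels $q_k(N_j)$, and $\bigcap_{j\in[0,k[}q_k(N_j)=\{e\}$ because $\bigcap_{j\in[0,k[}N_j=N_k$; hence Lemma~\ref{cphencepl} shows that $(G/N_k,(q_{j,k})_{j\in[0,k[})$ is the projective limit of $((G/N_j)_{j\in[0,k[},(q_{i,j})_{i\leq j})$. Then, for each fixed $n\in\N_0$, condition~(\ref{compble1}) says that the family $(y^{(j)}_n)_{j\in[0,k[}$ is compatible with the transition maps, so Remark~\ref{subgpPL}\,(a) provides a unique element $y^{(k)}_n\in G/N_k$ with $q_{i,k}(y^{(k)}_n)=y^{(i)}_n$ for all $i\in[0,k[$. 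This establishes~(\ref{sincePL1}) together with the asserted uniqueness.

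Next I would check that $(y^{(k)}_n)_{n\in\N_0}$ is an $\alpha_k$-regressive trajectory. Fix $n\in\N$. Using the relation $q_{i,k}\circ\alpha_k=\alpha_i\circ q_{i,k}$ (which follows from the defining relations in~\ref{settordi} and the surjectivity of $q_k$), together with~(\ref{compble1}) and the fact that $(y^{(i)}_n)_n$ is $\alpha_i$-regressive, one computes for every $i\in[0,k[$ that
\[
q_{i,k}\big(\alpha_k(y^{(k)}_n)\big)=\alpha_i(y^{(i)}_n)=y^{(i)}_{n-1}=q_{i,k}(y^{(k)}_{n-1}).
\]
Since the maps $q_{i,k}$, $i\in[0,k[$, jointly separate points of $G/N_k$ (equivalently, by the uniqueness clause of Remark~\ref{subgpPL}\,(a)), this forces $\alpha_k(y^{(k)}_n)=y^{(k)}_{n-1}$. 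This is the only genuinely new point compared with Lemma~\ref{myway-1A}, where a single element rather than a whole trajectory had to be produced.

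Finally I would establish the convergence $y^{(k)}_n\to e$ and property~(\ref{sincePL3}), essentially verbatim from Lemma~\ref{myway-1A}. For convergence: given an identity neighbourhood $U\sub G/N_k$, there are $i\in[0,k[$ and an identity neighbourhood $V\sub G/N_i$ with $q_{i,k}^{-1}(V)\sub U$; since $y^{(i)}_n\to e$, there is $n_0$ with $y^{(i)}_n\in V$, hence $y^{(k)}_n\in q_{i,k}^{-1}(V)\sub U$, for all $n\geq n_0$. For~(\ref{sincePL3}): by Remark~\ref{subgpPL}\,(b), $q_k(H)$ is the projective limit of its quotients $q_i(H)$ with limit maps $q_{i,k}|_{q_k(H)}$, and since by~(\ref{passend}) and~(\ref{compble1})
\[
q_{i,k}\big(q_k(x_n)^{-1}y^{(k)}_n\big)=q_i(x_n)^{-1}y^{(i)}_n\in q_i(H)
\]
for every $i\in[0,k[$, we conclude $q_k(x_n)^{-1}y^{(k)}_n\in q_k(H)$, i.e., $y^{(k)}_n\in q_k(x_n)q_k(H)$. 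I do not expect a real obstacle: the whole argument rests on the projective-limit package of Section~\ref{sec2}; the only points to be careful about are that the projective-limit identification is set up for the index interval $[0,k]$ (so that $N_k$ sits at the top), and that the regressive-trajectory condition for $(y^{(k)}_n)_n$ is obtained from the separation property of the transition maps rather than from a convergence statement.
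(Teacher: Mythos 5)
Your proposal is correct and follows essentially the same route as the paper: identify $G/N_k$ as the projective limit via Lemma~\ref{cphencepl}, obtain each $y^{(k)}_n$ from Remark~\ref{subgpPL}\,(a), verify the regressive-trajectory property by checking it under all $q_{i,k}$ and invoking the fact that these maps jointly separate points, and then use Remark~\ref{subgpPL}\,(b) and the neighbourhood-base description of the projective-limit topology for~(\ref{sincePL3}) and the convergence statement, respectively. The only differences from the paper's proof are cosmetic (indexing by $n,n-1$ rather than $n+1,n$, and spelling out the commutation $q_{i,k}\circ\alpha_k=\alpha_i\circ q_{i,k}$).
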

\begin{proof}
By Lemma~\ref{cphencepl},
$G/N_k$ is the projective limit
of the projective system $((G/N_j)_{j\in[0,k[},(q_{i,j})_{i\leq j})$,
with the limit maps $q_{i,k}$.
Thus, for $n\in\N_0$,\linebreak
(\ref{compble1})
determines a unique
$y^{(k)}_n\in G/N_k$ with~(\ref{sincePL1})
(see Remark~\ref{subgpPL}\,(a)).~As
\[
q_{i,k}(\alpha_k (y^{(k)}_{n+1}))=\alpha_i(q_{i,k}(y^{(k)}_{n+1}))=\alpha_i(y^{(i)}_{n+1})=
y^{(i)}_n=q_{i,k}(y^{(k)}_n)\;\mbox{for all $i\in[0,k[$,}
\]
we have $\alpha_k(y^{(k)}_{n+1})=y^{(k)}_n$. Thus $(y^{(k)}_n)_{n\in\N_0}$
is an $\alpha_k$-regressive trajectory.
Now
$q_k(H)$ is the projective limit of its quotients $q_i(H)$ for $i\in[0,k[$,
with the limit maps $q_{i,k}|_H\colon H\to q_{i,k}(H)$ (see Remark~\ref{subgpPL}\,(b)).
Since
\[
q_{i,k}(q_k(x_n)^{-1}y^{(k)}_n)=q_i(x_n)^{-1} y_n^{(i)} \in q_i(H)
\;\;\mbox{for all $i\in[0,k[$,}
\]
we deduce that $q_k(x)^{-1}y^{(k)}_n \in q_k(H)$.
To see that $y^{(k)}_n\to e$,
let $U\sub G/N_k$ be an identity neighbourhood.
Then $q_{i,k}^{-1}(V)\sub U$ for some $i\in[0,k[$ and identity neighbourhood $V\sub G/N_i$.
Since $q_i(y^{(i)}_n)\to e$, there is $n_0\in\N$
such that
\[
q_{i,k}(y_n^{(k)})=y^{(i)}_n\in V
\]
for all $n\geq n_0$ and thus $y_n^{(k)}\in q_{i,k}^{-1}(V)\sub U$.
\end{proof}
\begin{la}\label{myway2}
Let $k$ be an ordinal number, $\alpha$ be an endomorphism of a topological group~$G$
and $(N_j)_{j\in [0,k[}$ be a family of $\alpha$-invariant, compact normal subgroups $N_j\sub G$
such that $N_j\sub N_i$ for all $i\leq j$ in $[0,k[$ and
\[
N_j=\bigcap_{i<j}N_i\vspace{-1mm}
\]
for each limit ordinal $0\not=j\in [0,k[$.
Let $H\sub G$ be an $\alpha$-invariant closed subgroup.
Let $(x_n)_{n\in\N_0}$ be an $\alpha$-regressive
trajectory in~$G$ such that $x_n\to e$ modulo~$H$.
Using notation as in {\rm\ref{settordi}},
assume that $\alpha_j$-regressive trajectories in $G/N_j$ decompose
for each $j\in [0,k[$.
Then there exist
$\alpha_j$-regressive trajectories $(y^{(j)}_n)_{n\in\N_0}$ in $G/N_j$
which converge to~$e$,
for all $j\in [0,k[$,
such that conditions
{\rm(\ref{compble1})}
and {\rm(\ref{passend})} from Lemma~{\rm\ref{myway-1}} are satisfied.
\end{la}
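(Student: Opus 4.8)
The plan is to run the same transfinite Zorn's Lemma construction as in the proof of Lemma~\ref{myway2A}, but now lining up whole $\alpha_j$-regressive trajectories rather than single contraction-group elements (for $k=0$ there is nothing to prove, so assume $k\geq 1$). First I would record the facts used throughout: for each $j\in[0,k[$, with $q_j$, $\alpha_j$, $q_{i,j}$ as in~\ref{settordi}, the sequence $(q_j(x_n))_{n\in\N_0}$ is an $\alpha_j$-regressive trajectory in $G/N_j$ (since $q_j\circ\alpha=\alpha_j\circ q_j$); the subgroup $q_j(H)$ is $\alpha_j$-invariant and closed in $G/N_j$ (closed because $q_j$ is a closed map, its kernel $N_j$ being compact, cf.\ \cite[(5.18)]{HaR}); and $q_j(x_n)\to e$ modulo $q_j(H)$, by Lemma~\ref{imageconv}\,(a) applied to the uniformly continuous homomorphism $q_j$ with respect to the right uniform structures.

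Next I would let $M$ be the set of all families $((y^{(j)}_n)_{n\in\N_0})_{j\in[0,\ell[}$ with $\ell\in\,]0,k]$ such that each $(y^{(j)}_n)_{n}$ is an $\alpha_j$-regressive trajectory converging to $e$ and conditions~(\ref{compble1}) and~(\ref{passend}) hold with $\ell$ in place of $k$, ordered by restriction (so $f\leq g$ when $g$ extends $f$). The base case $\ell=1$ uses that $\alpha_0$-regressive trajectories decompose: applied to $(q_0(x_n))_n$ and the $\alpha_0$-invariant closed subgroup $q_0(H)$, it yields an $\alpha_0$-regressive trajectory $(y^{(0)}_n)_n$ with $y^{(0)}_n\in q_0(x_n)q_0(H)$ for all $n$ and $y^{(0)}_n\to e$, so $M\neq\emptyset$. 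Since the trajectory, limit and compatibility conditions are imposed separately at each index~$j$, the union of the graphs along a chain in $M$ is again a member of $M$ defined on an initial interval $[0,\ell[$ (with $\ell\in\,]0,k]$ the supremum of the chain's lengths); thus $M$ is inductively ordered, and Zorn's Lemma yields a maximal element defined on some $[0,\ell[$. It then suffices to rule out $\ell<k$ by extending a maximal element to $[0,\ell]$.

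For the extension I would split into cases as in Lemma~\ref{myway2A}. If $\ell$ is a limit ordinal, then $N_\ell=\bigcap_{i<\ell}N_i$ by hypothesis and Lemma~\ref{myway-1} (with $\ell$ in place of $k$) directly supplies an $\alpha_\ell$-regressive trajectory $(y^{(\ell)}_n)_n$ converging to $e$ and satisfying~(\ref{sincePL1}) and~(\ref{sincePL3}), which extends the family. If $\ell=j+1$ has precursor $j$, I would apply Lemma~\ref{keytwo} with $G/N_\ell$, $\alpha_\ell$, $(q_\ell(x_n))_n$, $q_\ell(H)$, $q_\ell(N_j)$, the quotient map $q_{j,\ell}$, $\alpha_j$ and $(y^{(j)}_n)_n$ in the roles of $G$, $\alpha$, $(x_n)_n$, $H$, $N$, $q$, $\wb\alpha$ and $(\wb y_n)_n$ (the hypothesis on $(\wb y_n)_n$ being exactly~(\ref{passend})). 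Here lies the one genuinely new point, and the main obstacle: unlike its automorphism analogue Lemma~\ref{keytwoA}, Lemma~\ref{keytwo} has no built-in ``decomposition'' clause, so it only yields an $\alpha_\ell$-regressive trajectory $(z_n)_n$ with $q_{j,\ell}(z_n)=y^{(j)}_n$, $z_n\in q_\ell(x_n)q_\ell(H)$, and $z_n\to e$ \emph{modulo} the compact $\alpha_\ell$-invariant subgroup $L:=q_\ell(H)\cap q_\ell(N_j)$. I would therefore insert one more step: since $\ell<k$, $\alpha_\ell$-regressive trajectories decompose, so $(z_n)_n$ can be corrected to an $\alpha_\ell$-regressive trajectory $(y^{(\ell)}_n)_n$ with $y^{(\ell)}_n\in z_nL$ and $y^{(\ell)}_n\to e$. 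Since $L\sub q_\ell(N_j)=\ker q_{j,\ell}$ this correction preserves $q_{j,\ell}(y^{(\ell)}_n)=y^{(j)}_n$, hence $q_{i,\ell}(y^{(\ell)}_n)=q_{i,j}(y^{(j)}_n)=y^{(i)}_n$ for all $i<\ell$ by~(\ref{compble1}); and since $L\sub q_\ell(H)$ it preserves $y^{(\ell)}_n\in q_\ell(x_n)q_\ell(H)$. This produces a member of $M$ on $[0,\ell]$, contradicting maximality, so $\ell=k$. The remaining verifications — that $q_\ell(N_j)$ is compact, normal and $\alpha_\ell$-invariant, and that passing to subnets and quotients preserves the trajectory structure — are routine and parallel the cited lemmas.
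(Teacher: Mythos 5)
Your proposal is correct and follows essentially the same route as the paper's own proof: the same Zorn's Lemma argument over the set of compatible families indexed by initial segments $[0,\ell[$, the same base case via the decomposition hypothesis for $\alpha_0$, Lemma~\ref{myway-1} at limit ordinals, and Lemma~\ref{keytwo} at successor ordinals followed by one further application of the decomposition hypothesis for $\alpha_\ell$. You correctly isolate the point where the argument differs from its automorphism analogue Lemma~\ref{myway2A}: since Lemma~\ref{keytwo} (unlike Lemma~\ref{keytwoA}) carries no built-in decomposition clause, the extra correction step modulo $q_\ell(H)\cap q_\ell(N_j)$ is exactly what the paper inserts as well.
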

\begin{proof}
For each $j\in[0,k[$, the sequence
$(q_j(x_n))_{n\in\N_0}$ is an $\alpha_j$-regressive trajectory in~$G/N_j$.
Moreover, $q_j(x_n)\to e$ modulo $q_j(H)$, by Lemma~\ref{imageconv}\,(a).
Let $M$ be the set of all families $f=(y^{(j)}_n)_{(j,n)\in[0,\ell[\,\times \N_0}$ with $\ell\in\,]0,k]$
such that {\rm(\ref{compble1})}
and {\rm(\ref{passend})} are satisfied, with~$\ell$ in place of~$k$.
Since $\alpha_0$-regressive trajectories in $G/N_0$
decompose and $q_0(x_n)\to e$ modulo~$q_0(H)$,
there exists an $\alpha_0$-regressive trajectory $(y^{(0)}_n)_{n\in\N_0}$
such that
\[
\mbox{$y^{(0)}_n\in q_0(x_n)q_0(H)$ for all $n\in\N_0$ and $y^{(0)}_n\to e$.}
\]
Then $(y^{(j)}_n)_{(j,n)\in \{0\}\times\N_0}\in M$
and thus $M\not=\emptyset$.
For $f=(y^{(j)}_n)_{(j,n)\in[0,\ell[\,\times\N_0}$ and $g=(z^{(j)}_n)_{(j,n)\in[0,\ell'[\,\times\N_0}$ in~$M$,
write $f\leq g$ if $\ell\leq \ell'$ and $f=g|_{[0,\ell[\,\times\N_0}$.
To see that $(M,\leq)$ is inductively ordered, let~$\Gamma$
be a totally ordered subset of~$M$.
If $\Gamma=\emptyset$, then every element of~$M$ (which is non-empty)
is an upper bound for~$\Gamma$. If $\Gamma\not=\emptyset$,
write $[0,\ell_f[\,\times\N_0$ for the domain of $f\in\Gamma$.
Let $\ell=\sup\{\ell_f\colon f\in\Gamma\}\in\,]0,k]$.
Then the union of the graphs of the $f\in\Gamma$
is the graph of a function $g\in M$ which is an upper bound for~$\Gamma$.
Hence $(M,\leq )$ is inductively ordered and thus~$M$ has a maximal
element $f=(y^{(j)}_n)_{(j,n)\in[0,\ell[\,\times \N_0}$, by Zorn's Lemma.
If $\ell=k$, then the assertion of the lemma holds for $\alpha$ and $(x_n)_{n\in\N_0}$.
If $\ell<k$, we reach a contradiction, as follows:
Since $[0,\ell+1[\,=[0,\ell]$, we can extend $(y^{(j)}_n)_{(j,n)\in[0,\ell[\,\times\N_0}$
to a properly larger element in $(M,\leq)$
if we can find an $\alpha_\ell$-regressive
trajectory $(y^{(\ell)}_n)_{n\in\N_0}$ in $G/N_\ell$ with $y^{(\ell)}_n\to e$
such that
(\ref{sincePL1}) and (\ref{sincePL3}) are satisfied, with $\ell$ in place of~$k$.\\[2.3mm]
If $\ell$ is a limit ordinal, then Lemma~\ref{myway-1}
yields $(y^{(\ell)}_n)_{n\in\N_0}$ in $G/N_\ell$ as desired (contradiction).\\[2.3mm]
If $\ell$ has a precursor~$j$, we apply Lemma~\ref{keytwo}
with $G/N_\ell$, the quotient map $q_{j,\ell}\colon G/N_\ell\to G/N_j$, $\alpha_\ell$, $q_\ell(N_j)$, $q_\ell(H)$, $(q_\ell(x_n))_{n\in\N_0}$,
and $(y^{(j)}_n)_{n\in\N_0}$
in place of~$G$, $q$, $\alpha$, $N$, $H$, $(x_n)_{n\in\N_0}$, and $(\wb{y}_n)_{n\in\N_0}$,
respectively. We obtain an $\alpha_\ell$-regressive trajectory $(y_n)_{n\in\N_0}$
in $G/N_\ell$ such that
\[
\mbox{$y_n\to e$ modulo $q_\ell(H)\cap q_\ell(N_j)$}
\]
and $q_{j,\ell}(y_n)=y^{(j)}_n$ as well as $y_n\in q_\ell(x_n)q_\ell(H)$ for all $n\in\N_0$.
Since $\alpha_\ell$-regressive trajectories decompose,
we find an $\alpha_\ell$-regressive trajectory $(y^{(\ell)}_n)_{n\in\N_0}$
in $G/N_\ell$ such that $y^{(\ell)}_n\to e$
and
\[
y^{(\ell)}_n\in y_n(q_\ell(H)\cap q_\ell(N_j))\sub q_\ell(x_n)q_\ell(H),
\]
whence $q_{j,\ell}(y^{(k)}_n)=q_{j,\ell}(y_n)=y^{(j)}_n$. Again, a contradiction is obtained.\vspace{1mm}
\end{proof}
\begin{proof}
(Theorem~B, completed). We show by transfinite induction on cardinals~$k\geq 1$ that $\alpha$-regressive trajectories decompose
for each endomorphism~$\alpha$ of a $\sigma$-compact, totally disconnected, locally compact
group~$G$ such that $\kappa(G,\alpha)\leq k$ (where $\kappa(G,\alpha)$ is as in Definition~\ref{thekappa}).
If $k\leq\aleph_0$, then $\kappa(G,\alpha)\leq \aleph_0$ implies that~$G$ is metrizable (see Lemma~\ref{kernels-directed});
thus $\alpha$-regressive trajectories decompose (as verified in Section~\ref{sec4}).\\[2.3mm]
Let~$k>\aleph_0$ now and assume that $\beta$-regressive trajectories decompose
for all endomorphisms~$\beta$ of $\sigma$-compact, totally disconnected, locally compact groups~$H$
with $\kappa(H,\beta)<k$. The assertion will
hold for~$k$ if we can show
that $\alpha$-regressive trajectories decompose for each endomorphism
$\alpha$ of a $\sigma$-compact,
totally disconnected, locally compact group~$G$ such that $\kappa(G,\alpha)=k$.
In this situation, we choose a set~$\cK$ with $|\cK|=k$ as in Definition~\ref{thekappa}
and a bijection
\[
[0,k[\, \to \cK,\quad j\mto K_j.
\]
We define
\[
N_j:=\bigcap_{i<j} K_i\quad\mbox{for all $j\in\, ]0,k[$ and $N_0:=N_1$.}\vspace{-1mm}
\]
This yields a decreasing family $(N_j)_{j\in[0,k[}$ of $\alpha$-invariant,
compact normal subgroups of~$G$ with $N_j=\bigcap_{i<j}N_i$
for each limit ordinal $0\not=j\in[0,k[$.
For $j\in[0,k[$, let $q_j\colon G\to G/N_j$ be the canonical quotient map
and $\alpha_j$ be the endomorphism of~$G/N_j$ induced by~$\alpha$.
Then $\cK_j:=\{q_j(K_i)\colon i<j\}$ is a set of cardinality~$\leq j$
which has the properties of~$\cK$ required in Definition~\ref{thekappa},
with~$G/N_j$ in place of~$G$ and~$\alpha_j$ in place of~$\alpha$.
Hence
\[
\kappa(G/N_j,\alpha_j)\leq |\cK_j|\leq j<k,
\]
and thus $\alpha_j$-regressive trajectories decompose.
Note that~$k$, being an infinite cardinal, is a limit ordinal.
Given an $\alpha$-regressive trajectory $(x_n)_{n\in\N_0}$ in~$G$,
we apply Lemma~\ref{myway2} to find $\alpha_j$-regressive trajectories
$(y^{(j)}_n)_{n\in\N_0}$ for $j\in[0,k[$ such that $\lim_{n\to\infty}y^{(j)}_n=e$
and both~(\ref{compble1}) and (\ref{passend}) are satisfied.
Now Lemma~\ref{myway-1}
provides an $\alpha$-regressive trajectory $(y_n)_{n\in\N_0}$ in $G\cong G/\bigcap_{j<k}N_j$
such that $y_n\to e$ and $x_n^{-1}y_n\in H$ for all $n\in\N_0$.
Hence $\alpha$-regressive trajectories decompose, as required.\\[2.3mm]
Now let $\alpha$ be an endomorphism of a totally disconnected, locally compact group~$G$
which need not be $\sigma$-compact, $H\sub G$ be an $\alpha$-invariant closed
subgroup and $(x_n)_{n\in\N_0}$ be an $\alpha$-regressive
trajectory such that $x_n\to e$ modulo~$H$.
There exists a $\sigma$-compact, $\alpha$-invariant open subgroup $S\sub G$
which contains the countable (hence $\sigma$-compact) set $\{x_n\colon n\in\N_0\}$
(see Remark~\ref{opensigmacomp}). Then $S\cap H$ is an $\alpha|_S$-invariant closed
subgroup of~$S$ and $(x_n)_{n\in\N_0}$ an $\alpha|_S$-regressive trajectory in~$S$
such that $x_n\to e$ modulo~$S\cap H$ (see Lemma~\ref{sammelsub}\,(b)).
Since $\alpha|_S$-regressive trajectories decompose (by $\sigma$-compactness of~$S$),
there exists an $\alpha|_S$-regressive trajectory $(y_n)_{n\in\N_0}$ in~$S$
such that $y_n\to e$ in~$S$ (hence in~$G$)
and $y_n\in x_n (S\cap H)\sub x_n H$ for all $n\in\N_0$.
Thus $\alpha$-regressive trajectories decompose.
Notably, $\con(\alpha,H)=\con(\alpha)H$.
\end{proof}
\section{Subgroups and quotients}\label{sec:scale_sub_quot}
In this section, we generalize several results of \cite{Fur} about how tidy subgroups and the scale behave with respect to taking subgroups and quotients from automorphisms to endomorphisms, paralleling the topological entropy study in~\cite{GBV}.
\begin{numba}\label{tidyingp}
For many results in this section we rely on the tidying procedure from \cite{End} which we shall outline for the benefit of the reader:
Given a totally disconnected locally compact group $G$, an endomorphism $\alpha$ and $U\in\COS(G)$, the following steps produce a tidy subgroup for $\alpha$.
\begin{enumerate}
	\item There exists $n\in\N$ such that $U_{-n}:=\bigcap_{k = 0}^{n}\alpha^{-k}(U)$ is tidy above for $\alpha$. By replacing $U$ with $U_{-n}$ we assume $U$ is tidy above.
	\item Define 
	\[\mathcal L_{U}: = \{x\in G:\exists y\in U_{+}\hbox{ }\exists m,n\in\N \hbox{ with }\alpha^{m}(y) = x \hbox{ and }\alpha^n(x)\in U_{-}\}\]
	and $L_{U}:=\overline{\mathcal L_{U}}$.
	\item Set $\widetilde{U} := \{x\in U: xL_U\sub L_{U}U\}$.
	\item Then $\widetilde{U}L_{U}$ is a compact open subgroup of $G$ which is tidy for $\alpha$.
\end{enumerate}
If the original subgroup $U$ was already tidy, then $\widetilde{U}L_{U} = U$.
\end{numba}
\subsection*{Subgroups}
We first explore the effect of taking subgroups on tidiness and the scale. The following lemma shows that tidy subgroups behave well when passing to subgroups. It is applied in Proposition \ref{prop:scale_subgroup} which deals with the scale.

\begin{la}\label{lem:tidy_subgroup}
Let $G$ be a totally disconnected, locally compact group, $\alpha$ an endomorphism of $G$ and $H\sub G$ a closed subgroup with $\alpha(H)\sub H$. Further, let $U\in\mathrm{COS}(G)$ be tidy for $\alpha$. Set $V:=U\cap H$. Then there is $N\in\mathbb{N}$ such that $V_{-N}$ is tidy for $\alpha|_{H}$.
\end{la}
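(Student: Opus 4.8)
The plan is to verify the two defining properties of tidiness for $V_{-N}=\bigcap_{j=0}^{N}(\alpha|_{H})^{-j}(V)$ separately, using the criterion recalled in Section~\ref{sec:basic_concepts} (\cite[Proposition~9]{End}): a compact open subgroup $W$ is tidy for an endomorphism $\beta$ precisely when $W$ is tidy above and $W_{--}=\bigcup_{n\in\N_0}\beta^{-n}(W_{-})$ is closed. First, for tidiness above I would apply Lemma~\ref{simple-obs} with $W:=U$: since $H$ is $\alpha$-invariant and $U\in\COS(G)$, there is $\ell_{0}\in\N_0$ such that $\bigl(\bigcap_{j=0}^{\ell}\alpha^{-j}(U)\bigr)\cap H$ is tidy above for $\alpha|_{H}$ for all $\ell\ge\ell_{0}$. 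By the computation carried out in the proof of Lemma~\ref{simple-obs}, this group is exactly $\bigcap_{j=0}^{\ell}(\alpha|_{H})^{-j}(V)=V_{-\ell}$, and it lies in $\COS(H)$ because $V=U\cap H$ does. So it suffices to fix some $N\ge\max\{\ell_{0},1\}$; then $V_{-N}\in\COS(H)$ is tidy above for $\alpha|_{H}$.

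It remains to show that $(V_{-N})_{--}$, formed inside $H$ with respect to $\alpha|_{H}$, is closed. The key point is that $(V_{-N})_{-}$ is independent of $N$ and equals $U_{-}\cap H$: for $x\in H$, the condition $(\alpha|_{H})^{n}(x)\in V_{-N}$ for all $n\in\N_0$ unwinds — using that $\alpha^{n}(x)\in H$ automatically and that $V_{-N}=\bigl(\bigcap_{j=0}^{N}\alpha^{-j}(U)\bigr)\cap H$ — to the requirement $\alpha^{m}(x)\in U$ for all $m\in\N_0$, i.e.\ to $x\in U_{-}$. Consequently
\[
(V_{-N})_{--}=\bigcup_{n\in\N_0}(\alpha|_{H})^{-n}\bigl(U_{-}\cap H\bigr)=H\cap\bigcup_{n\in\N_0}\alpha^{-n}(U_{-})=H\cap U_{--}.
\]
Since $U$ is tidy for $\alpha$, the subgroup $U_{--}$ is closed in $G$ (by the criterion above), and as $H$ is closed in $G$, so is $H\cap U_{--}$; hence $(V_{-N})_{--}$ is closed in $H$. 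Combining this with the first step and applying the tidiness criterion to $\alpha|_{H}$ shows that $V_{-N}$ is tidy for $\alpha|_{H}$.

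I do not expect a genuine obstacle: the proof is essentially bookkeeping that translates preimages under $\alpha$ into preimages under $\alpha|_{H}$, together with two black boxes — Lemma~\ref{simple-obs} and the ``tidy $\Leftrightarrow$ tidy above and ${}_{--}$ closed'' criterion. The one step that deserves a moment's care is the identity $(V_{-N})_{-}=U_{-}\cap H$, valid for every $N$, since it is precisely what lets the closedness of $U_{--}$ be transported down to~$H$; everything else is routine.
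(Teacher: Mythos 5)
Your proof is correct, and the second half takes a genuinely different (and arguably slicker) route than the paper's. Both you and the paper deal with tidiness above by invoking Lemma~\ref{simple-obs}, and both note the identity $V_- = U_- \cap H$. But for tidiness below the paper works through the tidying procedure: it shows $\mathcal{L}_V \subseteq \mathcal{L}_U \cap H \subseteq V$ (using $V_+ \subseteq U_+ \cap H$ and $V_- = U_- \cap H$), then concludes $L_V\subseteq V$, so $V$ is tidy below by \cite[Proposition~8]{End}. You instead bypass $\mathcal{L}_V$ entirely and apply the other criterion from \cite[Proposition~9]{End} directly: you observe that $(V_{-N})_- = U_-\cap H$ forces $(V_{-N})_{--} = H\cap U_{--}$, which is closed because $U$ is tidy and $H$ is closed. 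What your approach buys is that you never need to discuss $V_+$ or relate it to $U_+$, and you never touch the machinery of $\mathcal{L}_U$, $L_U$, $\widetilde{U}$; closedness of $U_{--}$ passes straight down to the subgroup. Both arguments are valid; yours has fewer moving parts for this particular lemma, while the paper's aligns more closely with the tidying-procedure toolkit it develops and reuses elsewhere (e.g.\ in Lemmas~\ref{lem:scale_quotients_1}--\ref{lem:scale_quotients_2}).
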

\begin{proof}
Note that $V$ is a compact open subgroup of $H$. By \cite[Proposition~3]{End} there is $N\in\mathbb{N}$ such that $V_{-N}$ is tidy above for $\alpha$. Since $U$ is
minimizing, the same proposition implies that $U_{-N}$ is tidy for $\alpha$.
By Lemma~\ref{simple-obs},
replacing $U$ by $U_{-N}$, we may assume that $V$ is tidy above for $\alpha|_{H}$.
To see that this $V$ is tidy, we show that $\mathcal{L}_{V}\sub V$ where $\mathcal{L}_{V}$ is given in \ref{tidyingp}. Since $V\sub H$ is closed this implies that $L_{V}=\overline{\mathcal{L}}_{V}\sub V$ and hence $V$ is tidy below and therefore tidy for $\alpha|_{H}$ by \cite[Proposition~8]{End}. First, note that
\begin{displaymath}
 V_{-}=\bigcap_{n\ge 0}V_{-n}=U_{-}\cap H\vspace{-.8mm}
\end{displaymath}
(see (\ref{thisgroup})).
Also, since $V_{+}$ is the collection of all elements in $V$ that admit an $\alpha$-regressive trajectory in $V= U\cap H$, it follows that $V_{+}\sub U_{+}\cap H$. Now, suppose that $x\in\mathcal{L}_{V}$. Then $x\in H$ and there are $y\in V_{+}$ and $m,n\in\mathbb{N}$ such that $\alpha^{m}(y)=x$ and $\alpha^{n}(y)\in V_{-}$. By the above, $y\in U_{+}$ and $\alpha^{n}(y)\in U_{-}$. Therefore, $x\in\mathcal{L}_{U}\cap H$. Since $U$ is tidy for $\alpha$ we have $\mathcal{L}_{U}\sub U$ and thus conclude $x\in U\cap H=V$. This shows $\mathcal{L}_{V}\sub V$ as required.
\end{proof}
\begin{prop}\label{prop:scale_subgroup}
Let $G$ be a totally disconnected, locally compact group, $\alpha$ an endomorphism of $G$ and $H\sub G$ a closed subgroup with $\alpha(H)\sub H$. Then
\begin{displaymath}
 s_{H}(\alpha|_{H})\le s_{G}(\alpha).
\end{displaymath}
Furthermore, if $H$ is normal in $G$ and $U\in\mathrm{COS}(G)$ is tidy for $\alpha$ such that $U\cap H$ is tidy for $\alpha|_{H}$, then $\alpha((U\cap H)_{+})U_{+}$ is a
subgroup of $G$ and
\begin{displaymath}
 s_{H}(\alpha|_{H})=[\alpha((U\cap H)_{+})U_{+}:U_{+}].
\end{displaymath}
\end{prop}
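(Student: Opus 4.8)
The plan is to split the statement into its two halves. For the inequality $s_H(\alpha|_H)\le s_G(\alpha)$, I would first invoke Lemma~\ref{lem:tidy_subgroup} to obtain $U\in\COS(G)$ tidy for $\alpha$ such that $V:=U\cap H$ — after replacing $U$ by a suitable $U_{-N}$ — is tidy for $\alpha|_H$. The scale is then computed on tidy subgroups via the formula $s(\alpha)=[\alpha(U_+):U_+]$ recalled in Section~\ref{sec:basic_concepts}. So it suffices to compare $[\alpha(V_+):V_+]$ with $[\alpha(U_+):U_+]$. The key containments, already isolated in the proof of Lemma~\ref{lem:tidy_subgroup}, are $V_-=U_-\cap H$ and $V_+\subseteq U_+\cap H$; I would also need $\alpha(V_+)\subseteq \alpha(U_+)\cap H$ together with the fact that $V_+$ is open in $U_+\cap H$ (both $V_+$ and $U_+$ being compact open in their respective ambient tidy groups, intersected appropriately). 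The cleanest route: $[\alpha(V_+):V_+]$ divides $[\alpha(U_+)\cap H:V_+]$, and the latter embeds into $[\alpha(U_+):U_+]$ because $\alpha(U_+)\cap H$ meets each coset of $U_+$ in at most one $V_+$-coset's worth — more precisely, the natural injection $\alpha(U_+)\cap H/(U_+\cap \alpha(U_+)\cap H)\hookrightarrow \alpha(U_+)/U_+$ shows $[\alpha(U_+)\cap H: U_+\cap H\cap\alpha(U_+)]\le s_G(\alpha)$, and one checks $U_+\cap H\cap \alpha(U_+)=V_+$ using $\alpha(U_+)\supseteq U_+$ and $V_+\subseteq U_+$. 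I expect this index-chasing to be the main obstacle: one must be careful that the relevant intermediate groups really are groups and that $\alpha|_{U_+}$ being injective on the relevant pieces (it need not be globally) does not cause trouble; here it helps that $U_+\cap U_-$ is $\alpha$-stable and that on $U_+$ one has $U_+\subseteq\alpha(U_+)$.

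For the second assertion, assume additionally that $H$ is normal in $G$ and that $U$ is tidy for $\alpha$ with $U\cap H=V$ tidy for $\alpha|_H$. I would first show $A:=\alpha(V_+)U_+$ is a subgroup of $G$. Normality of $H$ gives $gHg^{-1}=H$ for $g\in G$; since $V_+\subseteq H$ and $\alpha(V_+)\subseteq H$, conjugation of $\alpha(V_+)$ by elements of $U_+$ lands back in $H$, and one uses that $U_+$ normalizes $H$ together with $\alpha(U_+)\supseteq U_+$ to verify closure under products and inverses — concretely, $\alpha(V_+)U_+\cdot \alpha(V_+)U_+=\alpha(V_+)\,(U_+\alpha(V_+)U_+^{-1})\,U_+$ and $U_+\alpha(V_+)U_+^{-1}\subseteq H\cap\alpha(U_+)$, which I claim equals $\alpha(V_+)(U_+\cap H)=\alpha(V_+)V_+=\alpha(V_+)$ since $V_+\subseteq\alpha(V_+)$. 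That last identity $H\cap\alpha(U_+)=\alpha(V_+)$ is the crux and deserves its own short argument: $\supseteq$ is clear; for $\subseteq$, take $h\in H$ with $h=\alpha(u)$, $u\in U_+$, produce an $\alpha$-regressive trajectory for $u$ inside $U_+$, push it forward, and use $\alpha(H)\subseteq H$ plus tidiness of $V$ to pull the trajectory into $H$, so $u\in U_+\cap H$ with an $\alpha$-regressive trajectory in $V$, i.e. $u\in V_+$.

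Granting $H\cap\alpha(U_+)=\alpha(V_+)$, the index computation follows quickly. On one hand $[\alpha(V_+)U_+:U_+]=[\alpha(V_+):\alpha(V_+)\cap U_+]$; and $\alpha(V_+)\cap U_+=\alpha(V_+)\cap H\cap U_+=\alpha(V_+)\cap V$. Since $\alpha(V_+)\subseteq H$ and, as $V$ is tidy above, $V=V_+V_-$ with $V_+\subseteq\alpha(V_+)$, one gets $\alpha(V_+)\cap V=V_+(\alpha(V_+)\cap V_-)=V_+$ because $\alpha(V_+)\cap V_-$ is contained in $V_+\cap V_-$ which lies in $V_+$. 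Hence $[\alpha(V_+)U_+:U_+]=[\alpha(V_+):V_+]=s_H(\alpha|_H)$, using the scale formula on the tidy subgroup $V$ for $\alpha|_H$. I anticipate that verifying $A$ is a subgroup, i.e. nailing the identity $H\cap\alpha(U_+)=\alpha(V_+)$ and the compatibility of conjugation by $U_+$, is where the real work lies; everything downstream is bookkeeping with $V=V_+V_-$, $V_\pm=U_\pm\cap H$ (for the minus part) and $V_+\subseteq U_+\cap H$, $V_+\subseteq\alpha(V_+)$, $U_+\subseteq\alpha(U_+)$.
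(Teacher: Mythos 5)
The central difficulty is that in both halves you rely on identities that are not established and, for a genuine (non-injective) endomorphism, are not to be expected. In the first half your index chase reduces to the claim $U_+\cap H\cap\alpha(U_+)=V_+$, which (since $U_+\subseteq\alpha(U_+)$) is just $U_+\cap H=V_+$. But Lemma~\ref{lem:tidy_subgroup} only gives the one-sided inclusion $V_+\subseteq U_+\cap H$: an element $x\in U_+\cap H$ has an $\alpha$-regressive trajectory in $U$, and there is no reason that such a trajectory can be chosen inside $H$. In the second half the ``crux'' identity $H\cap\alpha(U_+)=\alpha(V_+)$ has the same defect, and your sketch for it makes the error visible: from $h=\alpha(u)$ with $u\in U_+$ you cannot conclude $u\in H$ — that would require $\alpha$ to be injective (or at least that preimages in $U_+$ of points of $H$ lie in $H$), which is precisely the automorphism intuition that fails for endomorphisms. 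Your subsequent appeal to ``tidiness of $V$ to pull the trajectory into $H$'' is circular, since it presupposes $U_+\cap H\subseteq V_+$.

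The paper sidesteps both identities. For the inequality, it defines the map $\varphi\colon\alpha(V_+)/V_+\to\alpha(U_+)/U_+$, $uV_+\mapsto uU_+$, well-defined because $V_+\subseteq U_+$, and shows injectivity directly: if $v^{-1}u\in\alpha(V_+)\cap U_+$, then $v^{-1}u\in H\cap U=V$, and $\alpha(V_+)\cap V=V_+$ by \cite[Lemma~1]{End} (an easy fact, valid without any tidiness assumption and without needing $U_+\cap H=V_+$). For the subgroup claim, rather than computing $H\cap\alpha(U_+)$, it proves inductively that $(U\cap H)_n$ is normal in $U_n$ at every finite level, hence $(U\cap H)_+\triangleleft U_+$; then for $u\in U_+$ one writes $u=\alpha(w)$ with $w\in U_+$ (possible since $U_+\subseteq\alpha(U_+)$) and applies $\alpha$ to $(U\cap H)_+w=w(U\cap H)_+$ to obtain $\alpha((U\cap H)_+)u=u\alpha((U\cap H)_+)$. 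That is all that is needed to see $\alpha((U\cap H)_+)U_+$ is a group, and the index formula then falls out of the image of the injective map $\varphi$. You should replace the two problematic identities with these two ideas — the ``$\alpha(V_+)\cap V=V_+$'' injectivity argument and the layer-by-layer normality induction — rather than try to repair $U_+\cap H=V_+$, which is likely false in general.
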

\begin{proof}
By Lemma \ref{lem:tidy_subgroup} there is a tidy subgroup $U\in\mathrm{COS}(G)$ for $\alpha$ such that $V:=U\cap H$ is tidy for $\alpha|_{H}$. In particular, $s_{H}(\alpha|_{H})=[\alpha(V_{+}):V_{+}]$ and $s_{G}(\alpha)=[\alpha(U_{+}):U_{+}]$. Define $\varphi:\alpha(V_{+})/V_{+}\to\alpha(U_{+})/U_{+}$ by setting 
$\varphi(uV_{+}):=uU_{+}$ for all $uV_{+}\in\alpha(V_{+})/V_{+}$. Clearly $\varphi$ is well defined as 
$V_+\subseteq U_+$. For the first claim it suffices to show that $\varphi$ is injective. Indeed, assume that $\varphi(uV_{+})=\varphi(vV_{+})$ for some $uV_{+},vV_{+}\in\alpha(V_{+})/V_{+}$. Then it follows that $x:=v^{-1}u\in\alpha(V_{+})\cap U_{+}$ where $\alpha(V_{+})=\alpha((U\cap H)_{+})\sub H$. By \cite[Lemma~1]{End} we conclude $x\in U\cap H\cap\alpha(V_{+})=V\cap\alpha(V_{+})=V_{+}$.

For the second claim suppose that $H$ is normal in $G$. It suffices to show that $\alpha((U\cap H)_{+})U_{+}=U_{+}\alpha((U\cap H)_{+})$: Indeed, this implies that $\alpha((U\cap H)_{+})U_{+}$ is a group and the assertion then follows from the previous paragraph.
Now, $(U\cap H)_0:=U\cap H$ is a normal subgroup of $U_0:=U$
and $(U\cap H)_{n+1}:=\alpha((U\cap H)_n)\cap U\cap H$
is normal in $U_{n+1}:=\alpha(U_n)\cap U$ for each $n\in\N_0$ by the following inductive argument:
By the inductive hypothesis, $(U\cap H)_n$ is normal
in~$U_n$, whence $\alpha((U\cap H)_n)$ is normal in $\alpha(U_n)$.
Since $U\cap H$ is normal in $U$, it follows that $\alpha((U\cap H)_n)\cap U\cap H$
is normal in $\alpha(U_n)\cap U$ which completes the induction.
As a consequence,
\[
(U\cap H)_+:=\bigcap_{n\in\N_0} (U\cap H)_n
\quad\mbox{is normal in}\quad
U_+:=\bigcap_{n\in\N_0}U_n.\vspace{-.7mm}
\]
Let $u\in U_+$. Then $u=\alpha(w)$ for some $w\in U_+$.
Applying $\alpha$ to $(U\cap H)_+w=w(U\cap H)_+$,
we deduce that $\alpha((U\cap H)_+)u=u\alpha((U\cap H)_+)$.
\end{proof}
\subsection*{Quotients}
We now turn our attention to quotients. Again, we first consider tidy subgroups and then apply our findings to gain insight into the scale. Our first lemma is a tool that provides control over $\alpha$-regressive trajectories.
In the following, $L_{U}$ and $\widetilde{U}$ are
as in the tidying procedure, \ref{tidyingp}.
\begin{la}\label{lem:scale_quotients_1}
Let $G$ be a totally disconnected, locally compact group and $\alpha$ an endomorphism of $G$. Further, let $U\in\COS(G)$ be tidy above for $\alpha$. Then
\begin{displaymath}
 U\cap\widetilde{U}L_{U}=\widetilde{U}.
\end{displaymath}
\end{la}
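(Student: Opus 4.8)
The inclusion $\widetilde U\subseteq U\cap\widetilde U L_U$ is immediate, since $e\in L_U$ gives $\widetilde U\subseteq\widetilde U L_U$, and $\widetilde U\subseteq U$ by definition. So the content is the reverse inclusion $U\cap\widetilde U L_U\subseteq\widetilde U$. The plan is to take $x\in U$ with $x=wg$ for some $w\in\widetilde U$ and $g\in L_U$, and show $x\in\widetilde U$. From $x\in U$ and $w\in\widetilde U\subseteq U$ we get $g=w^{-1}x\in U\cap L_U$. Thus it suffices to prove $U\cap L_U\subseteq\widetilde U$: once we know that, $g\in\widetilde U$, hence $x=wg\in\widetilde U$ because $\widetilde U$ is a group (it is a compact open subgroup of $U$, as recorded in the tidying procedure \ref{tidyingp}).

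The heart of the matter is therefore the claim $U\cap L_U\subseteq\widetilde U$. Recall $\widetilde U=\{x\in U: xL_U\subseteq L_UU\}$, so I must check that every $g\in U\cap L_U$ satisfies $gL_U\subseteq L_UU$. Since $g\in L_U$ and $L_U=\overline{\mathcal L_U}$ is a subgroup of $G$ (being the closure of the subgroup $\mathcal L_U$ — here one uses that $U$ is tidy above, so $\alpha(U_+)\supseteq U_+$ and $\alpha(U_-)\subseteq U_-$ make $\mathcal L_U$ a subgroup, as in \cite{End}), we have $gL_U\subseteq L_U\subseteq L_UU$ trivially (as $e\in U$). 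Hence $g\in\widetilde U$, which is exactly what was needed.

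Putting the two steps together: for $x\in U\cap\widetilde U L_U$, write $x=wg$ with $w\in\widetilde U$, $g\in L_U$; then $g=w^{-1}x\in U$ since $w,x\in U$; so $g\in U\cap L_U\subseteq\widetilde U$; therefore $x=wg\in\widetilde U$. This gives $U\cap\widetilde U L_U\subseteq\widetilde U$, and combined with the trivial inclusion we conclude $U\cap\widetilde U L_U=\widetilde U$.

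**Main obstacle.** The only non-formal input is that $L_U$ is a subgroup of $G$ (so that $gL_U\subseteq L_U$ for $g\in L_U$) and that $\widetilde U$ is a subgroup of $U$; both are part of the standard theory of the tidying procedure recalled in \ref{tidyingp} and \cite[\S{}after Proposition~9]{End}, and the tidy-above hypothesis on $U$ is what guarantees $\mathcal L_U$ — and hence its closure $L_U$ — is a group. If one wanted to avoid citing that $\widetilde U$ is a group, one could instead argue directly that $U\cap\widetilde U L_U\subseteq U\cap L_U U$ and analyze this, but invoking the subgroup properties is cleanest.
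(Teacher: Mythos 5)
Your proof is correct and lands on the same elementary facts, but the route through the reverse inclusion is slightly different and carries one extra dependency. The paper verifies the defining condition of $\widetilde U$ directly for any $x\in U\cap\widetilde U L_U$: since $L_U$ is a group, $xL_U\subseteq\widetilde U L_U L_U=\widetilde U L_U$, and $\widetilde U L_U\subseteq L_U U$ by the defining property of $\widetilde U$ applied to each of its elements; together with $x\in U$ this gives $x\in\widetilde U$, and nothing more is needed. You instead factor $x=wg$ with $w\in\widetilde U$, $g\in L_U$, deduce $g=w^{-1}x\in U\cap L_U$, establish the pleasant intermediate inclusion $U\cap L_U\subseteq\widetilde U$ (which does follow exactly as you say, from $g L_U\subseteq L_U\subseteq L_U U$), and then conclude $x=wg\in\widetilde U$ by invoking that $\widetilde U$ is a subgroup. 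That last step is the divergence: the paper's argument only needs $L_U$ to be a group, whereas yours additionally relies on $\widetilde U$ being a subgroup. That fact is indeed part of the tidying theory from \cite{End}, so your proof is valid; the trade-off is that your version isolates the clean statement $U\cap L_U\subseteq\widetilde U$, while the paper's has strictly fewer prerequisites.
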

\begin{proof}
The definitions imply $\widetilde{U}\sub U\cap\widetilde{U}L_{U}$ as $\widetilde{U}\sub U$ and $\widetilde{U}\sub\widetilde{U}L_{U}$. Now suppose $x\in U\cap\widetilde{U}L_{U}$. We need to show $x L_{U}\subseteq L_{U}U$. Indeed, we have $xL_{U}\subseteq\widetilde{U}L_{U}L_{U}=\widetilde{U}L_{U}\subseteq L_{U}U$.
\end{proof}
There are examples of automorphisms \cite{Fur} and associated tidy below subgroups which do not behave well when passing to quotients. Lemma~\ref{lem:scale_quotients_2} shows that although we can not expect a tidy below subgroup to be tidy below when passing to a quotient, the original subgroup can be chosen such that the quotient is as close as possible to being tidy below using the tidying procedure. The proof of Lemma~\ref{lem:scale_quotients_2} relies on the following result which is immediate from the proof of \cite[Lemma~16]{End}. We state it for convenience.
\begin{la}\label{lem:ObsFromEnd}
	Let $G$ be a totally disconnected locally compact group and $\alpha$ an endomorphism of $G$. Suppose $u\in \widetilde{U}$ where $U$ is tidy above for $\alpha$. Then for any pair $u_{\pm}\in U_{\pm}$ with $u = u_{+}u_{-}$ we have $u_{\pm}\in \widetilde{U}_{\pm}$.
\end{la}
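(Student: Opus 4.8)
The plan is to deduce the statement from the proof of \cite[Lemma~16]{End}, using in addition that the factorization afforded by $U=U_+U_-$ is unique up to the subgroup $U_+\cap U_-$: if $u=u_+u_-=v_+v_-$ with $u_\pm,v_\pm\in U_\pm$, then $g:=u_+^{-1}v_+=u_-v_-^{-1}\in U_+\cap U_-$, so $v_+=u_+g$ and $v_-=g^{-1}u_-$. Hence it will suffice to establish two facts: (i) \emph{some} factorization $u=u_+u_-$ has $u_+\in\widetilde U_+$ and $u_-\in\widetilde U_-$; and (ii) $U_+\cap U_-\subseteq\widetilde U_+\cap\widetilde U_-$. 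Granting these, for an arbitrary factorization $u=v_+v_-$ we get $v_+=u_+g\in\widetilde U_+$ and $v_-=g^{-1}u_-\in\widetilde U_-$, since $\widetilde U_+$ and $\widetilde U_-$ are subgroups containing $g$.

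For (i) I would quote \cite[Lemma~16]{End}, whose proof shows in particular that $\widetilde U$ is again tidy above for $\alpha$, so that every $u\in\widetilde U$ can be written as $u=u_+u_-$ with $u_\pm\in\widetilde U_\pm$; since $\widetilde U\subseteq U$ we automatically have $\widetilde U_+\subseteq U_+$ and $\widetilde U_-\subseteq U_-$, so this is a factorization of the required type. This is the one non-formal ingredient, and I expect the main work to lie in extracting it cleanly from~\cite{End}; the remaining steps are routine bookkeeping with the uniqueness observation above.

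For (ii), note first that $U_+\cap U_-\subseteq L_U$: given $x\in U_+\cap U_-$, using $\alpha(U_+\cap U_-)=U_+\cap U_-$ we may pick $y\in U_+\cap U_-\subseteq U_+$ with $\alpha(y)=x$, and $\alpha^n(x)\in U_-$ holds for all $n\in\N$ because $x\in U_-$ and $\alpha(U_-)\subseteq U_-$, so $x\in\mathcal L_U\subseteq L_U$. Thus $U_+\cap U_-\subseteq L_U\cap U$, and Lemma~\ref{lem:scale_quotients_1} together with $L_U\subseteq\widetilde U L_U$ gives $U_+\cap U_-\subseteq U\cap\widetilde U L_U=\widetilde U$. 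Finally, $U_+\cap U_-$ is an $\alpha$-stable subgroup contained in $\widetilde U$, so for $g\in U_+\cap U_-$ we have $\alpha^n(g)\in U_+\cap U_-\subseteq\widetilde U$ for all $n\geq0$, whence $g\in\widetilde U_-$; and, $\alpha$ being surjective on $U_+\cap U_-$, the element $g$ admits an $\alpha$-regressive trajectory lying entirely in $U_+\cap U_-\subseteq\widetilde U$, whence $g\in\widetilde U_+$. This yields $U_+\cap U_-\subseteq\widetilde U_+\cap\widetilde U_-$, completing the plan.
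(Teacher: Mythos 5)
Your proof is correct, but it takes a different route from the paper. The paper gives no explicit argument at all: it simply declares the statement ``immediate from the proof of \cite[Lemma~16]{End}'', i.e.\ it asks the reader to re-inspect Willis's proof of that lemma and notice that the claimed memberships $u_\pm\in\widetilde U_\pm$ are established along the way. You instead build a self-contained derivation using only the \emph{conclusion} of \cite[Lemma~16]{End} (that $\widetilde U$ is again tidy above), together with two observations: that the factorization $u=u_+u_-$ with $u_\pm\in U_\pm$ is unique up to right/left multiplication by an element of $U_+\cap U_-$; and that $U_+\cap U_-\subseteq\widetilde U_+\cap\widetilde U_-$, which you obtain by showing $U_+\cap U_-\subseteq\mathcal L_U$, then invoking Lemma~\ref{lem:scale_quotients_1} to conclude $U_+\cap U_-\subseteq\widetilde U$, and finally using $\alpha$-stability of $U_+\cap U_-$ to pass to $\widetilde U_+\cap\widetilde U_-$. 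All of these steps check out (your appeal to $\alpha(U_+\cap U_-)=U_+\cap U_-$ and $\alpha(U_-)\subseteq U_-$ is covered by Lemma~\ref{good-prepar}, and $\widetilde U_\pm$ are subgroups by the same lemma since $\widetilde U=U\cap\widetilde UL_U$ is a compact subgroup). The trade-off is the expected one: the paper's pointer is shorter but requires the reader to unearth an implicit step inside \cite{End}, whereas your argument is longer but depends only on the black-box statement that $\widetilde U$ is tidy above, making it more portable and easier to verify without the source in hand.
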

\begin{la}\label{lem:scale_quotients_2}
Let $G$ be a totally disconnected, locally compact group, $\alpha$ an endomorphism of $G$ and $H\subseteq G$ a closed normal subgroup with $\alpha(H)\sub H$. Denote by $\overline{\alpha}$ the endomorphism induced by $\alpha$ on $G/H$ and by $q\colon G\to G/H$ the quotient map. Then there is a compact open subgroup $U$ of $G$ such that
\begin{itemize}
 \item[\rm(a)] $U$ tidy for $\alpha$,
 \item[\rm(b)] $U\cap H$ is tidy for $\alpha|_{H}$, and
 \item[\rm(c)] $q(U)$ is tidy above for $\overline{\alpha}$ and $L_{q(U)}q(U)=q(U)L_{q(U)}$.
\end{itemize}
\end{la}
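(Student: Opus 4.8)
The plan is to start from a compact open subgroup $U_0 \in \COS(G)$ that is simultaneously tidy for $\alpha$ and such that $U_0 \cap H$ is tidy for $\alpha|_H$; such a $U_0$ exists by Lemma~\ref{lem:tidy_subgroup} (pick any $U \in \COS(G)$ tidy for $\alpha$, then $U \cap H$ becomes tidy for $\alpha|_H$ after replacing $U$ by a suitable $U_{-N}$, which remains tidy for $\alpha$ by \cite[Proposition~3]{End}). This secures (a) and (b). The issue is (c): $q(U_0)$ is a compact open subgroup of $G/H$ and is automatically tidy above for $\wb\alpha$ (since the quotient map $q$ is continuous, open, and commutes with $\alpha$, tidiness above of $U_0$ passes to $q(U_0)$; alternatively invoke Lemma~\ref{simple-obs} applied to $q$), but it need not be tidy below, and in particular the subgroup identity $L_{q(U_0)}q(U_0) = q(U_0)L_{q(U_0)}$ can fail.

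The key step is to run the tidying procedure \ref{tidyingp} simultaneously ``upstairs'' and ``downstairs''. First I would replace $U_0$ by $\wt{U_0}L_{U_0}$; since $U_0$ was already tidy, \ref{tidyingp} gives $\wt{U_0}L_{U_0} = U_0$, so this changes nothing. The real work is to arrange $q(U)$ to be a fixed point of the tidying procedure on $G/H$. I would therefore form $V := q^{-1}(\wt{q(U_0)}\,L_{q(U_0)}) \cap U_0$, or more carefully iterate: set $W := \wt{q(U_0)}\,L_{q(U_0)}$, which is tidy for $\wb\alpha$ by \ref{tidyingp}, so in particular $L_W W = W L_W$; then take $U := q^{-1}(W)\cap U_0'$ where $U_0'$ is a tidy-for-$\alpha$ subgroup contained in $q^{-1}(W)$, obtained by applying the tidying procedure and Lemma~\ref{lem:tidy_subgroup} to $q^{-1}(W)\cap U_0$. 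One then checks $q(U) = W$ (surjectivity onto $W$ holds because $q$ is open and $U$ is chosen large enough inside $q^{-1}(W)$, using $H \subseteq q^{-1}(W)$ so that $q^{-1}(W)$ is a union of $q$-fibres), and $W$ being tidy for $\wb\alpha$ gives both ``$q(U)$ tidy above'' and ``$L_{q(U)}q(U) = q(U)L_{q(U)}$'' in (c) — using here that $L_{q(U)}$ and $\wt{q(U)}$ are computed from the tidy-above subgroup $q(U) = W$, together with Lemma~\ref{lem:scale_quotients_1} which identifies $W \cap \wt W L_W = \wt W$. Finally (a) and (b) are preserved because the last replacement was again via Lemma~\ref{lem:tidy_subgroup}.

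The main obstacle is the bookkeeping in getting all three properties to hold for \emph{one} subgroup: each time one shrinks $U$ to fix behaviour downstairs, one must re-run the tidying procedure upstairs (Lemma~\ref{lem:tidy_subgroup}) to restore (a) and (b), and it is not a priori clear this does not destroy (c). The resolution is that the tidying procedure applied to an already-tidy subgroup is the identity (\ref{tidyingp}, last sentence), and that the operations ``tidy for $\alpha$'' upstairs and ``$q(U)$ tidy for $\wb\alpha$'' downstairs are compatible: replacing $U$ by $U_{-n} = \bigcap_{k=0}^n \alpha^{-k}(U)$ has image $q(U_{-n}) = \bigcap_{k=0}^n \wb\alpha^{-k}(q(U))$ by \eqref{thisgroup}-type reasoning (since $H$ is $\alpha$-invariant and $q$ is surjective), so the stabilisation of the tidying-above process upstairs projects to the corresponding stabilisation downstairs. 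Thus after finitely many coordinated steps all of (a), (b), (c) hold; I expect the delicate point to be verifying that $q$ restricted to the relevant subgroups is still surjective onto the intended image after each shrinking, which follows from openness of $q$ and the fact that every subgroup we pass to contains $H$ or at least contains $U_0 \cap H$ with the right index control via \cite[Lemma~1]{End}.
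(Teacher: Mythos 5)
Your overall strategy---start from a $U_0$ tidy for $\alpha$ with $U_0\cap H$ tidy for $\alpha|_H$, then modify to fix the quotient---is the right shape, but there is a genuine gap in how you modify, and the target you aim for in $G/H$ is actually too strong.

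You set $W := q(U_0)\wt{\;}\,L_{q(U_0)}$, the \emph{full} output of the tidying procedure applied to $q(U_0)$, and try to arrange $q(U)=W$. Two problems. First, $W$ is not contained in $q(U_0)$ (the $L_{q(U_0)}$-factor pushes you outside $q(U_0)$ unless $q(U_0)$ was already tidy), so you must achieve surjectivity of $q$ from a compact open subgroup $U\sub q^{-1}(W)$ onto all of $W$. You assert this ``because $q$ is open and $U$ is chosen large enough,'' but your construction $U:=q^{-1}(W)\cap U_0'$ with $U_0'$ obtained by re-tidying does not guarantee largeness: the tidying procedure and the replacement $U\mapsto U_{-N}$ can shrink the group, and then $q(U)\subsetneq W$. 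Second, even if surjectivity could be arranged, aiming for $q(U)$ \emph{tidy} for $\wb{\alpha}$ is more than the statement requires (only tidy above plus the commutation $L_{q(U)}q(U)=q(U)L_{q(U)}$ is needed), and it is more than one can expect in general---the paper explicitly notes beforehand that tidy-below subgroups do not behave well under quotients.

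The paper's construction sidesteps both issues by using only Step~3 of the tidying procedure downstairs: it takes $U:=V\cap q^{-1}(q(V)\wt{\;})$, where $V$ is the group from Lemma~\ref{lem:tidy_subgroup}. Since $q(V)\wt{\;}\sub q(V)$, one has $q^{-1}(q(V)\wt{\;})\sub VH$, and surjectivity $q(U)=q(V)\wt{\;}$ is then immediate (every $w\in q(V)\wt{\;}$ lifts to some $v\in V$, which automatically lies in $U$). The group $q(V)\wt{\;}$ is tidy above and satisfies the commutation with $L_{q(V)\wt{\;}}=L_{q(V)}$ by \cite[Lemma~13 and Prop.~6(3)]{End}---exactly the two properties (c) asks for, no more. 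The bulk of the paper's proof is then devoted to showing that this intersection $U$ is \emph{still} tidy for $\alpha$ (tidy above via Lemma~\ref{lem:ObsFromEnd} and a careful $\alpha$-regressive-trajectory argument using $\bik(\wb{\alpha})$ and \cite[Prop.~20]{End}; tidy below via $L_U\sub V_+\cap V_-\sub U$). Your proposal acknowledges this bookkeeping problem but does not resolve it, and in fact your modification (intersecting with $q^{-1}(W)$ where $W\not\sub q(U_0)$) makes it harder, not easier, to control how the upstairs tidiness interacts with the downstairs shrinking.
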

\begin{proof}
Applying Lemma~\ref{lem:tidy_subgroup},
choose $V\in\mathrm{COS}(G)$ which is tidy for $\alpha$ and such that $V\cap H$ is tidy for $\alpha|_{H}$.
Then $q(V)$ is tidy above for $\overline{\alpha}$: On the one hand
\begin{align*}
 q(V_{-})=q\left(\bigcap_{n\ge 0}\alpha^{-n}(V)\right)\subseteq\bigcap_{n\ge 0}q(\alpha^{-n}(V))=\bigcap_{n\ge 0}\overline{\alpha}^{-n}(q(V))=q(V)_{-}.
\end{align*}
Also, $V_{+}=\{x\in V\mid \text{there is an $\alpha$-regressive trajectory for $x$ in $V$}\}$ and therefore $q(V_{+})\subseteq q(V)_{+}$ as $\alpha$-regressive trajectories descend to the quotient. Combining the above we conclude
\begin{displaymath}
 q(V)=q(V_{+}V_{-})=q(V_{+})q(V_{-})\subseteq q(V)_{+}q(V)_{-}.
\end{displaymath}
That is, $q(V)$ is tidy above for $\overline{\alpha}$. Now define $U:=V\cap q^{-1}(q(V)\wt{\;})$,
where $q(V)\wt{\;}$ is as in Step 3 of the tidying procedure, \ref{tidyingp}.
Then $q(U)=q(V)\wt{\;}$ and hence $q(U)$ is tidy above for $\overline{\alpha}$ by
\cite[Lemma~16]{End}. In addition, 
by applying \cite[Proposition~6\,(3)]{End} we see that $L_{q(U)} = L_{q(V)\wt{\;}} = L_{q(V)}$. It follows from \cite[Lemma~13]{End} and $q(U)=q(V)\wt{\;}$ that $q(U){L}_{q(U)}={L}_{q(U)}q(U)$.
Furthermore, $V\cap H\sub\ker q\sub  q^{-1}(q(V)\wt{\;})$.
Hence
\begin{displaymath}
 U\cap H=V\cap H\cap q^{-1}(q(V)\wt{\;})=V\cap H
\end{displaymath}
is tidy for $\alpha|_{H}$.

It remains to show that $U$ is tidy for $\alpha$. We begin by proving that $U$ is tidy above for $\alpha$. Let $u\in U$. Then since $V$ is tidy above, $u=v_{+}v_{-}$ for some $v_{\pm}\in V_{\pm}$ and we aim to show that $v_{\pm}\in U_{\pm}$. Note that $q(u)=q(v_{+})q(v_{-})$ with $q(v_{\pm})\in~q(V_{\pm})\sub~q(V)_{\pm}$.
Since $q(u)\in~q(V)\wt{\;}$, we deduce $q(v_{\pm})\in~(q(V)\wt{\;})_{\pm}$ by applying Lemma \ref{lem:ObsFromEnd}. Since $\alpha^{n}(v_{-})\in V_{-}$ and $\overline{\alpha}^{n}(q(v_{-}))\in (q(V)\wt{\;})_{-}$ for all $n\ge 0$ we have $q(\alpha^{n}(v_{-}))\in (q(V)\wt{\;})_{-}$. Therefore, the orbit of $v_{-}~\in~V\cap~q^{-1}(q(V)\wt{\;})=U$ stays in $U$ and we conclude $v_{-}\in U_{-}$.

As to $v_+$, choose an $\alpha$-regressive trajectory $(v_i)_{i\in\mathbb{N}_0}$ for $v_+$ contained in $V_+$. We will show this sequence is contained within $U$. It is clear that $v_0 = v_+\in U$. Suppose for the purpose of induction that $v_n\in U$. Applying \cite[Lemma~15]{End} we see that
$q(v_n)\in q(U)\cap q(V_+)\subseteq q(V)\wt{\;}\cap q(V)_{+} = (q(V)\wt{\;})_{+}$. There exists $w\in (q(V)\wt{\;})_{+}$ such that \[\overline{\alpha}(w) = q(v_n) = \overline{\alpha}(q(v_{n+1})).\]
Now $w$, $q(v_n)$ and $q(v_{n+1})$ are all elements of $\parb^{-}(\overline{\alpha})$. By \cite[Proposition~20]{End}, there exists $b\in\bik(\overline{\alpha})$ such that $q(v_{n+1}) = wb$. Since $q(V)\wt{\;}L_{q(V)}$ is tidy, $b\in q(V)\wt{\;}L_{q(V)}$. Hence $q(v_{n+1})\in q(V)\wt{\;}L_{q(V)}$. By Lemma \ref{lem:scale_quotients_1}, $q(v_{n+1})\in q(V)\wt{\;}$ which implies $v_{n+1}\in U$. Inductively $v_i\in U$ for all $i\in\mathbb{N}_0$ and so $v_+\in U_+$.

To see that $U$ is tidy below, note that $V$ is tidy below and $U\subseteq V$. Hence $L_{U}\subseteq V_{+}\cap V_{-}$.
Clearly, $q(V_{+}\cap V_{-})\subseteq L_{q(V)}$ and so $q(V_{+}\cap V_{-})\subseteq {q(V)\wt{\;}}$. Hence $V_{+}\cap V_{-}\subseteq U$. As a consequence, $L_{U}\subseteq U$ which implies that $U$ is tidy below, see \cite[Proposition~8]{End}.
\end{proof}
In the following lemma, we factor the subgroup used to calculate the scale. Later on, we turn this into a factorization of the scale itself.
\begin{la}\label{lem:scale_quotients_3}
Let $G$ be a totally disconnected, locally compact group, $\alpha$ an endomorphism of $G$ and $H\subseteq G$ a closed normal subgroup with $\alpha(H)\sub H$. Denote by $\overline{\alpha}$ the endomorphism induced by $\alpha$ on $G/H$. Then there is a closed subgroup $J$ of $G$ with $\alpha((H\cap U)_{+})U_{+}\sub J\sub\alpha(U_{+})$ and
\begin{displaymath}
  s_{G/H}(\overline{\alpha})=[\alpha(U_{+}):J].
\end{displaymath}

\end{la}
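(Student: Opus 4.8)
The plan is to push the computation through the tidy subgroup of $G/H$ coming from the tidying procedure applied to the image of a well-chosen $U$. Let $q\colon G\to G/H$ denote the quotient map and let $U$ be the compact open subgroup furnished by Lemma~\ref{lem:scale_quotients_2}, so that $U$ is tidy for $\alpha$, $U\cap H$ is tidy for $\alpha|_{H}$, $q(U)$ is tidy above for $\overline{\alpha}$, and $L_{q(U)}q(U)=q(U)L_{q(U)}$. First I would observe that the inclusion $q(U)L_{q(U)}\sub L_{q(U)}q(U)$ forces $\widetilde{q(U)}=q(U)$ in the sense of Step~3 of \ref{tidyingp}; hence $W:=q(U)L_{q(U)}=\widetilde{q(U)}L_{q(U)}$ is a compact open subgroup of $G/H$ that is tidy for $\overline{\alpha}$, and consequently $s_{G/H}(\overline{\alpha})=[\overline{\alpha}(W_{+}):W_{+}]$.

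Next I would introduce the subgroup $J:=\alpha(U_{+})\cap q^{-1}(W_{+})$, a compact subgroup of $G$ contained in $\alpha(U_{+})$. Images of $\alpha$-regressive trajectories lying in $U$ are $\overline{\alpha}$-regressive trajectories lying in $q(U)\sub W$, so $q(U_{+})\sub q(U)_{+}\sub W_{+}$; together with $U_{+}\sub\alpha(U_{+})$ (valid as $U$ is tidy above) this gives $U_{+}\sub J$. Also $(U\cap H)_{+}\sub U_{+}$, and $\alpha((U\cap H)_{+})\sub\alpha(H)\sub H=\ker q\sub q^{-1}(W_{+})$, so $\alpha((U\cap H)_{+})\sub J$. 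Hence $\alpha((U\cap H)_{+})U_{+}\sub J\sub\alpha(U_{+})$, which is the sandwich required.

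For the index, note that $q$ restricts to a surjection $\alpha(U_{+})\to\overline{\alpha}(q(U_{+}))$ with kernel $H\cap\alpha(U_{+})\sub J$, and $q(J)=\overline{\alpha}(q(U_{+}))\cap W_{+}$, so $q$ induces a bijection of coset spaces and $[\alpha(U_{+}):J]=[\overline{\alpha}(q(U_{+})):\overline{\alpha}(q(U_{+}))\cap W_{+}]$. Since $q(U_{+})\sub W_{+}$ and $W_{+}\sub\overline{\alpha}(W_{+})$ by tidiness, both $\overline{\alpha}(q(U_{+}))$ and $W_{+}$ sit inside $\overline{\alpha}(W_{+})$, and the usual bijection between $\overline{\alpha}(q(U_{+}))/(\overline{\alpha}(q(U_{+}))\cap W_{+})$ and the set of $W_{+}$-cosets covering $\overline{\alpha}(q(U_{+}))W_{+}$ yields $[\alpha(U_{+}):J]=[\overline{\alpha}(q(U_{+}))W_{+}:W_{+}]$. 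Thus the lemma reduces to the identity
\[
\overline{\alpha}\big(q(U_{+})\big)\,W_{+}=\overline{\alpha}(W_{+}),
\]
since then $[\alpha(U_{+}):J]=[\overline{\alpha}(W_{+}):W_{+}]=s_{G/H}(\overline{\alpha})$.

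The hard part will be this identity; one inclusion is immediate. For the other, I would exploit the structure of $W=\widetilde{q(U)}L_{q(U)}$ recorded in \cite{End}: $L_{q(U)}$ is a compact $\overline{\alpha}$-stable subgroup of $W_{+}\cap W_{-}$ and $W_{+}=q(U)_{+}L_{q(U)}$, so $\overline{\alpha}(W_{+})=\overline{\alpha}(q(U)_{+})L_{q(U)}\sub\overline{\alpha}(q(U)_{+})W_{+}$, which reduces the claim to $\overline{\alpha}(q(U)_{+})\sub\overline{\alpha}(q(U_{+}))W_{+}$. This in turn is a lifting statement: an $\overline{\alpha}$-regressive trajectory in $q(U)$ must be lifted, up to $W_{+}$, to an $\alpha$-regressive trajectory inside $U$. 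I expect this step to be the main obstacle, and I would treat it exactly as in the proof of Lemma~\ref{lem:scale_quotients_2}, lifting regressive trajectories through $q$ and using \cite[Lemmas~13, 15 and 16 and Proposition~20]{End}, the bounded iterated kernel $\bik(\overline{\alpha})$, and Lemma~\ref{lem:scale_quotients_1} to force the lift to stay in $U$. Combined afterwards with Proposition~\ref{prop:scale_subgroup}, this factorisation of the minimizing subgroup will give the tower $s_{G}(\alpha)=s_{G/H}(\overline{\alpha})\cdot[J:\alpha((U\cap H)_{+})U_{+}]\cdot s_{H}(\alpha|_{H})$ used for Theorem~C.
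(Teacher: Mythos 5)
Your setup matches the paper's: you invoke the same $U$ from Lemma~\ref{lem:scale_quotients_2}, you correctly observe that $W:=q(U)L_{q(U)}$ is tidy for $\overline{\alpha}$, and your $J:=\alpha(U_{+})\cap q^{-1}(W_{+})$ agrees with the paper's $J:=q^{-1}\bigl(\overline{\alpha}(q(U)_{+})\cap q(U)_{+}L_{q(U)}\bigr)\cap\alpha(U_{+})$, since $q(\alpha(U_+))=\overline{\alpha}(q(U_+))\subseteq\overline{\alpha}(q(U)_+)$ makes the extra intersection with $\overline{\alpha}(q(U)_+)$ automatic. Your sandwich argument and the reduction of $[\alpha(U_{+}):J]$ to an index in $G/H$ are also correct.

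The genuine gap is exactly where you flag it: you reduce the lemma to the identity $\overline{\alpha}(q(U_{+}))W_{+}=\overline{\alpha}(W_{+})$ and then to the lifting statement $\overline{\alpha}(q(U)_{+})\subseteq\overline{\alpha}(q(U_{+}))W_{+}$, but you do not prove it, saying only that you would ``treat it exactly as in the proof of Lemma~\ref{lem:scale_quotients_2}.'' This is not a routine deferral: the lifting in Lemma~\ref{lem:scale_quotients_2} concerns $\alpha$-regressive trajectories for elements already known to lie in $U_{+}$, whereas here you would need to show that every element of $q(U)_{+}$ (not just $q(U_{+})$) lifts modulo $W_{+}$ to $U_{+}$, which is precisely what is \emph{not} available when $H$ is only $\alpha$-invariant rather than $\alpha$-stable. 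The paper avoids this lifting problem entirely. It first establishes, by a direct bijection between the coset spaces
\[
\overline{\alpha}(q(U)_{+})\big/\bigl(\overline{\alpha}(q(U)_{+})\cap q(U)_{+}L_{q(U)}\bigr)
\;\longrightarrow\;
\overline{\alpha}\bigl(q(U)_{+}L_{q(U)}\bigr)\big/q(U)_{+}L_{q(U)},
\]
well-defined and injective trivially and surjective because $\overline{\alpha}(L_{q(U)})\subseteq L_{q(U)}\subseteq q(U)_{+}L_{q(U)}$, that $s_{G/H}(\overline{\alpha})=[\overline{\alpha}(q(U)_{+}):S]$ with $S:=\overline{\alpha}(q(U)_{+})\cap W_{+}$. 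It then applies the Orbit Formula to the transitive left action of $\alpha(U_{+})$ on $\overline{\alpha}(q(U_{+}))/S$ with point stabilizer $J$, obtaining $[\alpha(U_{+}):J]$ directly. You should adopt that coset-bijection and orbit-formula argument rather than attempting the regressive-trajectory lift, which remains the missing step in your proposal.
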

\begin{proof}
Let $U$ satisfy the conclusions of Lemma \ref{lem:scale_quotients_2}.
Let $q\colon G\to G/H$ denote the quotient map.
Then $q(U)L_{q(U)}$ is tidy for $\overline{\alpha}$ and
\begin{displaymath}
 s_{G/H}(\overline{\alpha})=[\overline{\alpha}(q(U)_{+})L_{q(U)}:q(U)_{+}L_{q(U)}]
\end{displaymath}
using \cite[Proposition~4 and Proposition~6\,(2)]{End}.
Now consider the map
\begin{displaymath}
 \overline{\alpha}(q(U)_{+})/(\overline{\alpha}(q(U)_{+})\cap q(U)_{+}L_{q(U)})\to\overline{\alpha}(q(U)_{+}L_{q(U)})/q(U)_{+}L_{q(U)}
\end{displaymath}
given by
\begin{displaymath}
 g(\overline{\alpha}(q(U)_{+})\cap q(U)_{+}L_{q(U)})\mto g(q(U)_{+}L_{q(U)}).
\end{displaymath}
This map is well-defined as $\overline{\alpha}(q(U)_{+})\cap q(U)_{+}L_{q(U)})\subseteq q(U)_{+}L_{q(U)}$. It is injective because any two elements in the domain which have the same image have coset representatives which differ by an element in $\overline{\alpha}(q(U)_{+})\cap q(U)_{+}L_{q(U)}$. To see surjectivity, simply note that
$\overline{\alpha}(L_{q(U)})\sub L_{q(U)}\sub q(U_+)L_{q(U)}$ by \cite[Lemma 6]{End}. This shows
\begin{eqnarray}
s_{G/H}(\overline{\alpha})&=& [\overline{\alpha}(q(U)_{+})L_{q(U)}:q(U)_{+}L_{q(U)}] \notag \\
& =& [\overline{\alpha}(q(U)_{+}):\overline{\alpha}(q(U)_{+})\cap q(U)_{+}L_{q(U)}].\label{newnum}
\end{eqnarray}
We know that $\overline{\alpha}(q(U)_{+})\cap q(U)_{+}L_{q(U)}$ is closed in $G/H$
because~$\overline{\alpha}$
and $q$ are continuous, $U$ is compact and $L_{q(U)}$ is closed. Set
\begin{displaymath}
 J:=q^{-1}\left(\overline{\alpha}(q(U)_{+})\cap q(U)_{+}L_{q(U)}\right)\cap\alpha(U_{+}).
\end{displaymath}
By the above, $J\sub\alpha(U_{+})$ is a closed subgroup. To see $\alpha((H\cap U)_{+})U_{+}\sub J$, note that
\begin{equation}\label{newnum2}
 q(\alpha((H\cap U)_{+})U_{+})=q(U_{+})\sub q(U)_{+}\sub\overline{\alpha}(q(U)_{+})\cap q(U)_{+}L_{q(U)}=:S
\end{equation}
because $\alpha((H\cap U)_{+})U_{+}=U_{+}\alpha((H\cap U)_{+})$ and $\alpha((H\cap U)_{+})$ is contained in~$H$.
The formula
\[
x.(yS):=q(x)yS\quad\mbox{for $x\in \alpha(U_+)$ and $y\in q(U_+)$}
\]
defines a left action of
$\alpha(U_+)$ on $X:=\overline{\alpha}(q(U_+))/S$
which is transitive as $q(\alpha(U_+))=\overline{\alpha}(q(U_+))$.
Since $S\in X$ has stabilizer $q^{-1}(S)\cap \alpha(U_+)=J$
under the action,
the Orbit Formula (as in \cite[1.6.1\,(i)]{Rob}) shows that
\[
[\alpha(U_+):J]=|X|=[\overline{\alpha}(q(U_+)):S].
\]
Combining this with (\ref{newnum2}) and (\ref{newnum}) we obtain $s_{G/H}(\overline{\alpha})=[\alpha(U_{+}):J]$.
\end{proof}
\begin{thm}\label{thm:ScaleQuotDiv}
Let $G$ be a totally disconnected, locally compact group, $\alpha$ an endomorphism of $G$ and $H\subseteq G$ a closed normal subgroup with $\alpha(H)\sub H$. Denote by $\overline{\alpha}$ the endomorphism induced by $\alpha$ on $G/H$. Then $s_{H}(\alpha|_{H})s_{G/H}(\overline{\alpha})$ divides $s_{G}(\alpha)$.
\end{thm}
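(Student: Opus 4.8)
The plan is to realize all three scales as indices inside one ascending chain of subgroups of $G$ and then to conclude by multiplicativity of the index. First I would use Lemma~\ref{lem:scale_quotients_2} to fix a single compact open subgroup $U\sub G$ which is tidy for $\alpha$, such that $U\cap H$ is tidy for $\alpha|_H$, and which moreover has the extra properties (c) of that lemma. Tidiness of $U$ for $\alpha$ gives $s_G(\alpha)=[\alpha(U_+):U_+]$.

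Next, since $H$ is normal in $G$ and $U\cap H$ is tidy for $\alpha|_H$, Proposition~\ref{prop:scale_subgroup} applies and tells us that $K:=\alpha((U\cap H)_+)U_+$ is a subgroup of $G$ with $s_H(\alpha|_H)=[K:U_+]$. Then I would run the argument of Lemma~\ref{lem:scale_quotients_3} with this same $U$ to obtain a closed subgroup $J\sub G$ with $K=\alpha((H\cap U)_+)U_+\sub J\sub\alpha(U_+)$ and $s_{G/H}(\overline{\alpha})=[\alpha(U_+):J]$. This yields the chain
\[
U_+\ \sub\ K\ \sub\ J\ \sub\ \alpha(U_+),
\]
all of whose terms are subgroups of $G$ ($K$ by Proposition~\ref{prop:scale_subgroup}, $J$ by Lemma~\ref{lem:scale_quotients_3}), and in which the top index $[\alpha(U_+):U_+]=s_G(\alpha)$ is finite. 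Hence the index is multiplicative along the chain and
\[
s_G(\alpha)=[\alpha(U_+):J]\,[J:K]\,[K:U_+]=s_{G/H}(\overline{\alpha})\,[J:K]\,s_H(\alpha|_H),
\]
with $[J:K]\in\N$; in particular $s_H(\alpha|_H)\,s_{G/H}(\overline{\alpha})$ divides $s_G(\alpha)$.

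The only step needing attention is the coordination of a single $U$ through all three scale formulas: Proposition~\ref{prop:scale_subgroup} requires $U\cap H$ to be tidy for $\alpha|_H$, whereas Lemma~\ref{lem:scale_quotients_3} requires the additional structure of $U$ produced by Lemma~\ref{lem:scale_quotients_2}, and the point of Lemma~\ref{lem:scale_quotients_2} is exactly that such a $U$ exists. Once $U$ is fixed, everything else is immediate: the inclusions $U_+\sub K$ and $J\sub\alpha(U_+)$ are trivial, $K\sub J$ is the left inclusion furnished by Lemma~\ref{lem:scale_quotients_3}, and finiteness of $s_G(\alpha)$ forces all intermediate indices in the chain to be finite, validating multiplicativity. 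So I expect no substantial obstacle beyond this bookkeeping.
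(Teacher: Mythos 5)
Your proof is correct and follows essentially the same route as the paper: you fix one compact open subgroup $U$ via Lemma~\ref{lem:scale_quotients_2}, use Proposition~\ref{prop:scale_subgroup} to realize $s_H(\alpha|_H)$ as $[\alpha((U\cap H)_+)U_+:U_+]$, use Lemma~\ref{lem:scale_quotients_3} for the intermediate subgroup $J$ with $s_{G/H}(\overline{\alpha})=[\alpha(U_+):J]$, and conclude by multiplicativity of the index along the chain $U_+\sub\alpha((U\cap H)_+)U_+\sub J\sub\alpha(U_+)$. The bookkeeping remark about coordinating a single $U$ is exactly what Lemma~\ref{lem:scale_quotients_2} is designed to supply, so no gap remains.
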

\begin{proof}
Let $U$ satisfy the conclusions of Lemma \ref{lem:scale_quotients_2}. By Lemma \ref{lem:scale_quotients_3} there is a closed subgroup $J$ of $G$ such that
\begin{displaymath}
 U_{+}\sub\alpha((U\cap H)_{+})U_{+}\sub J\sub\alpha(U_{+}).
\end{displaymath}
Recall that by Proposition \ref{prop:scale_subgroup}, the set $\alpha((U\cap H)_{+})U_{+}$ is indeed a subgroup of $G$. Applying Lemma \ref{lem:scale_quotients_3} and Proposition \ref{prop:scale_subgroup} yields
\begin{align*}
 s_{G}(\alpha)&=[\alpha(U_{+}):U_{+}] \\
 &=[\alpha(U_{+}):J][J:\alpha((U\cap H)_{+})U_{+}][\alpha((U\cap H)_{+})U_{+}:U_{+}] \\
 &=s_{G/H}(\overline{\alpha})[J:\alpha((U\cap H)_{+})U_{+}]s_{H}(\alpha|_{H}).
\end{align*}
which completes the proof.
\end{proof}
We end this section by considering the special case of nested subgroups inside $\parb^{-}(\alpha)$ for which we achieve equality in Theorem \ref{thm:ScaleQuotDiv}.
\begin{la}
	\label{lem:parlemma}
	Let $G$ be a a totally disconnected locally compact group, $\alpha$ be an endomorphism of $G$ and $H\sub \parb^{-}(\alpha)$ a closed $\alpha$-stable subgroup. Then $\parb^{-}(\alpha|_{H}) = H$.
\end{la}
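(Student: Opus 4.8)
The plan is to establish the nontrivial inclusion $H\sub\parb^-(\alpha|_H)$; the reverse inclusion $\parb^-(\alpha|_H)\sub H$ is immediate from the definition. So fix $x\in H$. Since $H$ is $\alpha$-stable, $\alpha(H)=H$, so I can recursively choose elements $y_n\in H$ with $y_0:=x$ and $\alpha(y_n)=y_{n-1}$ for all $n\in\N$, thereby obtaining \emph{some} $\alpha|_H$-regressive trajectory $(y_n)_{n\in\N_0}$ for~$x$ inside~$H$. The entire point of the lemma is then to show that this trajectory is automatically bounded (hence relatively compact in~$H$, as $H$ is closed).

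To see this, I would bring in the hypothesis $H\sub\parb^-(\alpha)$: as $x\in\parb^-(\alpha)$, there is an $\alpha$-regressive trajectory $(x_n)_{n\in\N_0}$ for~$x$ in~$G$ such that $C:=\wb{\{x_n\colon n\in\N_0\}}$ is compact. Now I compare the two $\alpha$-regressive trajectories $(x_n)$ and $(y_n)$ for~$x$. Both stay inside the subgroup $\parb^-(\alpha)$: indeed $y_n\in H\sub\parb^-(\alpha)$ by hypothesis, while $x_n\in\parb^-(\alpha)$ because $(x_{n+k})_{k\in\N_0}$ is an $\alpha$-regressive trajectory for~$x_n$ with values in the relatively compact set~$C$. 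Since $\parb^-(\alpha)$ is a subgroup, $x_n^{-1}y_n\in\parb^-(\alpha)$; moreover $\alpha^n(x_n)=x_0=x=y_0=\alpha^n(y_n)$, so $\alpha^n(x_n^{-1}y_n)=e$. Hence $x_n^{-1}y_n$ belongs to $\{z\in\parb^-(\alpha)\colon\alpha^m(z)=e\text{ for some }m\in\N_0\}\sub\bik(\alpha)$, and $\bik(\alpha)$ is compact. Consequently $y_n=x_n(x_n^{-1}y_n)\in C\,\bik(\alpha)$ for every~$n$; since $C\,\bik(\alpha)$ is compact, being the image of $C\times\bik(\alpha)$ under multiplication, the sequence $(y_n)_{n\in\N_0}$ is bounded, so $x\in\parb^-(\alpha|_H)$.

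I do not anticipate a real obstacle; the step that carries the argument is the identification $x_n^{-1}y_n\in\bik(\alpha)$, and it is precisely here that the assumption $H\sub\parb^-(\alpha)$ (as opposed to merely $H$ being a closed $\alpha$-stable subgroup) is used: without it one would only know $x_n^{-1}y_n\in\ker(\alpha^n)$, which need not be relatively compact. Everything else — the recursive choice of preimages, the fact that a tail of a bounded trajectory is bounded, and the compactness of a product of two compact sets — is routine.
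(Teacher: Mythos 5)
Your proof is correct and follows the same route as the paper: it compares an arbitrary $\alpha$-regressive trajectory $(y_n)$ chosen inside $H$ (using $\alpha$-stability) with a bounded trajectory $(x_n)$ for $x$ in $G$ (from $H\sub\parb^-(\alpha)$), observes $x_n^{-1}y_n\in\bik(\alpha)$, and concludes boundedness of $(y_n)$ from compactness of $C\bik(\alpha)$. Your justification of $x_n^{-1}y_n\in\bik(\alpha)$ via $\alpha^n(x_n^{-1}y_n)=e$ makes explicit a step the paper leaves implicit, but the argument is otherwise the same.
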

\begin{proof}
	Suppose $x\in H$.
We can find an $\alpha$-regressive trajectory $(x = x_0, x_1, \ldots)$ which is contained in some compact set $K$. Since $\alpha(H) = H$ we can choose another $\alpha$-regressive trajectory $(x = y_0, y_1, \ldots)$ such that $y_n\in H$ for all $n\in\N$.
Now $y_n,x_n\in \parb^{-}(\alpha)$ for all $n\in \N$ and hence $x_n^{-1}y_n\in \bik(\alpha)$. This shows $y_n\in x_n\bik(\alpha)\subseteq K\bik(\alpha)$. Since both $K$ and $\bik(\alpha)$ are compact, $K\bik(\alpha)$ is compact and hence $K\bik(\alpha)\cap H$ is a compact subset of $H$. This shows that $(y_0,y_1,\ldots)$ is bounded and hence $x\in \parb^{-}(\alpha|_{H})$.
\end{proof}
The following result is known for automorphisms, see \cite[Proposition 3.21~(2)]{CON}. In the case of automorphisms, the proof utilizes the modular function which is not defined for endomorphisms. Instead we consider the factoring of the scale given by Theorem \ref{thm:ScaleQuotDiv}. 
\begin{prop}\label{prop:Janalysis}
	Suppose $\alpha$ is an endomorphism of a totally disconnected locally compact group $G$ and $H\sub \parb^{-}(\alpha)$ is a closed $\alpha$-stable subgroup. Also suppose that $N\subseteq H$ is a closed normal $\alpha$-stable subgroup. Denote by $\overline{\alpha}$ the endomorphism induced by $\alpha|_{H}$ on $H/N$. Then 
	\[s_H(\alpha|_{H}) = s_{H/N}(\overline{\alpha})s_{N}(\alpha|_N).\]
\end{prop}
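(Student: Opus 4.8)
The plan is to reduce the statement to the automorphism case \cite[Proposition 3.21~(2)]{CON} by dividing out the bounded iterated kernel, using that the scale is insensitive to compact normal $\alpha$-stable kernels. \textbf{Reduction to $H=G$.} By Lemma~\ref{lem:parlemma}, $\parb^{-}(\alpha|_H)=H$; replacing $(G,\alpha)$ by $(H,\alpha|_H)$ we may assume from now on that $\parb^{-}(\alpha)=G$, that $N$ is a closed normal $\alpha$-stable subgroup of~$G$, and that the claim to be proved is $s_G(\alpha)=s_{G/N}(\overline{\alpha})\,s_N(\alpha|_N)$. Since now $\parb^{-}(\alpha)=G$, the group $B:=\bik(\alpha)$ is a compact subgroup which is \emph{normal in all of~$G$}, satisfies $\alpha(B)=B$, and is such that the induced endomorphism $\hat{\alpha}$ of $G/B$ is an automorphism with $\parb^{-}(\hat{\alpha})=G/B$ (the last point because the quotient map carries bounded regressive trajectories to bounded regressive trajectories). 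Likewise $\parb^{-}(\alpha|_N)=N$ and $\parb^{-}(\overline{\alpha})=G/N$. I also record that $\ker\alpha\sub B$: an element killed by~$\alpha$ lies in $\parb^{-}(\alpha)=G$ and is killed by a power of~$\alpha$, hence lies in $\bik(\alpha)$.

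\textbf{Scale is insensitive to compact $\alpha$-stable normal kernels.} I would first establish the elementary fact: if $K$ is a compact normal $\beta$-stable subgroup of a totally disconnected, locally compact group~$\Gamma$, then $s_\Gamma(\beta)=s_{\Gamma/K}(\overline{\beta})$. For a compact open $W\sub\Gamma$ with $K\sub W$ one checks that $\beta(W)K\cap W=(\beta(W)\cap W)K$ and that $\beta(W)\cap K$ is normal in $\beta(W)$, which yields $[\beta(W):\beta(W)\cap W]=[\overline{\beta}(W/K):\overline{\beta}(W/K)\cap W/K]$; as $W$ runs over the compact open subgroups of $\Gamma$ containing~$K$ the groups $W/K$ exhaust $\COS(\Gamma/K)$, so $s_{\Gamma/K}(\overline{\beta})\ge s_\Gamma(\beta)$. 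For the reverse inequality one uses the purely group-theoretic estimate $[\beta(W):\beta(W)\cap WK]\le[\beta(W):\beta(W)\cap W]$ together with $\beta(WK)=\beta(W)K$ to get $[\beta(WK):\beta(WK)\cap WK]\le[\beta(W):\beta(W)\cap W]$ for \emph{every} compact open~$W$; applying this to a minimizing~$W$ and noting $WK\supseteq K$ gives $s_{\Gamma/K}(\overline{\beta})\le s_\Gamma(\beta)$.

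\textbf{Transport through~$B$ and conclude.} Applying the previous paragraph to $(G,\alpha,B)$ gives $s_G(\alpha)=s_{G/B}(\hat{\alpha})$. Applying it to $(N,\alpha|_N,N\cap B)$ — where $N\cap B$ is compact, normal in~$N$, and $\alpha|_N$-stable, the inclusion $\alpha(N\cap B)\supseteq N\cap B$ following from $\ker\alpha\sub B$ together with $\alpha(N)=N$ and $\alpha(B)=B$ — gives $s_N(\alpha|_N)=s_{\hat{N}}(\hat{\alpha}|_{\hat{N}})$ with $\hat{N}:=NB/B\isom N/(N\cap B)$. Applying it to $(G/N,\overline{\alpha},NB/N)$, whose kernel is compact, normal and $\overline{\alpha}$-stable, gives $s_{G/N}(\overline{\alpha})=s_{(G/B)/\hat{N}}(\overline{\hat{\alpha}})$ via the topological isomorphisms $(G/B)/\hat{N}\isom G/NB\isom(G/N)/(NB/N)$, which intertwine the induced maps. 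Finally $\hat{\alpha}$ is an automorphism of $G/B=\parb^{-}(\hat{\alpha})$ and $\hat{N}$ is a closed normal $\hat{\alpha}$-stable subgroup, so \cite[Proposition 3.21~(2)]{CON} gives $s_{G/B}(\hat{\alpha})=s_{\hat{N}}(\hat{\alpha}|_{\hat{N}})\,s_{(G/B)/\hat{N}}(\overline{\hat{\alpha}})$, and substituting the three identities above yields $s_G(\alpha)=s_N(\alpha|_N)\,s_{G/N}(\overline{\alpha})$. The divisibility $s_N(\alpha|_N)\,s_{G/N}(\overline{\alpha})\mid s_G(\alpha)$ is in any case already given by Theorem~\ref{thm:ScaleQuotDiv} and serves as a consistency check.

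\textbf{Main obstacle.} The substantive points are the reverse inequality in the middle paragraph (the index estimate $[\beta(WK):\beta(WK)\cap WK]\le[\beta(W):\beta(W)\cap W]$, which is what replaces the modular-function argument of the automorphism case) and, in the transport step, verifying that each of the three kernels $B$, $N\cap B$, $NB/N$ is $\alpha$-stable so that the reduction applies — this is the place where the hypothesis $\parb^{-}(\alpha)=G$, via $\ker\alpha\sub\bik(\alpha)$ and Lemma~\ref{lem:parlemma}, genuinely enters. The remaining steps are bookkeeping with the isomorphism theorems.
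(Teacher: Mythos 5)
Your proof is correct, and it takes a genuinely different route from the paper. The paper proves the proposition \emph{within} the machinery of Section~\ref{sec:scale_sub_quot}: starting from the factorization $s_H(\alpha)=s_{H/N}(\overline{\alpha})\,[J:\alpha((U\cap N)_+)U_+]\,s_N(\alpha|_N)$ obtained in the proof of Theorem~\ref{thm:ScaleQuotDiv}, it shows directly that $J\subseteq\alpha((U\cap N)_+)U_+$ by a careful analysis of the tidying procedure, the group $L_{q(U)}$, and the bounded iterated kernel. You instead collapse the endomorphism to an automorphism by dividing out $B=\bik(\alpha)$ (which is compact, normal in $\parb^-(\alpha)=H$, and $\alpha$-stable), prove from scratch that the scale is unchanged under quotients by compact normal $\alpha$-stable kernels, apply this three times to $B$, $N\cap B$, and $NB/N$, and then invoke the known automorphism result on $G/B$. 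Your stability checks ($\alpha(N\cap B)=N\cap B$ via $\ker\alpha\subseteq B$ and $\alpha(N)=N=\alpha(B)=B$; $\overline{\alpha}(NB/N)=NB/N$) are correct, and your elementary double-inequality argument for $s_\Gamma(\beta)=s_{\Gamma/K}(\overline{\beta})$ is valid: the inclusions $\beta(W)K\cap W=(\beta(W)\cap W)K$ for $K\subseteq W$ and $\beta(W)\cap WK\supseteq\beta(W)\cap W$ in general do the work. A few remarks on what the two approaches buy. Your reduction is conceptually cleaner and exposes the precise role of the bounded iterated kernel as ``the whole obstruction to being an automorphism''; however it outsources the heart of the result to the automorphism case, which in the literature is proved via the modular function — a tool the paper's authors explicitly flag as unavailable for endomorphisms and are consciously avoiding. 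The paper's proof is longer but stays self-contained within the tidying machinery and makes no appeal to the automorphism result. Note also that the ``scale is insensitive to compact $\alpha$-stable normal kernels'' lemma you re-establish is exactly Lemma~\ref{quottidy}\,(b), which in the paper appears \emph{after} Proposition~\ref{prop:Janalysis} and is presented there as a consequence of the present line of results; you avoid any circularity by proving it directly, but it is worth being aware that the logical dependence in your route runs opposite to the paper's.
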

\begin{proof}
	For simplicity, we write $\alpha$ for $\alpha|_{H}$ as the enveloping group will play no further role. By Lemma \ref{lem:parlemma}, $\parb^{-}(\alpha) = H$ and so if $U\in \COS(H)$ is tidy for $\alpha$,
	then $U = U_+$ by \cite[Proposition 11]{End}.
By Lemma~\ref{lem:tidy_subgroup},
we may assume that $U\cap N$ is tidy for $\alpha|_N$.
Let $q:H\to H/N$ denote the quotient map. Choose $U\in\COS(H)$ satisfying conditions of Lemma \ref{lem:scale_quotients_2} with respect to the normal subgroup $N$. From the proof of Theorem \ref{thm:ScaleQuotDiv} we have
	\[s_H(\alpha) = s_{H/N}(\overline{\alpha})[J:\alpha((U\cap N)_+)U_+]s_{N}(\alpha|_{N}),\]
	where $J$ is given in the proof of Lemma \ref{lem:scale_quotients_2} by
	\[J = q^{-1}(\overline{\alpha}(q(U)_+)\cap q(U)_+L_{q(U)})\cap \alpha(U_+).\]
	It suffices to show $J \sub \alpha((U\cap N)_+)U_+$.	Since $q(U_+)\sub q(U)_+$, as seen in the proof of Lemma \ref{lem:scale_quotients_2}, and $U_+ = U$ we have $q(U_+)\sub q(U)_+ \sub q(U) = q(U_+),$
	which gives equality throughout. Thus $J = q^{-1}\left(\overline{\alpha}(q(U))\cap q(U)L_{q(U)}\right)\cap \alpha(U)$.
Since $q(U)$ is an open identity neighbourhood,
we obtain
\[
q(U)L_{q(U)}=q(U)\overline{{\mathcal{L}}_{q(U)}}=q(U){\mathcal{L}}_{q(U)}.
\]
Suppose that $x\in q^{-1}(q(U) L_{q(U)})$. We can write
$x = ul$ where $u\in U$ and $l\in q^{-1}(\mathcal L_{q(U)})$. Consider $q(l) = lN\in \mathcal L_{q(U)}$. There exists $n\in\N$ with \[\overline{\alpha}^n(lN) = \alpha^{n}(l)N \in q(U).\]
	This implies $\alpha^n(l)m\in U$ for some $m\in N$. Then $\alpha^n(l)m$ has an $\alpha$-regressive trajectory contained in $U = U_+$.
Using that fact that $N$ is assumed to be $\alpha$-stable, choose $m'\in N$ such that $\alpha^n(m')=m$.
Since \cite[Proposition 20]{End} implies that any two elements in the preimage of an element of $\parb^{-}(\alpha) = H$ are equal modulo $\bik(\alpha)$,
we have
$lm'\in U\bik(\alpha)$ by comparing $\alpha^n(lm')=\alpha^n(l)m$ with the $\alpha$-regressive trajectory for $\alpha^n(l)m$ contained in $U$. But $U$ is tidy and so $\bik(\alpha)\sub U$. Hence $l\in UN$
and thus $x\in UN$.
This shows that $J \subset UN\cap \alpha(U)$. Suppose now that $x\in UN\cap \alpha(U)$. Then we can write $x = un$ where $u\in U$ and $n\in N$. Choose $\alpha$-regressive trajectories 
	\[(u = u_0, u_1,\ldots)\hbox{, }(un = v_0,v_1,\ldots) \hbox{, and }(n = n_0, n_1, \ldots)\]
	such that $u_i,v_{i+1}\in U$ for all $i\ge 0$ and $n_i\in N$ for all $i\in\mathbb{N}$. Note that $(un=u_0n_0,u_1n_1,\ldots)$ is also an $\alpha$-regressive trajectory. For all $i\ge 1$ we have $u_in_i\in v_i\bik(\alpha)$. Noting that $\bik(\alpha)\sub U$,
we have $n_i\in U$ for all $i\ge 1$. Then $n_1\in (U\cap N)_+$ and so $n = n_0 = \alpha(n_1)\in \alpha((U\cap N)_+)$.
As $x=un$,
this shows $x\in U\alpha((U\cap N)_{+})=\alpha((U\cap N)_+)U$ (with equality by Proposition~\ref{prop:scale_subgroup}).
\end{proof}
\section{Small tidy subgroups and Theorem~D}\label{sec5}
We now turn to Theorem~D which characterizes admitting small tidy subgroups for an endomorphism in terms of its contraction group. The following link between (anti)-contraction groups and the subgroups of a compact open subgroup associated to tidiness is known for automorphisms (cf.\ \cite[Proposition~3.16]{BaW}).
\begin{la}\label{modVpVm}
Let $\alpha$ be an endomorphism of a totally disconnected, locally
compact group~$G$, and $V\sub G$ be a compact open subgroup.
Then
\[
V_{--}=\con(\alpha, V_+\cap V_-)\quad\mbox{and} \quad V_{++}=\con^-(\alpha,V_+\cap V_-).
\]
\end{la}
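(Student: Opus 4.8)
The plan is to prove the two equalities by the same two-sided template: for the inclusions ``$\sub V_{--}$'' and ``$\sub V_{++}$'' I will use only the elementary fact that a compact subset of an open subgroup is eventually swallowed by that subgroup, while for the reverse inclusions I will invoke Lemma~\ref{stable-enough} to pin down $V_+\cap V_-$ as the genuine limit towards which the relevant orbits, respectively regressive trajectories, converge. Throughout I will use freely that $V_-$ and $V_+$ are compact subgroups with $\alpha(V_-)\sub V_-$ and $\alpha(V_+)\supseteq V_+$, and that $V_+\cap V_-$ is a compact subgroup with $\alpha(V_+\cap V_-)=V_+\cap V_-$ (Lemma~\ref{good-prepar}).

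The first thing I would establish is the identification
\[
(V_-)_+\cap (V_-)_-\;=\;V_+\cap V_-\;=\;(V_+)_+\cap (V_+)_-.
\]
By Lemma~\ref{good-prepar}\,(d), for a compact set $K$ the set $K_+\cap K_-$ is exactly the set of $x\in K$ lying on a two-sided $\alpha$-trajectory $(x_n)_{n\in\Z}$ with all $x_n\in K$. The point is that such a two-sided trajectory contained in~$V$ is automatically contained in both $V_+$ and $V_-$: from $\alpha^m(x_n)=x_{n+m}\in V$ one reads off $x_n\in V_-$, and the regressive trajectory $(x_{n-m})_{m\in\N_0}$ inside~$V$ shows $x_n\in V_+$. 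Since the inclusions $V_\pm\sub V$ are trivial, the three displayed sets coincide. Feeding this into Lemma~\ref{stable-enough} (applied once with $K=V_-$ and once with $K=V_+$) gives $\con(\alpha,V_-)=\con(\alpha,V_+\cap V_-)$ and $\con(\alpha,V_+)=\con(\alpha,V_+\cap V_-)$, and --- more useful here --- that every $\alpha$-regressive trajectory converging to $V_-$ (or to $V_+$) also converges to $V_+\cap V_-$.

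For $V_{--}=\con(\alpha,V_+\cap V_-)$ I would argue as follows. If $x\in V_{--}$, then $\alpha^N(x)\in V_-$ for some $N$, hence $\alpha^n(x)\in V_-$ for all $n\ge N$ by $\alpha$-invariance of $V_-$; so $\alpha^n(x)\to e$ modulo $V_-$, i.e.\ $x\in\con(\alpha,V_-)=\con(\alpha,V_+\cap V_-)$ by the previous step. Conversely, if $x\in\con(\alpha,V_+\cap V_-)$, pick an identity neighbourhood~$W$ with $W(V_+\cap V_-)\sub V$ (possible since $V_+\cap V_-$ is compact and $V$ open) and $N$ with $\alpha^n(x)\in W(V_+\cap V_-)\sub V$ for all $n\ge N$; then $\alpha^N(x)$ and all of its forward images lie in~$V$, i.e.\ $\alpha^N(x)\in V_-$, so $x\in\alpha^{-N}(V_-)\sub V_{--}$.

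The equality $V_{++}=\con^-(\alpha,V_+\cap V_-)$ runs symmetrically, once I note that for $y\in V_+$ any $\alpha$-regressive trajectory $(y_m)_{m\in\N_0}$ for $y$ that stays in~$V$ in fact stays in $V_+$ (its shifts witness $y_m\in V_+$). For ``$\sub$'': write $x=\alpha^N(y)$ with $y\in V_+$, choose such a trajectory $(y_m)$ for $y$ inside~$V$ (hence inside $V_+$), and splice $\alpha^{N-k}(y)$ for $0\le k\le N$ with $y_{k-N}$ for $k>N$ into an $\alpha$-regressive trajectory for~$x$ whose tail lies in $V_+$; this trajectory converges to $V_+$, hence to $V_+\cap V_-$ by the identification step, so $x\in\con^-(\alpha,V_+\cap V_-)$. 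For ``$\supseteq$'': given an $\alpha$-regressive trajectory $(x_n)$ for~$x$ with $x_n\to e$ modulo $V_+\cap V_-$, the compact-inside-open argument again yields~$N$ with $x_n\in V$ for $n\ge N$, so $(x_{N+m})_{m\in\N_0}$ exhibits $x_N\in V_+$ and $x=\alpha^N(x_N)\in\alpha^N(V_+)\sub V_{++}$. The only step I expect to demand genuine care is the identification $(V_\pm)_+\cap(V_\pm)_-=V_+\cap V_-$, i.e.\ recognising $V_+\cap V_-$ as the ``stable core'' so that Lemma~\ref{stable-enough} applies; everything after that is the standard compact-in-open manoeuvre together with routine bookkeeping of regressive trajectories.
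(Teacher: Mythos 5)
Your proof is correct and follows essentially the same strategy as the paper's: the compact-in-open observation shows the orbit or trajectory eventually lands in a compact subgroup, and Lemma~\ref{stable-enough} then pulls convergence down to $V_+\cap V_-$. The only cosmetic difference is that you route through $\con(\alpha,V_{\pm})$ and prove the identification $(V_{\pm})_+\cap(V_{\pm})_-=V_+\cap V_-$ before invoking Lemma~\ref{stable-enough}, whereas the paper simply observes that the orbit (resp.\ trajectory) eventually lies in $V$ itself and applies Lemma~\ref{stable-enough} directly with $K=V$, making that extra identification step unnecessary.
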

\begin{proof}
If $x\in \con(\alpha, V_+\cap V_-)$,
then there exists $n\in\N_0$ such that $\alpha^m(x)\in V(V_+\cap V_-)=V$
for all $m\geq n$. Thus $\alpha^k(\alpha^n(x))\in V$ for all $k\in\N_0$,
whence $\alpha^n(x)\in V_-$ and hence $x\in\alpha^{-n}(V_-)\sub V_{--}$.
Thus $\con(\alpha, V_+\cap V_-)\sub V_{--}$\\[2.3mm]
If $x\in V_{--}$, there exists $m\in\N_0$ such that
$\alpha^m(x)\in V_-$ and hence $\alpha^n(x)=\alpha^{n-m}(\alpha^m(x))\in V$
for all $n\geq m$. Thus $\alpha^n(x)\to e$ modulo~$V$.
By Lemma~\ref{stable-enough}, also $\alpha^n(x)\to e$ modulo $V_+\cap V_-$.\\[2.3mm]
If $x\in \con^-(\alpha, V_+\cap V_-)$,
let $(x_n)_{n\in\N_0}$ be an $\alpha$-regressive trajectory for~$x$ such that $x_n\to e$
modulo $V_+\cap V_-$.
There exists $n\in\N_0$ such that $x_m\in V(V_+\cap V_-)=V$
for all $m\geq n$. Then $(x_{n+m})_{m\in\N_0}$ is an $\alpha$-regressive trajectory for $x_n$ in~$V$,
whence $x_n\in V_+$ and $x_0=\alpha^n(x_n)\in\alpha^n(V_+)\sub V_{++}$.\\[2.3mm]
If $x\in V_{++}$, then $x=\alpha^m(y)$ for some $m\in\N_0$ and $y\in V_+$.
Let $(y_n)_{n\in\N_0}$ be an $\alpha$-regressive trajectory for~$y$
in~$V$. Then
\[
x=\alpha^m(y),\alpha^{m-1}(y),\ldots, \alpha(y),y=y_0,y_1,y_2,\ldots
\]
is an $\alpha$-regressive trajectory for~$x$ which eventually lies in~$V$ and hence converges to~$e$
modulo~$V$. As it also converges to~$e$ modulo $V_+\cap V_-$ (by Lemma~\ref{stable-enough}),
we see that $x\in \con^-(\alpha,V_+\cap V_-)$.
\end{proof}
\begin{la}\label{conclosure}
Let $\alpha$ be an endomorphism of a totally disconnected,
locally compact group~$G$ and $V\sub G$ by a tidy subgroup for~$\alpha$.
Then
\[
\wb{\con(\alpha)}\sub \con(\alpha,V_+\cap V_-)\quad\mbox{and}\quad
\wb{\con^-(\alpha)}\sub\con^-(\alpha,V_+\cap V_-).
\]
\end{la}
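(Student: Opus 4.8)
The plan is to reduce both inclusions to Lemma~\ref{modVpVm}, which identifies $\con(\alpha,V_+\cap V_-)$ with $V_{--}$ and $\con^-(\alpha,V_+\cap V_-)$ with $V_{++}$, combined with the fact that a tidy subgroup has both $V_{--}$ and $V_{++}$ closed in~$G$.

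First I would record the elementary inclusions $\con(\alpha)\sub\con(\alpha,V_+\cap V_-)$ and $\con^-(\alpha)\sub\con^-(\alpha,V_+\cap V_-)$. Since $V_+$ and $V_-$ are subgroups, $e\in V_+\cap V_-$, so $W\sub W(V_+\cap V_-)$ for every identity neighbourhood $W\sub G$. Hence, if $x\in\con(\alpha)$ then $\alpha^n(x)\to e$ forces $\alpha^n(x)\in W\sub W(V_+\cap V_-)$ for large~$n$, i.e.\ $\alpha^n(x)\to e$ modulo $V_+\cap V_-$; and if $x\in\con^-(\alpha)$, an $\alpha$-regressive trajectory $(x_n)_{n\in\N_0}$ for $x$ with $x_n\to e$ likewise satisfies $x_n\to e$ modulo $V_+\cap V_-$. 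Rewriting the right-hand sides through Lemma~\ref{modVpVm} gives $\con(\alpha)\sub V_{--}$ and $\con^-(\alpha)\sub V_{++}$.

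Next, since $V$ is tidy for $\alpha$ it is in particular tidy below, so $V_{++}$ is closed in~$G$ by the definition of tidiness below; and by the characterization recalled earlier (namely that $V$ is tidy if and only if $V$ is tidy above and $V_{--}$ is closed in~$G$), the subgroup $V_{--}$ is closed in~$G$ as well. Passing to closures in the two inclusions just obtained therefore yields $\wb{\con(\alpha)}\sub V_{--}$ and $\wb{\con^-(\alpha)}\sub V_{++}$. Applying Lemma~\ref{modVpVm} once more to identify $V_{--}=\con(\alpha,V_+\cap V_-)$ and $V_{++}=\con^-(\alpha,V_+\cap V_-)$ completes the proof. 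There is essentially no obstacle here; the only point requiring care is that the closedness of $V_{--}$ is not part of the definition of tidiness but a consequence of it, while the closedness of $V_{++}$ must be drawn from tidiness below.
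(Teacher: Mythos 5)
Your argument is correct and follows essentially the same route as the paper's proof: identify $\con(\alpha,V_+\cap V_-)=V_{--}$ and $\con^-(\alpha,V_+\cap V_-)=V_{++}$ via Lemma~\ref{modVpVm}, note that tidiness of~$V$ forces both of these to be closed, and pass to closures. The only difference is that you spell out the trivial inclusions $\con(\alpha)\sub\con(\alpha,V_+\cap V_-)$ and $\con^-(\alpha)\sub\con^-(\alpha,V_+\cap V_-)$, which the paper leaves implicit.
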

\begin{proof}
Since $V$ is tidy, both $V_{--}=\con(\alpha,V_+\cap V_-)$ and $V_{++}=\con^-(\alpha, V_+\cap V_-)$
are closed in~$G$ (with equalities by Lemma~\ref{modVpVm}).
So $\wb{\con(\alpha)}\sub \con(\alpha,V_+\cap V_-)$
and $\wb{\con^-(\alpha)}\sub\con^-(\alpha,V_+\cap V_-)$.
\end{proof}
If $\alpha$ is an endomorphism of a totally disconnected locally compact group,
we write $\TID(\alpha)$ for the set of all tidy subgroups of~$\alpha$.
The following auxiliary result will be improved in Proposition~\ref{keynub}\,(d).
\begin{la}\label{conclosure2}
Let $\alpha$ be an endomorphism of a totally disconnected,
locally compact group~$G$.
Then $\, \wb{\con(\alpha)}\sub \con(\alpha)\nub(\alpha)$.
\end{la}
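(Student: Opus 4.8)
The plan is to obtain this by chaining Lemma~\ref{conclosure}, Lemma~\ref{intersect-famly}, and Theorem~A, all of which are available at this stage. Write $\TID(\alpha)$ for the set of all subgroups of~$G$ which are tidy for~$\alpha$; it is nonempty, since minimizing (equivalently tidy) subgroups exist, and $\nub(\alpha)=\bigcap_{V\in\TID(\alpha)}V$ by definition. For each $V\in\TID(\alpha)$, the set $V_+\cap V_-$ is a compact subgroup of~$G$ contained in~$V$ (recalled in Section~\ref{sec:basic_concepts}); in particular $\bigcap_{V\in\TID(\alpha)}(V_+\cap V_-)\sub\bigcap_{V\in\TID(\alpha)}V=\nub(\alpha)$.

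First I would apply Lemma~\ref{conclosure} to each $V\in\TID(\alpha)$, obtaining $\wb{\con(\alpha)}\sub\con(\alpha,V_+\cap V_-)$ and hence
\[
\wb{\con(\alpha)}\;\sub\;\bigcap_{V\in\TID(\alpha)}\con(\alpha,V_+\cap V_-).
\]
Since each $V_+\cap V_-$ is compact (in particular closed) and $\TID(\alpha)\neq\emptyset$, Lemma~\ref{intersect-famly}, applied to the family $(V_+\cap V_-)_{V\in\TID(\alpha)}$ in the uniform space $(G,\cR)$, identifies the right-hand side with $\con\bigl(\alpha,\bigcap_{V\in\TID(\alpha)}(V_+\cap V_-)\bigr)$. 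Because $S\mapsto\con(\alpha,S)$ is monotone (immediate from the definition) and $\bigcap_{V\in\TID(\alpha)}(V_+\cap V_-)\sub\nub(\alpha)$, we conclude $\wb{\con(\alpha)}\sub\con(\alpha,\nub(\alpha))$.

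To finish, I would invoke Theorem~A with $H:=\nub(\alpha)$: the nub is a compact subgroup of~$G$ with $\alpha(\nub(\alpha))=\nub(\alpha)$, so Theorem~A gives $\con(\alpha,\nub(\alpha))=\con(\alpha)\nub(\alpha)$, and combining this with the inclusion of the previous paragraph yields $\wb{\con(\alpha)}\sub\con(\alpha)\nub(\alpha)$. The argument is essentially bookkeeping, so I do not expect a genuine obstacle; the only points to check are the hypotheses of the cited results (compactness of each $V_+\cap V_-$ and of $\nub(\alpha)$, $\alpha$-stability of $\nub(\alpha)$, nonemptiness of $\TID(\alpha)$), and the real content is Theorem~A applied to the compact $\alpha$-stable subgroup $\nub(\alpha)$.
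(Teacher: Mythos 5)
Your proposal is correct and follows essentially the same route as the paper: Lemma~\ref{conclosure}, then Lemma~\ref{intersect-famly} applied over $\TID(\alpha)$, then Theorem~A with the compact $\alpha$-stable subgroup $\nub(\alpha)$. The only cosmetic difference is that the paper enlarges $\con(\alpha,V_+\cap V_-)\sub\con(\alpha,V)$ before intersecting and then uses the exact identity $\bigcap_{V\in\TID(\alpha)}V=\nub(\alpha)$, whereas you intersect the $V_+\cap V_-$ directly and then pass to $\nub(\alpha)$ by monotonicity of $S\mapsto\con(\alpha,S)$; both are equally valid bookkeeping.
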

\begin{proof}
For each $V\in\TID(\alpha)$, we have $\wb{\con(\alpha)}\sub\con(\alpha, V_+\cap V_-)\sub \con(\alpha, V)$.
and therefore
\begin{eqnarray*}
\wb{\con(\alpha)}&\sub& \bigcap_{V\in\TID(\alpha)}\con(\alpha, V)=\con\left(\alpha,\bigcap_{V\in\TID(\alpha)} V\right)\\
&=& \con(\alpha,\nub(\alpha))=\con(\alpha)\nub(\alpha),
\end{eqnarray*}
using Lemma~\ref{intersect-famly} for the first equality and Theorem~A for the last.
\end{proof}
The following result is known for automorphisms (cf.\ \cite[Lemma~3.31\,(3)]{BaW}).
\begin{prop}\label{when-TB}
Let $\alpha$ be an endomorphism of a totally disconnected, locally compact group~$G$
and $V\sub G$ be a compact open subgroup which is tidy above for~$\alpha$.
Then $V$ is tidy if and only if $V$ contains the nub of~$\alpha$.
\end{prop}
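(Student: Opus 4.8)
The plan is to prove both implications by relating tidiness below to the behaviour of $V_{++}$ (equivalently $V_{--}$) and invoking the characterisation ``$V$ tidy $\iff$ $V$ tidy above and $V_{--}$ closed'' (\cite[Proposition~9]{End}) recalled in Section~\ref{sec:basic_concepts}, together with Lemma~\ref{modVpVm} and Lemma~\ref{conclosure}.

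\emph{First, the ``only if'' direction.} Suppose $V$ is tidy for $\alpha$. Then $V\in\TID(\alpha)$, and since $\nub(\alpha)=\bigcap_{W\in\TID(\alpha)}W$ by definition, we get $\nub(\alpha)\sub V$ immediately. (This direction is essentially trivial and uses only the definition of the nub.)

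\emph{Next, the ``if'' direction, which is the substantive one.} Assume $V$ is tidy above and $\nub(\alpha)\sub V$. By the criterion \cite[Proposition~9]{End}, it suffices to show $V_{--}$ is closed in $G$. By Lemma~\ref{modVpVm}, $V_{--}=\con(\alpha,V_+\cap V_-)$, so the task is to show $\con(\alpha,V_+\cap V_-)$ is closed. Now pick any genuinely tidy subgroup $W\in\TID(\alpha)$ (one exists, e.g.\ via the tidying procedure~\ref{tidyingp}). By Lemma~\ref{conclosure}, $\wb{\con(\alpha)}\sub\con(\alpha,W_+\cap W_-)=W_{--}$, and since $W$ is tidy, $W_{--}$ is closed; hence $\wb{\con(\alpha)}\sub W_{--}\sub W$. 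More usefully, I will argue that $\con(\alpha,V_+\cap V_-)=\con(\alpha)(V_+\cap V_-)$: by Theorem~A applied to the $\alpha$-invariant compact subgroup $K:=V_+\cap V_-$ (which satisfies $\alpha(K)=K$ by Lemma~\ref{good-prepar}\,(d)), we have $\con(\alpha,K)=\con(\alpha)K$. So $V_{--}=\con(\alpha)(V_+\cap V_-)$. Since $V_+\cap V_-$ is compact, $V_{--}$ is closed as soon as $\con(\alpha)(V_+\cap V_-)$ is closed, and for that it suffices to show $\wb{\con(\alpha)}\sub\con(\alpha)(V_+\cap V_-)$ (then $\con(\alpha)(V_+\cap V_-)=\wb{\con(\alpha)}(V_+\cap V_-)$, a product of a closed set with a compact set in a topological group, hence closed). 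By Lemma~\ref{conclosure2}, $\wb{\con(\alpha)}\sub\con(\alpha)\nub(\alpha)$, and since $\nub(\alpha)\sub V$ and $\nub(\alpha)$ is $\alpha$-stable and compact, every $x\in\nub(\alpha)$ lies in $V$ and has all forward iterates in $\nub(\alpha)\sub V$, and (using $\alpha(\nub(\alpha))=\nub(\alpha)$) admits a bounded $\alpha$-regressive trajectory inside $\nub(\alpha)\sub V$; hence $\nub(\alpha)\sub V_+\cap V_-$. Therefore $\wb{\con(\alpha)}\sub\con(\alpha)\nub(\alpha)\sub\con(\alpha)(V_+\cap V_-)$, as needed.

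\emph{Where the difficulty lies.} The delicate point is the inclusion $\nub(\alpha)\sub V_+\cap V_-$: one must check that every element of the nub sits in $V_-$ (forward orbit stays in $V$) and in $V_+$ (admits an $\alpha$-regressive trajectory within $V$), which is exactly where $\alpha$-stability of $\nub(\alpha)$ together with $\nub(\alpha)\sub V$ is used; compactness of $\nub(\alpha)$ is what lets a bounded regressive trajectory be extracted (cf.\ Lemma~\ref{good-prepar}\,(c)). After that, the closedness of $\con(\alpha)(V_+\cap V_-)$ is the standard fact that in a topological group the product of a closed subset and a compact subset is closed, and the identification $V_{--}=\con(\alpha,V_+\cap V_-)$ is Lemma~\ref{modVpVm}. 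Assembling these with \cite[Proposition~9]{End} completes the proof.
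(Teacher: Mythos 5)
Your proof is correct, and it takes a route that is close in spirit to the paper's but differs in one genuine way. Both arguments reduce, via Lemma~\ref{modVpVm} and Theorem~A, to showing that $V_{--}=\con(\alpha)(V_+\cap V_-)$ is closed, and both accomplish this by exhibiting it as the product of a closed set with the compact group $V_+\cap V_-$. The paper gets its closed factor by first invoking the filter‑basis Lemma~\ref{la:filter_basis} (together with the fact that $\TID(\alpha)$ is a filter basis of compact sets with intersection $\nub(\alpha)\sub V$) to extract a tidy $W\sub V$, then intercalating $W_+\cap W_-$ to rewrite $V_{--}=W_{--}(V_+\cap V_-)$ with $W_{--}$ closed. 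You instead go through Lemma~\ref{conclosure2}, observing directly that $\nub(\alpha)\sub V_+\cap V_-$ (using $\alpha$-stability of the nub to produce both a forward orbit and a regressive trajectory inside $\nub(\alpha)\sub V$), whence $\wb{\con(\alpha)}\sub\con(\alpha)\nub(\alpha)\sub\con(\alpha)(V_+\cap V_-)$ and so $V_{--}=\wb{\con(\alpha)}(V_+\cap V_-)$. Your version avoids the detour through an auxiliary tidy $W$ at the cost of leaning on Lemma~\ref{conclosure2} (which the paper happens to have already established and which involves no circularity here). One small remark: for the inclusion $\nub(\alpha)\sub V_+$, $\alpha$-stability of $\nub(\alpha)$ already supplies a regressive trajectory inside $\nub(\alpha)\sub V$ by repeatedly choosing preimages, so compactness of the nub is not actually what is doing the work there — surjectivity of $\alpha|_{\nub(\alpha)}$ suffices.
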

\begin{proof}
If $V$ is tidy, then $V\supseteq \nub(\alpha)$ by definition of the nub. For the converse,
assume that $V$ is tidy above
and $V\supseteq \nub(\alpha)$. Since $W_1\cap W_2$ is tidy for all tidy subgroups $W_1$
and $W_2$ (see \cite[Proposition~12]{End}), $\TID(\alpha)$ is a filter basis.
By Lemma \ref{la:filter_basis}, there exists $W\in\TID(\alpha)$ such that $W\sub V$.
Using both Lemma~\ref{modVpVm} and Theorem~A twice, we see that
\begin{eqnarray*}
V_{--}&=& \con(\alpha,V_+\cap V_-)=\con(\alpha)(V_+\cap V_-)=\con(\alpha)(W_+\cap W_-)(V_+\cap V_-)\\
&=& \con(\alpha, W_+\cap W_-)(V_+\cap V_-)=
W_{--}(V_+\cap V_-).
\end{eqnarray*}
As $W_{--}$ is closed and $V_+\cap V_-$ is compact, $V_{--}$ is closed.
Hence $V$ is tidy.
\end{proof}
The following proposition subsumes Theorem~D.
For the special case of automorphisms, see already
\cite[Theorem 3.32]{BaW} (for metrizable~$G$).
\begin{prop}\label{betterC}
Let $\alpha$ be an endomorphism of a totally disconnected, locally compact group~$G$.
Then the following conditions are equivalent:
\begin{itemize}
\item[\rm(a)]
$\alpha$ has small tidy subgroups.
\item[\rm(b)]
$\nub(\alpha)=\{e\}$.
\item[\rm(c)]
Every compact open subgroup $V\sub G$ which is tidy
above for~$\alpha$ is tidy.
\item[\rm(d)]
$\con(\alpha)$ is closed in~$G$.
\end{itemize}
\end{prop}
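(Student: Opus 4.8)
The plan is to prove the cycle $(a)\Leftrightarrow(b)$, $(b)\Rightarrow(d)$, $(d)\Rightarrow(c)$ and $(c)\Rightarrow(a)$. The equivalence $(a)\Leftrightarrow(b)$ is just the characterization of the nub recalled in Section~\ref{sec:basic_concepts}: since $\nub(\alpha)=\bigcap_{V\in\TID(\alpha)}V$ and the tidy subgroups of~$\alpha$ form a filter basis, $\nub(\alpha)=\{e\}$ precisely when the tidy subgroups form a basis of identity neighbourhoods, i.e.\ when $\alpha$ has small tidy subgroups. For $(b)\Rightarrow(d)$ I would simply apply Lemma~\ref{conclosure2}: if $\nub(\alpha)=\{e\}$, then $\wb{\con(\alpha)}\sub\con(\alpha)\nub(\alpha)=\con(\alpha)$, so $\con(\alpha)$ is closed.

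The main step is $(d)\Rightarrow(c)$. Let $V\sub G$ be a compact open subgroup which is tidy above for~$\alpha$. By \cite[Proposition~9]{End} (recalled in Section~\ref{sec:basic_concepts}), a subgroup which is tidy above is tidy as soon as $V_{--}$ is closed, so it suffices to show that $V_{--}$ is closed in~$G$. By Lemma~\ref{modVpVm}, $V_{--}=\con(\alpha, V_+\cap V_-)$. Now $V_+\cap V_-$ is a compact subgroup of~$G$ with $\alpha(V_+\cap V_-)=V_+\cap V_-$, so Theorem~A applies and yields $\con(\alpha, V_+\cap V_-)=\con(\alpha)(V_+\cap V_-)$. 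Under hypothesis~$(d)$ the subgroup $\con(\alpha)$ is closed, and since $V_+\cap V_-$ is compact the product $\con(\alpha)(V_+\cap V_-)$ is closed in~$G$ (a product of a closed subset and a compact subset of a topological group is always closed). Hence $V_{--}$ is closed and $V$ is tidy.

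For $(c)\Rightarrow(a)$: given any identity neighbourhood $W\sub G$, van Dantzig's theorem provides a compact open subgroup $U\sub W$, and for large $\ell\in\N_0$ the compact open subgroup $\bigcap_{j=0}^\ell\alpha^{-j}(U)$ is tidy above for~$\alpha$ (see \cite{End}); by~$(c)$ it is then tidy, while being contained in $U\sub W$. Thus arbitrarily small tidy subgroups exist, which is~$(a)$. I expect the only genuinely delicate point to be $(d)\Rightarrow(c)$, and even there the work reduces, via Lemma~\ref{modVpVm}, to an application of Theorem~A together with the elementary closedness of $\con(\alpha)(V_+\cap V_-)$; the remaining implications are bookkeeping with results from~\cite{End} and the earlier lemmas. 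One could alternatively route $(b)\Rightarrow(c)$ directly through Proposition~\ref{when-TB}, but the cycle above is the most economical.
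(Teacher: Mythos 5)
Your proof is correct and relies on exactly the same ingredients as the paper's: Lemma~\ref{modVpVm} together with Theorem~A for (d)$\impl$(c), Lemma~\ref{conclosure2} for (b)$\impl$(d), and the standard passage via $\bigcap_{j=0}^\ell\alpha^{-j}(U)$ for (c)$\impl$(a). The only organizational difference is that you establish (a)$\aeq$(b) directly (via the filter-basis property of $\TID(\alpha)$ and Lemma~\ref{la:filter_basis}, which the paper records as a known fact in Section~\ref{sec:basic_concepts}), whereas the paper runs the shorter cycle (a)$\impl$(b)$\impl$(c)$\impl$(a), obtaining (b)$\impl$(c) from Proposition~\ref{when-TB}. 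Your route therefore bypasses Proposition~\ref{when-TB} entirely, at the cost of a slightly redundant cycle (given (a)$\aeq$(b), the implication (c)$\impl$(a) plus (b)$\impl$(d)$\impl$(c) already closes the loop, so one direction of the equivalence is superfluous); the paper's arrangement is marginally tighter but the mathematical content is the same. One small point worth making explicit, which you stated but didn't justify: the closedness of $\con(\alpha)(V_+\cap V_-)$ uses that in a topological group the product of a closed set and a compact set is closed — true, and used implicitly in the paper's version as well.
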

\begin{proof}
The implication ``(a)$\impl$(b)" is trivial, while ``(b)$\impl$(c)" is immediate from Proposition~\ref{when-TB}.
If (c) holds and $U\sub G$ is an identity neighbourhood,
pick a compact open subgroup $V\sub U$. After replacing $V$ with $\bigcap_{j=0}^\ell\alpha^{-j}(V)$
for some $\ell\in\N_0$, we may assume that $V$ is tidy above for~$\alpha$ and hence tidy,
as we assume~(c). Hence (c) implies (a).\\[2.3mm]
(b)$\impl$(d) holds by Lemma~\ref{conclosure2}.\\[2.3mm]
(d)$\impl$(c): By Lemma~\ref{modVpVm} and Theorem~A, we have $V_{--}=\con(\alpha, V_+\cap V_-)=\con(\alpha)(V_+\cap V_-)$,
which is closed in~$G$ as $V_+\cap V_-$ is compact and we assume that $\con(\alpha)$ is closed.
Hence $V$ is tidy.
\end{proof}
Tidy automorphisms were introduced in~\cite{Tid}.
\begin{defn}
An endomorphism
$\alpha$ of a totally disconnected, locally compact group
is called \emph{tidy} if it satisfies
one (hence any) of the conditions (a)--(d)
in Proposition~\ref{betterC}.
\end{defn}
\begin{cor}\label{subtidy}
	Let $\alpha$ be an endomorphism of a totally disconnected, locally compact group~$G$ and $H\sub G$ an $\alpha$-invariant closed subgroup. If $\alpha$ is tidy, then also $\alpha|_H$ is tidy.
	If $H$ is also normal and compact, then the endomorphism
	$\overline{\alpha}$ induced by $\alpha$ on $G/H$ is tidy. 
\end{cor}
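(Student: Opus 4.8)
The plan is to use throughout the characterization ``$\alpha$ is tidy $\iff$ $\con(\alpha)$ is closed'' supplied by Proposition~\ref{betterC}\,(d).

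For the first assertion, observe that $H$, being a closed subgroup of a totally disconnected, locally compact group, is again such a group, and $\alpha|_H\colon H\to H$ is a continuous endomorphism since $\alpha(H)\sub H$. As the iterates $\alpha^n(x)$ for $x\in H$ are the same whether computed in~$H$ or in~$G$, we have $\con(\alpha|_H)=\con(\alpha)\cap H$. Since $\con(\alpha)$ is closed in~$G$ (as $\alpha$ is tidy) and $H$ is closed in~$G$, the intersection $\con(\alpha)\cap H$ is closed in~$H$, so $\alpha|_H$ is tidy by Proposition~\ref{betterC}.

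For the second assertion, assume $H$ is normal and compact, and let $q\colon G\to G/H$ be the canonical quotient map; then $\overline{\alpha}$ is the continuous endomorphism of the totally disconnected, locally compact group~$G/H$ (which is again such a group, $H$ being compact) determined by $\overline{\alpha}\circ q=q\circ\alpha$. First I would verify that $q^{-1}(\con(\overline{\alpha}))=\con(\alpha,H)$: since $q$ is open and continuous, the sets $q(V)$ with $V$ an identity neighbourhood of~$G$ form a basis of identity neighbourhoods of~$G/H$ and $q^{-1}(q(V))=VH$, so $\overline{\alpha}^n(q(x))\to e$ in $G/H$ is equivalent to $\alpha^n(x)\to e$ modulo~$H$; hence $q(x)\in\con(\overline{\alpha})$ iff $x\in\con(\alpha,H)$. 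Since $\alpha(H)\sub H$ and $H$ is compact, Theorem~A yields $\con(\alpha,H)=\con(\alpha)H$. As $\con(\alpha)$ is closed in~$G$ ($\alpha$ being tidy) and $H$ is compact, the set $\con(\alpha)H$ is closed in~$G$ (the product of a closed subset with a compact subset of a topological group is closed). Thus $q^{-1}(\con(\overline{\alpha}))$ is closed in~$G$, and since $q$ is a quotient map, $\con(\overline{\alpha})$ is closed in~$G/H$; by Proposition~\ref{betterC}, $\overline{\alpha}$ is tidy.

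I expect no genuine obstacle here: the only points needing a moment's care are the identity $q^{-1}(\con(\overline{\alpha}))=\con(\alpha,H)$ (resting on $q$ being open, so that closedness in $G/H$ may be tested on $q$-preimages and convergence modulo~$H$ translates to convergence in $G/H$) and the elementary fact that a closed set times a compact set is closed in a topological group; the substantive ingredient is Theorem~A, which replaces $\con(\alpha,H)$ by $\con(\alpha)H$.
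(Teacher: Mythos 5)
Your argument is correct and follows the same route as the paper: for the restriction you use $\con(\alpha|_H)=\con(\alpha)\cap H$ and for the quotient you use $q^{-1}(\con(\overline{\alpha}))=\con(\alpha,H)=\con(\alpha)H$ (via Theorem~A) together with the characterization in Proposition~\ref{betterC}\,(d). The only difference is that you spell out the verification of $q^{-1}(\con(\overline{\alpha}))=\con(\alpha,H)$, which the paper asserts without comment.
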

\begin{proof}
	If $\alpha$ is tidy, then $\con(\alpha)$ is closed in~$G$ by Proposition \ref{betterC},
	whence $\con(\alpha)\cap H\!=\!\con(\alpha|_H)$ is closed in~$H$.
	Thus $\alpha|_H$ is tidy by the same~proposition.\\[2.3mm]
	Assume also $H$ is normal in $G$ and compact. Applying Theorem A, we have
$q^{-1}(\con(\wb{\alpha}))=\con(\alpha,H) = \con(\alpha)H$
	which is closed since $\con(\alpha)$ is closed by Proposition~\ref{betterC} and $H$ is compact.
	Hence $\con(\wb{\alpha})$ is closed and thus $\wb{\alpha}$ is tidy,
	by Proposition~\ref{betterC}.
\end{proof}
\section{Dynamics on {\boldmath$\parb^-(\alpha)$ and $\lev(\alpha)$}}
We now prove results involving
the restriction of an endomorphism\linebreak
$\alpha\colon G\to G$ to
the anti-parabolic subgroup $\parb^-(\alpha)$ or
the Levi subgroup $\lev(\alpha)=\parb(\alpha)\cap\parb^{-}(\alpha)$, to be used in the proof of Theorem~E.
\begin{la}\label{hencebeta}
Let $\alpha$ be an endomorphism of a totally disconnected, locally compact group~$G$
and $K$ be a compact subgroup of~$G$ such that $\alpha(K)=K$.
Set $\beta:=\alpha|_{\parb^-(\alpha)}$.
Then $K\sub\parb^-(\alpha)$ and
\[
\con^{-}(\alpha,K)=\con^{-}(\beta,K)\sub\parb^-(\alpha).
\]
In particular, $\con^-(\alpha)=\con^-(\beta)\sub\parb^-(\alpha)$.
\end{la}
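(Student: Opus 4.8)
The plan is to prove Lemma~\ref{hencebeta} by first handling the easy containments and then showing that every $\alpha$-regressive trajectory witnessing membership in $\con^-(\alpha,K)$ can be taken to lie in $\parb^-(\alpha)$, which is the content that makes the first displayed equality work and simultaneously proves that $\con^-(\alpha,K)\sub\parb^-(\alpha)$.

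\emph{Step 1: $K\sub\parb^-(\alpha)$.} Since $\alpha(K)=K$ and $K$ is compact, every point $x\in K$ admits an $\alpha$-regressive trajectory inside~$K$: by surjectivity of $\alpha|_K$ we can choose $x_0:=x$ and inductively pick $x_{n}\in K$ with $\alpha(x_n)=x_{n-1}$. This trajectory is contained in the compact set $K$, so $x\in\parb^-(\alpha)$ by definition. Hence $K\sub\parb^-(\alpha)$, and it makes sense to speak of $\con^-(\beta,K)$ for $\beta=\alpha|_{\parb^-(\alpha)}$.

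\emph{Step 2: $\con^-(\beta,K)\sub\con^-(\alpha,K)\cap\parb^-(\alpha)$.} This is immediate: if $x\in\con^-(\beta,K)$, then $x\in\parb^-(\alpha)$ and there is a $\beta$-regressive trajectory $(x_n)$ in $\parb^-(\alpha)$ with $x_n\to_\cR K$; this same sequence is an $\alpha$-regressive trajectory in~$G$ converging to $K$ modulo the right uniformity, so $x\in\con^-(\alpha,K)$. (One can phrase this via Lemma~\ref{imageconv}\,(b) applied to the inclusion $\parb^-(\alpha)\emb G$, or just directly.)

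\emph{Step 3: the reverse inclusion $\con^-(\alpha,K)\sub\con^-(\beta,K)$.} Let $x\in\con^-(\alpha,K)$ with $\alpha$-regressive trajectory $(x_n)_{n\in\N_0}$ in $G$ satisfying $x_n\to e$ modulo~$K$. By Lemma~\ref{specialkaes}\,(a), since $K$ is compact, the set $R:=\{x_n\colon n\in\N_0\}$ is relatively compact in~$G$; thus $(x_n)$ is a bounded $\alpha$-regressive trajectory, which means each $x_n\in\parb^-(\alpha)$ — indeed the tail $(x_m)_{m\ge n}$ is itself a bounded $\alpha$-regressive trajectory for $x_n$. Therefore $(x_n)$ is actually a $\beta$-regressive trajectory lying in $\parb^-(\alpha)$. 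Moreover, since $\parb^-(\alpha)$ carries the subspace topology, Lemma~\ref{subuni} gives that $x_n\to_\cR K$ also holds in the uniform space $\parb^-(\alpha)$ (note $K\sub\parb^-(\alpha)$ by Step~1). Hence $x\in\con^-(\beta,K)$. Combining Steps 2 and 3 yields $\con^-(\alpha,K)=\con^-(\beta,K)\sub\parb^-(\alpha)$.

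\emph{Step 4: the special case.} Taking $K=\{e\}$ (which is compact with $\alpha(\{e\})=\{e\}$), the definitions give $\con^-(\alpha,\{e\})=\con^-(\alpha)$ and $\con^-(\beta,\{e\})=\con^-(\beta)$, so the first assertion specializes to $\con^-(\alpha)=\con^-(\beta)\sub\parb^-(\alpha)$. I expect the only mildly delicate point to be the bookkeeping in Step~3 — namely making precise that boundedness of the whole trajectory forces each entry into $\parb^-(\alpha)$ and that convergence modulo~$K$ transfers to the subspace; both are handled cleanly by the cited lemmas (Lemma~\ref{specialkaes}\,(a) and Lemma~\ref{subuni}), so there is no real obstacle.
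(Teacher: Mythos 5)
Your proposal is correct and follows essentially the same route as the paper's own proof: both establish $K\sub\parb^-(\alpha)$ from surjectivity of $\alpha|_K$, use Lemma~\ref{specialkaes}\,(a) to get relative compactness of the trajectory and hence $x_n\in\parb^-(\alpha)$, and invoke Lemma~\ref{subuni} to transfer convergence modulo~$K$ to the subspace. (One tiny remark: for the trivial direction, the paper cites Lemma~\ref{imageconv}\,(a) rather than part~(b), since part~(b) is stated for $\con$ rather than $\con^-$; your fallback to a direct argument covers this.)
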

\begin{proof}
Since $\alpha(K)=K$, we find an $\alpha$-regressive trajectory in~$K$ for each $x\in K$,
showing that $K\sub\parb^-(\alpha)$.
If $x\in\con^-(\alpha,K)$, then
there exists an $\alpha$-regressive trajectory $(x_n)_{n\in\N_0}$ for~$x$
such that $x_n\to e$ modulo $K$.
As $K$ is compact, $\{x_n\colon n\in\N_0\}$ is relatively compact
(see Lemma~\ref{specialkaes}\,(a)), entailing that $x_n\in\parb^-(\alpha)$ for each $n\in\N_0$ and thus
$x\in\con^-(\beta, K)$ (cf.\ Lemma~\ref{subuni}).
Thus $\con^-(\alpha,K)\sub \con^-(\beta,K)$. The converse inclusion is trivial (cf.\ Lemma~\ref{imageconv}\,(a)).
\end{proof}
Among other things, the next lemma is useful for the proof of Theorem~E.
Independently, part~(c) of the lemma was also obtained in \cite[Lemma~4.5\,(3)]{GBV}.
\begin{la}\label{fi-steps}
Let $\alpha$ be an endomorphism of a totally disconnected, locally compact group~$G$.
Then the following holds:
\begin{itemize}
\item[\rm(a)]
A compact open subgroup $U\sub G$ is tidy below for $\alpha$
if and only if $U \cap \parb^-(\alpha)$ is tidy below for
$\alpha|_{\parb^-(\alpha)}$.
\item[\rm(b)]
If $\alpha|_{\parb^-(\alpha)}$ is injective, then $\alpha|_{\parb^-(\alpha)}$ is an automorphism
of $\parb^-(\alpha)$.
\end{itemize}
If $U\sub G$ is a tidy subgroup for~$\alpha$, then
\begin{itemize}
\item[\rm(c)]
$U_+$ is a compact open subgroup of $\parb^-(\alpha)$
which is tidy for $\alpha|_{\parb^-(\alpha)}$,
and $s(\alpha)=s(\alpha|_{\parb^-(\alpha)})$.
\item[\rm(d)]
$U_+$ is a compact open subgroup of $U_{++}$ which is tidy for $\alpha|_{U_{++}}$,
and $s(\alpha)=s(\alpha|_{U_{++}})$.
\end{itemize}
\end{la}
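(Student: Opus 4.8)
The plan is to prove the four assertions in the natural order, using the characterization of tidiness via "tidy above $+$ $U_{--}$ closed" (equivalently the description of $U_{++}$, $U_{--}$ as (anti)-domains of attraction from Lemma~\ref{modVpVm}), the known facts about $\parb^-(\alpha)$ from Section~\ref{sec:basic_concepts}, and the structural results already established.

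For part (a): write $P:=\parb^-(\alpha)$, which is closed, $\alpha$-stable and contains every $U_+$ for $U$ a compact open subgroup. Given $U\in\COS(G)$, set $V:=U\cap P$, a compact open subgroup of $P$. The key observation is that $U_+=V_+$ (an $\alpha$-regressive trajectory inside $U$ automatically lies in $P$, hence in $V=U\cap P$, by Lemma~\ref{good-prepar}(c) and the definition of $P$; conversely a regressive trajectory in $V$ lies in $U$). Consequently $U_{++}=\bigcup_n\alpha^n(U_+)=\bigcup_n(\alpha|_P)^n(V_+)=V_{++}$, and likewise the indices $[\alpha^{n+1}(U_+):\alpha^n(U_+)]$ are computed identically for $\alpha$ and $\alpha|_P$. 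Since $U$ is tidy below iff $U_{++}$ is closed in $G$ and these indices are independent of $n$, and $V$ is tidy below iff $V_{++}$ is closed in $P$ with the same index condition, the two conditions are literally the same statement once we note $U_{++}=V_{++}\sub P$ is closed in $G$ iff it is closed in $P$ (as $P$ is closed in $G$). This gives (a).

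For part (b): if $\alpha|_P$ is injective, I want to show it is surjective with continuous inverse. Surjectivity: recall from Section~\ref{sec:basic_concepts} that $\alpha(\parb^-(\alpha))=\parb^-(\alpha)$, so $\alpha|_P$ is already onto $P$; injectivity is the hypothesis, so $\alpha|_P$ is a continuous bijection of $P$. For the inverse to be continuous, use that $P$ is locally compact (closed subgroup of a locally compact group) and $\sigma$-compactness is not needed here — instead invoke that a continuous bijective endomorphism of a locally compact group which is also an open map is a topological isomorphism; openness follows because $\alpha|_P$ maps the compact open subgroup $U_+$ onto $\alpha(U_+)$, and $\alpha(U_+)\supseteq U_+$ with finite index $s(\alpha)$ — combined with injectivity this forces $\alpha(U_+)$ to be a compact open subgroup and $\alpha|_{U_+}\colon U_+\to\alpha(U_+)$ a continuous bijection of compact groups, hence a homeomorphism; translating by elements of $P$ then gives openness of $\alpha|_P$ everywhere. (Alternatively: $\alpha|_P$ injective forces $U_+=\bigcap_n\alpha^{-n}(U)\cap\cdots$ type arguments to collapse, but the open-map route is cleanest.)

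For parts (c) and (d): let $U\sub G$ be tidy for $\alpha$. By Lemma~\ref{modVpVm}, $U_{++}=\con^-(\alpha,U_+\cap U_-)$, which is closed since $U$ is tidy below. Now $U_+$ is a compact open subgroup of $\parb^-(\alpha)$: it is compact (Lemma~\ref{good-prepar}), contained in $\parb^-(\alpha)$, and open in $\parb^-(\alpha)$ because $U$ is open in $G$ and $U_+=U\cap\parb^-(\alpha)\cap(\text{regressive condition})$ — more carefully, $U_+$ contains $U\cap\parb^-(\alpha)$'s "plus part" which by the identity $U_+=V_+$ from part (a) equals $(U\cap\parb^-(\alpha))_+$, and one checks this is open since $\parb^-(\alpha)$ is covered by the open sets $g(U\cap\parb^-(\alpha))$ whose intersection with $\parb^-(\alpha)$'s compact-trajectory structure stabilizes; the slick argument is that $(U\cap\parb^-(\alpha))_{n,\alpha}$ is open in $\parb^-(\alpha)$ for each $n$ and the decreasing sequence of compact open subgroups stabilizes, so $U_+=(U\cap\parb^-(\alpha))_+$ is open. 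To see $U_+$ is tidy for $\alpha|_{\parb^-(\alpha)}$: it is tidy above trivially since $(U_+)_-=(U_+)_+=U_+$ (every element of $U_+$ has a regressive trajectory in $U_+$ and stays in $U_+$ under no constraint... one needs $\alpha(U_+)\supseteq U_+$, giving $(U_+)_+=U_+$; and $(U_+)_-=U_+$ because... actually $\alpha(U_+)$ need not be inside $U_+$, so $(U_+)_-=\{x\in U_+:\alpha^n(x)\in U_+\,\forall n\}$ which I claim equals $U_+$ using $U_+\sub U$ and the tidy-below index structure). Then $U_+$ tidy above and $(U_+)_{--}$ closed (it is contained in $\parb^-(\alpha)$ and relates to $\con(\alpha|_{\parb^-(\alpha)},\cdot)$) gives tidiness, and the scale is $s(\alpha|_{\parb^-(\alpha)})=[\alpha((U_+)_+):(U_+)_+]=[\alpha(U_+):U_+]=s(\alpha)$. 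Part (d) is the same argument verbatim with $U_{++}$ in place of $\parb^-(\alpha)$: $U_{++}$ is a closed (hence locally compact) $\alpha$-stable subgroup containing $U_+$ as a compact open subgroup, the trajectory computations are identical, and the scale is preserved.

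\textbf{Main obstacle.} The delicate point is verifying that $U_+$ is genuinely \emph{open} in $\parb^-(\alpha)$ (and in $U_{++}$) and that $U_+$ is \emph{tidy above} for the restricted endomorphism, i.e., that $(U_+)_-$ and $(U_+)_+$ recombine to give $U_+$. This requires carefully exploiting that $U$ is tidy \emph{below} — specifically that $U_{++}$ is closed and the indices $[\alpha^{n+1}(U_+):\alpha^n(U_+)]$ stabilize — to control how $\alpha$ moves $U_+$ around inside $\parb^-(\alpha)$; without tidiness below one only gets $U_+\sub\alpha(U_+)$ and the "$(U_+)_-=U_+$" claim can fail. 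I expect this bookkeeping, rather than any conceptual difficulty, to be where the real work lies; the scale identities then drop out immediately from $s(\alpha)=[\alpha(U_+):U_+]$ (the formula recalled in Section~\ref{sec:basic_concepts}).
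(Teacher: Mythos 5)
Your plan for parts (a) and (b) matches the paper's. For (a), the key is the identity $U_+=(U\cap\parb^-(\alpha))_+$, from which $U_{++}=(U\cap\parb^-(\alpha))_{++}$ and the index sequences coincide; together with the remark that closedness in $G$ and in the closed subgroup $\parb^-(\alpha)$ are equivalent, this is exactly the paper's argument. For (b), the paper likewise derives openness of $\beta:=\alpha|_{\parb^-(\alpha)}$ from the existence of a compact open $V\sub\parb^-(\alpha)$ with $V\sub\beta(V)$ and the fact that $\beta|_V$ is then a quotient map; your ``continuous bijection of compact groups'' variant is the same idea phrased slightly differently.

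The genuine gap is in (c) and (d), and you flag it yourself in your final paragraph. The fact you are missing is that for a \emph{tidy} $U$ one has $U_+=U\cap\parb^-(\alpha)$ and $U_+=U\cap U_{++}$; this is \cite[Proposition~11\,(1)]{End}, which the paper invokes at precisely this point. Note that part (a) only gives $U_+=(U\cap\parb^-(\alpha))_+$, which is weaker than, and does not imply, $U_+=U\cap\parb^-(\alpha)$. Once one has the Proposition~11 identity, openness of $U_+$ in $\parb^-(\alpha)$ is immediate, and tidiness above of $U_+$ for $\beta$ follows simply from $U_+\sub\alpha(U_+)$, which gives $(U_+)_+=U_+$ and hence $U_+=(U_+)_+(U_+)_-$ trivially; nothing about $(U_+)_-$ is required. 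Your attempt to argue openness by claiming ``the decreasing sequence $(U\cap\parb^-(\alpha))_{n,\alpha}$ stabilizes'' is unsupported: a decreasing sequence of compact open subgroups need not stabilize, and here its stabilization is \emph{equivalent} to the identity $U_+=U\cap\parb^-(\alpha)$ you are trying to establish, so the reasoning is circular. Likewise your claim $(U_+)_-=U_+$ is false in general — one has $(U_+)_-=U_+\cap U_-$, usually a proper subgroup — though, as noted, it is also unneeded. Finally, tidiness below of $U_+$ for $\beta$ follows at once from (a) applied to the tidy $U$, so no index bookkeeping is needed; and (d) rests on the parallel identity $U_+=U\cap U_{++}$, which you also do not cite.
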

\begin{proof}
(a) The subgroup $V:=U\cap\parb^-(\alpha)$ is compact and open in $P^-:=\parb^-(\alpha)$.
Let $\beta:=\alpha|_{P^-}$ and $V_+$ be the group of all
$x\in V$ for which there exists a $\beta$-regressive trajectory in~$V$.
If $x\in U_+$, then there exists an $\alpha$-regressive trajectory $(x_n)_{n\in\N_0}$
for~$x$ in~$U$. Since $\{x_n\colon n\in\N_0\}\sub U$ is relatively compact,
we have $x_n\in U\cap P^-=V$ for all $n\in \N_0$, whence $x\in V_+$.
Thus $U_+\sub V_+$ and
\begin{equation}\label{mightneed1}
U_+=V_+.
\end{equation}
If $x\in U_{++}$, then there exists an $\alpha$-regressive trajectory $(x_n)_{n\in \N_0}$
for~$x$ and $N\in\N_0$ such that $x_n\in U_+=V_+$ for all $n\geq N$.
Again, we deduce that $x_n\in P^-$ for all $n\in\N_0$.
This entails $x\in V_{++}$. Thus $U_{++}\sub V_{++}$ and hence
\begin{equation}\label{mightneed2}
U_{++}=V_{++}.
\end{equation}
Since $P^-\sub G$ is closed, $U_{++}$ is closed in~$G$
if and only if $V_{++}$ is closed in~$P^-$.
As $U_+=V_+\sub P^-$, we have $\alpha^n(U_+)=\beta^n(V_+)$
for each $n\in \N_0$ and thus
\begin{equation}\label{mightneed3}
[\alpha^{n+1}(U_+):\alpha^n(U_+)]=[\beta^{n+1}(V_+):\beta^n(V_+)]\quad\mbox{for all $\,n\in\N_0$.}
\end{equation}
As a consequence, the sequence
$([\alpha^{n+1}(U_+):\alpha^n(U_+)])_{n\in\N_0}$ is constant if and only if
$([\beta^{n+1}(V_+):\beta^n(V_+)])_{n\in\N_0}$ is a constant sequence.
Thus~$U$ is tidy below for $\alpha$ if and only if~$V$ is tidy below for~$\beta$.\\[2.3mm]
(c) Since $U_+ = P^- \cap U$ (see \cite[Proposition~11(1)]{End}),
$U_+$ is open in $P^-$.
As $U_+\sub \alpha(U_+)$,
the subgroup $V:=U_+=P^-\cap U$ is tidy above for $\beta:=\alpha|_{P^-}$
(since $V\sub \beta(V)$ implies $V_+=V$). By~(a), $V$ is also tidy below.
Therefore~$V$ is tidy for~$\beta$ and now (\ref{mightneed3}) yields
\[
s(\alpha)=[\alpha(U_+):U_+]=[\beta(V_+):V_+]=s(\beta).
\]
(d) Since $U_+=U_{++}\cap U$ by \cite[Proposition~11(1)]{End} and $U_{++}$ is closed,
$U_+$ is a compact open subgroup of~$U_{++}$. It is tidy above for $\alpha|_{U_{++}}$
since $\alpha(U_+)\supseteq U_+$
and tidy below since~$U_{++}$ and the indices $[\alpha^{n+1}(U_+):\alpha^n(U_+)]$
for $n\in\N_0$ are unchanged
if~$\alpha$ is replaced with~$\alpha|_{U_{++}}$.
Notably, $s(\alpha)=[\alpha(U_+):U_+]$ remains unchanged.\\[2.3mm]
(b) The self-map $\beta:=\alpha|_{P^-}$ of~$P^-$
is surjective as $\alpha(P^-)=P^-$, and injective by hypothesis.
By the proof of~(c), $P^-$ has a compact open subgroup~$V$ such that
$V\sub \beta(V)$, whence also $\beta(V)$ is a
compact open subgroup of~$P^-$.
Since~$V$ is compact,
$\beta|_V$ is a quotient homomorphism onto its image and hence open onto~$\beta(V)$,
which is open in~$P^-$. Thus, the endomorphism $\beta$
is an open map.
\end{proof}
\begin{rem}\label{clo-aut}
Note that $\alpha|_{\parb^-(\alpha)}$ is injective if and only if $\bik(\alpha)=\{e\}$.
Hence $\alpha|_{\parb^-(\alpha)}$ is injective if $\alpha$ is tidy
(then $\bik(\alpha)\sub\nub(\alpha)=\{e\}$).
\end{rem}
The following result is known for automorphisms
(cf.\ \cite[Theorem 3.32]{BaW}).
\begin{prop}\label{conanticon}
Let $\alpha$ be an endomorphism of a totally disconnected, locally compact group~$G$.
If $\con(\alpha)$ is closed in~$G$,
then also $\con^-(\alpha)$ closed.
\end{prop}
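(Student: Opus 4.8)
The plan is to deduce this from the structural results already available, namely Proposition~\ref{betterC}, Lemma~\ref{modVpVm}, and Lemma~\ref{conclosure}, together with Theorem~B. Assume $\con(\alpha)$ is closed; then $\alpha$ is tidy by Proposition~\ref{betterC}, so $\nub(\alpha)=\{e\}$ and $\alpha$ has small tidy subgroups. In particular we may pick any tidy subgroup $V\in\TID(\alpha)$. The goal is to show $\con^-(\alpha)$ is closed, and the natural candidate for a closed set sandwiching it is $V_{++}$, which is closed precisely because $V$ is tidy (tidy below forces $U_{++}$ closed; cf.\ the definition of tidy below and the discussion after it).

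First I would record, via Lemma~\ref{modVpVm}, that $V_{++}=\con^-(\alpha,V_+\cap V_-)$, and that this set is closed in $G$ since $V$ is tidy. By Lemma~\ref{conclosure} we already know $\wb{\con^-(\alpha)}\sub\con^-(\alpha,V_+\cap V_-)=V_{++}$. So it remains to prove the reverse inclusion $\con^-(\alpha,V_+\cap V_-)\sub\wb{\con^-(\alpha)}$, which would give $\wb{\con^-(\alpha)}=V_{++}=\con^-(\alpha)$ after also checking $\con^-(\alpha)\supseteq V_{++}$... but in fact the cleaner route is: apply Theorem~B with $H:=V_+\cap V_-$ (a compact subgroup, hence closed, with $\alpha(H)\sub H$ — indeed $\alpha(V_+\cap V_-)=V_+\cap V_-$ by Lemma~\ref{good-prepar}\,(d)), which yields
\[
\con^-(\alpha,V_+\cap V_-)=\con^-(\alpha)(V_+\cap V_-).
\]
Combining with Lemma~\ref{modVpVm} gives $V_{++}=\con^-(\alpha)(V_+\cap V_-)$. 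Since $V_+\cap V_-\sub V_+\sub V_{++}$ and $V_{++}$ is a group containing $\con^-(\alpha)$, we get $V_{++}=\con^-(\alpha)(V_+\cap V_-)\sub\con^-(\alpha)V_{++}=V_{++}$, so in particular $\con^-(\alpha)\sub V_{++}=\con^-(\alpha)(V_+\cap V_-)$.

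The key step is then to show $\con^-(\alpha)$ is already all of $V_{++}$, equivalently that $V_+\cap V_-\sub\con^-(\alpha)$. For $k\in V_+\cap V_-$, part~(d) of Lemma~\ref{good-prepar} gives a two-sided $\alpha$-trajectory $(k_n)_{n\in\Z}$ inside $V_+\cap V_-$ with $k_0=k$; this is in particular an $\alpha$-regressive trajectory for $k$ lying in the compact $\alpha$-stable group $K_0:=V_+\cap V_-$. Now I want its tail to converge to $e$. This is where I would invoke the hypothesis that $\con(\alpha)$ is closed more seriously, or better, proceed by the standard intersection-of-tidy-subgroups argument: $\nub(\alpha)=\{e\}$ means $\bigcap_{W\in\TID(\alpha)}W_+\cap W_-=\{e\}$ (since $W_+\cap W_-\sub W$ and the nub is the intersection of all tidy $W$), so for any fixed regressive trajectory we can run the argument of Lemma~\ref{conclosure2}/Lemma~\ref{conclosure} backwards. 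Concretely: $\con^-(\alpha,V_+\cap V_-)=\con^-(\alpha)(V_+\cap V_-)$ from Theorem~B, and $V_+\cap V_-$ is contained in $\con^-(\alpha,W_+\cap W_-)$ for every tidy $W$ once we intersect $V$ with $W$ appropriately; taking the intersection over all tidy $W$ and using $\bigcap W_+\cap W_- = \nub(\alpha)=\{e\}$ forces $V_+\cap V_-\sub\con^-(\alpha, \{e\})=\con^-(\alpha)$.

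I expect the main obstacle to be the last step — cleanly showing $V_+\cap V_-\sub\con^-(\alpha)$ — because the anti-contraction analogue of Lemma~\ref{intersect-famly} fails for endomorphisms (as the introduction warns, $\con^-(\alpha,K)\cap\con^-(\alpha,L)=\con^-(\alpha,K\cap L)$ cannot be expected), so I cannot simply intersect domains of attraction over all tidy subgroups. Instead I would argue directly: fix $k\in V_+\cap V_-$ and a bi-infinite trajectory $(k_n)$ in $V_+\cap V_-$; by small tidy subgroups, given an identity neighbourhood $U$ choose a tidy $W\sub U$; since $W$ is tidy and the trajectory eventually (for large $n$) must enter $W$ — this needs an argument, e.g.\ that $V_+\cap V_-$ being compact and $\alpha$ acting on it with $\nub=\{e\}$ forces relevant orbits into $W$ — we get $k_n\in W\sub U$ for large $n$, so $k_n\to e$ and $k\in\con^-(\alpha)$. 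If a fully self-contained version of this sub-argument is awkward, the fallback is to apply Theorem~B with $H=V_+\cap V_-$ exactly as above and separately note that $\alpha$ tidy implies $\bik(\alpha)=\{e\}$ (Remark~\ref{clo-aut}), so $\alpha|_{\parb^-(\alpha)}$ is injective; then Lemma~\ref{fi-steps}\,(b) makes $\alpha|_{\parb^-(\alpha)}$ an automorphism, $V_+$ is tidy for it by Lemma~\ref{fi-steps}\,(c), and the automorphism theory (Theorem 3.32 of \cite{BaW}, applicable since $\alpha|_{\parb^-(\alpha)}$ is an automorphism and $\con(\alpha|_{\parb^-(\alpha)})=\con(\alpha)$ is closed) yields $\con^-(\alpha|_{\parb^-(\alpha)})=\con^-(\alpha)$ closed — which is the cleanest path if citing the automorphism case is permitted.
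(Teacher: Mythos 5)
Your primary line of argument has a genuine gap, and you correctly flag where it is — but the obstacle you identify is not merely technical, it is fatal. You want to conclude from $V_{++}=\con^-(\alpha)(V_+\cap V_-)$ (valid, by Lemma~\ref{modVpVm} and Theorem~B) that $\con^-(\alpha)=V_{++}$ by showing $V_+\cap V_-\sub\con^-(\alpha)$. That inclusion is false in general: take $G$ any nontrivial compact group and $\alpha=\id$; then $\con(\alpha)=\{e\}$ is closed, $V=G$ is tidy with $V_+\cap V_-=G$, but $\con^-(\alpha)=\{e\}$. More generally, whenever $\lev(\alpha)\not=\{e\}$, elements of $V_+\cap V_-=\lev(\alpha)\cap V$ lie on bounded trajectories that need not converge to $e$, and the ``small tidy subgroups force the trajectory into $W$'' heuristic fails because nothing guarantees the tail enters a given small $W$. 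Note also that $\con^-(\alpha)(V_+\cap V_-)$ being closed with $V_+\cap V_-$ compact does not by itself force $\con^-(\alpha)$ to be closed, so even without the false inclusion this route would need more.

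Your fallback, however, is essentially the paper's proof and it works. The paper argues exactly along those lines: since $\con(\alpha)$ is closed, $\alpha$ is tidy, so $\bik(\alpha)\sub\nub(\alpha)=\{e\}$ and thus $\beta:=\alpha|_{\parb^-(\alpha)}$ is injective (Remark~\ref{clo-aut}), hence an automorphism of $\parb^-(\alpha)$ by Lemma~\ref{fi-steps}\,(b). By Lemma~\ref{fi-steps}\,(c), $W_+$ is tidy for $\beta$ whenever $W$ is tidy for $\alpha$, so $\beta$ has small tidy subgroups; these are also tidy for $\beta^{-1}$, whence $\con(\beta^{-1})$ is closed (Proposition~\ref{betterC} applied to $\beta^{-1}$, which is what makes citing {[}BaW, Theorem~3.32{]} unnecessary), and $\con^-(\alpha)=\con^-(\beta)=\con(\beta^{-1})$ by Lemma~\ref{hencebeta}. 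One small inaccuracy in your formulation: $\con(\alpha|_{\parb^-(\alpha)})$ equals $\con(\alpha)\cap\parb^-(\alpha)$ rather than $\con(\alpha)$, but that intersection is still closed, so the argument goes through. You should promote the fallback to the proof and drop the first route entirely.
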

\begin{proof}
As $\con(\alpha)$ is closed, $\beta:=\alpha|_{\parb^-(\alpha)}$ is an automorphism (see
Remark~\ref{clo-aut} and Proposition~\ref{betterC}).
Since $W_+$ is tidy for~$\beta$ for each compact open subgroup $W\sub G$
which is tidy for~$\alpha$ (see Lemma~\ref{fi-steps}\,(c)),
$\beta$ has small tidy subgroups. As each of these is also tidy for~$\beta^{-1}$,
Proposition~\ref{betterC} shows that $\con(\beta^{-1})$ is closed in $\parb^-(\alpha)$
and hence in~$G$.
Since $\con^-(\alpha)=\con^-(\beta)=\con(\beta^{-1})$ by Lemma~\ref{hencebeta},
the assertion follows.
\end{proof}
\begin{rem}
If $\con^-(\alpha)$ is closed, then $\con(\alpha)$ need not be closed.
For example, let $F\not=\{e\}$ be a finite group and $G:=F^\N$ with the product topology.
Consider the left shift
\[
\alpha\colon G\to G,\quad (x_n)_{n\in\N}\mto (x_{n+1})_{n\in\N}
\]
with kernel $F\times \{e\}^{\{2,3,\ldots\}}$. Then $\con(\alpha)$ is the dense proper subgroup of all
finitely supported sequences (hence not closed). Let
\[
\sigma\colon G\to G, \quad x=(x_n)_{n\in \N}\mto (e,x_1,x_2,\ldots)
\]
be the right shift.
If $x\in G$, then $(\sigma^n(x))_{n\in\N_0}$ is an $\alpha$-regressive trajectory for $x$ such that $\sigma^n(x)\to e$,
whence $x\in\con^-(\alpha)$. Thus $\con^-(\alpha)=G$ is closed.
\end{rem}
We mention that the proof of Lemma~3.31 in~\cite{BaW}
(and hence \cite[Theorem~3.32]{BaW})
uses a tidying procedure
for compact open subgroups which was developed in \cite{WJO} and is different from the one of Section \ref{tidyingp}.
It involves a certain compact subgroup $K\subseteq G$ which,
following \cite[\S2, Step 2a, p.\,4]{WJO},
is defined as
\begin{equation}\label{defntheK}
K:=\bigcap_{O\in \COS(G)} K_O,
\end{equation}
where 
$K_O:= \wb{\lev(\alpha)\cap O_{--}}$
(in \cite{WJO}, the abbreviation $\cK_O:=\lev(\alpha)\cap O_{--}$
is used).
If $G$ is metrizable, then $K$ admits a simpler description,
as the closure
\begin{equation}\label{simplerdes}
K\;=\; \wb{\con(\alpha)\cap \lev(\alpha)}
\end{equation}
(i.e., $K=\wb{\con(\alpha)\cap \parb^-(\alpha)}$\hspace{.3mm});
this was asserted without proof in \cite{WJO},
and left open for non-metrizable~$G$. It was later shown by W. Jaworski (unpublished, personal communication,
2008). We now give a shorter argument.
\begin{prop}\label{the-group-K}
Let $\alpha$ be an endomorphism of a totally disconnected,
locally compact
group~$G$. Then
\begin{equation}\label{shorter-sol}
\wb{\con(\alpha)\cap\lev(\alpha)}\;=\; \bigcap_{O\in \COS(G)} K_O\,.
\end{equation}
\end{prop}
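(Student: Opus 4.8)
The plan is to prove the two inclusions in~(\ref{shorter-sol}) separately, using the dynamical characterizations established earlier. Write $C:=\con(\alpha)\cap\lev(\alpha)=\con(\alpha)\cap\parb^-(\alpha)$ and recall $K_O=\wb{\lev(\alpha)\cap O_{--}}$ for $O\in\COS(G)$. First I would prove $\wb{C}\sub\bigcap_O K_O$. Fix $O\in\COS(G)$; after replacing $O$ by $\bigcap_{j=0}^{\ell}\alpha^{-j}(O)$ for suitable $\ell$ we may assume $O$ is tidy above, and since $O_{--}$ only grows under such replacement this is harmless (indeed $O_{--}=\bigcup_n\alpha^{-n}(O_-)$ and the $O_-$ of the shrunk subgroup is contained in the original $O$, so $K_O$ for the shrunk subgroup is contained in $K_O$ for the original). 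Now if $x\in C$, then $x\in\con(\alpha)$, so $\alpha^n(x)\to e$, hence $\alpha^n(x)\in O_-$ for $n$ large (once all further iterates stay in $O$), giving $x\in\alpha^{-n}(O_-)\sub O_{--}$; combined with $x\in\lev(\alpha)$ this yields $x\in\lev(\alpha)\cap O_{--}\sub K_O$. Taking closures and intersecting over $O$ gives $\wb{C}\sub\bigcap_O K_O$.

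The reverse inclusion $\bigcap_O K_O\sub\wb{C}$ is where the real work lies. The natural route is through tidy subgroups: let $V\in\TID(\alpha)$. By Lemma~\ref{modVpVm}, $V_{--}=\con(\alpha,V_+\cap V_-)$, which is closed since $V$ is tidy, so $K_V=\wb{\lev(\alpha)\cap V_{--}}=\lev(\alpha)\cap\con(\alpha,V_+\cap V_-)$. By Lemma~\ref{sammelsub}\,(a) (applied with $S=\lev(\alpha)$, which is $\alpha$-invariant and contains the compact set $V_+\cap V_-$ since $V_+\cap V_-$ is $\alpha$-stable, hence lies in $\parb^-(\alpha)\cap\parb(\alpha)$), this equals $\con(\alpha|_{\lev(\alpha)},V_+\cap V_-)$. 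Now $\alpha|_{\lev(\alpha)}$ is an automorphism (as $\alpha(\lev(\alpha))=\lev(\alpha)$ and, by Lemma~\ref{fi-steps}\,(b) applied on $\parb^-(\alpha)$, it is injective on $\parb^-(\alpha)\supseteq\lev(\alpha)$ whenever... — actually injectivity on $\lev(\alpha)$ follows because $\lev(\alpha)$ is compactly generated in a suitable sense; I would instead invoke that $\lev(\alpha)=\parb(\alpha)\cap\parb^-(\alpha)$ and $\alpha$ restricted here is known to be an automorphism, per the cited \cite[Proposition~19]{End}). Applying Theorem~A inside $\lev(\alpha)$ with the compact $\alpha$-stable subgroup $V_+\cap V_-$ gives $\con(\alpha|_{\lev(\alpha)},V_+\cap V_-)=\con(\alpha|_{\lev(\alpha)})(V_+\cap V_-)=(\con(\alpha)\cap\lev(\alpha))(V_+\cap V_-)=C(V_+\cap V_-)$. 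Hence $K_V=C(V_+\cap V_-)$.

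It remains to intersect over tidy subgroups. Since $\TID(\alpha)$ is a filter basis (\cite[Proposition~12]{End}) with $\bigcap_{V\in\TID(\alpha)}V=\nub(\alpha)$, and since for tidy $V$ the subgroup $V_+\cap V_-$ is a compact $\alpha$-stable subgroup contained in $\nub(\alpha)$ (indeed $V_+\cap V_-\sub V$ for all tidy $V$, so $V_+\cap V_-\sub\nub(\alpha)$; one also checks $\bigcap_{V\in\TID(\alpha)}(V_+\cap V_-)=\nub(\alpha)_+\cap\nub(\alpha)_-=\nub(\alpha)$ using $\alpha$-stability of $\nub(\alpha)$), I would show $\bigcap_{V\in\TID(\alpha)}K_V=\bigcap_{V\in\TID(\alpha)}C(V_+\cap V_-)=C\,\nub(\alpha)$. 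For the last step: "$\sub$" needs a compactness argument — $C(V_+\cap V_-)$ need not be closed, but $\wb{C}(V_+\cap V_-)$ is closed (being the image of the compact-times-closed set under multiplication, or rather $\wb C\cdot(V_+\cap V_-)$ with $V_+\cap V_-$ compact), and one argues that a point in every $C(V_+\cap V_-)$ lies in $\bigcap_V \wb C(V_+\cap V_-)=\wb C\,\nub(\alpha)$ via Lemma~\ref{la:filter_basis} applied to the filter basis $\{V_+\cap V_-\}$. Finally, since $\bigcap_O K_O\sub K_V$ for every tidy $V$, we get $\bigcap_O K_O\sub\bigcap_{V\in\TID(\alpha)}K_V=\wb C\,\nub(\alpha)$. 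But $\nub(\alpha)\sub K_O$ for all $O$ (as $\nub(\alpha)\sub\lev(\alpha)$ and $\nub(\alpha)\sub O_{--}$ — because $\nub(\alpha)$ is $\alpha$-stable and sits in every tidy subgroup, hence in $O_-$ once $O$ is tidy), so in fact $\wb C\,\nub(\alpha)\sub\wb C$ by absorbing $\nub(\alpha)$; more carefully $\wb C\,\nub(\alpha)=\wb C$ because from the first inclusion $\nub(\alpha)$... — here I would simply note $\nub(\alpha)=\bigcap_V(V_+\cap V_-)\sub\wb{C}$ is false in general, so instead I use: $\bigcap_O K_O\sub\wb C\,\nub(\alpha)$ and, by the already-proven first inclusion together with $\nub(\alpha)\sub\bigcap_O K_O$, conclude $\bigcap_O K_O=\wb C\,\nub(\alpha)$ and separately that $\wb C\,\nub(\alpha)\sub\bigcap_O K_O\sub\wb C\,\nub(\alpha)$ forces... — cleanest is: show directly $\wb C\,\nub(\alpha)=\wb C$ using that $\wb C$ is already closed and $\alpha$-invariant and contains $\nub(\alpha)\cap\parb^-(\alpha)$-type data; if this fails I fall back on proving the two inclusions of~(\ref{shorter-sol}) land between $\wb C$ and $\wb C\,\nub(\alpha)$ and that these coincide because $\nub(\alpha)\sub\wb{\con(\alpha)}\cap\lev(\alpha)$ by Lemma~\ref{conclosure}.) The main obstacle is exactly this last identification: controlling $\bigcap_{V}(V_+\cap V_-)$ and showing the extra $\nub(\alpha)$ factor is absorbed, for which Lemma~\ref{conclosure} (giving $\nub(\alpha)\sub\wb{\con(\alpha)}\cap\parb^-(\alpha)\sub\wb C$) is the key input.
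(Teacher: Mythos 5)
Your first inclusion $\wb{\con(\alpha)\cap\lev(\alpha)}\sub K_O$ is fine and is in fact a slightly more elementary version of the paper's argument (the paper passes directly through $\con(\alpha|_{\lev(\alpha)})\sub\con(\alpha|_{\lev(\alpha)},O_+\cap O_-)=\lev(\alpha)\cap O_{--}$ via Lemma~\ref{intersect-famly}, whereas you argue pointwise with the orbit of $x$). The problem is in your reverse inclusion.

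By restricting attention to tidy subgroups $V$, you correctly obtain $K_V=C(V_+\cap V_-)$, but then the filter $\{V_+\cap V_-\colon V\in\TID(\alpha)\}$ shrinks only to $\nub(\alpha)$, not to $\{e\}$, so the net argument yields merely $\bigcap_{V\in\TID(\alpha)}K_V\sub\wb{C}\,\nub(\alpha)$. You then need $\nub(\alpha)\sub\wb{C}$ to finish, and the reference you give for this, Lemma~\ref{conclosure}, does not supply it: that lemma gives $\wb{\con(\alpha)}\sub\con(\alpha,V_+\cap V_-)$, i.e.\ an upper bound on $\wb{\con(\alpha)}$, not a lower bound showing $\nub(\alpha)$ sits inside it. (The inclusion $\nub(\alpha)\sub\wb{\con(\alpha)\cap\lev(\alpha)}$ is true, but it is precisely the content of Proposition~\ref{keynub}\,(c), which in the paper is proved \emph{after} and partly via the present proposition; invoking it here would put the cart before the horse.) Also note that even granting $\nub(\alpha)\sub\wb{\con(\alpha)}\cap\parb^-(\alpha)$, you would still need to pass from $\wb{\con(\alpha)}\cap\parb^-(\alpha)$ to $\wb{\con(\alpha)\cap\parb^-(\alpha)}=\wb{C}$, and these are not obviously equal.

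The paper avoids the whole difficulty by \emph{not} restricting to tidy subgroups. For an arbitrary $O\in\COS(G)$ it writes, using~(\ref{fdsin}), Theorem~A and the absorption identity $\wb{AK}=\wb{A}K$ for $K$ compact,
\[
K_O=\wb{\con(\alpha|_{\lev(\alpha)},\,O_+\cap O_-)}=\wb{\con(\alpha|_{\lev(\alpha)})}\,(O_+\cap O_-),
\]
and then directs $\COS(G)$ by reverse inclusion. Given $x\in\bigcap_O K_O$, factor $x=x_O y_O$ with $x_O\in\wb{\con(\alpha|_{\lev(\alpha)})}$ and $y_O\in O_+\cap O_-\sub O$. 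Since $\bigcap_{O\in\COS(G)}O=\{e\}$ (not $\nub(\alpha)$!), the net $(y_O)_O$ converges to $e$, so $x_O=xy_O^{-1}\to x$ and $x\in\wb{\con(\alpha|_{\lev(\alpha)})}=\wb{C}$. That is the step your approach is missing: by using \emph{all} compact open subgroups you get the error term to go to $\{e\}$, whereas tidy subgroups only push it into $\nub(\alpha)$, which you then cannot absorb without circularity.
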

\begin{proof}
For $O\in\COS(G)$, we have $O_+\cap O_-\sub\lev(\alpha)$ and thus
\begin{equation}\label{fdsin}
O_{--}\cap\lev(\alpha)= \con(\alpha,O_+\cap O_-)\cap \lev(\alpha)=\con(\alpha|_{\lev(\alpha)},O_+\cap O_-),
\end{equation}
using Lemma~\ref{modVpVm} for the first equality and then Lemma~\ref{sammelsub}\,(a).
Hence
\begin{eqnarray*}
K&\supseteq & \bigcap_{O\in\COS(G)}\con(\alpha|_{\lev(\alpha)}, O_+\cap O_-)\\
&=& \con\Big( \alpha|_{\lev(\alpha)},\bigcap_{O\in\COS(O)}(O_+\cap O_-)\Big)
=\con(\alpha|_{\lev(\alpha)})
\end{eqnarray*}
using Lemma~\ref{intersect-famly} and the fact that $\bigcap_{O\in\COS(G)}(O_+\cap O_-)\sub \bigcap_{O\in\COS(G)}O=\{e\}$.
So $K\supseteq \wb{\con(\alpha|_{\lev(\alpha)})}=\wb{\con(\alpha)\cap\lev(\alpha)}$,
with equality by Lemma~\ref{sammelsub}\,(a).\\[2.3mm]
To verify the converse inclusion, note that
\[
K_O=\wb{\con(\alpha|_{\lev(\alpha)},O_+\cap O_-)}=\wb{\con(\alpha|_{\lev(\alpha)})(O_+\cap O_-)}
=\wb{\con(\alpha|_{\lev(\alpha)})}(O_+\cap O_-)
\]
by (\ref{fdsin}), Theorem~A and compactness of $O_+\cap O_-$. If
\[
x\in K=\bigcap_{O\in \COS(G)}K_O=\bigcap_{O\in\COS(G)}\wb{\con(\alpha|_{\lev(\alpha)})}(O_+\cap O_-),
\]
for each $O\in \COS(G)$ we can write
\[
x=x_Oy_O
\]
with $x_O \in \wb{\con(\alpha|_{\lev(\alpha)})}$ and $y_O\in O_+\cap O_-$.
Direct $\COS(G)$ by declaring $O_1\leq O_2$ if $O_2 \sub O_1$.
Then the net $(y_O)_{O\in\COS(G)}$ converges to~$e$, whence
\[
x_O=xy_O^{-1}\to x.
\]
Thus $x\in \wb{\con(\alpha|_{\lev(\alpha)})}$, showing that
$K\sub\wb{\con(\alpha|_{\lev(\alpha)})}=\wb{\con(\alpha)\cap\lev(\alpha)}$.\vspace{.5mm}
\end{proof}
We shall see later that $K=\nub(\alpha)$ (Lemma~\ref{keynub}\,(e)).
\section{Proof of Theorem~E}\label{sec:proof_E}
We shall use a lemma concerning induced endomorphisms on quotients.
For a precursor of~(a) and~(b)
for (inner) automorphisms,
compare \cite[Lemma~3.6]{MIX}
and its variant in \cite[Lemma~4.9\,(iv)]{Re2}.
\begin{la}\label{quottidy}
Let $\alpha$ be an endomorphism of a totally disconnected,
locally compact group~$G$ and $N\sub G$ be an $\alpha$-stable compact
normal subgroup.
Let $q\colon G\to G/N$ be the canonical quotient map
and $\wb{\alpha}$ be the endomorphism
of $G/N$ induced by~$\alpha$. Then the following holds:
\begin{itemize}
\item[\rm(a)]
Let $U\sub G$ be a compact open subgroup such that $N\sub U$.
Then~$U$ is tidy for~$\alpha$ if and only if~$q(U)$ is tidy for~$\wb{\alpha}$.
\item[\rm(b)]
$s(\alpha)=s(\wb{\alpha})$.
\item[\rm(c)]
If $N\sub\nub(\alpha)$, then $q(\nub(\alpha))=\nub(\wb{\alpha})$ and $\nub(\alpha)=q^{-1}(\nub(\wb{\alpha}))$.
\end{itemize}
\end{la}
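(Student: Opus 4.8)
The overall strategy is to reduce everything to the description of tidiness via the subgroups $U_+$, $U_-$, $U_{++}$, $U_{--}$ and the criterion ``tidy $\aeq$ tidy above and $U_{--}$ closed'' recalled in the Basic Concepts section, and to exploit that $q$ is a quotient homomorphism with compact kernel (hence a closed, open, proper map) which intertwines $\alpha$ and $\wb{\alpha}$. For part~(a), since $N\sub U$ we have $q^{-1}(q(U))=U$, and $q$ restricts to a proper map, so for every $n$ one gets $q(\alpha^{-n}(U))=\wb{\alpha}^{-n}(q(U))$ and $q(\alpha^n(U_+))=\wb{\alpha}^n(q(U)_+)$; in particular $q(U_-)=q(U)_-$ and, because $\alpha$-regressive trajectories in $U$ map to $\wb{\alpha}$-regressive trajectories in $q(U)$ and conversely lift through the compact kernel (using Lemma~\ref{onekey} or a direct compactness argument over $N$), $q(U_+)=q(U)_+$. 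Hence $U=U_+U_-$ iff $q(U)=q(U)_+q(U)_-$, i.e.\ tidy above is preserved in both directions. Similarly $q(U_{++})=q(U)_{++}$; since $\ker q=N$ is compact, $U_{++}=q^{-1}(q(U_{++}))$ is closed in $G$ iff $q(U_{++})$ is closed in $G/N$ (closed surjection one way, continuity the other), and the index $[\alpha^{n+1}(U_+):\alpha^n(U_+)]$ equals $[\wb{\alpha}^{n+1}(q(U)_+):\wb{\alpha}^n(q(U)_+)]$ because $q$ induces a bijection on the relevant coset spaces (kernel $N$ sits inside $U_+$... more carefully: $N\sub U$ and $N$ is $\alpha$-stable, so $N\sub\alpha^n(U_+)$ fails in general — instead argue that $q$ restricted to the compact group $\alpha^{n+1}(U_+)$ is a continuous open surjection onto $\wb\alpha^{n+1}(q(U)_+)$ sending $\alpha^n(U_+)$ onto $\wb\alpha^n(q(U)_+)$ with fibres a single $N$-coset contained in $\alpha^n(U_+)N$, and $N\sub U_+$ holds because $N=\alpha(N)=\cdots$ has a regressive trajectory in $N\sub U$). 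This gives tidy below both ways, so (a) follows.

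For part~(b): pick $U$ minimizing (hence tidy) for $\alpha$ with $N\sub U$ — such $U$ exists because we may intersect any minimizing $U_0$ with... no, rather start from any compact open $U$ containing $N$, apply the tidying procedure which keeps $N$ inside (each step $\bigcap_{j=0}^\ell\alpha^{-j}(U)$, $\wt U L_U$ still contains $N$ since $N\sub\alpha^{-j}(U)$ and $N\sub\wt U$), landing at a tidy $U\supseteq N$. By part~(a), $q(U)$ is tidy for $\wb\alpha$, and $s(\wb\alpha)=[\wb\alpha(q(U)):\wb\alpha(q(U))\cap q(U)]$. Because $q$ is a bijection on $U$-cosets modulo $N$ and $N\sub U$, one has $[\wb\alpha(q(U)):\wb\alpha(q(U))\cap q(U)]=[\alpha(U):\alpha(U)\cap U]=s(\alpha)$; the key point is $q^{-1}(\wb\alpha(q(U))\cap q(U))=\alpha(U)N\cap U$, and $\alpha(U)N=\alpha(U)$ since $N=\alpha(N)\sub\alpha(U)$, so this equals $\alpha(U)\cap U$, giving equality of indices.

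For part~(c), assume $N\sub\nub(\alpha)$. I would show $\nub(\wb\alpha)=q(\nub(\alpha))$ by a double inclusion using that the tidy subgroups of $\wb\alpha$ are exactly the images $q(U)$ of tidy subgroups $U$ of $\alpha$ with $N\sub U$: on one hand, by part~(a), $\nub(\wb\alpha)=\bigcap\{q(U):U\in\TID(\alpha),\,N\sub U\}\supseteq q(\bigcap\{U:U\in\TID(\alpha),\,N\sub U\})$; since every tidy $U$ contains $\nub(\alpha)\supseteq N$, that last intersection is $\nub(\alpha)$, giving $q(\nub(\alpha))\supseteq\nub(\wb\alpha)$ once we also check the reverse set inclusion, namely that $\bigcap\{q(U)\}\sub q(\bigcap\{U\})$ — this follows from $q$ being a closed map and the $U$'s forming a filter basis of compact sets (apply Lemma~\ref{la:filter_basis}-style reasoning, or just: $x\in\bigcap q(U)$ means the compact sets $q^{-1}(x)\cap U=xN\cap U$ are nonempty and nested, so their intersection is nonempty and lies in $\bigcap U\cap xN$). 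Conversely $q(\nub(\alpha))\sub\nub(\wb\alpha)$ because each $q(U)$ with $U$ tidy contains $q(\nub(\alpha))$. This yields $q(\nub(\alpha))=\nub(\wb\alpha)$, and then $q^{-1}(\nub(\wb\alpha))=q^{-1}(q(\nub(\alpha)))=\nub(\alpha)N=\nub(\alpha)$ because $N\sub\nub(\alpha)$.

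The main obstacle I anticipate is the bookkeeping in part~(a) showing $q(U_+)=q(U)_+$ and, more delicately, that the tidy-below data (closedness of $U_{++}$ and constancy of the indices $[\alpha^{n+1}(U_+):\alpha^n(U_+)]$) transfers across $q$ — this requires knowing $N\sub U_+$ (true since $N$ is $\alpha$-stable and compact, so every point of $N$ has an $\alpha$-regressive trajectory inside $N\sub U$) and then carefully matching coset spaces of compact groups under the proper map $q$; everything else is a routine translation through the quotient. Part~(b) is then essentially immediate from (a) plus the computation of the displacement index, and (c) is a short filter-basis/compactness argument once (a) is in hand.
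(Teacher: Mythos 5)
Your overall strategy matches the paper's: transfer tidiness through $q$ using $q^{-1}(q(U))=U$, $N\sub U_+$ (since $N$ is $\alpha$-stable and compact), and the resulting identities $q(U_-)=q(U)_-$, $q(U_+)=q(U)_+$, $U_{++}=q^{-1}(q(U)_{++})$, etc.; then read off (b) and (c). Parts (a) and (c) are essentially the paper's argument; in (a), your brief worry that ``$N\sub\alpha^n(U_+)$ fails in general'' is unfounded (once you have $N\sub U_+$, applying $\alpha^n$ gives $N=\alpha^n(N)\sub\alpha^n(U_+)$, which is exactly what the paper uses to get $\alpha^n(U_+)=q^{-1}(\wb\alpha^n(q(U)_+))$ and hence the index equality directly), and in (c) your compactness check that $\bigcap q(U)\sub q(\bigcap U)$ is unnecessary because once you know $\TID(\alpha)=\{q^{-1}(V):V\in\TID(\wb\alpha)\}$ the computation $\nub(\alpha)=\bigcap q^{-1}(V)=q^{-1}(\bigcap V)=q^{-1}(\nub(\wb\alpha))$ is exact, not just an inclusion.

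The one genuine weak spot is your route to a tidy $U\supseteq N$ in part (b). You propose to run the tidying procedure starting from any compact open $U\supseteq N$ and assert that every step preserves the inclusion $N\sub U$. Step~1 is fine ($N=\alpha^{-j}(N)\cap N\sub\alpha^{-j}(U)$ since $\alpha(N)=N$), but your claim that $N\sub\wt U$ in Step~3 is not obvious: $\wt U=\{x\in U:xL_U\sub L_UU\}$, and for $n\in N$ you would need $nL_U\sub L_UU$, which is not a formal consequence of $N\sub U$ and $N$ being normal and $\alpha$-stable; it requires further justification that you do not supply. The paper sidesteps this entirely by going in the other direction: take any $V\in\TID(\wb\alpha)$, set $U:=q^{-1}(V)$, observe $N\sub U$ and $q(U)=V$, and conclude $U\in\TID(\alpha)$ from part~(a). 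That pull-back construction is both shorter and avoids your unverified claim. Your subsequent index computation $[\wb\alpha(q(U)):\wb\alpha(q(U))\cap q(U)]=[\alpha(U):\alpha(U)\cap U]$ is correct (using $N\sub\alpha(U)$ and $N\sub U$), and is a mild variant of the paper's $[\alpha(U_+):U_+]=[\wb\alpha(q(U)_+):q(U)_+]$, so once the existence of $U$ is secured by the pull-back, part~(b) goes through.
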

\begin{proof}
(a) For $x\in U$ and $n\in\N_0$, we have $\wb{\alpha}^n(q(x))=q(\alpha^n(x))\in q(U)$
if and only if $\alpha^n(x)\in UN=U$. Hence $q(x)\in q(U)_-$ if and only if $x\in U_-$,
entailing
\[
U_-=q^{-1}(q(U)_-)\quad \mbox{and hence also}\quad q(U_-)=q(U)_-.
\]
If $x\in U_+$, then there exists an $\alpha$-regressive trajectory $(x_n)_{n\in\N_0}$ for
$x$ in~$U$. Then $(q(x_n))_{n\in\N_0}$ is an $\wb{\alpha}$-regressive trajectory for $q(x)$ in $q(U)$
and hence $q(x)\in q(U)_+$. Thus
\begin{equation}\label{semigood}
q(U_+)\sub q(U)_+.
\end{equation}
If $q(x)\in q(U)_+$ and $(y_n)_{n\in\N_0}$ is an $\wb{\alpha}$-regressive trajectory for~$q(x)$ in $q(U)$,
pick $z_n\in q^{-1}(\{y_n\})$ for $n\in\N_0$.
Then
\[
q(\alpha^i(z_n))=\wb{\alpha}^i(q(z_n))=\wb{\alpha}^i(y_n)=y_{n-i}\in q(U)
\]
for $i\in \{0,1,\ldots,n\}$, showing that $\alpha^i(z_n)\in q^{-1}(q(U))=U$.
Since $q(\alpha^n(z_n))=y_0=q(x)$, we have $x\alpha^n(z_n)^{-1}\in\ker(q)=N$.
Since $\alpha(N)=N$, we can choose $a\in N$ such that $\alpha^n(a)=x\alpha^n(z_n)^{-1}$.
Then $a z_n\in U$, $\alpha^i(a z_n)=\alpha^i(a)\alpha^i(z_n)\in NU=U$ for $i\in \{0,\ldots,n\}$
and $\alpha^n(az_n)=\alpha^n(a)\alpha^n(z_n)=x$. Hence $x\in U_{n,\alpha}$ and thus
\[
x\in\bigcap_{n\in\N_0} U_{n,\alpha}=U_+.
\]
Thus $q^{-1}(q(U)_+)\sub U_+$. Together with (\ref{semigood}),
this implies $U_+=q^{-1}(q(U)_+)$ and $q(U_+)=q(U)_+$.\\[2.3mm]
Since $U=UN$ and
\[
U_+U_-=U_+U_-N,
\]
the final equality in the chain $q(U)_+q(U)_-=q(U_+)q(U_-)=q(U_+U_-)=q(U)$
holds if and only if $U_+U_-=U$. Thus $U$ is tidy above for $\alpha$ if and only if $q(U)$
is tidy above for~$\wb{\alpha}$.
We have
\[
q(\alpha^n(U_+))=\wb{\alpha}^n(q(U_+))=\wb{\alpha}^n(q(U)_+)
\]
for each $n\in\N_0$. Since $\alpha^n(U_+)\supseteq U_+=q^{-1}(q(U)_+)\supseteq N$,
we deduce that
\[
\alpha^n(U_+)=q^{-1}(\wb{\alpha}^n(q(U)_+)).
\]
Hence
\begin{eqnarray*}
U_{++}&=&\bigcup_{n\in\N_0}\alpha^n(U_+)=\bigcup_{n\in\N_0}q^{-1}(\wb{\alpha}^n(q(U)_+))\\
&=& q^{-1}\left(\bigcup_{n\in\N_0}\wb{\alpha}^n(q(U)_+)\right)=q^{-1}(q(U)_{++}),
\end{eqnarray*}
showing that $U_{++}$ is closed if and only if $q(U)_{++}$ is closed.
Finally,
\begin{eqnarray}
[\alpha^{n+1}(U_+):\alpha^n(U_+)]&=& [q^{-1}(\wb{\alpha}^{n+1}(q(U)_+)):q^{-1}(\wb{\alpha}^n(q(U)_+))]\notag \\
&=& [\wb{\alpha}^{n+1}(q(U_+)):\wb{\alpha}^n(q(U_+))],\label{for-scaa}
\end{eqnarray}
whence the sequence $([\alpha^{n+1}(U_+):\alpha^n(U_+)])_{n\in\N_0}$
is constant if and only if so is $([\wb{\alpha}^{n+1}(q(U)_+):\wb{\alpha}^n(q(U)_+)])_{n\in\N_0}$.
Thus $U$ is tidy if and only if $q(U)$ is tidy.\\[2.3mm]
(b) Let $V\sub G/N$ be a compact open subgroup which is tidy for~$\wb{\alpha}$.
Then $U:=q^{-1}(V)$ is a compact open subgroup of~$G$ such that $N\sub U$ and $q(U)=V$ is tidy,
whence~$U$ is tidy, by~(a). Using~(\ref{for-scaa}), we get
\[
s(\alpha)=[\alpha(U_+):U_+]=[\wb{\alpha}(q(U)_+),q(U)_+]=[\wb{\alpha}(V_+):V_+]=s(\wb{\alpha}).
\]
(c) As $N\sub\nub(\alpha)\sub U$ for each $U\in \TID(\alpha)$,
we deduce from~(a) that
\[
\TID(\alpha)=\{q^{-1}(V)\colon V\in\TID(\wb{\alpha})\}.
\]
Hence
\[
\nub(\alpha)=\bigcap_{V\in\TID(\wb{\alpha})}q^{-1}(V)=q^{-1}\left(\bigcap_{V\in\TID(\wb{\alpha})}V\right)
=q^{-1}(\nub(\wb{\alpha}))
\]
and thus $q(\nub(\alpha))=\nub(\wb{\alpha})$.
\end{proof}
\begin{proof}
(Theorem E). Let $\beta:=\alpha|_{\parb^-(\alpha)}$ and $\wb{\alpha}$
be the automorphism induced by~$\beta$ on $\parb^-(\alpha)/\bik(\alpha)$.
Let $q\colon \parb^-(\alpha)\to\parb^-(\alpha)/\bik(\alpha)$ be the
canonical quotient map, whose kernel $\bik(\alpha)$ is compact.
Then
\begin{equation}\label{the-chain}
s(\alpha)=s(\beta)=s(\wb{\alpha})=s(\wb{\alpha}|_{\wb{\con^-(\wb{\alpha})}}),
\end{equation}
using Lemma~\ref{fi-steps}\,(c), Lemma~\ref{quottidy}\,(b) or Proposition \ref{prop:Janalysis},
as well as \cite[Proposition~3.21]{BaW}.
The first and second equality were obtained independently in \cite[Lemma~4.5 (3), (4)]{GBV}.
We have $\con^-(\beta,\bik(\alpha))=\con^-(\beta)\bik(\alpha)$ by Theorem~B
and thus $q(\con^-(\alpha))$
$=q(\con^-(\beta))=\con^-(\wb{\alpha})$.
Since $\bik(\alpha)$ is compact, we deduce that
\[
\wb{\con^-(\alpha)}\bik(\alpha)
\]
is closed.
It then coincides with the set $\wb{\con^-(\alpha)\bik(\alpha)}$, which is mapped onto $\wb{\con^-(\wb{\alpha})}$
by~$q$. Hence
\[
p:=q|_{\wb{\con^-(\alpha)}}\colon\wb{\con^-(\alpha)}\to\wb{\con^-(\wb{\alpha})}
\]
is a continuous surjective homomorphism and hence a quotient morphism,
as both groups are $\sigma$-compact (whence the open mapping theorem, \cite[(5.29)]{HaR} applies):
In fact, as the left hand side is a closed subgroup of $\wb{\con^-(\alpha)}\bik(\alpha)$ and this group has $\wb{\con^-(\wb{\alpha})}$
as a quotient with compact kernel, it will be $\sigma$-compact if $\wb{\con^-(\wb{\alpha})}$
is $\sigma$-compact, which
holds by Lemma~\ref{con-sep} to come.\\[2.3mm]
Since~$p$ has compact kernel, by applying 
Lemma~\ref{quottidy}\,(b) or Proposition \ref{prop:Janalysis} we deduce that  $s(\alpha|_{\wb{\con^-(\alpha)}})
=s(\wb{\alpha}|_{\wb{\con^-(\wb{\alpha})}})$.
Together with (\ref{the-chain}), we get $s(\alpha)=s(\alpha|_{\wb{\con^-(\alpha)}})$.\vspace{2mm}
\end{proof}

\noindent
We applied the following lemma to the automorphism $\wb{\alpha}^{\,-1}$:
\begin{la}\label{con-sep}
For each automorphism $\alpha\colon G\to G$ of a locally compact group, the closure
$\wb{\con(\alpha)}$ of the contraction group
is $\sigma$-compact.
\end{la}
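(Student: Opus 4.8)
The claim is that $\overline{\con(\alpha)}$ is $\sigma$-compact whenever $\alpha$ is an automorphism of a locally compact group~$G$. The plan is to produce a single $\sigma$-compact open subgroup containing $\con(\alpha)$; since a closed subset of a $\sigma$-compact set is $\sigma$-compact, this will finish the proof. First I would fix a compact identity neighbourhood $V\sub G$ and, using Remark~\ref{opensigmacomp} (with $M:=V$), obtain an $\alpha$-invariant, $\sigma$-compact open subgroup $S\sub G$; concretely $S$ is generated by $\bigcup_{n\in\N_0}\alpha^n(V)$. The point is that $S$ is a union of a countable ascending chain of compact sets, hence $\sigma$-compact, and $\alpha(S)\sub S$. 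But for an automorphism one can say more: replacing $V$ by $V\cup\alpha^{-1}(V)$ at the outset (still a compact identity neighbourhood), or simply applying Remark~\ref{opensigmacomp} to the endomorphism $\alpha$ and separately to $\alpha^{-1}$ and intersecting, I can arrange that $S$ is invariant under both $\alpha$ and $\alpha^{-1}$, i.e.\ $\alpha(S)=S$.

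The key observation is then that $\con(\alpha)\sub S$. Indeed, if $x\in\con(\alpha)$, then $\alpha^n(x)\to e$, so for all large~$n$ we have $\alpha^n(x)\in V\sub S$; since $S$ is $\alpha$-stable, $\alpha^{-n}(S)=S$, and therefore $x=\alpha^{-n}(\alpha^n(x))\in S$. Hence $\con(\alpha)\sub S$, and because $S$ is closed (open subgroups of topological groups are closed), also $\overline{\con(\alpha)}\sub S$. A closed subset of the $\sigma$-compact space~$S$ is $\sigma$-compact, so $\overline{\con(\alpha)}$ is $\sigma$-compact, as desired.

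The only mildly delicate point — and the one I would expect to need the most care — is ensuring that the subgroup $S$ furnished by Remark~\ref{opensigmacomp} can be taken $\alpha$-stable rather than merely $\alpha$-invariant, so that the backward step $x=\alpha^{-n}(\alpha^n(x))\in S$ is legitimate. This is where automorphy (as opposed to mere endomorphy) is genuinely used: for an automorphism, applying the remark to the $\sigma$-compact set $M:=\bigcup_{n\in\Z}\alpha^n(V)$ directly gives an $\alpha$-invariant $\sigma$-compact open subgroup $S\supseteq M$, and since $M$ is $\alpha^{-1}$-invariant as well, $\alpha^{-1}(S)$ is again a $\sigma$-compact open subgroup containing $M$; one either checks $\alpha^{-1}(S)\sub S$ from the explicit generating-set description, or replaces $S$ by $\bigcap_{n\in\Z}\alpha^n(S)$ — but the cleanest route is to note that the group generated by $\bigcup_{n\in\Z}\alpha^n(V)$ is itself $\sigma$-compact, open, and manifestly $\alpha$-stable, and then take this as $S$. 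Everything else is routine: $\sigma$-compactness passes to closed subsets, and open subgroups are closed.
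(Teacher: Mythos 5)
Your argument is correct, but it routes through a different observation than the paper's. You construct a single $\alpha$-stable, $\sigma$-compact, open (hence closed) subgroup $S\sub G$ (the group generated by $\bigcup_{n\in\Z}\alpha^n(V)$ for a compact identity neighbourhood $V$), show $\con(\alpha)\sub S$ by pulling an eventually-small orbit back through the automorphism, and then invoke that a closed subset of a $\sigma$-compact space is $\sigma$-compact. The paper instead works directly inside $\wb{\con(\alpha)}$: it picks a compact identity neighbourhood $K$ of $\wb{\con(\alpha)}$, uses density of $\con(\alpha)$ to write $\wb{\con(\alpha)}=\con(\alpha)K$, and then observes that each $x\in\con(\alpha)$ has $\alpha^n(x)\in K$ for some $n$, yielding the explicit countable cover $\wb{\con(\alpha)}=\bigcup_{n\in\N}\alpha^{-n}(K)K$ by compact sets. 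Both proofs hinge on the same two facts — that points of the contraction group are eventually captured by a fixed compact identity neighbourhood, and that one can run $\alpha^{-1}$ backward — but the paper's version is shorter and self-contained, while yours leans on (an $\alpha$-stable variant of) Remark~\ref{opensigmacomp} and the general permanence of $\sigma$-compactness under passage to closed subsets. One stylistic remark: the parenthetical suggestion of replacing $S$ by $\bigcap_{n\in\Z}\alpha^n(S)$ is best dropped, as such an intersection need not remain open; the ``cleanest route'' you end up taking (generating directly from the $\alpha$-stable set $\bigcup_{n\in\Z}\alpha^n(V)$) is the one that actually works.
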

\begin{proof}
Let $K$ be a compact identity neighbourhood in~$\wb{\con(\alpha)}$.
Then
\[
\wb{\con(\alpha)}=\con(\alpha)K.
\]
For each $x\in\con(\alpha)$, there is $n\in\N$ with $\alpha^n(x)\in K$
whence $x\in\alpha^{-n}(K)$. Thus
\[
\wb{\con(\alpha)}=\bigcup_{n\in\N} \alpha^{-n}(K)K
\]
is $\sigma$-compact.
\end{proof}
As an application of Theorem~B, Proposition~\ref{prop:Janalysis} and Theorem~E, we include the following addition to the study begun in Section~\ref{sec:scale_sub_quot}. Note that since $\alpha$-stable compact groups are contained within $\parb^{-}(\alpha)$, this result is a generalization of Lemma~\ref{quottidy}\,(b). It also generalizes Proposition~\ref{prop:Janalysis} which can be seen by choosing $G = \parb^{-}(\alpha)$.
As a consequence, the result
is also a generalization of \cite[Proposition~3.21\,(2)]{BaW} which is precisely Proposition~\ref{prop:Janalysis} restricted to automorphisms.
\begin{thm}\label{thm:scale_equality}
	Let $G$ be a totally disconnected locally compact group and $\alpha$ an endomorphism of $G$. Further, let $N\subseteq G$ be a closed normal subgroup which is $\alpha$-stable and contained in $\parb^{-}(\alpha)$. If $\overline{\alpha}$ is the endomorphism induced by $\alpha$ on $G/N$, then
	\[s_{G}(\alpha) = s_{G/N}(\bar{\alpha})s_{N}(\alpha).\]
\end{thm}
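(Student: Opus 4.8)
The plan is to derive the identity from the divisibility $s_H(\alpha|_H)s_{G/H}(\overline\alpha)\mid s_G(\alpha)$ of Theorem~\ref{thm:ScaleQuotDiv} by exhibiting the missing factor as trivial, and the cleanest route to this seems to be a reduction into the anti-parabolic subgroup followed by an application of Proposition~\ref{prop:Janalysis}. Write $P:=\parb^-(\alpha)$ and $\beta:=\alpha|_P$; recall $\alpha(P)=P$. By Lemma~\ref{fi-steps}\,(c), $s_G(\alpha)=s_P(\beta)$. Now $N$ is a closed normal $\beta$-stable subgroup of~$P$ contained in $\parb^-(\beta)=P$ (the last equality by Lemma~\ref{lem:parlemma}), so Proposition~\ref{prop:Janalysis}, applied with $P$, $\beta$, $N$ in place of $G$, $\alpha$, $N$, yields
\[
s_P(\beta)=s_{P/N}(\widehat\beta)\,s_N(\beta|_N)=s_{P/N}(\widehat\beta)\,s_N(\alpha),
\]
where $\widehat\beta$ is the endomorphism induced by $\beta$ on $P/N$. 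It therefore remains to identify $s_{P/N}(\widehat\beta)$ with $s_{G/N}(\overline\alpha)$.

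Let $q\colon G\to G/N$ be the quotient map. Since $N\sub P$ and $P$ is closed and $N$-saturated, the canonical map $P/N\to G/N$ is a topological isomorphism onto the closed subgroup $q(P)$ and carries $\widehat\beta$ to $\overline\alpha|_{q(P)}$; thus $s_{P/N}(\widehat\beta)=s_{q(P)}(\overline\alpha|_{q(P)})$. On the other hand, Lemma~\ref{fi-steps}\,(c) applied to $\overline\alpha$ gives $s_{G/N}(\overline\alpha)=s_{\parb^-(\overline\alpha)}(\overline\alpha|_{\parb^-(\overline\alpha)})$. Hence it suffices to prove $q(\parb^-(\alpha))=\parb^-(\overline\alpha)$. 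The inclusion ``$\sub$'' is immediate, as $\alpha$-regressive trajectories with relatively compact range map to $\overline\alpha$-regressive trajectories with relatively compact range. For ``$\supseteq$'', take $\overline x\in\parb^-(\overline\alpha)$ with $\overline\alpha$-regressive trajectory $(\overline x_n)_{n\in\N_0}$ whose range has compact closure $\overline K$; using that every compact subset of $G/N$ is the image of a compact subset of~$G$, pick a compact $C\sub G$ with $q(C)=\overline K$ and lifts $y_n\in C$ of $\overline x_n$. Then $m_n:=y_n^{-1}\alpha(y_{n+1})$ lies in the compact set $N\cap(C^{-1}\alpha(C))$, and solving the recursion $\alpha(c_{n+1})=m_n^{-1}c_n$ with $c_0=e$ inside~$N$ — possible because $\alpha(N)=N$ — produces an $\alpha$-regressive trajectory $\tilde y_n:=y_nc_n$ lifting $(\overline x_n)$. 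Because $N\sub\parb^-(\alpha)$, i.e.\ $\parb^-(\alpha|_N)=N$ by Lemma~\ref{lem:parlemma}, the preimages $c_{n+1}$ can be chosen to keep $(c_n)$ in a fixed compact subgroup of~$N$, so $(\tilde y_n)$ is bounded and $\overline x=q(\tilde y_0)\in q(\parb^-(\alpha))$.

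Combining the three displayed equalities gives $s_G(\alpha)=s_{G/N}(\overline\alpha)\,s_N(\alpha)$. The step I expect to be the main obstacle is the last one: lifting a bounded $\overline\alpha$-regressive trajectory modulo~$N$ to a bounded $\alpha$-regressive trajectory in~$G$ when $N$ is not assumed compact, so that Lemma~\ref{keytwo} (which presupposes compactness of~$N$) does not apply directly and one must instead use $N\sub\parb^-(\alpha)$ to prevent the $N$-valued corrections $c_n$ from escaping to infinity; making the ``fixed compact subgroup'' choice precise, probably via a tidy compact open subgroup of $N$ for $\alpha|_N$, is where the real work lies. A viable alternative that bypasses $\parb^-(\overline\alpha)$ altogether is to argue directly in~$G$ as in the proof of Proposition~\ref{prop:Janalysis}: with $U$ as produced by Lemma~\ref{lem:scale_quotients_2}, Theorem~\ref{thm:ScaleQuotDiv} gives $s_G(\alpha)=s_{G/N}(\overline\alpha)\,[J:\alpha((U\cap N)_+)U_+]\,s_N(\alpha)$, and one then shows $J\sub\alpha((U\cap N)_+)U_+$, using the identity $U\cap N=U_+\cap N$ (valid since $N\sub\parb^-(\alpha)$ and $U_+=U\cap\parb^-(\alpha)$ by \cite[Proposition~11(1)]{End}) in the role that ``$U=U_+$'' plays there.
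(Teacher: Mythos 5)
Your opening reduction matches the paper exactly: writing $P:=\parb^-(\alpha)$ and $\beta:=\alpha|_P$, Lemma~\ref{fi-steps}\,(c) gives $s_G(\alpha)=s_P(\beta)$, and Proposition~\ref{prop:Janalysis} (applicable since $\parb^-(\beta)=P$ by Lemma~\ref{lem:parlemma}) gives $s_P(\beta)=s_{P/N}(\widehat\beta)\,s_N(\alpha|_N)$, so the problem reduces to showing $s_{G/N}(\overline\alpha)=s_{P/N}(\widehat\beta)=s_{q(P)}(\overline\alpha|_{q(P)})$.

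At that point, however, your argument diverges from the paper's and leaves a genuine gap. You attempt to prove the stronger equality $q(\parb^-(\alpha))=\parb^-(\overline\alpha)$ by lifting a bounded $\overline\alpha$-regressive trajectory $(\overline x_n)$ to a bounded $\alpha$-regressive trajectory in~$G$. The correction sequence $(c_n)$ in~$N$ satisfying $\alpha(c_{n+1})=m_n^{-1}c_n$ is not shown to be bounded: the increments $m_n^{-1}$ lie in a fixed compact subset of~$N$, but the recursion is unbounded in length and nothing in the sketch prevents the $c_n$ from escaping to infinity. Choosing $W\in\COS(N)$ tidy for $\alpha|_N$ with $W=W_+$ does not by itself solve this, since the $m_n^{-1}c_n$ need not lie in $W$, and the groups $\alpha^k(W)$ exhaust only $W_{++}$, not all of~$N$. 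You flag this yourself as ``where the real work lies'', and indeed it is the crux; as written the proof is incomplete. Your fallback alternative (redoing the $J$-analysis of Proposition~\ref{prop:Janalysis} inside~$G$ using $U\cap N=U_+\cap N$ in place of $U=U_+$) is also only a sketch, and the identity $U\cap N=U_+\cap N$ is strictly weaker than $U=U_+$ at several points in that proof (for instance where $q(U)=q(U_+)=q(U)_+$ is used), so it is not clear it carries the argument.

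The paper's actual route sidesteps the lifting problem entirely and is the missing idea here: one does not need $q(P)=\parb^-(\overline\alpha)$, only the inclusion $\overline{\con^-(\overline\alpha)}\sub q(P)$. This follows directly from Theorem~B: if $xN\in\con^-(\overline\alpha)$ then $x\in\con^-(\alpha,N)=\con^-(\alpha)N\sub\parb^-(\alpha)N=\parb^-(\alpha)$, and $\parb^-(\alpha)$ is closed. Crucially, Theorem~B concerns $\alpha$-regressive trajectories that converge to $e$ modulo~$N$, and for those the lifting does work (that is precisely what Theorem~B asserts) — whereas for merely \emph{bounded} trajectories, as in your argument, there is no such tool. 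Then Theorem~E gives $s_{G/N}(\overline\alpha)=s\bigl(\overline\alpha|_{\overline{\con^-(\overline\alpha)}}\bigr)$, and Proposition~\ref{prop:scale_subgroup} sandwiches $s_{G/N}(\overline\alpha)=s_{\overline{\con^-(\overline\alpha)}}(\overline\alpha)\le s_{q(P)}(\overline\alpha|_{q(P)})\le s_{G/N}(\overline\alpha)$, completing the proof. Replacing your unproven equality of anti-parabolics with this Theorem~B / Theorem~E sandwich would make the argument complete.
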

\begin{proof}
Applying
Lemma~\ref{fi-steps} and Proposition \ref{prop:Janalysis}, it suffices to show 
\[s_{G/N}(\overline{\alpha}) = s_{\parb^{-}(\alpha)/N}(\overline{\alpha}),\]
where the right hand side denotes the scale of the restriction of~$\wb{\alpha}$ to
$\parb^-(\alpha)/N$.
	Let $xN\in \con^{-}(\overline{\alpha})$. Then $x\in \con^{-}(\alpha, N)$. Thus $x\in \con^{-}(\alpha) N$ by Theorem~B. Therefore, $xN \in \parb^{-}(\alpha)/N$ as $\con^{-}(\alpha)\sub \parb^{-}(\alpha)$. Since $\parb^{-}(\alpha)$ is closed we have $\overline{\con^{-}(\overline{\alpha})}\sub \parb^{-}(\alpha)/N.$
	
	By Theorem~E we have $s(\overline{\alpha}) = s(\overline{\alpha}|_{\overline{\con^{-}(\overline{\alpha})}})$,
	so applying Proposition \ref{prop:scale_subgroup} we get $s_{G/N}(\overline{\alpha}) = s_{\overline{\con^{-}(\overline{\alpha})}}(\overline{\alpha}) \le s_{\parb^{-}(\alpha)/N}(\overline{\alpha}) \le s_{G/N}(\overline{\alpha})$ as required.
\end{proof}
\section{Further results concerning the nub}
In this section, we provide further results concerning the nub of an endomorphism,
as well as consequences, for example an improved description
of the closure of a contraction group.
\begin{la}\label{easy-lev}
If $\alpha$ is an endomorphism of a totally disconnected,
locally compact group~$G$, then
\[
\lev(\alpha)=\lev(\alpha|_{\parb^-(\alpha)})\qquad\mbox{and}\quad
\bik(\alpha)=\bik(\alpha|_{\lev(\alpha)}).
\]
\end{la}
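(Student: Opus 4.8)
The plan is to prove the two asserted equalities separately, each by a double inclusion, using only the descriptions of $\parb(\alpha)$, $\parb^-(\alpha)$, $\lev(\alpha)$, $\bik(\alpha)$ recalled in the introduction together with the fact (already stated, from \cite[Proposition~19]{End} and the comment after it) that $\parb^-(\alpha)$ is closed and $\alpha$-stable.

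\medskip

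\emph{The equality $\lev(\alpha)=\lev(\alpha|_{\parb^-(\alpha)})$.} Write $P^-:=\parb^-(\alpha)$ and $\beta:=\alpha|_{P^-}$. By definition $\lev(\beta)=\parb(\beta)\cap\parb^-(\beta)$, computed inside the group $P^-$. First I would observe that $\parb^-(\beta)=P^-$: this is exactly Lemma~\ref{lem:parlemma} applied with $H=P^-$ (which is closed and $\alpha$-stable), or can be seen directly since every $x\in P^-$ has a bounded $\alpha$-regressive trajectory, and $P^-$ being $\alpha$-stable one can, as in the proof of Lemma~\ref{lem:parlemma}, replace it by a trajectory inside $P^-$ (the two trajectories differ by elements of $\bik(\alpha)$, which is compact). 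Hence $\lev(\beta)=\parb(\beta)$. It remains to check $\parb(\beta)=\lev(\alpha)$. If $x\in\parb(\beta)$, then $x\in P^-$ and $\{\beta^n(x)\colon n\in\N_0\}=\{\alpha^n(x)\colon n\in\N_0\}$ is relatively compact in $P^-$, hence in $G$, so $x\in\parb(\alpha)\cap\parb^-(\alpha)=\lev(\alpha)$. Conversely, if $x\in\lev(\alpha)$, then $x\in P^-$ and $\{\alpha^n(x)\colon n\in\N_0\}$ is relatively compact in $G$; since $P^-$ is closed and contains all the $\alpha^n(x)$, the closure of this orbit lies in $P^-$, so the orbit is relatively compact in $P^-$, i.e.\ $x\in\parb(\beta)$. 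This gives the first equality.

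\medskip

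\emph{The equality $\bik(\alpha)=\bik(\alpha|_{\lev(\alpha)})$.} Write $L:=\lev(\alpha)$ and $\gamma:=\alpha|_{L}$. Note $\alpha(L)=L$, so $\gamma$ is a surjective endomorphism of $L$; I would also use that $\parb^-(\gamma)=L$, which follows from the first equality (or again from Lemma~\ref{lem:parlemma}, since $L$ is a closed $\alpha$-stable subgroup with $\alpha(L)=L$ contained in $\parb^-(\alpha)$). By definition,
\[
\bik(\gamma)=\overline{\{x\in \parb^-(\gamma)\colon \gamma^n(x)=e\ \text{for some }n\in\N_0\}}
=\overline{\{x\in L\colon \alpha^n(x)=e\ \text{for some }n\}},
\]
using $\parb^-(\gamma)=L$. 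For the inclusion ``$\supseteq$'': every $x\in L$ with $\alpha^n(x)=e$ lies in $\parb^-(\alpha)$ (it is in $L\subseteq\parb^-(\alpha)$) and has $\alpha^n(x)=e$, so the unclosed generating set for $\bik(\gamma)$ sits inside the unclosed generating set for $\bik(\alpha)$, whence $\bik(\gamma)\subseteq\bik(\alpha)$. For ``$\subseteq$'' one must show: if $x\in\parb^-(\alpha)$ with $\alpha^n(x)=e$, then $x\in L$. Indeed $\alpha^n(x)=e$ means the orbit $\{\alpha^k(x)\colon k\in\N_0\}$ is finite (it is $\{x,\alpha(x),\dots,\alpha^{n-1}(x),e,e,\dots\}$), hence relatively compact, so $x\in\parb(\alpha)$; combined with $x\in\parb^-(\alpha)$ this gives $x\in\lev(\alpha)=L$. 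Therefore the generating set for $\bik(\alpha)$ is contained in $L$, and since $\bik(\alpha)$ is a compact (hence closed) subgroup of $\parb^-(\alpha)$ contained in the closed set $L$, taking closures gives $\bik(\alpha)\subseteq\bik(\gamma)$.

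\medskip

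I expect no serious obstacle here; the only point requiring a little care is making sure ``relatively compact in $G$'' and ``relatively compact in $P^-$'' (resp.\ in $L$) are interchangeable, which is immediate because $P^-$ and $L$ are \emph{closed} subgroups of $G$ — so the closure of a subset of $P^-$ (resp.\ $L$) taken in $G$ already lies in $P^-$ (resp.\ $L$). If one prefers to avoid invoking Lemma~\ref{lem:parlemma} (which appears later in the article), the identity $\parb^-(\alpha|_H)=H$ for closed $\alpha$-stable $H\subseteq\parb^-(\alpha)$ with $\alpha(H)=H$ can be inserted inline using the $\bik(\alpha)$-coset argument sketched above; this is the one spot where the compactness of $\bik(\alpha)$ and Proposition~20 of \cite{End} are genuinely used.
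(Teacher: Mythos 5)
Your proof is correct, and for the first equality it takes a genuinely different route from the paper's. The paper proves $\lev(\alpha)\subseteq\lev(\alpha|_{\parb^-(\alpha)})$ directly via bounded two-sided $\alpha$-trajectories: starting from such a trajectory $(x_n)_{n\in\Z}$ for $x\in\lev(\alpha)$, it observes every $x_n$ lies in $\lev(\alpha)$ and so the trajectory is a bounded $\alpha|_{\parb^-(\alpha)}$-trajectory in $\parb^-(\alpha)$. You instead invoke $\parb^-(\beta)=\parb^-(\alpha)$ from Lemma~\ref{lem:parlemma}, collapse $\lev(\beta)$ to $\parb(\beta)$, and then identify $\parb(\beta)$ with $\lev(\alpha)$ using only that $\parb^-(\alpha)$ is closed; this is more structural and avoids any hands-on manipulation of trajectories. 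For the second equality the two arguments are essentially parallel (both boil down to the observation that a nilpotent element of $\parb^-(\alpha)$ automatically lies in $\parb(\alpha)$, so the generating sets of $\bik(\alpha)$ and $\bik(\gamma)$ coincide, and closure in $G$ equals closure in $\lev(\alpha)$ since the latter is closed). You are more explicit than the paper about the point that, to unwind the definition of $\bik(\gamma)$, one needs $\parb^-(\gamma)=\lev(\alpha)$; the paper glosses over this, relying implicitly on the first equality or on Lemma~\ref{lem:parlemma}, so your added justification is a welcome clarification rather than a detour.
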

\begin{proof}
Abbreviate $\gamma:=\alpha|_{\lev(\alpha)}$.
If $x\in \parb^-(\alpha)$ and $\alpha^n(x)=e$ for some $n$,
then $x\in\lev(\alpha)$. Hence
\[
\{x\in \parb^-(\alpha)\colon (\exists n\in \N_0)\; \alpha^n(x)=e\}
=\{x\in \lev(\alpha) \colon (\exists n\in\N_0)\;\gamma^n(x)=e\}.
\]
Since $\lev(\alpha)$ is closed, the closures of the preceding set
in $G$ and $\lev(\alpha)$ coincide. Thus $\bik(\alpha)=\bik(\gamma)$.\\[2.3mm]
If $x\in\lev(\alpha)$, there exists a bounded $\alpha$-trajectory $(x_n)_{n\in\Z}$
with $x_0=x$.
Then $x_n\in\lev(\alpha)$ for each~$n$, since $(x_{n+m})_{m\in \Z}$
is a bounded $\alpha$-trajectory for this element. Thus $(x_n)_{n\in \Z}$
is a bounded $\gamma$-trajectory for~$x$ in~$\lev(\alpha)$
(and hence also a bounded $\alpha|_{\parb^-(\alpha)}$-trajectory for~$x$
in $\parb^-(\alpha)$). Thus $x\in\lev(\alpha|_{\parb^-(\alpha)})$
and hence $\lev(\alpha)\sub \lev(\alpha|_{\parb^-(\alpha)})$.
The converse inclusion is trivial.
\end{proof}
The following statements (a) and (b) are implicit
in comments concerning the nub in \cite[lines following Remark~4]{End}
(where no proof is given);
independently, they were verified in
\cite[Lemma~4.5\,(5)]{GBV}.
\begin{prop}\label{keynub}
Let $\alpha$ be an endomorphism of a totally disconnected, locally compact
group~$G$. Let $q\colon \parb^-(\alpha)\!\to\!\parb^-(\alpha)/\bik(\alpha)=:Q$
be the canonical quotient map and $\wb{\alpha}$ be the automorphism of~$Q$
induced by $\alpha|_{\parb^-(\alpha)}$.
Then
\begin{itemize}
\item[\rm(a)]
$\nub(\alpha|_{\parb^-(\alpha)})=\nub(\alpha)$,
\item[\rm(b)]
$\nub(\wb{\alpha})=q(\nub(\alpha))$,
\item[\rm(c)]
$\nub(\alpha)=\wb{\con(\alpha)\cap\nub(\alpha)}$,
\item[\rm(d)]
$\wb{\con(\alpha)}=\bigcap_{U\in\TID(\alpha)}U_{--}=\con(\alpha)\nub(\alpha)$, and
\item[\rm(e)]
$\nub(\alpha)=\wb{\con(\alpha)\cap\lev(\alpha)}=\bigcap_{U\in\COS(G)}\wb{\lev(\alpha)\cap U_{--}}$\\
\hspace*{14mm}$=\lev(\alpha)\cap\bigcap_{U\in \TID(\alpha)} U_{--}=
\bigcap_{V\in\COS(\lev(\alpha))}\wb{V_{--}}$.
\end{itemize}
\end{prop}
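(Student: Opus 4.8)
The plan is to prove the five equalities of Proposition~\ref{keynub} in an order that lets each one feed the next, using the results already assembled. I would prove (a) and (b) first, then derive (c), (d) and finally (e).

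\textbf{Parts (a) and (b).} For (a), abbreviate $P:=\parb^-(\alpha)$ and $\beta:=\alpha|_P$. By Lemma~\ref{fi-steps}\,(c), for every $U\in\TID(\alpha)$ the group $U_+$ is a tidy subgroup for~$\beta$, and conversely one checks (using \cite[Proposition~11]{End}, i.e. $U_+=P\cap U$) that every tidy subgroup of $\beta$ arises this way up to the usual filter-basis manipulations; more precisely, since $\TID(\alpha)$ and $\TID(\beta)$ are filter bases (intersections of tidy subgroups are tidy, \cite[Proposition~12]{End}) and $U\mapsto U_+=U\cap P$ sends one cofinally into the other, the intersections agree: $\nub(\beta)=\bigcap_{U\in\TID(\alpha)}U_+=\bigcap_{U\in\TID(\alpha)}(U\cap P)=\big(\bigcap_{U\in\TID(\alpha)}U\big)\cap P=\nub(\alpha)\cap P=\nub(\alpha)$, the last step because $\nub(\alpha)$ is compact and hence contained in $P$. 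For (b), apply Lemma~\ref{quottidy}\,(c) to the $\alpha|_P$-stable compact normal subgroup $N=\bik(\alpha)$ of $P$; note $\bik(\alpha)\sub\nub(\alpha|_P)=\nub(\alpha)$ holds by the cited fact in the basic-concepts section, so the hypothesis $N\sub\nub$ of Lemma~\ref{quottidy}\,(c) is met and we get $q(\nub(\alpha|_P))=\nub(\wb\alpha)$, i.e. $q(\nub(\alpha))=\nub(\wb\alpha)$.

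\textbf{Parts (c) and (d).} For (d), the middle expression $\bigcap_{U\in\TID(\alpha)}U_{--}$ equals $\bigcap_{U\in\TID(\alpha)}\con(\alpha,U_+\cap U_-)$ by Lemma~\ref{modVpVm} (each summand is closed since $U$ is tidy). Since $\TID(\alpha)$ is a filter basis and $\bigcap_{U\in\TID(\alpha)}(U_+\cap U_-)\sub\bigcap_{U\in\TID(\alpha)}U=\nub(\alpha)$, while conversely $\nub(\alpha)\sub U_+\cap U_-$ is false in general---so instead I should argue: each $U_+\cap U_-$ is compact, so Lemma~\ref{intersect-famly} gives $\bigcap_U\con(\alpha,U_+\cap U_-)=\con(\alpha,\bigcap_U(U_+\cap U_-))$. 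Now I must identify $\bigcap_{U\in\TID(\alpha)}(U_+\cap U_-)$ with $\nub(\alpha)$: it contains $\nub(\alpha)$ is not automatic, but in fact $\nub(\alpha)$ is $\alpha$-stable and compact, and for a tidy $U$ containing $\nub(\alpha)$ one has $\nub(\alpha)\sub U_+\cap U_-$ since $\nub(\alpha)$ admits two-sided bounded $\alpha$-trajectories inside itself (using $\alpha(\nub(\alpha))=\nub(\alpha)$) which stay in $U$; and the reverse inclusion $\bigcap_U(U_+\cap U_-)\sub\bigcap_U U=\nub(\alpha)$ is clear. Hence $\bigcap_{U\in\TID(\alpha)}U_{--}=\con(\alpha,\nub(\alpha))=\con(\alpha)\nub(\alpha)$ by Theorem~A (as $\nub(\alpha)$ is $\alpha$-stable and compact). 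Since $\con(\alpha)\nub(\alpha)$ is closed ($\nub(\alpha)$ compact) and contains $\con(\alpha)$, it contains $\wb{\con(\alpha)}$; the reverse inclusion $\wb{\con(\alpha)}\supseteq\bigcap_U U_{--}$ follows because $\nub(\alpha)\sub\wb{\con(\alpha)}$ once we know (c). So I prove (c) before finishing (d): applying (b) and the known fact that $\nub(\wb\alpha)$ is the largest $\wb\alpha$-stable closed subgroup on which $\wb\alpha$ acts ergodically (\cite[Proposition~4.4]{Nub}), together with the result of \cite{Nub} that for an ergodic automorphism of a compact group the contraction group is dense, we get $\nub(\wb\alpha)=\wb{\con(\wb\alpha)\cap\nub(\wb\alpha)}$; pulling back along $q$ (whose kernel $\bik(\alpha)\sub\nub(\alpha)\sub\wb{\con(\alpha)}$, using that $\bik(\alpha)$ is contained in the closure of $\alpha^{-n}$-kernels) and using $q(\con(\alpha))\supseteq$ a dense subset of $\con(\wb\alpha)$ gives $\nub(\alpha)=\wb{\con(\alpha)\cap\nub(\alpha)}$. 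Then in particular $\nub(\alpha)\sub\wb{\con(\alpha)}$, which closes the loop for (d). Finally Lemma~\ref{conclosure2} already gives $\wb{\con(\alpha)}\sub\con(\alpha)\nub(\alpha)$, so all three expressions in (d) coincide; and then $\con(\alpha)\nub(\alpha)$ being closed improves Lemma~\ref{conclosure2} to an equality.

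\textbf{Part (e).} The first equality $\nub(\alpha)=\wb{\con(\alpha)\cap\lev(\alpha)}$ is forced by (c) once we know $\con(\alpha)\cap\lev(\alpha)=\con(\alpha)\cap\nub(\alpha)$. For $\sub$: if $x\in\con(\alpha)\cap\lev(\alpha)$ then $x\in\wb{\con(\alpha)\cap\lev(\alpha)}=\bigcap_{O\in\COS(G)}K_O=K$ by Proposition~\ref{the-group-K}; I then need $K\sub\nub(\alpha)$, which should follow because $K$ is $\alpha$-stable (being $\wb{\con(\alpha|_{\lev(\alpha)})}$, and $\con$ of anything is $\alpha$-invariant, with equality on the closure since $\alpha|_{\lev(\alpha)}$ restricted appropriately is invertible---actually $\alpha(\lev(\alpha))=\lev(\alpha)$) and compact; and any tidy $U$ satisfies $K\sub K_O=\wb{\lev(\alpha)\cap U_{--}}\sub U_{--}$, so $K\sub\lev(\alpha)\cap\bigcap_{U\in\TID(\alpha)}U_{--}$. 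The reverse $\supseteq$ is immediate. So the chain becomes: $\wb{\con(\alpha)\cap\lev(\alpha)}=\bigcap_{O\in\COS(G)}\wb{\lev(\alpha)\cap O_{--}}$ is exactly Proposition~\ref{the-group-K}; the middle term $\lev(\alpha)\cap\bigcap_{U\in\TID(\alpha)}U_{--}$ equals $\lev(\alpha)\cap\wb{\con(\alpha)}$ by (d), which by the equality $\con(\alpha)\cap\lev(\alpha)=\con(\alpha)\cap\nub(\alpha)$ and $\lev(\alpha)\cap\wb{\con(\alpha)}=\wb{\con(\alpha)\cap\lev(\alpha)}$ (using Lemma~\ref{sammelsub}\,(a), since $\lev(\alpha)$ is closed and $\alpha$-stable, plus the fact that $\wb{\con(\alpha)}$ meets $\lev(\alpha)$ only in the closure of $\con(\alpha)\cap\lev(\alpha)$---this last needs the product decomposition $\wb{\con(\alpha)}=\con(\alpha)\nub(\alpha)$ from (d) and $\nub(\alpha)\sub\lev(\alpha)$) reduces to $\nub(\alpha)$. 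For the last term $\bigcap_{V\in\COS(\lev(\alpha))}\wb{V_{--}}$: here $V_{--}$ is computed inside $\lev(\alpha)$ with respect to $\alpha|_{\lev(\alpha)}$, and by Lemma~\ref{modVpVm} applied to $\alpha|_{\lev(\alpha)}$, $\wb{V_{--}}=\wb{\con(\alpha|_{\lev(\alpha)},V_+\cap V_-)}$; intersecting over all $V$ and using Lemma~\ref{intersect-famly} plus $\bigcap_V(V_+\cap V_-)=\{e\}$ (as $\bigcap_{V\in\COS(\lev(\alpha))}V=\{e\}$) gives $\con(\alpha|_{\lev(\alpha)})$, whose closure is $\wb{\con(\alpha)\cap\lev(\alpha)}=\nub(\alpha)$.

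\textbf{Main obstacle.} I expect the real work to be in (c): establishing that $\nub(\alpha)$ equals the closure of $\con(\alpha)\cap\nub(\alpha)$ requires the structural input that the nub carries an ergodic automorphism whose contraction group is dense, which rests on \cite{Nub}; one must be careful that the passage to the quotient by $\bik(\alpha)$ is faithful on the relevant contraction groups (i.e. $q$ maps $\con(\alpha|_{\parb^-(\alpha)})$ onto a dense subgroup of $\con(\wb\alpha)$, which itself uses Theorem~A/B-type lifting of sequences converging modulo a compact group). After (c) is in hand, everything else is a bookkeeping exercise chaining Proposition~\ref{the-group-K}, Lemma~\ref{modVpVm}, Lemma~\ref{intersect-famly}, Theorem~A and the $\sigma$-compactness/openness lemmas, with the only subtle point being the repeated identity $\lev(\alpha)\cap\wb{\con(\alpha)}=\wb{\con(\alpha)\cap\lev(\alpha)}$, which I would derive cleanly from the decomposition $\wb{\con(\alpha)}=\con(\alpha)\nub(\alpha)$ together with $\nub(\alpha)\sub\lev(\alpha)$.
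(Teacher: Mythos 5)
Your overall plan — (a) and (b) feed (c), which feeds (d), which feeds (e), with the main load-bearing facts being Lemma~\ref{fi-steps}\,(c), Lemma~\ref{quottidy}\,(c), Theorem~A, Lemma~\ref{intersect-famly}, Lemma~\ref{modVpVm} and Proposition~\ref{the-group-K} — coincides with the paper's structure. Parts (b) and (d) are essentially the paper's argument. However, there are two genuine gaps and one shaky spot.

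\textbf{Part (a): the cofinality claim is not a formality.} You assert that $U\mapsto U\cap P$ maps $\TID(\alpha)$ cofinally into $\TID(\alpha|_{P})$ ``up to the usual filter-basis manipulations.'' That cofinality is precisely the substance of (a), and it does not follow formally: given $W\in\TID(\alpha|_P)$ there is no obvious tidy $U\in\TID(\alpha)$ with $U\cap P\sub W$, since an arbitrary compact open subgroup of $G$ containing $W$ may meet $P$ in something much larger. The paper's proof has to build $V:=\bigcap_{y\in W}yOy^{-1}$ for a suitable $O\in\COS(G)$, form the compact open subgroup $VW$, pass to $U:=\bigcap_{j=0}^\ell\alpha^{-j}(VW)$ (tidy above), enlarge $\ell$ so that $U\cap P$ is tidy above as well, and then argue that $\nub(\alpha|_P)\sub U\cap P$ forces $U\cap P$ to be tidy, whence $U$ is tidy by Lemma~\ref{fi-steps}\,(a). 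Without some such construction, (a) is not proven.

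\textbf{Part (e): the argument for $K\sub\nub(\alpha)$ is circular.} You correctly reduce the first equality in (e) to the containment $K=\wb{\con(\alpha)\cap\lev(\alpha)}\sub\nub(\alpha)$, but you then prove only $K\sub\lev(\alpha)\cap\bigcap_{U\in\TID(\alpha)}U_{--}$, rewrite that intersection as $\lev(\alpha)\cap\wb{\con(\alpha)}=\wb{\con(\alpha)\cap\lev(\alpha)}=K$ using (d) and the product decomposition, and finally invoke $\con(\alpha)\cap\lev(\alpha)=\con(\alpha)\cap\nub(\alpha)$ — which is exactly the assertion you set out to prove. The chain collapses to $K\sub K$. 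The missing ingredient is the paper's observation that, for tidy $U$, the closed set $\lev(\alpha)\cap U_{--}$ is actually contained in $U_+\cap U_-$: one shows that an element of $\lev(\alpha)\cap U_{--}$ differs from an element of $U_+\cap U_-$ by something in $\bik(\alpha)\sub\nub(\alpha)\sub U_+\cap U_-$, using a bounded regressive trajectory and \cite[Proposition~11]{End}. Intersecting over all tidy $U$ then gives $K\sub\bigcap_{U\in\TID(\alpha)}(U_+\cap U_-)=\nub(\alpha)$. This step cannot be replaced by filter-chasing.

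\textbf{Part (c): the route works but the writeup is loose.} You invoke ``$\bik(\alpha)\sub\nub(\alpha)\sub\wb{\con(\alpha)}$'' while in the middle of proving (c), but $\nub(\alpha)\sub\wb{\con(\alpha)}$ is a consequence of (c), not an input. What you actually need (and the paper supplies) is: the surjection $p=q|_{\nub(\alpha)}\colon\nub(\alpha)\to\nub(\wb\alpha)$ is a quotient map by compactness; $q(\con(\alpha)\cap\nub(\alpha))=\con(\wb\alpha|_{\nub(\wb\alpha)})$ by Theorem~A; this image is dense in $\nub(\wb\alpha)$ by the cited density result; hence $\con(\alpha|_{\nub(\alpha)})\bik(\alpha)$ is dense in $\nub(\alpha)$; and independently the $\alpha^n$-torsion in $\parb^-(\alpha)$ is a dense subset of $\bik(\alpha)$ that lies in $\con(\alpha)\cap\nub(\alpha)$. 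Only the last step lets you absorb the $\bik(\alpha)$ factor. Your version gestures at this but as written relies on a fact you have not yet established.

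Parts (b) and (d) are fine, modulo the dependence of (d) on (c). Once (a), (c), (e) are repaired as above, your overall architecture is the same as the paper's.
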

\begin{proof}
(a) Abbreviate $P^-:=\parb^-(\alpha)$.
We recall from \cite[Corollary~5]{End} that $\nub(\alpha)
=\bigcap_{U\in\TID(\alpha)}U=\bigcap_{U\in \TID(\alpha)}(U_+\cap U_-)$, whence also
\begin{equation}\label{prenb}
\nub(\alpha)=\bigcap_{U\in\TID(\alpha)} U_+.
\end{equation}
The right hand side of (\ref{prenb}) contains $\nub(\alpha|_{P^-})$,
by Lemma~\ref{fi-steps}\,(c). Hence $\nub(\alpha)\supseteq \nub(\alpha|_{P^-})$.
For the reverse inclusion we show the contrapositive. Suppose $x\not\in \nub(\alpha|_{P^{-}})$. Then $x\not\in W$ for some compact open subgroup $W\sub P^-$ which is
tidy for $\alpha|_{P^-}$.
Let $O$ be a compact open subgroup of~$G$ with $O\cap P^-\sub W$.
Then
\[
V:=\bigcap_{y\in W}yOy^{-1}
\]
is a compact open subgroup of~$G$ which is normalized by~$W$,
entailing that $VW$ is a compact open subgroup of~$G$.
Note that $x\not\in VW$ (if we could write $x=vw$ with $v\in V$ and $w\in W$,
then $v=xw^{-1}\in V\cap P^- \sub O\cap  P^- \sub W$,
whence $x=vw\in W$ contrary to the choice of~$W$).
For large $\ell\in\N_0$,
\[
U:=\bigcap_{j=0}^\ell\alpha^{-j}(VW)
\]
is tidy above for~$\alpha$.
By Lemma~\ref{simple-obs},
after increasing $\ell$ if necessary, we may assume that, moreover,
$P^-\cap U$ is tidy above for $\alpha|_{P^-}$.
Since $\nub(\alpha|_{P^-})\sub W$ and
$\alpha(\nub(\alpha|_{P^-}))=\nub(\alpha|_{P^-})$,
we have
$\nub(\alpha|_{P^-})\sub U$
and hence
\[
\nub(\alpha|_{P^-})\sub U\cap P^-.
\]
As a consequence, $U\cap P^-$ is tidy for~$\alpha|_{P^-}$,
whence $U$ is tidy below for~$\alpha$ (by Lemma~\ref{fi-steps}\,(a))
and hence tidy. Thus $x\not\in \nub(\alpha)\sub U$.
This shows that $\nub(\alpha)=\nub(\alpha|_{P^-})$.\\[2.3mm]
(b) As $\ker(q)=\bik(\alpha)\sub \nub(\alpha)=\nub(\alpha|_{P^-})$ (using~(a)),
we have $q(\nub(\alpha))=q(\nub(\alpha|_{P^-}))=\nub(\wb{\alpha})$ by Lemma~\ref{quottidy}\,(c).\\[2.3mm]
(c) By (b), the map $p:=q|_{\nub(\alpha)}\colon \nub(\alpha)\to\nub(\wb{\alpha})$ is
onto and thus a quotient map by the open mapping theorem \cite[(5.29)]{HaR}, as $\nub(\alpha)$ is compact. Hence
\[
q(\con(\alpha)\cap \nub(\alpha))=q(\con(\alpha|_{\nub(\alpha}))=\con(\wb{\alpha}|_{\nub(\wb{\alpha})}),
\]
by Theorem~A,
which is dense in $\nub(\wb{\alpha})$ by \cite[Lemma~3.31, (1) and (2)]{BaW}.
As a consequence,
\begin{equation}\label{also-dense}
\con(\alpha|_{\nub(\alpha)})\ker(p)=
\con(\alpha|_{\nub(\alpha)})\bik(\alpha)
\end{equation}
is dense in $\nub(\alpha)$.
The subset
\begin{eqnarray*}
\{x\in \parb^-(\alpha)\colon (\exists n\in\N_0)\; \alpha^n(x)=e\} &\sub & \bik(\alpha)\cap \con(\alpha)\sub
\nub(\alpha)\cap \con(\alpha)\\
&=&\con(\alpha|_{\nub(\alpha)})
\end{eqnarray*}
is dense in $\bik(\alpha)$.
Combining this with the density of the set from (\ref{also-dense}) in $\nub(\alpha)$,
we find that $\con(\alpha|_{\nub(\alpha)})$ is dense in~$\nub(\alpha)$ (as asserted).\\[2.3mm]
(d) We know from Lemma~\ref{conclosure2} that $\wb{\con(\alpha)}=\con(\alpha)(\wb{\con(\alpha)}\cap\nub(\alpha))$.
By~(c), $\con(\alpha)\cap \nub(\alpha)$ is dense in $\nub(\alpha)$, and thus
$\wb{\con(\alpha)}\cap\nub(\alpha)=\nub(\alpha)$. Hence $\wb{\con(\alpha)}=\con(\alpha)\nub(\alpha)$.
It only remains to recall that
\begin{eqnarray*}
\bigcap_{U\in\TID(\alpha)}U_{--}&=& \bigcap_{U\in\TID(\alpha)}\con(\alpha,U_+\cap U_-)
=\bigcap_{U\in\TID(\alpha)}\con(\alpha, U)\\
&=& \con(\alpha,\nub(\alpha))=\con(\alpha)\nub(\alpha)
\end{eqnarray*}
by Lemma~\ref{modVpVm}, Lemma~\ref{stable-enough}, Lemma~\ref{intersect-famly}, and Theorem~A.\\[2.3mm]
(e) As $\{U\cap \lev(\alpha)\colon U\in\COS(G)\}$ is a basis
of identity neighbourhoods in~$\lev(\alpha)$
and $\lev(\alpha)\cap U_{--}=(\lev(\alpha)\cap U)_{--}$,
we have $\bigcap_{U\in\COS(G)}\wb{\lev(\alpha)\cap U_{--}}$
$=\bigcap_{V\in\COS(\lev(\alpha))}\wb{V_{--}}$.
The second and third groups in~(e) (from the left) coincide by Lemma~\ref{the-group-K},
and we denote them by~$K$ (as in~(\ref{defntheK})).
Now $K=\wb{\con(\alpha)\cap\lev(\alpha)}\supseteq \wb{\con(\alpha)\cap\nub(\alpha)}=\nub(\alpha)$,
by Proposition~\ref{keynub}\,(c). On the other hand,
\[
K=\bigcap_{U\in\COS(G)}\wb{\lev(\alpha)
\cap U_{--}}\sub \bigcap_{U\in\TID(\alpha)}\wb{\lev(\alpha)\cap U_{--}}\sub \nub(\alpha)
\]
as each of the sets $\lev(\alpha)\cap U_{--}$ is closed,
and contained in $U_+\cap U_-$, as we now verify:
If $x\in\lev(\alpha)\cap U_{--}$, then $x\in\lev(\alpha)$ and $\alpha^n(x)\in U_-$
for some $n\in \N_0$. Let $(x_n)_{n\in\N_0}$ be a bounded $\alpha$-regressive trajectory
for~$x$.
Then $\alpha^n(x),\alpha^{n-1}(x),\ldots, \alpha(x),x,x_1,x_2,\ldots$ is
a bounded $\alpha$-regressive trajectory for $\alpha^n(x)\in U$,
whence $\alpha^n(x)\in U_+$ by \cite[Proposition~11\,(a)]{End} and thus $\alpha^n(x)\in U_+\cap U_-$.
Since $\alpha(U_+\cap U_-)=U_+\cap U_-$, there exists and $\alpha$-regressive trajectory $(y_j)_{j\in\N_0}$
for $\alpha^n(x)$ inside $U_+\cap U_-$. Then $y_n^{-1}x\in (U_+\cap U_-)\lev(\alpha)\sub\lev(\alpha)\sub
\parb^-(\alpha)$ and $\alpha^n(y_n^{-1}x)=e$, showing that $y_n^{-1}x\in\bik(\alpha)\sub\nub(\alpha)$.
Hence
\[
x=y_n(y_n^{-1}x)\in (U_+\cap U_-)\nub(\alpha)=U_+\cap U_-.
\]
Thus $\lev(\alpha)\cap U_{--}\sub U_+\cap U_-$ and thus
$K\sub\bigcap_{U\in\TID(\alpha)}(\lev(\alpha)\cap U_{--})\sub \bigcap_{U\in\TID(\alpha)}U_+\cap U_-=\nub(\alpha)$.
\end{proof}
Proposition~\ref{keynub}\,(e) provides several characterizations
of the nub, and identifies it as the group considered in
Proposition~\ref{the-group-K} (which had been used in the literature
in the case of automorphisms, as recalled above).
A complementary characterization of the nub was obtained in \cite[Corollary 4.6]{GBV}.
\section{Theorem~F: Dynamics on the `big cell'}\label{sec-cell}
Before we prove Theorem~F, let us collect some tools.
For automorphisms, part (a), (d), (e) and (f) of the following lemma
are known (cf.\
Proposition~3.4,
Corollary 3.17, and
Lemma 3.18 in~\cite{BaW}).
\begin{la}\label{parblev}
Let $\alpha$ be an endomorphism of a totally disconnected,
locally compact group~$G$.
Then the following holds:
\begin{itemize}
\item[\rm(a)]
$\con(\alpha)$ is a normal subgroup of $\parb(\alpha)$
and $\con^-(\alpha)$ is normal in $\parb^-(\alpha)$.
\item[\rm(b)]
$V_-=\con(\alpha|_{V_-})(V_+\cap V_-)$ for each compact open subgroup $V\sub G$,
and $\con(\alpha|_{V_-})$ is normal in~$V_-$.
If $V$ is tidy for~$\alpha$, then $V_+\cap V_-=\lev(\alpha)\cap V$ and
$\con(\alpha|_{V-})=\con(\alpha)\cap V$.
\item[\rm(c)]
$V_+=(\con^-(\alpha)\cap V_+)(V_+\cap V_-)$ for each compact open subgroup $V\sub G$,
and $\con^-(\alpha)\cap V_+$
is normal in~$V_+$.
If $V$ is tidy, then $\con^-(\alpha)\cap V_+=\con^-(\alpha)\cap V$.
\item[\rm(d)]
$\parb(\alpha)=\con(\alpha)\lev(\alpha)$. Moreover, the surjective homomorphism\linebreak
$q\colon\lev(\alpha)\to \parb(\alpha)/\wb{\con(\alpha)}$, $g\mto g\, \wb{\con(\alpha)}$
is continuous and open.
\item[\rm(e)]
$\parb^-(\alpha)=\con^-(\alpha)\lev(\alpha)$. Moreover, the surjective homomorphism\linebreak
$p\colon
\lev(\alpha)\to \parb^-(\alpha)/\wb{\con^-(\alpha)}$, $g\mto g\, \wb{\con^-(\alpha)}$
is continuous and open.
\item[\rm(f)]
A compact open subgroup $V\sub\lev(\alpha)$ is tidy for $\alpha|_{\lev(\alpha)}$ if and only if
$\alpha(V)=V$.
\end{itemize}
\end{la}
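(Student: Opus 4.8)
The plan is to deduce all six parts from Theorems~A and~B together with the tidiness theory of~\cite{End}; the structural facts I will use most are that for a subgroup~$U$ tidy for~$\alpha$ one has $U_+=\parb^-(\alpha)\cap U$ and $U_-=\parb(\alpha)\cap U$ (\cite[Proposition~11]{End}). For part~(a): given $g\in\parb(\alpha)$ and $x\in\con(\alpha)$, the orbit closure $\Gamma:=\wb{\{\alpha^n(g):n\in\N_0\}}$ is compact, so for each identity neighbourhood~$W$ a tube-lemma argument yields an identity neighbourhood~$V$ with $kVk^{-1}\sub W$ for all $k\in\Gamma$; since $\alpha^n(x)\to e$ this gives $\alpha^n(gxg^{-1})=\alpha^n(g)\alpha^n(x)\alpha^n(g)^{-1}\to e$, so $gxg^{-1}\in\con(\alpha)$. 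Running the same computation termwise on a bounded $\alpha$-regressive trajectory for~$g$ and a null $\alpha$-regressive trajectory for~$x$ yields $\con^-(\alpha)\triangleleft\parb^-(\alpha)$.

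For parts (b) and~(c) I would fix a compact open $V\sub G$ and put $M:=V_+\cap V_-$, which is a compact subgroup with $\alpha(M)=M$ by Lemma~\ref{good-prepar}. If $x\in V_-$, then $(\alpha^n(x))_n$ lies in the compact set~$V$, hence converges to~$M$ by Lemma~\ref{stable-enough}; by Theorem~A, $x\in\con(\alpha,M)=\con(\alpha)M$, and in any factorization $x=cm$ with $c\in\con(\alpha)$, $m\in M$ we have $c=xm^{-1}\in V_-$, so $c\in\con(\alpha)\cap V_-=\con(\alpha|_{V_-})$. This gives $V_-=\con(\alpha|_{V_-})(V_+\cap V_-)$, and normality of $\con(\alpha|_{V_-})$ in~$V_-$ follows by applying part~(a) to $\alpha|_{V_-}$ on the compact group~$V_-$. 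When $V$ is tidy, $V_-=\parb(\alpha)\cap V$ and $V_+=\parb^-(\alpha)\cap V$, so $M=\lev(\alpha)\cap V$ and $\con(\alpha|_{V_-})=\con(\alpha)\cap V$ (as $\con(\alpha)\sub\parb(\alpha)$). Part~(c) is the mirror image: for $x\in V_+$ take an $\alpha$-regressive trajectory for~$x$ inside~$V$, which converges to~$M$ by the last assertion of Lemma~\ref{stable-enough}, so $x\in\con^-(\alpha,M)=\con^-(\alpha)M$ by Theorem~B; one then argues as before, using $V_+\sub\parb^-(\alpha)$ and part~(a) for normality.

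For parts (d) and~(e): given $x\in\parb(\alpha)$, I would look at the set~$C$ of accumulation points of $(\alpha^n(x))_n$, which is nonempty, compact, $\alpha$-stable (i.e.\ $\alpha(C)=C$), and contained in $\lev(\alpha)$ --- forward orbits of points of~$C$ remain in the compact orbit closure, and $\alpha(C)=C$ supplies bounded regressive trajectories inside~$C$. Since $(\alpha^n(x))_n$ is bounded and all its accumulation points lie in the closed set $\lev(\alpha)$, Lemma~\ref{specialkaes}\,(c) gives $\alpha^n(x)\to e$ modulo~$\lev(\alpha)$, whence $x\in\con(\alpha,\lev(\alpha))=\con(\alpha)\lev(\alpha)$ by Theorem~A (using $\alpha(\lev(\alpha))=\lev(\alpha)$). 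Thus $\parb(\alpha)=\con(\alpha)\lev(\alpha)=\lev(\alpha)\con(\alpha)$ by the normality in~(a); then $\wb{\con(\alpha)}$ is closed and normal in $\parb(\alpha)$, the map~$q$ is a well-defined continuous surjective homomorphism, and it remains to see that~$q$ is open. Part~(e) runs identically, with the $\alpha$-orbit of~$x$ replaced by a bounded $\alpha$-regressive trajectory and Theorem~A replaced by Theorem~B. For part~(f): if $\alpha(V)=V$ then the displacement index $[\alpha(V):\alpha(V)\cap V]$ equals~$1$, the minimum possible value, so $V$ is minimizing and hence tidy for $\alpha|_{\lev(\alpha)}$; conversely, if $V\sub\lev(\alpha)$ is tidy for $\gamma:=\alpha|_{\lev(\alpha)}$, then $\lev(\gamma)=\lev(\alpha)$ (since $\parb(\gamma)=\parb^-(\gamma)=\lev(\alpha)$, e.g.\ by Lemma~\ref{lem:parlemma}), so part~(b) applied to~$\gamma$ on~$\lev(\alpha)$ gives $V_+\cap V_-=\lev(\gamma)\cap V=V$, forcing $V_+=V_-=V$; then $V_-=V$ yields $\alpha(V)\sub V$ and $V_+=V$ (every element of~$V$ has a $\gamma$-preimage inside~$V$) yields $V\sub\alpha(V)$, so $\alpha(V)=V$.

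The step I expect to be the main obstacle is the openness of the homomorphisms~$q$ and~$p$ in parts (d) and~(e): since $\lev(\alpha)$ need not be $\sigma$-compact, the open mapping theorem is not directly available. I would instead verify that~$q$ is a topological quotient map --- which for a surjective homomorphism of topological groups implies openness --- by restricting~$q$ to a compact open subgroup of $\lev(\alpha)$, where it is automatically a closed (hence open) homomorphism onto its image, and then propagating openness with the help of the decomposition $\parb(\alpha)=\lev(\alpha)\,\wb{\con(\alpha)}$ and the normality of $\con(\alpha)$ in $\parb(\alpha)$. Everything else amounts to bookkeeping on top of Theorems~A and~B and the cited results of~\cite{End}.
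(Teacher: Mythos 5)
Your plan for parts (a), (b), (c) and (f) and for the equalities $\parb(\alpha)=\con(\alpha)\lev(\alpha)$ and $\parb^-(\alpha)=\con^-(\alpha)\lev(\alpha)$ in (d), (e) is essentially the paper's argument: normality from conjugation of null sequences by bounded ones, the decomposition of $V_\pm$ via Lemma~\ref{stable-enough} plus Theorem~A (resp.\ B), and the characterization of tidy $V$ via $V_+=V_-=V$ for the Levi restriction. (Your use of the accumulation set $C$ instead of $K_+\cap K_-$ in (d), and the minimizing-index argument in (f), are cosmetic variations.)

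The genuine gap is exactly where you expected trouble: openness of $q$ and $p$. You correctly identify that the open mapping theorem is unavailable and that one should restrict $q$ to a compact open $W\sub\lev(\alpha)$, on which it is a quotient (hence open) map onto its image $q(W)$. But that alone does not make $q$ open: you still need $q(W)$ to be \emph{open in the codomain} $\parb(\alpha)/\wb{\con(\alpha)}$, equivalently that $\wb{\con(\alpha)}\,W$ is open in $\parb(\alpha)$. That is not supplied by the global decomposition $\parb(\alpha)=\lev(\alpha)\,\wb{\con(\alpha)}$ together with normality; indeed a surjective continuous group homomorphism whose restriction to a compact open subgroup is open onto its image need not be open (the image of that subgroup can still fail to be a neighbourhood of the identity). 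The missing ingredient is the local decomposition from part~(b): take $W=V\cap\lev(\alpha)=V_+\cap V_-$ for $V$ \emph{tidy} for $\alpha$; then $V_-=(\con(\alpha)\cap V)(V_+\cap V_-)$ shows $\wb{\con(\alpha)}\,W\supseteq V_-$, which is open in $\parb(\alpha)$ because $V_-=V\cap\parb(\alpha)$ and $V$ is open in $G$. Hence $q(W)$ is open, $q|_W$ is an open map into the codomain, and openness propagates by translation. The same fix, with $V_+=(\con^-(\alpha)\cap V)(V_+\cap V_-)$ from part~(c), repairs the argument for $p$. Without specifying that $W$ comes from a tidy $V$ and invoking (b)/(c) in this way, the openness claim does not follow.
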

\begin{proof}
(a) If $(x_n)_{n\in\N_0}$ and $(g_n)_{n\in\N_0}$
are sequences in~$G$
such that $x_n\to e$ and the set $\{g_n\colon n\in\N_0\}$
is relatively compact, then
also $g_nx_ng_n^{-1}\to e$ by virtue of \cite[(4.9)]{HaR}. The assertions follow from this observation.\\[2.3mm]
(b) If $x\in V_-\sub V_{--}$, then $\alpha^n(x)\to e$ modulo $V_+\cap V_-$
in~$G$ (see Lemma~\ref{modVpVm}) and hence also in~$V_-$ (see Lemma~\ref{subuni}).
Thus, using Theorem~A,
\[
V_-=\con(\alpha|_{V_-},V_+\cap V_-)=\con(\alpha|_{V_-})(V_+\cap V_-).
\]
By~(a), $\con(\alpha|_{V_-})$ is normal in $\parb(\alpha|_{V_-})=V_-$.
If $V$ is tidy and $x\in \con(\alpha)\cap V$,
then $\{\alpha^n(x)\colon n\in\N_0\}$ is relatively compact
and thus $x\in V_-$ (by \cite[Proposition~11\,(2)]{End}). Hence $x\in\con(\alpha|_{V_-})$
and so $\con(\alpha)\cap V\sub  \con(\alpha|_{V_-})$.
The converse inclusion is trivial.\\[2.3mm]
Finally, $V_+\cap V_-=\lev(\alpha)\cap V$ by \cite[Proposition~11]{End}.\\[2.3mm]
(c) As $\con^-(\alpha)$ is normal in $\parb^-(\alpha)$, the intersection
$\con^-(\alpha)\cap V_+$ is normal in the subgroup~$V_+$ of $\parb^-(\alpha)$.
If $x\in V_+\sub V_{++}$, then $x\in \con^-(\alpha, V_+\cap V_-)$ by Lemma~\ref{modVpVm}
and thus $x\in (\con^-(\alpha)\cap V_+)(V_+\cap V_-)$, by Theorem~B.
Trivially, $(\con^-(\alpha)\cap V_+)(V_+\cap V_-)\sub V_+$.
If $V$ is tidy, then $\con^-(\alpha)\cap V\sub V_+$ (by \cite[Proposition~11\,(1)]{End}),
whence $\con^-(\alpha)\cap V_+=\con^-(\alpha)\cap V$.\\[2.3mm]
(d) If $x\in\parb(\alpha)$, then the closure $K:=\wb{\{\alpha^n(x)\colon n\in\N_0\}}$
is compact. Since $\alpha^n(x)\in K$ for all~$n$, we have $\alpha^n(x)\to_\cR K$
and thus $\alpha^n(x)\to_\cR K_+\cap K_-$, by Lemma~\ref{stable-enough}.
Since $K_+\cap K_-$ is compact and $\alpha(K_+\cap K_-)=K_+\cap K_-$,
\[
K_+\cap K_-\sub\lev(\alpha).
\]
Thus $x\in \con(\alpha,K_+\cap K_-)\sub \con(\alpha,\lev(\alpha))=\con(\alpha)\lev(\alpha)$,
using Theorem~A. Thus $\parb(\alpha)\supseteq\con(\alpha)\lev(\alpha)$.
The converse inclusion is trivial.\\[2.3mm]
If $V\sub G$ is a compact open subgroup which is tidy for~$\alpha$,
then $V_-=V\cap \parb(\alpha)$ is open in $\parb(\alpha)$
and $V_+\cap V_-=V\cap\lev(\alpha)$ is open in $\lev(\alpha)$
(cf.\ \cite[Proposition~11]{End}).
Since $V_-=(\con(\alpha)\cap V)(V_+\cap V_-)$ by~(a), we see that
\[
q(V_+\cap V_-)=V_-\,\wb{\con(\alpha)}/\wb{\con(\alpha)}
\]
is open in~$\parb(\alpha)/\wb{\con(\alpha)}$. Since $V_+\cap V_-$ is compact,
the continuous surjection $q|_{V_+\cap V_-}\colon V_+\cap V_-\to q(V_+\cap V_-)$ is a quotient map
and thus an open map (being also a homomorphism). The assertion follows.\\[2.3mm]
(e) Every $x\in\parb^{-}(\alpha)$ admits a bounded $\alpha$-regressive trajectory $(x_{n})_{n\in\mathbb{N}_{0}}$. In particular, $K:=\overline{\{x_{n}\colon n\in\mathbb{N}_{0}\}}$ is compact. Since $x_{n}\in K$ for all $n$, we have $x_{n}(x)\to_{\mathcal{R}}K$ and thus $x_{n}\to_{\mathcal{R}}K_{+}\cap K_{-}$, by Lemma~\ref{stable-enough}. Since $K_{+}\cap K_{-}$ is compact and $\alpha(K_{+}\cap K_{-})=K_{+}\cap K_{-}$,
\begin{displaymath}
 K_{+}\cap K_{-}\subseteq\lev(\alpha).
\end{displaymath}
Thus $x\in\con^{-}(\alpha,K_{+}\cap K_{-})\subseteq\con^{-}(\alpha,\lev(\alpha))=\con^{-}(\alpha)\lev(\alpha)$, using Theorem~B. Thus $\parb^{-}(\alpha)\supseteq\con^{-}(\alpha)\lev(\alpha)$. The converse inclusion is trivial. \\[2.3mm]
If $V\sub G$ is a tidy subgroup for~$\alpha$,
then $V_+=V\cap \parb^-(\alpha)$ is open in $\parb^-(\alpha)$
and $V_+\cap V_-=V\cap\lev(\alpha)$ is open in $\lev(\alpha)$
(cf.\ \cite[Proposition~11]{End}).
Since $V_+=(\con^-(\alpha)\cap V_+)(V_+\cap V_-)$ by~(b), we see that
\[
p(V_+\cap V_-)=V_+\,\wb{\con^-(\alpha)}/\wb{\con^-(\alpha)}
\]
is open in~$\parb^-(\alpha)/\wb{\con^-(\alpha)}$. Since $V_+\cap V_-$ is compact,
the continuous surjection $p|_{V_+\cap V_-}\colon V_+\cap V_-\to p(V_+\cap V_-)$ is a quotient map
and thus an open map (being also a homomorphism). The assertion follows.\\[2.3mm]
(f) If $\alpha(V)=V$, then $V_+=V_-=V$ (whence $V=V_+V_-$) and $V_{--}$ is open (and thus closed);
so $V$ is tidy. If, conversely, $V$ is tidy, then every $x\in V$ lies on a bounded $\alpha$-trajectory $(x_n)_{n\in\Z}$
(as $x\in\lev(\alpha)$),
whence $x\in V_+\cap V_-$ by \cite[Proposition~11]{End}. Thus $V=V_+\cap V_-$
and thus $\alpha(V)=V$.
\end{proof}
\begin{proof}
(Theorem~F). If $\alpha$ has small tidy subgroups, then also~$\alpha_L:=\alpha|_{\lev(\alpha)}$,
by Corollary~\ref{subtidy}.
If, conversely, $\alpha_L$ has small tidy subgroups,
then
\begin{equation}\label{ifsmll}
\{e\}=\nub(\alpha_L)=\wb{\con(\alpha_L)\cap\lev(\alpha_L)}=\wb{\con(\alpha_L)},
\end{equation}
using Proposition~\ref{betterC}
for the first equality, Proposition~\ref{keynub}\,(e) for the second,
and the fact that $\lev(\alpha_L)=\lev(\alpha)$
for the last equality. Thus
\begin{equation}\label{new38}
\nub(\alpha)=\wb{\con(\alpha)\cap\lev(\alpha)}=\wb{\con(\alpha_L)}=\{e\},
\end{equation}
whence $\alpha$ has small tidy subgroups (by Proposition~\ref{betterC}).\\[2.3mm]
Assuming now that~$\alpha$ has small tidy subgroups, we have:\\[2.3mm]
(c)
Since $\ker(\alpha)\cap\parb^-(\alpha)\sub \bik(\alpha)\sub\nub(\alpha)=\{e\}$, the induced endomorphism
$\alpha|_{\parb^-(\alpha)}$ of $\parb^-(\alpha)$ is injective and hence an automorphism of the
topological group $\parb^-(\alpha)$ (by
Lemma~\ref{fi-steps}~(b)).
We know that the endomorphisms $\alpha|_{\con^-(\alpha)}$ and $\alpha|_{\lev(\alpha)}$ are surjective.
Being restrictions of the automorphism $\alpha|_{\parb^-(\alpha)}$,
also $\alpha|_{\con^-(\alpha)}$ and $\alpha|_{\lev(\alpha)}$ are automorphisms.\\[2.3mm]
(b) By (\ref{new38}), we have $\con(\alpha)\cap\lev(\alpha)=\{e\}$.
Together with Lemma~\ref{parblev} (a) and (d),
this entails that
$\parb(\alpha)=\con(\alpha)\rtimes \lev(\alpha)$ as an abstract group.
By Proposition~\ref{parblev}\,(d), the map $q\colon \lev(\alpha)\to\parb(\alpha)/\con(\alpha)$,
$g\mto g\,\con(\alpha)$ is an isomorphism of topological groups. As a consequence,
the homomorphism $\parb(\alpha)\to\lev(\alpha)$,
$xy\mto y$ (with $x\in\con(\alpha)$, $y\in\lev(\alpha)$)
is continuous and thus $\parb(\alpha)=\con(\alpha)\rtimes\lev(\alpha)$ as a topological group.\\[2.3mm]
As $\alpha$ has small tidy subgroups, so does the automorphism
$\beta:=\alpha|_{\parb^-(\alpha)}$ (see Corollary~\ref{subtidy}). Hence
\begin{eqnarray*}
\parb^-(\alpha)& =& \parb^-(\beta)=\parb(\beta^{-1})=\con(\beta^{-1})\rtimes \lev(\beta^{-1})\\
&=&
\con^-(\beta)\rtimes \lev(\beta)=
\con^-(\alpha)\rtimes \lev(\alpha).
\end{eqnarray*}
(d) Let $V\sub G$ be a compact open subgroup which is tidy for~$\alpha$. Then
\begin{eqnarray*}
V&=&V^{-1}=(V_-)^{-1}(V_+)^{-1}=V_-(V_+)^{-1}\\
&=&
(\con(\alpha)\cap V)(\lev(\alpha)\cap V)(\con^-(\alpha)\cap V)\sub \Omega,
\end{eqnarray*}
using Lemma~\ref{parblev} (b) and (c).
As each of $\con(\alpha)$ and $\con^-(\alpha)$ intersects
$\lev(\alpha)$ in~$\{e\}$ (by (b)),
we deduce from Lemma~\ref{parblev} (b) and (c) that
\begin{equation}\label{abstrgp}
V_-=(\con(\alpha)\cap V)\rtimes (\lev(\alpha)\cap V)\;\mbox{and}\;
V_+=(\con^-(\alpha)\cap V)\rtimes (\lev(\alpha)\cap V)
\end{equation}
as abstract groups. Using (b), we see that (\ref{abstrgp}) also holds in the sense
of topological groups.\\[2.3mm]
(a) By (d), $\Omega$ is an identity neighbourhood.
For $x\in G$, the translations $\lambda_x,\rho_x\colon G\to G$,
\[
\lambda_x(y):=xy,\quad\rho_x(y):=yx\quad\mbox{for $y\in G$}
\]
are homeomorphisms. If $z\in\Omega$,
write $z=xy$ with $x\in\con(\alpha)$ and $y\in\lev(\alpha)\con^-(\alpha)=\parb^-(\alpha)$ (cf.\
Lemma~\ref{parblev}\,(e)).
Since
\[
\Omega=\con(\alpha)\parb^-(\alpha),
\]
we have $(\rho_y\circ \lambda_x)(\Omega)\sub\Omega$.
Since $(\rho_y\circ \lambda_x)(\Omega)$ is a neighbourhood
of
\[
(\rho_y\circ\lambda_x)(e)=xy=z,
\]
so is~$\Omega$. Hence $\Omega$ is open.
If also $z=ab$ with $a\in \con(\alpha)$ and $y\in\parb^-(\alpha)$, then
\[
a^{-1}x=by^{-1}\in \con(\alpha)\cap \parb^-(\alpha)=\con(\alpha|_{\lev(\alpha)})=\{e\}
\]
(where we used~(\ref{ifsmll})). Hence $x=a$ and $y=b$, showing that $\pi$ is injective and
hence a bijection. It remains to show that $\pi$ is an open map.
If $V\sub G$ is a tidy subgroup for~$\alpha$ and
\[
L:=(\con(\alpha)\cap V)\times (\lev(\alpha)\cap V)\times (\con^-(\alpha)\cap V),
\]
then $\pi$ restricts to a continuous bijection
\begin{equation}\label{lhscomp}
\pi_V\colon
L \to V_-V_+=V,
\end{equation}
as a consequence of~(d). Since~$L$ is
compact, $\pi_V$ is a homeomorphism.
Hence $\pi|_L$ is an open map.
Given $a\in \con(\alpha)$, $b\in\lev(\alpha)$ and $c\in\con^-(\alpha)$,
we have
\[
\pi(x,y,z)=a\pi|_L(a^{-1}x,yb^{-1},bzc^{-1}b^{-1})bc
\]
for $(x,y,z)\in \con(\alpha)\times\lev(\alpha)\times\con^-(\alpha)$
in some open neighbourhood~$W$ of $(a,b,c)$. Since translations and the automorphism
$\con^-(\alpha)\to\con^-(\alpha)$, $v\mto bvb^{-1}$
are open maps, $\pi|_W$ is an open map and thus $\pi$ a homeomorphism.\\[2.3mm]
(e)
If $V$ is tidy, then $V$ is the product of the $\alpha$-invariant set $\con(\alpha)\cap V=\con(\alpha|_{V_-})$,
the $\alpha$-stable set $\lev(\alpha)\cap V=V_+\cap V_-$
and the set $\con^-(\alpha)\cap V=\con^-(\alpha)\cap V_+$
whose image under~$\alpha$ is contained in $\con^-(\alpha)$.
In view of (a), we now deduce from
\[
(V_+\cap V_-) (\con^-(\alpha)\cap V_+)=V_+\sub \alpha(V_+)=(V_+\cap V_-)\alpha(\con^-(\alpha)\cap V_+)
\]
that $\con^-(\alpha)\cap V_+\sub\alpha(\con^-(\alpha)\cap V_+)$.
Conversely, assume that (\ref{chart1})--(\ref{chart4}) are satisfied
by a compact open subgroup~$V\sub G$.
If $v\in V_-$, write $\pi^{-1}(v)=:(a,b,c)$.
As $\con(\alpha)$, $\lev(\alpha)$ and $\con^-(\alpha)$ are $\alpha$-invariant,
we deduce from
\[
\alpha^n(v)=\alpha^n(a)\alpha^n(b)\alpha^n(c)\in V
\]
that $\alpha^n(c)\in \con^-(\alpha)\cap V$ for all $n\in\N_0$,
whence $c\in \parb(\alpha)\cap\con^-(\alpha)=\con(\alpha)\lev(\alpha)\cap \con^-(\alpha)=\{e\}$
and thus $c=e$. We now easily deduce that
\[
V_-=(\con(\alpha)\cap V)(\lev(\alpha)\cap V).
\]
If $v\in V_+$ and $(a,b,c):=\pi^{-1}(v)$, let $(v_n)_{n\in\N_0}$
be an $\alpha$-regressive trajectory for~$v$ in~$V$.
Write $(a_n,b_n,c_n):=\pi^{-1}(v_n)$.
Then $(a_n)_{n\in\N_0}$ is an $\alpha$-regressive trajectory
for~$a$ in~$V$, whence $a\in\con(\alpha)\cap \parb^-(\alpha)=\con(\alpha)\cap\lev(\alpha)\sub\nub(\alpha)=\{e\}$
and thus $a=e$. We now easily deduce that
\[
V_+=(\lev(\alpha)\cap V)(\con^-(\alpha)\cap V).
\]
Using that $\lev(\alpha)\cap V$ is $\alpha$-stable
and $\con^-(\alpha)\sub V_{++}$ (cf.\ Lemma~\ref{modVpVm}), we find
\[
V_{++}=(\lev(\alpha)\cap V)\con^-(\alpha).
\]
As $\con^-(\alpha)$ is closed in~$G$ and $\lev(\alpha)\cap V$ is compact,
$V_{++}$ is closed in~$G$.
Since $\beta:=\alpha|_{\parb^-(\alpha)}$ is an automorphism,
the indices $[\alpha^{n+1}(V_+):\alpha^n(V_+)]=[\beta^{n+1}(V_+):\beta^n(V_+)]$
are independent of $n\in\N_0$. Hence $V$ is tidy.\\[2.3mm]
(f) As $\lev(\alpha)$ has small tidy subgroups, \hspace*{-.3mm}(f) follows from Lemma~\ref{parblev}\,(f).
\end{proof}
\end{document}